\theoremstyle{definition}
\newtheorem{definition}{Definition}[section]
\newtheorem{example}[definition]{Example}
\newtheorem{remark}[definition]{Remark}
\theoremstyle{plain}
\newtheorem{theorem}[definition]{Theorem}
\newtheorem{lemma}[definition]{Lemma}
\newtheorem{proposition}[definition]{Proposition}
\newtheorem{corollary}[definition]{Corollary}
\def\N{{\mathbb N}}
\def\C{{\cal C}}
\begin{document}
\title{\vspace*{-55pt}Functorial Properties of the Reticulation of a Universal Algebra}
\author{George GEORGESCU$^1$, Leonard KWUIDA$^2$, Claudia MURE\c SAN$^3$\thanks{Corresponding author.}\\ 
{\small $^{1,3}$University of Bucharest, $^2$Bern University of Applied Sciences}\\ 
{\small $^1$georgescu.capreni@yahoo.com; $^2$leonard.kwuida@bfh.c; $^3$cmuresan@fmi.unibuc.ro; c.muresan@yahoo.com}}
\date{\today }
\maketitle \vspace*{-20pt}

\begin{abstract} The {\em reticulation} of an algebra $A$ is a bounded distributive lattice whose prime spectrum of ideals (or filters), endowed with the Stone topology, is homeomorphic to the prime spectrum of congruences of $A$, with its own Stone topology. The reticulation allows algebraic and topological properties to be transferred between the algebra $A$ and bounded distributive lattices, a transfer which is facilitated if we can define a {\em reticulation functor} from a variety containing $A$ to the variety of (bounded) distributive lattices. In this paper, we continue the study of the reticulation of a universal algebra initiated in \cite{retic}, where we have used the notion of a prime congruence introduced through the term condition commutator. We characterize morphisms which admit an image through the reticulation and investigate the kinds of varieties that admit reticulation functors; we prove that these include semi--degenerate congruence--distributive varieties with the Compact Intersection Property and semi--degenerate congruence--distributive varieties with congruence intersection terms, as well as generalizations of these, and additional varietal properties ensure that the reticulation functors preserve the injectivity of morphisms. We also study the property of morphisms of having an image through the reticulation in relation to another property, involving the complemented elements of congruence lattices, exemplify the transfer of properties through the reticulation with conditions Going Up, Going Down, Lying Over and the Congruence Boolean Lifting Property, and illustrate the applicability of such a transfer by using it to derive results for certain types of varieties from properties of bounded distributive lattices.
     
\noindent {\bf 2010 Mathematics Subject Classification:} primary: 08B10; se\-con\-da\-ry: 08A30, 06B10, 06F35, 03G25.

\noindent {\bf Keywords:} (congruence--modular, congruence--distributive, semi--degenerate) variety, commutator, (prime, compact) congruence, reticulation.\end{abstract}

\section{Introduction}
\label{introduction}

The {\em reticulation} of an algebra $A$ from a variety ${\cal C}$ is a bounded distributive lattice ${\cal L}(A)$ such that the spectrum of the prime congruences of $A$, endowed with the Stone topology, is homeomorphic to the spectrum of the prime ideals or the prime filters of $A$, endowed with its Stone topology. This construction allows algebraic and topological properties to be transferred between ${\cal C}$ and the variety ${\cal D}{\bf 01}$ of bounded distributive lattices. While a known property of bounded distributive lattices ensures the uniqueness of ${\cal L}(A)$ up to a lattice isomorphism (once we have chosen, for its construction, either its spectrum of prime ideals or that of its prime filters, since the reticulation constructed w.r.t. to one of these prime spectra is dually lattice isomorphic to the one constructed w.r.t. the other), prior to our construction for the setting of universal algebra from \cite{retic}, the existence of the reticulation had only been proven for several concrete varieties ${\cal C}$, out of which we mention: commutative unitary rings \cite{joy,sim}, unitary rings \cite{bell}, MV--algebras \cite{bellsspl}, BL--algebras \cite{dinggll} and (bounded commutative integral) residuated lattices \cite{eu1,eu2,eu5}.

In \cite{retic}, we have constructed the reticulation for any algebra whose one--class congruence is compact, whose term condition commutator is commutative and distributive w.r.t. arbitrary joins and whose set of compact congruences is closed w.r.t. this commutator operation; in particular, our construction can be applied to any algebra from a semi--degenerate congruence--modular variety having the set of the compact congruences closed w.r.t. the modular commutator, hence this construction generalizes all previous constructions of the reticulation for particular varieties.

We recall our construction from \cite{retic} for the reticulation in this universal algebra setting in Section \ref{reticulatia}, after a preliminaries section in which we remind some notions from universal algebra and establish several notations.

A very useful tool for transferring properties through the reticulation between ${\cal C}$ and ${\cal D}{\bf 01}$ is a reticulation functor ${\cal L}:{\cal C}\rightarrow {\cal D}{\bf 01}$, whose preservation properties can be used for such a transfer. In \cite{retic}, we have defined an image through the reticulation for any surjective morphism between algebras satisfying the conditions above for the compact congruences and the term condition commutator. In Section \ref{fret} of the present paper we introduce the {\em functoriality of the reticulation}, which essentially means, for an arbitrary morphism $f:A\rightarrow B$ in $\C $ between algebras $A$ and $B$ from $\C $ having the commutators with the properties above, that $f$ admits an image ${\cal L}(f)$ through the reticulation, that is $f$ induces a $0$ and join--preserving function ${\cal L}(f):{\cal L}(A)\rightarrow {\cal L}(B)$. We have a functor from $\C $ to the variety of (bounded) distributive lattices iff all morphisms in $\C $ satisfy the functoriality of the reticulation and their images through the reticulation also preserve the meet (and the $1$). It turns out that the {\em admissible morphisms} we have studied in \cite{gulo,euadm}, that is the morphisms whose inverse images take prime congruences to prime congruences, are exactly the morphisms satisfying the functoriality of the reticulation and whose images through the reticulation are lattice morphisms. Unfortunately, we have not been able to construct a reticulation functor in the most general case for which we have constructed the reticulation, but we have obtained reticulation functors for remarkable kinds of varieties, such as semi--degenerate congruence--distributive varieties with the Compact Intersection Property (CIP) and semi--degenerate congruence--modular varieties with {\em compact commutator terms}, a notion we have defined by analogy to the more restrictive one of a congruence--distributive variety with compact intersection terms. Varieties with stronger properties, such as semi--degenerate congruence--extensible congruence--distributive varieties with the CIP or semi--degenerate varieties with equationally definable principal congruences (EDPC) and the CIP turn out to have reticulation functors which preserve the injectivity of morphisms. We conclude this section by transferring properties Going Up, Going Down and Lying Over on admissible morphisms through the reticulation, and using this transfer to derive a result on varieties with EDPC, as an illustration of the applicability of the reticulation.

In Section \ref{fbc} we study the functoriality of the reticulation in relation with another property of morphisms, that we call {\em functoriality of the Boolean center}, involving the complemented elements of the congruence lattice of an algebra $A$, which form a Boolean sublattice of the lattice of congruences of $A$, called the {\em Boolean center} of this congruence lattice, whenever $A$ satisfies the conditions above on compact congruences and the term condition commutator and, additionally, has the property that the term condition commutator of any congruence $\alpha $ of $A$ with the one--class congruence of $A$ equals $\alpha $, in particular whenever $A$ is a member of a semi--degenerate congruence--modular variety and has the set of the compact congruences closed w.r.t. the modular commutator. The functoriality of the Boolean center on a morphism $f:A\rightarrow B$ in $\C $ between algebras with the commutators as above essentially means that $f$ induces a Boolean morphism between the Boolean centers of the congruence lattices of $A$ and $B$; if all morphisms in $\C $ have this property, then we can define a functor from $\C $ to the variety of Boolean algebras. We also study another property related to these Boolean centers, namely the Congruence Boolean Lifting Property (CBLP), which turns out to be transferrable through the reticulation in the case when $\C $ is semi--degenerate and congruence--modular.

We conclude our paper with Section \ref{examples}, containing examples for the notions in the previous sections and the relations between these notions.

\section{Preliminaries}
\label{preliminaries}

We refer the reader to \cite{agl}, \cite{bur}, \cite{gralgu}, \cite{koll} for a further study of the following notions from universal algebra, to \cite{bal}, \cite{blyth}, \cite{cwdw}, \cite{gratzer} for the lattice--theoretical ones, to \cite{agl}, \cite{fremck}, \cite{koll}, \cite{ouwe} for the results on commutators and to \cite{agl}, \cite{cze}, \cite{cze2}, \cite{gulo}, \cite{euadm}, \cite{joh} for the Stone topologies.

All algebras will be non--empty and they will be designated by their underlying sets; by {\em trivial algebra} we mean one--element algebra. For brevity, we denote by $A\cong B$ the fact that two algebras $A$ and $B$ of the same type are isomorphic. We abbreviate by {\em CIP} and {\em PIP} the Compact Intersection Property and the principal intersection property, respectively.

$\N $ denotes the set of the natural numbers, $\N ^*=\N \setminus \{0\}$, and, for any $a,b\in \N $, we denote by $\overline{a,b}$ the interval in the lattice $(\N ,\leq )$ bounded by $a$ and $b$, where $\leq $ is the natural order. Let $M$, $N$ be sets and $S\subseteq M$. Then ${\cal P}(M)$ denotes the set of the subsets of $M$ and $({\rm Eq}(M),\vee ,\cap ,\Delta _M=\{(x,x)\ |\ x\in M\},\nabla _M=M^2)$ is the bounded lattice of the equivalences on $M$. We denote by $i_{S,M}:S\rightarrow M$ the inclusion map and by $id_M=i_{M,M}$ the identity map of $M$. For any function $f:M\rightarrow N$, we denote by ${\rm Ker}(f)$ the kernel of $f$, by $f$ the direct image of $f^2=f\times f$ and by $f^*$ the inverse image of $f^2$.

Let $L$ be a lattice. Then ${\rm Cp}(L)$ denotes the set of the compact elements of $L$, and ${\rm Id}(L)$ and ${\rm Spec}_{\rm Id}(L)$ denote the set of the ideals and that of the prime ideals of $L$, respectively. Let $U\subseteq L$ and $u\in L$. Then $[U)$ and $[u)$ denote the filters of $L$ generated by $U$ and by $u$, respectively, while $(U]$ and $(u]$ denote the ideals of $L$ generated by $U$ and by $u$, respectively.

We denote by ${\cal L}_n$ the $n$--element chain for any $n\in \N ^*$, by ${\cal M}_3$ the five--element modular non--distributive lattice and by ${\cal N}_5$ the five--element non--modular lattice. Recall that a {\em frame} is a complete lattice with the meet distributive w.r.t. arbitrary joins.

Throughout this paper, by {\em functor} we mean covariant functor. ${\cal B}$ denotes the functor from the variety of bounded distributive lattices to the variety of Boolean algebras which takes each bounded distributive lattice to its Boolean center and every morphism in the former variety to its restriction to the Boolean centers. If $L$ is a bounded lattice, then we denote by ${\cal B}(L)$ the set of the complemented elements of $L$ even if $L$ is not distributive.

Throughout the rest of this paper, $\tau $ will be a universal algebras signature, $\C $ an equational class of $\tau $--algebras and $A$ an arbitrary member of $\C $. Unless mentioned otherwise, by {\em morphism} we mean $\tau $--morphism.

${\rm Con}(A)$, ${\rm Max}(A)$, ${\rm PCon}(A)$ and ${\cal K}(A)$ denote the sets of the congruences, maximal congruences, principal congruences and finitely generated congruences of $A$, respectively; note that ${\cal K}(A)$ is the set of the compact elements of the lattice ${\rm Con}(A)$. ${\rm Max}(A)$ is called the {\em maximal spectrum} of $A$. For any $X\subseteq A^2$ and any $a,b\in A$, $Cg_A(X)$ will be the congruence of $A$ generated by $X$ and we shall denote by $Cg_A(a,b)=Cg_A(\{(a,b)\})$.

For any $\theta \in {\rm Con}(A)$, $p_{\theta }:A\rightarrow A/\theta $ will be the canonical surjective morphism; given any $X\in A\cup A^2\cup {\cal P}(A)\cup {\cal P}(A^2)$, we denote by $X/\theta =p_{\theta }(X)$. If $L$ is a distributive lattice, so that we have the canonical lattice embedding $\iota _L:{\rm Id}(L)\rightarrow {\rm Con}(L)$, then we will denote, for every $I\in {\rm Id}(L)$, by $\pi _I=p_{\iota _L(I)}:L\rightarrow L/I$.

Recall that, if $B$ is a member of $\C $ and $f:A\rightarrow B$ is a morphism, then, for any $\alpha \in {\rm Con}(A)$ and any $\beta \in {\rm Con}(B)$, we have $f^*(\beta )\in [{\rm Ker}(f))\subseteq {\rm Con}(A)$, $f(f^*(\beta ))=\beta \cap f(A^2)\subseteq \beta $ and $\alpha \subseteq f^*(f(\alpha ))$; if $\alpha \in [{\rm Ker}(f))$, then 
$f(\alpha )\in {\rm Con}(f(A))$ and $f^*(f(\alpha ))=\alpha $. Hence $\theta \mapsto f(\theta )$ is a lattice isomorphism from $[{\rm Ker}(f))$ to ${\rm Con}(f(A))$ and thus it sets an order isomorphism from ${\rm Max}(A)\cap [{\rm Ker}(f))$ to ${\rm Max}(f(A))$. For the next lemma, note that ${\rm Ker}(p_{\theta })=\theta $ for any $\theta \in {\rm Con}(A)$, and that $Cg_A(Cg_S(X))=Cg_A(X)$ for any subalgebra $S$ of $A$ and any $X\subseteq S^2$.

\begin{lemma}{\rm \cite[Lemma $1.11$]{bak}, \cite[Proposition $1.2$]{urs5}} If $B$ is a member of $\C $ and $f:A\rightarrow B$ is a morphism, then, for any $X\subseteq A^2$ and any $\alpha ,\theta \in {\rm Con}(A)$:\begin{itemize}
\item $f(Cg_A(X)\vee {\rm Ker}(f))=Cg_{f(A)}(f(X))$, so $Cg_B(f(Cg_A(X)))=Cg_B(f(X))$ and $(Cg_A(X)\vee \theta )/\theta =$\linebreak $Cg_{A/\theta }(X/\theta )$;
\item in particular, $f(\alpha \vee {\rm Ker}(f))=Cg_{f(A)}
(f(\alpha ))$, so $(\alpha \vee \theta )/\theta =Cg_{A/\theta }(\alpha /\theta )$.\end{itemize}\label{fcg}\end{lemma}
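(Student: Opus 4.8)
The plan is to reduce everything to the standard description of principal and finitely generated congruences as joins of congruences generated by single pairs, and to the behaviour of generated congruences under a morphism. Recall the two basic facts that are being cited: for a morphism $f:A\rightarrow B$ and $X\subseteq A^2$, one has $f(X)\subseteq f(A)^2$, and the congruence of $f(A)$ generated by $f(X)$ is computed inside $f(A)$; on the other hand $f^*$ and the correspondence $\theta \mapsto f(\theta )$ between $[{\rm Ker}(f))$ and ${\rm Con}(f(A))$ recalled just before the lemma is a lattice isomorphism. So the skeleton is: first establish the identity $f(Cg_A(X)\vee {\rm Ker}(f))=Cg_{f(A)}(f(X))$, and then read the remaining assertions off from it.

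For the first identity I would argue as follows. Since $Cg_A(X)\vee {\rm Ker}(f)\in [{\rm Ker}(f))$, the image $f(Cg_A(X)\vee {\rm Ker}(f))$ is a congruence of $f(A)$, and it clearly contains $f(X)$, hence $Cg_{f(A)}(f(X))\subseteq f(Cg_A(X)\vee {\rm Ker}(f))$. For the reverse inclusion, pull back: $f^*(Cg_{f(A)}(f(X)))$ is a congruence of $A$ containing ${\rm Ker}(f)$ and containing $X$ (because $f(X)\subseteq Cg_{f(A)}(f(X))$), so it contains $Cg_A(X)\vee {\rm Ker}(f)$; applying $f$ and using $f(f^*(\gamma ))=\gamma $ for $\gamma \in {\rm Con}(f(A))$ gives $f(Cg_A(X)\vee {\rm Ker}(f))\subseteq Cg_{f(A)}(f(X))$. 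This yields the displayed equality. Alternatively one can invoke the Birkhoff-style term description of $Cg_A(X)$ and push it through $f$; but the lattice-isomorphism argument via $f^*$ is cleaner and uses only facts recalled in the paragraph preceding the lemma.

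The remaining clauses are then quick corollaries. For $Cg_B(f(Cg_A(X)))=Cg_B(f(X))$: note $f(Cg_A(X))\subseteq f(Cg_A(X)\vee {\rm Ker}(f))=Cg_{f(A)}(f(X))\subseteq Cg_B(f(X))$, so $Cg_B(f(Cg_A(X)))\subseteq Cg_B(f(X))$; the reverse inclusion is immediate from $f(X)\subseteq f(Cg_A(X))$. For the quotient statement, specialize to the canonical surjection $f=p_{\theta }:A\rightarrow A/\theta $, for which $f(A)=A/\theta $ and ${\rm Ker}(f)=\theta $, so $f(Cg_A(X)\vee \theta )=Cg_{A/\theta }(X/\theta )$, which is exactly $(Cg_A(X)\vee \theta )/\theta =Cg_{A/\theta }(X/\theta )$ in the notation $Y/\theta =p_{\theta }(Y)$. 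Finally the ``in particular'' items are the case $X=\alpha $, using that $Cg_A(\alpha )=\alpha $ when $\alpha $ is already a congruence.

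The only mild subtlety — and the step I would be most careful about — is the direction $f(Cg_A(X)\vee {\rm Ker}(f))\subseteq Cg_{f(A)}(f(X))$, i.e.\ making sure the pulled-back object $f^*(Cg_{f(A)}(f(X)))$ really is a congruence containing ${\rm Ker}(f)$ and that $f(f^*(\gamma ))=\gamma $ holds for $\gamma \in {\rm Con}(f(A))$ (it does, since $f$ is surjective onto $f(A)$, so $\gamma \cap f(A)^2=\gamma $); all of this is contained in the recollections stated just above the lemma, so no new work is needed. Everything else is bookkeeping with the correspondence theorem and the definition of generated congruences, so I would present it tersely.
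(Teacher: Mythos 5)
Your proof is correct and complete; the identity $f(Cg_A(X)\vee {\rm Ker}(f))=Cg_{f(A)}(f(X))$ is established exactly as it should be, by combining the correspondence-theorem facts recalled immediately before the lemma ($f(\alpha )\in {\rm Con}(f(A))$ for $\alpha \in [{\rm Ker}(f))$, $f(f^*(\gamma ))=\gamma $ for $\gamma \in {\rm Con}(f(A))$, and $Cg_B(Cg_{f(A)}(Y))=Cg_B(Y)$), and the remaining clauses do follow by specializing to $p_{\theta }$ and to $X=\alpha $. The paper itself gives no proof — it imports the statement from \cite{bak} and \cite{urs5} — so there is nothing to compare against; your argument is the standard one and fills that gap adequately.
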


If $B$ is a member of $\C $ and $f:A\rightarrow B$ is a morphism, then, for any non--empty family $(\alpha _i)_{i\in I}\subseteq [{\rm Ker}(f))$, we have, in ${\rm Con}(f(A))$: $\displaystyle f(\bigvee _{i\in I}\alpha _i)=\bigvee _{i\in I}f(\alpha _i)$. Indeed, by Lemma \ref{fcg}, $\displaystyle f(\bigvee _{i\in I}\alpha _i)=f(Cg_A(\bigcup _{i\in I}\alpha _i))=$\linebreak $\displaystyle Cg_{f(A)}(f(\bigcup _{i\in I}\alpha _i))=Cg_{f(A)}(\bigcup _{i\in I}f(\alpha _i))=\bigvee _{i\in I}f(\alpha _i)$.

We use the following definition from \cite{mcks} for the {\em term condition commutator}: let $\alpha ,\beta \in {\rm Con}(A)$. For any $\mu \in {\rm Con}(A)$, by $C(\alpha ,\beta ;\mu )$ we denote the fact that the following condition holds: for all $n,k\in \N $ and any term $t$ over $\tau $ of arity $n+k$, if $(a_i,b_i)\in \alpha $ for all $i\in \overline{1,n}$ and $(c_j,d_j)\in \beta $ for all $j\in \overline{1,k}$, then $(t^A(a_1,\ldots ,a_n,c_1,\ldots ,c_k),t^A(a_1,\ldots ,a_n,d_1,\ldots ,d_k))\in \mu $ iff $(t^A(b_1,\ldots ,b_n,c_1,\ldots ,c_k),t^A(b_1,\ldots ,b_n,d_1,\ldots ,d_k))\in \mu $. We denote by $[\alpha ,\beta ]_A=\bigcap \{\mu \in {\rm Con}(A)\ |\ C(\alpha ,\beta ;\mu )\}$; we call $[\alpha ,\beta ]_A$ the {\em commutator of $\alpha $ and $\beta $} in $A$. The operation $[\cdot ,\cdot ]_A:{\rm Con}(A)\times {\rm Con}(A)\rightarrow {\rm Con}(A)$ is called the {\em commutator of $A$}.

By \cite{fremck}, if $\C $ is congruence--modular, then, for each member $M$ of $\C $, $[\cdot ,\cdot ]_M$ is the unique binary operation on ${\rm Con}(M)$ such that, for all $\alpha ,\beta \in {\rm Con}(M)$, $[\alpha ,\beta ]_M=\min \{\mu \in {\rm Con}(M)\ |\ \mu \subseteq \alpha \cap \beta $ and, for any member $N$ of $\C $ and any surjective morphism $h:M\rightarrow N$ in $\C $, $\mu \vee {\rm Ker}(h)=h^*([h(\alpha \vee {\rm Ker}(h)),h(\beta \vee {\rm Ker}(h))]_N)\}$. Therefore, if $\C $ is congruence--modular, $\alpha ,\beta ,\theta \in {\rm Con}(A)$ and $f$ is surjective, then $[f(\alpha \vee {\rm Ker}(f)),f(\beta \vee {\rm Ker}(f))]_B=f([\alpha ,\beta ]_A\vee {\rm Ker}(f))$, thus $[(\alpha \vee \theta )/\theta ,(\beta \vee \theta )/\theta]_B=([\alpha ,\beta ]_A\vee \theta )/\theta $, hence, if $\theta \subseteq \alpha \cap \beta $, then $[\alpha /\theta ,\beta /\theta ]_{A/\theta }=([\alpha ,\beta ]_A\vee \theta )/\theta $, and, if, moreover, $\theta \subseteq [\alpha ,\beta ]_A$, then $[\alpha /\theta ,\beta /\theta ]_{A/\theta }=[\alpha ,\beta ]_A/\theta $.

By \cite[Lemma 4.6,Lemma 4.7,Theorem 8.3]{mcks}, the commutator is smaller than the intersection and increasing in both arguments. If $\C $ is congruence--modular, then the commutator is also commutative and distributive in both arguments with respect to arbitrary joins. By \cite{bj}, if $\C $ is congruence--distributive, then, in each member of $\C $, the commutator coincides to the intersection of congruences. Clearly, if the commutator of $A$ coincides to the intersection of congruences, then ${\rm Con}(A)$ is a frame, in particular it is congruence--distributive. Recall, however, that, since the lattice ${\rm Con}(A)$ is complete and algebraic, thus upper continuous, ${\rm Con}(A)$ is a frame whenever it is distributive.

By \cite[Theorem 8.5, p. 85]{fremck}, if $\C $ is congruence--modular, then the following are equivalent:\begin{itemize}
\item for any algebra $M$ from $\C $, $[\nabla _M,\nabla _M]_M=\nabla _M$;
\item for any algebra $M$ from $\C $ and any $\theta \in {\rm Con}(M)$, $[\theta ,\nabla _M]_M=\theta $;
\item $\C $ has no skew congruences, that is, for any algebras $M$ and $N$ from $\C $, ${\rm Con}(M\times N)=\{\theta \times \zeta \ |\ \theta \in {\rm Con}(M),\zeta \in {\rm Con}(N)\}$.\end{itemize}

Recall that $\C $ is said to be {\em semi--degenerate} iff no non--trivial algebra in $\C $ has one--element subalgebras. By \cite{koll}, $\C $ is semi--degenerate iff, for all members $M$ of $\C $, $\nabla _M\in {\cal K}(M)$. By \cite[Lemma 5.2]{agl} and the fact that, in congruence--distributive varieties, the commutator coincides to the intersection, we have: if $\C $ is either congruence--distributive or both congruence--mo\-du\-lar and semi--degenerate, then $\C $ has no skew congruences.

If $[\cdot ,\cdot ]_A$ is commutative and distributive w.r.t. the join (in particular if $\C $ is congruence--modular), then, if $A$ has principal commutators, that is $[{\rm PCon}(A),{\rm PCon}(A)]_A\subseteq {\rm PCon}(A)$, then $[{\cal K}(A),{\cal K}(A)]_A\subseteq {\cal K}(A)$.

We denote the set of the {\em prime congruences} of $A$ by ${\rm Spec}(A)$. As defined in \cite{fremck}, ${\rm Spec}(A)=\{\phi \in {\rm Con}(A)\setminus \{\nabla _A\}\ |\ (\forall \, \alpha ,\beta \in {\rm Con}(A))\, ([\alpha ,\beta ]_A\subseteq \phi \Rightarrow \alpha \subseteq \phi \mbox{ or }\beta \subseteq \phi )\}$. ${\rm Spec}(A)$ is called the {\em (prime) spectrum} of $A$. Recall that ${\rm Spec}(A)$ is not necessarily non--empty. However, by \cite[Theorem $5.3$]{agl}, if $\C $ is congruence--modular and semi--de\-ge\-ne\-rate, then any proper congruence of $A$ is included in a maximal congruence of $A$, and any maximal congruence of $A$ is prime. Recall, also, that, if $\C $ is congruence--modular, $B$ is a member of $\C $ and $f:A\rightarrow B$ is a morphism, then the map $\alpha \mapsto f(\alpha )$ is an order isomorphism from ${\rm Spec}(A)\cap [{\rm Ker}(f))$ to ${\rm Spec}(f(A))$, thus to ${\rm Spec}(B)$ if $f$ is surjective, case in which its inverse is $f^*\mid _{{\rm Spec}(B)}:{\rm Spec}(B)\rightarrow {\rm Spec}(A)$. In \cite{gulo},\cite{euadm}, we have called $f$ an {\em admissible morphism} iff $f^*({\rm Spec}(B))\subseteq {\rm Spec}(A)$.

\begin{remark} By the above, if $f$ is surjective, then $f$ is admissible.\end{remark}

Assume that $[\cdot ,\cdot ]_A$ is commutative and distributive w.r.t. arbitrary joins and that ${\rm Spec}(A)$ is non--empty, which hold if $\C $ is congruence--modular and semi--degenerate and $A$ is non--trivial. For each $\theta \in {\rm Con}(A)$, we denote by $V_A(\theta )={\rm Spec}(A)\cap [\theta )$ and by $D_A(\theta )={\rm Spec}(A)\setminus V_A(\theta )$. Then, by \cite{agl} and \cite{retic}, $({\rm Spec}(A),\{D_A(\theta )\ |\ \theta \in {\rm Con}(A)\})$ is a topological space in which, for all $\alpha ,\beta \in {\rm Con}(A)$ and any family $(\alpha _i)_{i\in I}\subseteq {\rm Con}(A)$, the following hold:\begin{itemize}
\item $V_A([\alpha ,\beta ]_A)=V_A(\alpha \cap \beta )=V_A(\alpha )\cup V_A(\beta )$ and $\displaystyle V_A(\bigvee _{i\in I}\alpha _i)=\bigcap _{i\in I}V_A(\alpha _i)$;
\item if $\C $ is congruence--modular and semi--degenerate, then: $V_A(\alpha )=\emptyset $ iff $\alpha =\nabla _A$.\end{itemize}

$\{D_A(\theta )\ |\ \theta \in {\rm Con}(A)\}$ is called the {\em Stone topology} on ${\rm Spec}(A)$ and it has $\{D_A(Cg_A(a,b))\ |\ a,b\in A\}$ as a basis. In the same way, but replacing congruences with ideals, one defines the Stone topology on the set of prime ideals of a bounded distributive lattice.

\section{The Construction of the Reticulation of a Universal Algebra and Related Results}
\label{reticulatia}

In this section, we recall the construction for the reticulation of $A$ from \cite{retic} and point out its basic properties. Throughout this section, we shall assume that $[\cdot ,\cdot ]_A$ is commutative and distributive w.r.t. arbitrary joins and that $\nabla _A\in {\cal K}(A)$, which hold in the particular case when $\C $ is congruence--modular and semi--degenerate.

For every $\theta \in {\rm Con}(A)$, we denote by $\rho _A(\theta )$ the {\em radical} of $\theta $: $\displaystyle \rho _A(\theta )=\bigcap \{\phi \in {\rm Spec}(A)\ |\ \theta \subseteq \phi \}=\bigcap _{\phi \in V_A(\theta )}\phi $. We denote by ${\rm RCon}(A)$ the set of the {\em radical congruences} of $A$: ${\rm RCon}(A)=\{\rho _A(\theta )\ |\ \theta \in {\rm Con}(A)\}=\{\theta \in {\rm Con}(A)\ |\ \theta =\rho _A(\theta )\}=\{\bigcap M\ |\ M\subseteq {\rm Spec}(A)\}$. If the commutator of $A$ equals the intersection (so that $A$ is congruence--distributive), in particular if $\C $ is congruence--distributive, then ${\rm Spec}(A)$ is the set of the prime elements of the lattice ${\rm Con}(A)$, thus its set of meet--irreducible elements, hence ${\rm RCon}(A)={\rm Con}(A)$ since the lattice ${\rm Con}(A)$ is algebraic.

Note that, for any $\alpha ,\beta ,\theta \in {\rm Con}(A)$, the following equivalences hold: $\alpha \subseteq \rho _A(\beta )$ iff $\rho _A(\alpha )\subseteq \rho _A(\beta )$ iff $V_A(\alpha )\supseteq V_A(\beta )$; thus $\rho _A(\alpha )=\rho _A(\beta )$ iff $V_A(\alpha )=V_A(\beta )$. By the above and the properties of the Stone topology on ${\rm Spec}(A)$ recalled in Section \ref{preliminaries}, we have proven, in \cite{retic}, that, for any $n\in \N ^*$, any $\alpha ,\beta \in {\rm Con}(A)$ and any $(\alpha _i)_{i\in I}\subseteq {\rm Con}(A)$, we have:\begin{itemize}
\item $\rho _A(\rho _A(\alpha ))=\rho _A(\alpha )$; $\alpha \subseteq \rho _A(\beta )$ iff $\rho _A(\alpha )\subseteq \rho _A(\beta )$; $\rho _A(\alpha )=\alpha $ iff $\alpha \in {\rm RCon}(A)\supseteq {\rm Spec}(A)$;
\item $\displaystyle \rho _A(\bigvee _{i\in I}\alpha _i)=\rho _A(\bigvee _{i\in I}\rho _A(\alpha _i))=\bigvee _{i\in I}\rho _A(\alpha _i)$; $\rho _A([\alpha ,\beta ]_A^n)=\rho _A([\alpha ,\beta ]_A)=\rho _A(\alpha \wedge \beta )=\rho _A(\alpha )\wedge \rho _A(\beta )$;
\item $\rho _A(\nabla _A)=\nabla _A$; if $\C $ is congruence--modular and semi--degenerate, then: $\rho _A(\alpha )=\nabla _A$ iff $\alpha =\nabla _A$;
\item $\rho _{A/\theta }((\alpha \vee \theta )/\theta )=\rho _A(\alpha \vee \theta )/\theta $.\end{itemize}

If we define $\equiv _A=\{(\alpha ,\beta )\in {\rm Con}(A)\times {\rm Con}(A)\ |\ \rho _A(\alpha )=\rho _A(\beta )\}$, then, by the above, $\equiv _A$ is a lattice congruence of ${\rm Con}(A)$ that preserves arbitrary joins and fulfills $[\alpha ,\beta ]_A\equiv _A\alpha \cap \beta $ for all $\alpha ,\beta \in {\rm Con}(A)$. By the above, if the commutator of $A$ equals the intersection, in particular if $\C $ is congruence--distributive, then $\rho _A(\theta )=\theta $ for all $\theta \in {\rm Con}(A)$, hence $\equiv _A=\Delta _{{\rm Con}(A)}$. Recall that $A$ is called a {\em semiprime algebra} iff $\Delta _A\in {\rm RCon}(A)$, that is iff $\rho _A(\Delta _A)=\Delta _A$. Therefore, if the commutator of $A$ equals the intersection, then $A$ is semiprime, and, if $\C $ is congruence--distributive, then all members of $\C $ are semiprime. Of course, $\theta \subseteq \rho _A(\theta )$ for all $\theta \in {\rm Con}(A)$, so $\rho _A(\theta )=\Delta _A$ implies $\theta =\Delta _A$, hence, if $A$ is semiprime, then $\Delta _A/\!\!\equiv _A=\{\Delta _A\}$. By the above, if $\C $ is congruence--modular and semi--degenerate, then $\nabla _A/\!\!\equiv _A=\{\nabla _A\}$.

\begin{remark} Assume that $A$ is semiprime and let $\alpha ,\beta \in {\rm Con}(A)$. Then $\rho _A([\alpha ,\beta ]_A)=\rho _A(\alpha \cap \beta )$, hence, by the above: $[\alpha ,\beta ]_A=\Delta _A$ iff $\alpha \cap \beta =\Delta _A$.\end{remark}

We will often use the remarks in this paper without referencing them.

By the properties of the commutator, the quotient bounded lattice, $({\rm Con}(A)/\!\!\equiv _A,\vee ,\wedge ,{\bf 0},{\bf 1})$, is a frame. We denote by $\lambda _A:{\rm Con}(A)\rightarrow {\rm Con}(A)/\!\!\equiv _A$ the canonical surjective lattice morphism. The intersection $\equiv _A\!\!\cap ({\cal K}(A))^2\in {\rm Eq}({\cal K}(A))$ will also be denoted $\equiv _A$; ${\cal L}(A)={\cal K}(A)/\!\!\equiv _A$ will be its quotient set and we will use the same notation for the canonical surjection: $\lambda _A:{\cal K}(A)\rightarrow {\cal L}(A)$.

Throughout the rest of this section, we shall assume that ${\cal K}(A)$ is closed w.r.t. the commutator of $A$. Then, by \cite[Proposition $9$]{retic}, ${\cal L}(A)$ is a bounded sublattice of ${\rm Con}(A)/\!\equiv _A$, thus it is a bounded distributive lattice. Note that, in the particular case when the commutator of $A$ coincides to the intersection, the fact that ${\cal K}(A)$ is closed w.r.t. the commutator means that ${\cal K}(A)$ is a sublattice of ${\rm Con}(A)$. So, if $\C $ is congruence--distributive, then: $\C $ has the CIP iff ${\cal K}(M)$ is a sublattice of ${\rm Con}(M)$ in each member $M$ of $\C $.

Note from the above that, for any $\theta \in {\rm Con}(A)$, we have: $\lambda _A(\theta )={\bf 1}$ iff $\theta =\nabla _A$.

Let $\theta \in {\rm Con}(A)$. Then we denote by $\theta ^*=\{\lambda _A(\alpha )\ |\ \alpha \in {\cal K}(A),\alpha \subseteq \theta \}$. Of course, ${\bf 0}=\lambda _A(\Delta _A)\in \theta ^*$. Let $\alpha ,\beta \in {\cal K}(A)$. Then clearly $\alpha \vee \beta \in {\cal K}(A)$, $\lambda _A(\alpha \vee \beta )=\lambda _A(\alpha )\vee \lambda _A(\beta )$ and, if $\alpha \subseteq \theta $ and $\beta \subseteq \theta $, then $\alpha \vee \beta \subseteq \theta $. Since ${\cal K}(A)$ is closed w.r.t. the commutator of $A$, we have $[\alpha ,\beta ]_A\in {\cal K}(A)$, and, if $\alpha \subseteq \theta $ and $\lambda _A(\beta )\leq \lambda _A(\alpha )$, then $[\alpha ,\beta ]_A\subseteq \alpha \subseteq \theta $ and $\lambda _A(\beta )=\lambda _A(\alpha )\wedge \lambda _A(\beta )=\lambda _A([\alpha ,\beta ]_A)$. Hence $\theta ^*\in {\rm Id}({\cal L}(A))$.

\begin{proposition}{\rm \cite[Proposition $10$, (ii)]{retic}} The map $\theta \mapsto \theta ^*$ from ${\rm Con}(A)$ to ${\rm Id}({\cal L}(A))$ is surjective.\label{mapstarsurj}\end{proposition}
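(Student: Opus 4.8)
The plan is to produce, for each $I\in {\rm Id}({\cal L}(A))$, an explicit congruence $\theta $ with $\theta ^*=I$. The natural candidate collects all compact congruences whose $\equiv _A$--class lies in $I$: put $S=\{\alpha \in {\cal K}(A)\ |\ \lambda _A(\alpha )\in I\}$ and $\theta =\bigvee S\in {\rm Con}(A)$, the join being taken in the complete lattice ${\rm Con}(A)$. Since $\lambda _A(\Delta _A)={\bf 0}\in I$, we have $\Delta _A\in S$, so $S$ is non--empty.

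First I would verify $I\subseteq \theta ^*$. If $x\in I$, then, as $\lambda _A:{\cal K}(A)\rightarrow {\cal L}(A)$ is surjective, there is $\alpha \in {\cal K}(A)$ with $\lambda _A(\alpha )=x$; then $\alpha \in S$, hence $\alpha \subseteq \bigvee S=\theta $, and therefore $x=\lambda _A(\alpha )\in \theta ^*$ by the definition of $\theta ^*$.

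Next I would verify the reverse inclusion $\theta ^*\subseteq I$, which is the only step using compactness in an essential way. Take $x\in \theta ^*$, say $x=\lambda _A(\beta )$ with $\beta \in {\cal K}(A)$ and $\beta \subseteq \theta =\bigvee S$. Compactness of $\beta $ yields finitely many $\alpha _1,\ldots ,\alpha _n\in S$ with $\beta \subseteq \alpha _1\vee \cdots \vee \alpha _n$. Since $\lambda _A$ is a lattice morphism, $x=\lambda _A(\beta )\leq \lambda _A(\alpha _1\vee \cdots \vee \alpha _n)=\lambda _A(\alpha _1)\vee \cdots \vee \lambda _A(\alpha _n)$, and the latter element lies in $I$ because $I$ is closed under finite joins and each $\lambda _A(\alpha _i)\in I$; as $I$ is a downset, $x\in I$. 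Combining the two inclusions gives $\theta ^*=I$, so the map is surjective.

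I expect no serious obstacle here: the argument only relies on ${\cal K}(A)$ being closed under finite joins and containing $\Delta _A$, on $\lambda _A:{\cal K}(A)\rightarrow {\cal L}(A)$ being a surjective lattice morphism, and on compactness of $\beta $ to reduce an arbitrary join to a finite one. As a sanity check, the construction also covers the extremes: $I={\cal L}(A)$ is realized by $\theta =\nabla _A$ (using $\nabla _A\in {\cal K}(A)$ and surjectivity of $\lambda _A$), while, when $A$ is semiprime, $I=\{{\bf 0}\}$ forces $S=\{\Delta _A\}$ and hence $\theta =\Delta _A$.
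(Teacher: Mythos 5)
Your argument is correct, and since the paper only cites this result from \cite{retic} without reproducing a proof, there is nothing to diverge from: the construction $\theta =\bigvee \{\alpha \in {\cal K}(A)\ |\ \lambda _A(\alpha )\in I\}$, with the compactness of $\beta $ used to reduce $\beta \subseteq \bigvee S$ to a finite subjoin in the inclusion $\theta ^*\subseteq I$, is the standard proof of this surjectivity. All the ingredients you invoke --- surjectivity of $\lambda _A:{\cal K}(A)\rightarrow {\cal L}(A)$, its preservation of finite joins, ${\bf 0}\in I$, and ${\cal K}(A)={\rm Cp}({\rm Con}(A))$ --- are available under the standing hypotheses of Section \ref{reticulatia}.
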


\begin{proposition}{\rm \cite[Proposition $11$]{retic}} If $\theta \in {\rm Spec}(A)$, then $\theta ^*\in {\rm Spec}_{\rm Id}({\cal L}(A))$, and the map $\phi \mapsto \phi ^*$ is an order isomorphism from ${\rm Spec}(A)$ to ${\rm Spec}_{\rm Id}({\cal L}(A))$ and a homeomorphism w.r.t. the Stone topologies.\label{homeo}\end{proposition}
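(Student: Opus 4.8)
The plan is to establish, in order: (1) that $\phi^*\in{\rm Spec}_{\rm Id}({\cal L}(A))$ for $\phi\in{\rm Spec}(A)$; (2) that $\phi\mapsto\phi^*$ is injective; (3) that it is surjective onto ${\rm Spec}_{\rm Id}({\cal L}(A))$ (so that together with (2) and order-preservation in both directions it is an order isomorphism); and (4) that it is a homeomorphism for the two Stone topologies. The workhorse throughout is the correspondence, already set up in the excerpt, between $\lambda_A$-classes of compact congruences and elements of ${\cal L}(A)$, together with the translation $\alpha\subseteq\rho_A(\beta)\iff V_A(\alpha)\supseteq V_A(\beta)$.

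For (1): we already know $\phi^*\in{\rm Id}({\cal L}(A))$; it is proper because $\lambda_A(\nabla_A)={\bf 1}$ and $\nabla_A\not\subseteq\phi$ (as $\phi\neq\nabla_A$), and no compact $\alpha\subseteq\phi$ has $\lambda_A(\alpha)={\bf 1}$. For primeness, take $x,y\in{\cal L}(A)$ with $x\wedge y\in\phi^*$; write $x=\lambda_A(\alpha)$, $y=\lambda_A(\beta)$ with $\alpha,\beta\in{\cal K}(A)$, so $x\wedge y=\lambda_A([\alpha,\beta]_A)$ (using that ${\cal K}(A)$ is closed under the commutator and $[\alpha,\beta]_A\equiv_A\alpha\cap\beta$). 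Then there is a compact $\gamma\subseteq\phi$ with $\lambda_A([\alpha,\beta]_A)=\lambda_A(\gamma)$, i.e.\ $\rho_A([\alpha,\beta]_A)=\rho_A(\gamma)\subseteq\phi$, so $[\alpha,\beta]_A\subseteq\phi$; since $\phi$ is a prime congruence, $\alpha\subseteq\phi$ or $\beta\subseteq\phi$, giving $x\in\phi^*$ or $y\in\phi^*$. Both the map and its candidate inverse are visibly order-preserving once we show it is a bijection: $\phi\subseteq\psi\Rightarrow\phi^*\subseteq\psi^*$ is immediate, and conversely if $\phi^*\subseteq\psi^*$ then every compact $\alpha\subseteq\phi$ satisfies $\lambda_A(\alpha)\in\psi^*$, hence $\alpha\subseteq\rho_A(\psi)=\psi$ (radical congruences are fixed by $\rho_A$, and prime congruences lie in ${\rm RCon}(A)$), and since $\phi=\bigvee\{\alpha\in{\cal K}(A)\mid\alpha\subseteq\phi\}$ by algebraicity of ${\rm Con}(A)$, we get $\phi\subseteq\psi$. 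This simultaneously yields injectivity.

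The main obstacle is surjectivity (step 3): given $\mathfrak{p}\in{\rm Spec}_{\rm Id}({\cal L}(A))$, we must produce a prime congruence $\phi$ of $A$ with $\phi^*=\mathfrak{p}$. I would set $\phi=\bigvee\{\alpha\in{\cal K}(A)\mid\lambda_A(\alpha)\in\mathfrak{p}\}$, equivalently the congruence generated by $\lambda_A^{-1}(\mathfrak{p})$, and then argue $\phi$ is prime and radical with $\phi^*=\mathfrak{p}$. That $\phi^*\supseteq\mathfrak{p}$ is by construction; the reverse inclusion and primeness need the compactness of the members of $\mathfrak{p}$ inside the lattice and the fact that $[\alpha,\beta]_A\equiv_A\alpha\cap\beta$ translates the lattice-ideal primeness of $\mathfrak{p}$ into the commutator-primeness of $\phi$ — concretely, if $[\alpha,\beta]_A\subseteq\phi$ with $\alpha,\beta\in{\cal K}(A)$ then $\lambda_A([\alpha,\beta]_A)\in\phi^*$, hence in $\mathfrak{p}$, hence $\lambda_A(\alpha)\in\mathfrak{p}$ or $\lambda_A(\beta)\in\mathfrak{p}$, so $\alpha\subseteq\phi$ or $\beta\subseteq\phi$; the general case reduces to compact congruences using that $[\cdot,\cdot]_A$ is distributive over arbitrary joins and the join-irreducibility argument. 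Since this is precisely the inverse-map construction that underlies Proposition \ref{mapstarsurj}, I expect most of this to be extractable from \cite[Proposition 11]{retic} or a direct reworking of Proposition \ref{mapstarsurj}'s proof restricted to prime ideals.

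Finally (step 4): it suffices to check that $\phi\mapsto\phi^*$ carries a basis of the Stone topology on ${\rm Spec}(A)$ to a basis on ${\rm Spec}_{\rm Id}({\cal L}(A))$, bijectively. For $a,b\in A$, $Cg_A(a,b)\in{\cal K}(A)$, and one shows $\phi\in D_A(Cg_A(a,b))\iff Cg_A(a,b)\not\subseteq\phi\iff\lambda_A(Cg_A(a,b))\notin\phi^*\iff\phi^*\in\{\mathfrak{q}\in{\rm Spec}_{\rm Id}({\cal L}(A))\mid\lambda_A(Cg_A(a,b))\notin\mathfrak{q}\}$, and the sets on the right form the standard basis of the lattice Stone topology since every element of ${\cal L}(A)$ is of the form $\lambda_A(\gamma)$ for some $\gamma\in{\cal K}(A)$, and every compact $\gamma$ is a finite join of principal congruences $Cg_A(a,b)$, so $\lambda_A(\gamma)$ is a finite join of such $\lambda_A(Cg_A(a,b))$ and $D_{\rm Id}$ of a join is the union. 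Combining with the bijection from steps (2)–(3), $\phi\mapsto\phi^*$ is a bijection matching the two bases, hence a homeomorphism.
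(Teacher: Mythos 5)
The paper does not prove this proposition --- it is imported verbatim from \cite[Proposition 11]{retic} --- so there is no in-paper argument to compare against; judged on its own, your reconstruction is sound and is essentially the expected one: primeness of $\phi^*$ via $\lambda _A(\alpha )\wedge \lambda _A(\beta )=\lambda _A([\alpha ,\beta ]_A)$ and Lemma \ref{recallemma}, injectivity and order-reflection via algebraicity of ${\rm Con}(A)$ and $\rho _A(\psi )=\psi $ for primes, surjectivity via $\phi =\bigvee \{\alpha \in {\cal K}(A)\mid \lambda _A(\alpha )\in \mathfrak{p}\}$, and the homeomorphism by matching the bases $D_A(Cg_A(a,b))\leftrightarrow \{\mathfrak{q}\mid \lambda _A(Cg_A(a,b))\notin \mathfrak{q}\}$. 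The one step you do not address in the surjectivity argument is that the constructed $\phi $ is a \emph{proper} congruence, which is part of the definition of ${\rm Spec}(A)$: this is where $\nabla _A\in {\cal K}(A)$ is used --- if $\phi =\nabla _A$, compactness of $\nabla _A$ gives finitely many $\alpha _{i}$ with $\lambda _A(\alpha _i)\in \mathfrak{p}$ and $\nabla _A=\alpha _{i_1}\vee \dots \vee \alpha _{i_n}$, forcing ${\bf 1}\in \mathfrak{p}$, a contradiction with $\mathfrak{p}$ being a prime (hence proper) ideal. The same compactness argument is the one you already invoke for $\phi ^*\subseteq \mathfrak{p}$, so the gap is easily filled.
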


The previous proposition allows us to define:

\begin{definition} ${\cal L}(A)$ is called the {\em reticulation} of $A$.\end{definition}

By the above, if the commutator of $A$ equals the intersection, in particular if $\C $ is congruence--distributive, then $\lambda _A:{\rm Con}(A)\rightarrow {\rm Con}(A)/\equiv _A$ is a lattice isomorphism, ${\cal K}(A)$ is a bounded sublattice of ${\rm Con}(A)$ (recall that we are under the hypotheses that $[{\cal K}(A),{\cal K}(A)]_A\subseteq {\cal K}(A)$ and $\nabla _A\in {\cal K}(A)$) and $\lambda _A:{\cal K}(A)\rightarrow {\cal L}(A)$ is a lattice isomorphism, therefore we may take ${\cal L}(A)={\cal K}(A)$, hence, if, additionally, $A$ is finite, so that ${\cal K}(A)={\rm Con}(A)$, then we may take ${\cal L}(A)={\rm Con}(A)$.

\section{Functoriality of the Reticulation}
\label{fret}

Throughout this section, $B$ will be an arbitrary member of $\C $ and $f:A\rightarrow B$ shall be an arbitrary morphism in $\C $. We define $f^{\bullet }:{\rm Con}(A)\rightarrow {\rm Con}(B)$ by: $f^{\bullet }(\alpha )=Cg_B(f(\alpha ))$. Let us note that $f^{\bullet }$ and $f^*$ are order--preserving and, of course, so is the direct image of $f$. Notice, also, that, for all $\alpha \in {\rm Con}(A)$, $f(\alpha )\subseteq f^{\bullet }(\alpha )$, and, if $f$ is surjective and $\alpha \in [{\rm Ker}(f))$, then $f(\alpha )=f^{\bullet }(\alpha )$. Of course, $f^{\bullet }(\Delta _A)=\Delta _B$.

\begin{remark}$(i)\quad f^{\bullet }$ is the unique left adjoint of $f^*$, that is, for all $\alpha \in {\rm Con}(A)$ and all $\beta \in {\rm Con}(B)$: $f^{\bullet }(\alpha )\subseteq \beta \mbox{ iff }\alpha \subseteq f^*(\beta )$.

\noindent Indeed, for the direct implication, notice that $f(\alpha )\subseteq f^{\bullet }(\alpha )\subseteq \beta $ implies $\alpha \subseteq f^*(f(\alpha ))\subseteq f^*(\beta )$. For the converse, note that $\alpha \subseteq f^*(\beta )$ implies $f(\alpha )\subseteq f(f^*(\beta ))\subseteq \beta \in {\rm Con}(B)$, hence $f^{\bullet }(\alpha )=Cg_B(f(\alpha ))\subseteq \beta $. Therefore $f^{\bullet }$ is a left adjoint of $f^*$, and it is unique by the properties of adjoint pairs of morphisms between posets.

$(ii)\quad f^{\bullet }$ preserves arbitrary joins of congruences of $A$.

\noindent This follows from Lemma \ref{fcg}, but also from the properties of adjoint pairs of lattice morphisms between complete lattices and the fact that $f^*$ preserves arbitrary intersections, since it is the inverse image of $f^2$.

$(iii)\quad $If $C$ is a member of ${\cal V}$ and $g:B\rightarrow C$ is a morphism in ${\cal V}$, then $(g\circ f)^{\bullet }=g^{\bullet }\circ f^{\bullet }$.

\noindent It is immediate that $g^{\bullet }\circ f^{\bullet }$ is the unique left adjoint of $(g\circ f)^*=f^*\circ g^*$, so the equality above follows by $(i)$.\label{fbullet}\end{remark}

By Lemma \ref{fcg}, we may consider the restrictions: $f^{\bullet }\mid _{{\rm PCon}(A)}:{\rm PCon}(A)\rightarrow {\rm PCon}(B)$ and $f^{\bullet }\mid _{{\cal K}(A)}:{\cal K}(A)\rightarrow {\cal K}(B)$.

We recall the following definition from \cite{badvag}: $\C $ is called a {\em variety with $\vec{0}$ and $\vec{1}$} iff there exists an $N\in \N ^*$ and constants $0_1,\ldots ,0_N,1_1,\ldots ,1_N$ from $\tau $ such that, if we denote by $\vec{0}=(0_1,\ldots ,0_N)$ and $\vec{1}=(1_1,\ldots ,1_N)$, then $\C \vDash \vec{0}\approx \vec{1}\Rightarrow x\approx y$, that is, for any member $M$ of $\C $, if $0_i^M=1_i^M$ for all $i\in \overline{1,N}$, then $M$ is the trivial algebra. For instance, any variety of bounded ordered structures is a variety with $\vec{0}$ and $\vec{1}$, with $N=1$. Clearly, any variety with $\vec{0}$ and $\vec{1}$ is semi--degenerate.

\begin{remark} If $\C $ is a variety with $\vec{0}$ and $\vec{1}$ (with $N\in \N ^*$ as in the definition above), then, for all $i\in \overline{1,N}$, $(0_i^B,1_i^B)=(f(0_i^A),f(1_i^A))\in f(\nabla _A)\subseteq f^{\bullet }(\nabla _A)=Cg_B(f(\nabla _A))$, hence $B/f^{\bullet }(\nabla _A)\vDash \vec{0}\approx \vec{1}$, thus $f^{\bullet }(\nabla _A)=\nabla _B$.\end{remark}

\begin{remark} As shown in \cite{euadm}, $(f^*)^{-1}(\{\nabla _B\})=\{\nabla _A\}$, otherwise written $f^*(\theta )\neq \nabla _A$ for all $\theta \in {\rm Con}(B)\setminus \{\nabla _B\}$, holds if $\C $ is semi--degenerate, in particular it holds if $\C $ is a variety with $\vec{0}$ and $\vec{1}$.\end{remark}

Throughout the rest of this section, we shall assume that $[\cdot ,\cdot ]_A$ and $[\cdot ,\cdot ]_B$ are commutative and distributive w.r.t. arbitrary joins and that $\nabla _A\in {\cal K}(A)$ and $\nabla _B\in {\cal K}(B)$, all of which hold in the particular case when $\C $ is congruence--modular and semi--degenerate. We will also assume that ${\cal K}(A)$ and ${\cal K}(B)$ are closed w.r.t. the commutator.

\begin{proposition} There exists at most one function $\varphi :{\cal L}(A)\rightarrow {\cal L}(B)$ that closes the following diagram commutatively, and such a function preserves the ${\bf 0}$ and the join. Additionally:\begin{enumerate} 
\item\label{closediagr1} if $f$ is surjective or $\C $ is a variety with $\vec{0}$ and $\vec{1}$, then $\varphi $ preserves the ${\bf 1}$;
\item\label{closediagr2} if $f$ is surjective and $\C $ is congruence--modular, then $\varphi $ is a bounded lattice morphism.\end{enumerate}\vspace*{-10pt}\begin{center}
\begin{picture}(120,40)(0,0)
\put(0,30){${\cal K}(A)$}
\put(82,30){${\cal K}(B)$}
\put(0,0){${\cal L}(A)$}
\put(82,0){${\cal L}(B)$}
\put(25,33){\vector(1,0){55}}
\put(36,38){$f^{\bullet }\mid _{{\cal K}(A)}$}
\put(25,3){\vector(1,0){55}}
\put(30,6){$\varphi ={\cal L}(f)$}
\put(12,27){\vector(0,-1){18}}
\put(-1,17){$\lambda _A$}
\put(94,27){\vector(0,-1){18}}
\put(96,17){$\lambda _B$}
\end{picture}\end{center}\label{closediagr}\vspace*{-10pt}\end{proposition}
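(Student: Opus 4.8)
The plan is to deduce uniqueness of $\varphi $ from the surjectivity of $\lambda _A$, then to read off the preservation of ${\bf 0}$ and of joins for the (necessarily unique) $\varphi $, and finally to treat items (i) and (ii) separately. (Existence of a $\varphi $ closing the square is automatic when $f$ is surjective, being the image through the reticulation from \cite{retic}, and in general is exactly the ``functoriality'' hypothesis this section revolves around; so the proposition is really about the properties such a $\varphi $ must have.) Since $\lambda _A:{\cal K}(A)\rightarrow {\cal L}(A)$ is onto, commutativity of the square forces $\varphi (\lambda _A(\alpha ))=\lambda _B(f^{\bullet }(\alpha ))$ for every $\alpha \in {\cal K}(A)$, so $\varphi $ is determined; in particular $\varphi ({\bf 0})=\varphi (\lambda _A(\Delta _A))=\lambda _B(f^{\bullet }(\Delta _A))=\lambda _B(\Delta _B)={\bf 0}$, using $f^{\bullet }(\Delta _A)=\Delta _B$. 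For joins I would write two elements of ${\cal L}(A)$ as $\lambda _A(\alpha ),\lambda _A(\beta )$ with $\alpha ,\beta \in {\cal K}(A)$, note $\alpha \vee \beta \in {\cal K}(A)$, and use that $\lambda _A$ (on ${\rm Con}(A)$) and $\lambda _B$ are lattice morphisms and that $f^{\bullet }$ preserves joins (Remark \ref{fbullet}(ii)) to compute $\varphi (\lambda _A(\alpha )\vee \lambda _A(\beta ))=\varphi (\lambda _A(\alpha \vee \beta ))=\lambda _B(f^{\bullet }(\alpha \vee \beta ))=\lambda _B(f^{\bullet }(\alpha )\vee f^{\bullet }(\beta ))=\varphi (\lambda _A(\alpha ))\vee \varphi (\lambda _A(\beta ))$.

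For item (i) it suffices to show $f^{\bullet }(\nabla _A)=\nabla _B$, since then $\varphi ({\bf 1})=\varphi (\lambda _A(\nabla _A))=\lambda _B(f^{\bullet }(\nabla _A))=\lambda _B(\nabla _B)={\bf 1}$. If $f$ is surjective this is immediate, since $f(\nabla _A)=f(A^2)=B^2=\nabla _B$ and hence $f^{\bullet }(\nabla _A)=Cg_B(\nabla _B)=\nabla _B$; if $\C $ is a variety with $\vec{0}$ and $\vec{1}$, then $f^{\bullet }(\nabla _A)=\nabla _B$ is precisely the content of the remark above asserting that equality for such varieties.

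For item (ii), $\varphi $ already preserves ${\bf 0}$, ${\bf 1}$ (by (i), since $f$ is surjective) and joins, so only meet preservation remains, and this is the crux of the argument. Because the CIP is not assumed, $\alpha \cap \beta $ need not be compact for $\alpha ,\beta \in {\cal K}(A)$, so I would replace it by the commutator: $[\alpha ,\beta ]_A\in {\cal K}(A)$ (closure of ${\cal K}(A)$ under the commutator) and $[\alpha ,\beta ]_A\equiv _A\alpha \cap \beta $, whence, ${\cal L}(A)$ being a bounded sublattice of ${\rm Con}(A)/\!\!\equiv _A$, one has $\lambda _A(\alpha )\wedge \lambda _A(\beta )=\lambda _A(\alpha \cap \beta )=\lambda _A([\alpha ,\beta ]_A)$, so $\varphi (\lambda _A(\alpha )\wedge \lambda _A(\beta ))=\lambda _B(f^{\bullet }([\alpha ,\beta ]_A))$. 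The key input is then the commutator transfer identity along surjective morphisms in congruence--modular varieties recalled in Section \ref{preliminaries}: combined with $f^{\bullet }(\gamma )=f(\gamma \vee {\rm Ker}(f))$ for surjective $f$ (via Lemma \ref{fcg}), it gives $f^{\bullet }([\alpha ,\beta ]_A)=[f^{\bullet }(\alpha ),f^{\bullet }(\beta )]_B$. Finally, since $f^{\bullet }(\alpha ),f^{\bullet }(\beta )\in {\cal K}(B)$, since $[\cdot ,\cdot ]_B\equiv _B\cap $, and since $\lambda _B$ is a lattice morphism, I obtain $\lambda _B([f^{\bullet }(\alpha ),f^{\bullet }(\beta )]_B)=\lambda _B(f^{\bullet }(\alpha )\cap f^{\bullet }(\beta ))=\lambda _B(f^{\bullet }(\alpha ))\wedge \lambda _B(f^{\bullet }(\beta ))=\varphi (\lambda _A(\alpha ))\wedge \varphi (\lambda _A(\beta ))$, so $\varphi $ is a bounded lattice morphism. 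The main obstacle is exactly this last chain — funnelling the meet through a compact commutator representative and invoking the characterization of the modular commutator to commute $[\cdot ,\cdot ]$ past $f^{\bullet }$; everything else is straightforward diagram chasing once one knows that $\lambda _A,\lambda _B$ are lattice morphisms and $f^{\bullet }$ preserves joins.
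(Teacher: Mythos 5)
Your proposal is correct and follows essentially the same route as the paper's proof: uniqueness and the formula $\varphi(\lambda_A(\theta))=\lambda_B(f^{\bullet}(\theta))$ from the surjectivity of $\lambda_A$, preservation of ${\bf 0}$ and joins from $f^{\bullet}(\Delta_A)=\Delta_B$ and Remark \ref{fbullet}, item (i) via $f^{\bullet}(\nabla_A)=\nabla_B$ in both cases, and item (ii) by funnelling the meet through the compact commutator $[\alpha,\beta]_A$ and using Lemma \ref{fcg} together with the Freese--McKenzie transfer identity for surjections in congruence--modular varieties. Your isolation of the intermediate identity $f^{\bullet}([\alpha,\beta]_A)=[f^{\bullet}(\alpha),f^{\bullet}(\beta)]_B$ is only a cosmetic reorganization of the paper's chain of equalities.
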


\begin{proof}Let $\alpha ,\beta \in {\cal K}(A)$. By the surjectivity of $\lambda _A$, if $\varphi $ exists, then it is uniquely defined by: $\varphi (\lambda _A(\theta ))=\lambda _B(f^{\bullet }(\theta ))$ for all $\theta \in {\cal K}(A)$. Assume that this function is well defined. Then $\varphi ({\bf 0})=\varphi (\lambda _A(\Delta _A))=\lambda _B(f^{\bullet }(\Delta _A))=\lambda _B(Cg_B(f(\Delta _A)))=\lambda _B(\Delta _B)={\bf 0}$ and $\varphi (\lambda _A(\alpha )\vee \lambda _A(\beta ))=\varphi (\lambda _A(\alpha \vee \beta ))=\lambda _B(f^{\bullet }(\alpha \vee \beta ))=\lambda _B(f^{\bullet }(\alpha )\vee f^{\bullet }(\beta ))=\lambda _B(f^{\bullet }(\alpha ))\vee \lambda _B(f^{\bullet }(\beta ))=\varphi (\lambda _A(\alpha ))\vee \varphi (\lambda _A(\beta ))$.

\noindent (\ref{closediagr1}) If $f$ is surjective or $\C $ is a variety with $\vec{0}$ and $\vec{1}$, then $\varphi ({\bf 1})=\varphi (\lambda _A(\nabla _A))=\lambda _B(f^{\bullet }(\nabla _A))=\lambda _B(f(\nabla _A))=\lambda _B(\nabla _B)={\bf 1}$.

\noindent (\ref{closediagr2}) If $f$ is surjective and $\C $ is congruence--modular, then, by Lemma \ref{fcg}, $\varphi (\lambda _A(\alpha )\wedge \lambda _A(\beta ))=\varphi (\lambda _A([\alpha ,\beta ]_A)=\lambda _B(f^{\bullet }([\alpha ,\beta ]_A))=\lambda _B(Cg_B(f([\alpha ,\beta ]_A)))=\lambda _B(f([\alpha ,\beta ]_A\vee {\rm Ker}(f)))=\lambda _B([f(\alpha \vee {\rm Ker}(f)),f(\beta \vee {\rm Ker}(f))]_B)=\lambda _B(f(\alpha \vee {\rm Ker}(f)))\wedge \lambda _B(f(\beta \vee {\rm Ker}(f)))=\lambda _B(Cg_B(f(\alpha )))\wedge \lambda _B(Cg_B(f(\beta )))=\lambda _B(f^{\bullet }(\alpha ))\wedge \lambda _B(f^{\bullet }(\beta ))=\varphi (\lambda _A(\alpha ))\wedge \varphi (\lambda _A(\beta ))$.\end{proof}

\begin{definition} We will say that $f$ satisfies the {\em functoriality of the reticulation} (abbreviated {\em FRet}) iff there exists a function that closes the diagram above commutatively, that is iff the function $\varphi $ in Proposition \ref{closediagr} is well defined.\end{definition}

If $f$ satisfies FRet, then we will denote by ${\cal L}(f)=\varphi $, that is: ${\cal L}(f):{\cal L}(A)\rightarrow {\cal L}(B)$, for all $\alpha \in {\cal K}(A)$, ${\cal L}(f)(\lambda _A(\alpha ))=\lambda _B(f^{\bullet }(\alpha ))$.

\begin{remark} Obviously, if $f$ is an isomorphism, then $f$ satisfies FRet and ${\cal L}(f)$ is a lattice isomorphism (in particular ${\cal L}(f)$ preserves the meet and the ${\bf 1}$), but the converse does not hold, as shown by the case of the morphism $l:Q\rightarrow P$ in Example \ref{mnex4}. Note that, in particular, $id_A^{\bullet }=id_{{\rm Con}(A)}$, thus ${\cal L}(id_A)=id_{{\cal L}(A)}$.\end{remark}

\begin{remark} As shown by the morphism $v:V\rightarrow V$ in Example \ref{tip20}, $f$ may fail FRet, while $f^{\bullet }$ preserves the meet and the commutator and $f^{\bullet }(\nabla _A)\equiv _B\nabla _B$.\end{remark}

\begin{lemma}\begin{itemize}
\item If the commutator of $A$ coincides to the intersection, then $f$ fulfills FRet.
\item In particular, if $\C $ is congruence--distributive and semi--degenerate and has the CIP, then all morphisms in $\C $ fulfill FRet.
\item If the commutators of $A$ and $B$ coincide to the intersection, in particular if $\C $ is congruence--distributive, then $f$ fulfills FRet and the following equivalences hold: ${\cal L}(f)$ preserves the meet iff $f^{\bullet }(\alpha \cap \beta )=f^{\bullet }(\alpha )\cap f^{\bullet }(\beta )$ for all $\alpha ,\beta \in {\cal K}(A)$, ${\cal L}(f)$ preserves the ${\bf 1}$ iff $f^{\bullet }(\nabla _A)=\nabla _B$, ${\cal L}(f)$ is injective or surjective iff $f^{\bullet }\mid _{{\cal K}(A)}:{\cal K}(A)\rightarrow {\cal K}(B)$ is injective or surjective, respectively.\end{itemize}\label{distribsuffret}\end{lemma}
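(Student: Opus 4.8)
The plan is to reduce the whole statement to the single fact, recalled in Section~\ref{reticulatia}, that whenever the commutator of an algebra coincides with the intersection of congruences its radical map is the identity, so that the associated relation $\equiv $ on its congruence lattice is the diagonal. For the first item, under the standing hypotheses of this section together with the assumption that $[\cdot ,\cdot ]_A$ is the intersection, we have $\rho _A(\theta )=\theta $ for all $\theta \in {\rm Con}(A)$, hence $\equiv _A=\Delta _{{\rm Con}(A)}$ and therefore $\equiv _A\cap ({\cal K}(A))^2=\Delta _{{\cal K}(A)}$, so that $\lambda _A:{\cal K}(A)\rightarrow {\cal L}(A)$ is a bijection. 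Then, for $\alpha ,\beta \in {\cal K}(A)$, $\lambda _A(\alpha )=\lambda _A(\beta )$ forces $\alpha =\beta $, so the map $\lambda _A(\theta )\mapsto \lambda _B(f^{\bullet }(\theta ))$ of Proposition~\ref{closediagr} is automatically well defined; that is, $f$ satisfies FRet.

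For the second item I would check that ``congruence--distributive, semi--degenerate, with the CIP'' forces, in \emph{every} member $M$ of $\C $, precisely the standing hypotheses of this section: by \cite{bj} the commutator of $M$ coincides with the intersection, whence ${\rm Con}(M)$ is a frame and so $[\cdot ,\cdot ]_M$ is commutative and distributive w.r.t.\ arbitrary joins; semi--degeneracy gives $\nabla _M\in {\cal K}(M)$; and the CIP gives that ${\cal K}(M)$ is a sublattice of ${\rm Con}(M)$, i.e.\ closed under the commutator. Hence every morphism of $\C $ is a morphism between algebras covered by the first item, so it satisfies FRet.

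For the third item, assume both $[\cdot ,\cdot ]_A$ and $[\cdot ,\cdot ]_B$ coincide with the intersection. The first item gives FRet. Moreover $\equiv _A=\Delta _{{\rm Con}(A)}$ and $\equiv _B=\Delta _{{\rm Con}(B)}$, so $\lambda _A$ and $\lambda _B$ are lattice isomorphisms from ${\cal K}(A)$ and ${\cal K}(B)$ onto ${\cal L}(A)$ and ${\cal L}(B)$ respectively (with ${\cal K}(A)$, ${\cal K}(B)$ bounded sublattices of the respective congruence lattices, meet being intersection, and $f^{\bullet }$ restricting to a map ${\cal K}(A)\rightarrow {\cal K}(B)$ by Lemma~\ref{fcg}), and the defining identity ${\cal L}(f)\circ \lambda _A=\lambda _B\circ (f^{\bullet }\mid _{{\cal K}(A)})$ presents ${\cal L}(f)$ as $f^{\bullet }\mid _{{\cal K}(A)}$ transported along these isomorphisms. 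Unwinding this identity yields each equivalence at once: from $\lambda _A(\alpha )\wedge \lambda _A(\beta )=\lambda _A(\alpha \cap \beta )$ and $\lambda _B(f^{\bullet }(\alpha ))\wedge \lambda _B(f^{\bullet }(\beta ))=\lambda _B(f^{\bullet }(\alpha )\cap f^{\bullet }(\beta ))$ (the last computed in ${\cal K}(B)$) together with the injectivity of $\lambda _B$, ${\cal L}(f)$ preserves the meet iff $f^{\bullet }(\alpha \cap \beta )=f^{\bullet }(\alpha )\cap f^{\bullet }(\beta )$ for all $\alpha ,\beta \in {\cal K}(A)$; since ${\bf 1}=\lambda _A(\nabla _A)$ and $\lambda _B(\theta )={\bf 1}$ iff $\theta =\nabla _B$, ${\cal L}(f)$ preserves ${\bf 1}$ iff $f^{\bullet }(\nabla _A)=\nabla _B$; and since $\lambda _A$ is surjective and $\lambda _B$ is bijective, ${\cal L}(f)$ is injective, respectively surjective, iff $f^{\bullet }\mid _{{\cal K}(A)}$ is.

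There is no real obstacle; the work is all bookkeeping. The only point requiring care is the second item — verifying that the three stated assumptions on $\C $ really do reconstitute, in every member, the standing hypotheses of the section — together with the trivial but necessary observation (Lemma~\ref{fcg}) that $f^{\bullet }$ restricts to ${\cal K}(A)\rightarrow {\cal K}(B)$ and that ${\cal K}(B)$ is closed under intersection, so that the identities $f^{\bullet }(\alpha \cap \beta )=f^{\bullet }(\alpha )\cap f^{\bullet }(\beta )$ appearing in the third item are statements about elements of ${\cal K}(B)$.
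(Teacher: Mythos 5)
Your proposal is correct and follows essentially the same route as the paper's proof: the commutator coinciding with the intersection makes $\rho _A$ the identity, hence $\equiv _A$ trivial and $\lambda _A$ injective, which gives FRet at once, and for the third item the identity ${\cal L}(f)\circ \lambda _A=\lambda _B\circ f^{\bullet }\mid _{{\cal K}(A)}$ together with $\lambda _A$, $\lambda _B$ being lattice isomorphisms yields all the stated equivalences. The extra bookkeeping you supply for the second item (checking that congruence--distributivity, semi--degeneracy and the CIP reconstitute the section's standing hypotheses in every member of $\C $) is implicit in the paper but harmless to make explicit.
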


\begin{proof} If the commutator of $A$ coincides to the intersection, then $\rho _A=id _{{\rm Con}(A)}$, so, for all $\alpha ,\beta \in {\rm Con}(A)$, $\lambda _A(\alpha )=\lambda _A(\beta )$ iff $\alpha =\beta $, thus, trivially, $f$ fulfills FRet.

If, additionally, the commutator of $B$ coincides to the intersection, then both $\lambda _A:{\cal K}(A)\rightarrow {\cal L}(A)$ and $\lambda _B:{\cal K}(B)\rightarrow {\cal L}(B)$ are lattice isomorphisms, so the equality ${\cal L}(f)\circ \lambda _A=\lambda _B\circ f^{\bullet }$ proves the equivalences in the enunciation. In fact, we may take ${\cal L}(A)={\cal K}(A)$ and ${\cal L}(B)={\cal K}(B)$, so that $\lambda _A$ and $\lambda _B$ become $id_{{\cal K}(A)}:{\cal K}(A)\rightarrow {\cal L}(A)$ and $id_{{\cal K}(B)}:{\cal K}(B)\rightarrow {\cal L}(B)$, respectively, and ${\cal L}(f)=f^{\bullet }$.\end{proof}

\begin{remark} If $f$ fulfills FRet and $f^{\bullet }:{\rm Con}(A)\rightarrow {\rm Con}(B)$ preserves the intersection, then, clearly, ${\cal L}(f)$ preserves the meet. As shown by Example \ref{tip20}, the converse does not hold.\end{remark}

\begin{proposition} Let $C$ be a member of $\C $ such that $[\cdot ,\cdot ]_C$ is commutative and distributive w.r.t. arbitrary joins, $\nabla _C\in {\cal K}(C)$ and ${\cal K}(C)$ is closed w.r.t. the commutator, and let $g:B\rightarrow C$ be a morphism. If $f$ and $g$ satisfy FRet, then $g\circ f$ satisfies FRet and ${\cal L}(g\circ f)={\cal L}(g)\circ {\cal L}(f)$. Also:\begin{itemize}
\item if, additionally, ${\cal L}(f)$ and ${\cal L}(g)$ preserve the ${\bf 1}$, then ${\cal L}(g\circ f)$ preserves the ${\bf 1}$;
\item if, additionally, ${\cal L}(f)$ and ${\cal L}(g)$ preserve the meet, then ${\cal L}(g\circ f)$ preserves the meet.\end{itemize}\label{compfret}\end{proposition}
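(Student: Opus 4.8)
The plan is to verify the three claims in sequence, using only Remark~\ref{fbullet}~$(iii)$, the defining formula for $\mathcal{L}(\cdot)$ on morphisms, and the hypothesis that both $f$ and $g$ satisfy FRet. First I would establish that $g\circ f$ satisfies FRet, i.e.\ that the candidate function $\psi:\mathcal{L}(A)\to\mathcal{L}(C)$ given by $\psi(\lambda_A(\alpha))=\lambda_C((g\circ f)^{\bullet}(\alpha))$ for $\alpha\in\mathcal{K}(A)$ is well defined. Suppose $\alpha,\alpha'\in\mathcal{K}(A)$ with $\lambda_A(\alpha)=\lambda_A(\alpha')$. Since $f$ satisfies FRet, $\lambda_B(f^{\bullet}(\alpha))=\mathcal{L}(f)(\lambda_A(\alpha))=\mathcal{L}(f)(\lambda_A(\alpha'))=\lambda_B(f^{\bullet}(\alpha'))$ in $\mathcal{L}(B)$; here I use that $f^{\bullet}$ restricts to a map $\mathcal{K}(A)\to\mathcal{K}(B)$ (noted after Remark~\ref{fbullet}), so $f^{\bullet}(\alpha),f^{\bullet}(\alpha')\in\mathcal{K}(B)$ and the equality makes sense in $\mathcal{L}(B)$. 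Applying the well-definedness of $\mathcal{L}(g)$ to this equality gives $\lambda_C(g^{\bullet}(f^{\bullet}(\alpha)))=\lambda_C(g^{\bullet}(f^{\bullet}(\alpha')))$, and by Remark~\ref{fbullet}~$(iii)$, $g^{\bullet}\circ f^{\bullet}=(g\circ f)^{\bullet}$, so $\lambda_C((g\circ f)^{\bullet}(\alpha))=\lambda_C((g\circ f)^{\bullet}(\alpha'))$, as required. Hence $g\circ f$ satisfies FRet and $\mathcal{L}(g\circ f)=\psi$.

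Next I would check the composition identity $\mathcal{L}(g\circ f)=\mathcal{L}(g)\circ\mathcal{L}(f)$. Both sides are functions $\mathcal{L}(A)\to\mathcal{L}(C)$, and since $\lambda_A$ is surjective it suffices to evaluate on an arbitrary $\lambda_A(\alpha)$ with $\alpha\in\mathcal{K}(A)$. The left side is $\lambda_C((g\circ f)^{\bullet}(\alpha))=\lambda_C(g^{\bullet}(f^{\bullet}(\alpha)))$ by Remark~\ref{fbullet}~$(iii)$; the right side is $\mathcal{L}(g)(\mathcal{L}(f)(\lambda_A(\alpha)))=\mathcal{L}(g)(\lambda_B(f^{\bullet}(\alpha)))=\lambda_C(g^{\bullet}(f^{\bullet}(\alpha)))$, using the defining formula for $\mathcal{L}(f)$, then for $\mathcal{L}(g)$ (applied to $f^{\bullet}(\alpha)\in\mathcal{K}(B)$). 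The two expressions coincide, proving the identity.

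Finally, the two bulleted clauses follow immediately from $\mathcal{L}(g\circ f)=\mathcal{L}(g)\circ\mathcal{L}(f)$ together with the fact, recorded in Proposition~\ref{closediagr}, that every image through the reticulation already preserves $\mathbf{0}$ and the join: a composite of two $\mathbf{0}$-- and join--preserving maps is again $\mathbf{0}$-- and join--preserving, so if in addition $\mathcal{L}(f)$ and $\mathcal{L}(g)$ preserve $\mathbf{1}$ (resp.\ the meet), then their composite $\mathcal{L}(g\circ f)$ preserves $\mathbf{1}$ (resp.\ the meet). There is no real obstacle here; the only point requiring slight care is making sure the intermediate equalities live in $\mathcal{L}(B)$ rather than in the larger lattice $\mathrm{Con}(B)/\!\!\equiv_B$, which is handled by observing that $f^{\bullet}$ maps compact congruences to compact congruences, so $\lambda_B$ may legitimately be read as the surjection $\mathcal{K}(B)\to\mathcal{L}(B)$ throughout.
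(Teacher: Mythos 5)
Your proof is correct and takes essentially the same route as the paper: both rest on the identity $(g\circ f)^{\bullet}=g^{\bullet}\circ f^{\bullet}$ from Remark~\ref{fbullet}~$(iii)$ and on the commuting square defining ${\cal L}(\cdot)$, the only difference being that the paper phrases the whole argument as a single chain of equalities between composites $\lambda _C\circ (g\circ f)^{\bullet }={\cal L}(g)\circ {\cal L}(f)\circ \lambda _A$ and then invokes the uniqueness clause of Proposition~\ref{closediagr}, whereas you unpack the same computation elementwise (well-definedness first, then the identity on each $\lambda _A(\alpha )$). Both are complete; no gaps.
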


\begin{proof}$\lambda _C\circ (g\circ f)^{\bullet }=\lambda _C\circ g^{\bullet }\circ f^{\bullet }={\cal L}(g)\circ \lambda _B\circ f^{\bullet }={\cal L}(g)\circ {\cal L}(f)\circ \lambda _A$, therefore $g\circ f$ satisfies FRet and, by the uniqueness stated in Proposition \ref{closediagr}, ${\cal L}(g\circ f)={\cal L}(g)\circ {\cal L}(f)$, hence the statements on the preservation of the ${\bf 1}$ and the meet.\end{proof}

By Propositions \ref{closediagr} and \ref{compfret}, if all morphisms in ${\cal C}$ satisfy FRet and are such that their images through the map ${\cal L}$ preserve the meet, so that these images are lattice morphisms, then ${\cal L}$ becomes a covariant functor from ${\cal C}$ to the variety of distributive lattices, and, if, additionally, these images preserve the ${\bf 1}$, then ${\cal L}$ is a functor from ${\cal C}$ to the variety of bounded distributive lattices. In either of these cases, we call ${\cal L}$ the {\em reticulation functor} for ${\cal C}$.

\begin{lemma}{\rm \cite{gulo},\cite{euadm}} If $\phi \in {\rm Con}(A)\setminus \{\nabla _A\}$, then the following are equivalent:\begin{enumerate}
\item\label{charspec0} $\phi \in {\rm Spec}(A)$;
\item\label{charspec1} for all $\alpha ,\beta \in {\rm PCon}(A)$, $[\alpha ,\beta ]_A\subseteq \phi $ implies $\alpha \subseteq \phi $ or $\beta \subseteq \phi $;
\item\label{charspec2} for all $\alpha ,\beta \in {\cal K}(A)$, $[\alpha ,\beta ]_A\subseteq \phi $ implies $\alpha \subseteq \phi $ or $\beta \subseteq \phi $.\end{enumerate}\label{charspec}\end{lemma}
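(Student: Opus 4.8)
The plan is to prove the cycle of implications (i) $\Rightarrow$ (iii) $\Rightarrow$ (ii) $\Rightarrow$ (i). The first two are immediate from the chain of inclusions ${\rm PCon}(A)\subseteq {\cal K}(A)\subseteq {\rm Con}(A)$ together with the definition of ${\rm Spec}(A)$ recalled in Section \ref{preliminaries}: a congruence $\phi\neq \nabla _A$ satisfying the prime condition $[\alpha ,\beta ]_A\subseteq \phi \Rightarrow \alpha \subseteq \phi \mbox{ or }\beta \subseteq \phi $ for all $\alpha ,\beta \in {\rm Con}(A)$ certainly satisfies it when $\alpha ,\beta $ are restricted to ${\cal K}(A)$, and (iii) in turn restricts to (ii) by further passing from compact to principal congruences. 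So the entire content of the lemma lies in the implication (ii) $\Rightarrow$ (i).

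For that implication I would argue contrapositively at the level of the prime condition. Assume $\phi $ satisfies (ii), take arbitrary $\alpha ,\beta \in {\rm Con}(A)$ with $[\alpha ,\beta ]_A\subseteq \phi $, and suppose toward a contradiction that $\alpha \not\subseteq \phi $ and $\beta \not\subseteq \phi $. Choose witnessing pairs $(a,b)\in \alpha \setminus \phi $ and $(c,d)\in \beta \setminus \phi $, and set $\gamma =Cg_A(a,b)$, $\delta =Cg_A(c,d)$. Then $\gamma ,\delta \in {\rm PCon}(A)$, $\gamma \subseteq \alpha $, $\delta \subseteq \beta $, and, crucially, $\gamma \not\subseteq \phi $ and $\delta \not\subseteq \phi $ since $(a,b)\in \gamma $ and $(c,d)\in \delta $. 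Because the term condition commutator is increasing in both arguments (recalled in Section \ref{preliminaries} from \cite{mcks}), $[\gamma ,\delta ]_A\subseteq [\alpha ,\beta ]_A\subseteq \phi $; but then (ii) forces $\gamma \subseteq \phi $ or $\delta \subseteq \phi $, a contradiction. Hence $\alpha \subseteq \phi $ or $\beta \subseteq \phi $, and since $\phi \neq \nabla _A$ by the hypothesis of the lemma, $\phi \in {\rm Spec}(A)$.

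I do not expect any genuine obstacle here: the argument uses only monotonicity of the commutator together with the trivial observation that any congruence not contained in $\phi $ contains a principal --- hence compact --- congruence not contained in $\phi $, namely the one generated by any pair witnessing the non--containment. In particular, the standing assumptions of this section (commutativity and arbitrary--join distributivity of $[\cdot ,\cdot ]_A$, $\nabla _A\in {\cal K}(A)$, and closure of ${\cal K}(A)$ under the commutator) are not needed for this lemma; they merely ensure that ${\rm Spec}(A)$ and ${\cal L}(A)$ are the well--behaved objects used elsewhere. One could alternatively phrase (ii) $\Rightarrow$ (i) by noting that every congruence is the join of the principal congruences below it and invoking join--distributivity of the commutator to propagate the prime condition upward, but the contrapositive route above is shorter and dispenses even with that.
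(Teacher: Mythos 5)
Your proof is correct. Note that the paper does not prove this lemma at all --- it is quoted from \cite{gulo} and \cite{euadm} --- so there is no in-text argument to compare against; your cycle of implications, with the only substantive step (ii) $\Rightarrow$ (i) handled by choosing witnessing pairs, passing to the principal congruences they generate, and invoking monotonicity of the term condition commutator, is the standard and complete argument, and your observation that the section's standing hypotheses are not needed for this particular lemma is accurate.
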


\begin{lemma}For all $\alpha ,\beta \in {\rm Con}(A)$, $\rho _B(f^{\bullet }([\alpha ,\beta ]_A))\subseteq \rho _B([f^{\bullet }(\alpha ),f^{\bullet }(\beta )]_B)$.\label{fcomm}\end{lemma}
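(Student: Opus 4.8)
The goal is to show $\rho_B(f^{\bullet}([\alpha,\beta]_A)) \subseteq \rho_B([f^{\bullet}(\alpha),f^{\bullet}(\beta)]_B)$ for all $\alpha,\beta \in {\rm Con}(A)$. The plan is to work on the level of prime spectra: by the properties of $\rho_B$ recalled in Section \ref{reticulatia}, an inclusion $\rho_B(\theta) \subseteq \rho_B(\sigma)$ between radicals is equivalent to the reverse inclusion $V_B(\theta) \supseteq V_B(\sigma)$ of closed sets. So I would fix $\psi \in V_B([f^{\bullet}(\alpha),f^{\bullet}(\beta)]_B)$, i.e.\ a prime congruence $\psi \in {\rm Spec}(B)$ with $[f^{\bullet}(\alpha),f^{\bullet}(\beta)]_B \subseteq \psi$, and aim to prove $f^{\bullet}([\alpha,\beta]_A) \subseteq \psi$, which lands $\psi$ in $V_B(f^{\bullet}([\alpha,\beta]_A))$.

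The key step is the following. Since $\psi$ is prime and $[f^{\bullet}(\alpha),f^{\bullet}(\beta)]_B \subseteq \psi$, primeness gives $f^{\bullet}(\alpha) \subseteq \psi$ or $f^{\bullet}(\beta) \subseteq \psi$; without loss of generality assume $f^{\bullet}(\alpha) \subseteq \psi$. By the adjunction of Remark \ref{fbullet}(i), $f^{\bullet}(\alpha) \subseteq \psi$ is equivalent to $\alpha \subseteq f^*(\psi)$. Now $[\alpha,\beta]_A \subseteq \alpha$ (the commutator is below the intersection), so $[\alpha,\beta]_A \subseteq \alpha \subseteq f^*(\psi)$; applying the adjunction in the other direction, $f^{\bullet}([\alpha,\beta]_A) \subseteq \psi$, as desired. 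Hence $V_B([f^{\bullet}(\alpha),f^{\bullet}(\beta)]_B) \subseteq V_B(f^{\bullet}([\alpha,\beta]_A))$, which by the $\rho_B$/$V_B$ correspondence is exactly the claimed radical inclusion.

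I should double-check that $[f^{\bullet}(\alpha),f^{\bullet}(\beta)]_B$ being contained in a prime congruence really does trigger the primeness dichotomy: indeed $f^{\bullet}(\alpha), f^{\bullet}(\beta) \in {\rm Con}(B)$, so the definition of ${\rm Spec}(B)$ applies directly with these two congruences. The argument does not even need $\alpha,\beta$ to be compact, nor the hypothesis that ${\cal K}(B)$ is closed under the commutator, nor congruence--modularity; it uses only that the commutator is below the intersection and the adjunction $f^{\bullet} \dashv f^*$. The one point to handle cleanly is the degenerate possibility that $V_B([f^{\bullet}(\alpha),f^{\bullet}(\beta)]_B) = \emptyset$, in which case the inclusion of closed sets is trivial and so is the radical inclusion; no obstacle there. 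In fact the only mild subtlety — hardly an obstacle — is simply remembering that the containment direction flips between radicals and their $V$-sets, so that ``$\subseteq$ for $\rho$'' corresponds to ``$\supseteq$ for $V$''; once that bookkeeping is straight the proof is a two-line application of the adjunction.
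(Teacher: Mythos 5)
Your proof is correct and follows essentially the same route as the paper: fix a prime $\psi$ containing $[f^{\bullet}(\alpha),f^{\bullet}(\beta)]_B$, use primeness to get $f^{\bullet}(\alpha)\subseteq\psi$ or $f^{\bullet}(\beta)\subseteq\psi$, deduce $f^{\bullet}([\alpha,\beta]_A)\subseteq\psi$ (the paper does this by monotonicity of $f^{\bullet}$ and $[\alpha,\beta]_A\subseteq\alpha\cap\beta$, you do it via the adjunction, which is the same thing), and conclude by the $V_B$/$\rho_B$ correspondence.
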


\begin{proof}Let $\psi \in {\rm Spec}(B)$ such that $[f^{\bullet }(\alpha ),f^{\bullet }(\beta )]_B\subseteq \psi $, so that $f^{\bullet }(\alpha )\subseteq \psi $ or $f^{\bullet }(\beta )\subseteq \psi $, so that $f^{\bullet }([\alpha ,\beta ]_A)\subseteq \psi $. Hence $V_B([f^{\bullet }(\alpha ),f^{\bullet }(\beta )]_B)\subseteq V_B(f^{\bullet }([\alpha ,\beta ]_A))$, therefore $\rho _B(f^{\bullet }([\alpha ,\beta ]_A))\subseteq \rho _B([f^{\bullet }(\alpha ),f^{\bullet }(\beta )]_B)$.\end{proof}

\begin{theorem} The following are equivalent:\begin{enumerate}
\item\label{admfret0} $f$ is admissible;
\item\label{admfret1} $f$ satisfies FRet and ${\cal L}(f)$ preserves the meet (so that ${\cal L}(f)$ is a lattice morphism);
\item\label{admfret2} for all $\alpha ,\beta \in {\cal K}(A)$, $\lambda _B(f^{\bullet }([\alpha ,\beta ]_A))=\lambda _B([f^{\bullet }(\alpha ),f^{\bullet }(\beta )]_B)$;
\item\label{admfret3} for all $\alpha ,\beta \in {\cal K}(A)$, $\rho _B(f^{\bullet }([\alpha ,\beta ]_A))=\rho _B([f^{\bullet }(\alpha ),f^{\bullet }(\beta )]_B)$;
\item\label{admfret4} for all $\alpha ,\beta \in {\cal K}(A)$, $\rho _B(f^{\bullet }([\alpha ,\beta ]_A))\supseteq [f^{\bullet }(\alpha ),f^{\bullet }(\beta )]_B$.
\end{enumerate}\label{admfret}\end{theorem}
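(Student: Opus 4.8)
The plan is to establish the chain of implications $(\ref{admfret0})\Rightarrow(\ref{admfret2})\Rightarrow(\ref{admfret3})\Rightarrow(\ref{admfret4})\Rightarrow(\ref{admfret0})$ together with the equivalence $(\ref{admfret1})\Leftrightarrow(\ref{admfret2})$, using the already-established machinery: Lemma \ref{charspec} (three equivalent forms of primeness), Lemma \ref{fcomm} (the one nontrivial inclusion is for free), the correspondence $\phi\mapsto\phi^*$ of Proposition \ref{homeo}, and the basic identities $\lambda_A(\theta)=\lambda_A(\eta)$ iff $\rho_A(\theta)=\rho_A(\eta)$, and $\rho$ commuting with joins and turning commutators into meets. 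The role of $f^{\bullet}$ as the left adjoint of $f^*$ (Remark \ref{fbullet}) is what lets us translate ``$f^*$ takes primes to primes'' into statements about $f^{\bullet}$.

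\textbf{The easy links.} The equivalence $(\ref{admfret2})\Leftrightarrow(\ref{admfret3})$ is immediate since $\lambda_B(\theta)=\lambda_B(\eta)$ iff $\rho_B(\theta)=\rho_B(\eta)$. The implication $(\ref{admfret3})\Rightarrow(\ref{admfret4})$ is trivial because $[f^{\bullet}(\alpha),f^{\bullet}(\beta)]_B\subseteq\rho_B([f^{\bullet}(\alpha),f^{\bullet}(\beta)]_B)$ always. For $(\ref{admfret1})\Leftrightarrow(\ref{admfret2})$: assuming $f$ satisfies FRet, the map ${\cal L}(f)$ is already ${\bf 0}$- and join-preserving by Proposition \ref{closediagr}, so ${\cal L}(f)$ preserves the meet iff ${\cal L}(f)(\lambda_A(\alpha)\wedge\lambda_A(\beta))={\cal L}(f)(\lambda_A(\alpha))\wedge{\cal L}(f)(\lambda_A(\beta))$ for all $\alpha,\beta\in{\cal K}(A)$; unwinding the left side via $\lambda_A(\alpha)\wedge\lambda_A(\beta)=\lambda_A([\alpha,\beta]_A)$ gives $\lambda_B(f^{\bullet}([\alpha,\beta]_A))$, and the right side is $\lambda_B(f^{\bullet}(\alpha))\wedge\lambda_B(f^{\bullet}(\beta))=\lambda_B([f^{\bullet}(\alpha),f^{\bullet}(\beta)]_B)$ (here I need that ${\cal K}(B)$ is closed under the commutator so these elements lie in ${\cal L}(B)$ and the meet in ${\cal L}(B)$ is computed by the commutator). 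But this needs $f$ to satisfy FRet first; so I will actually show $(\ref{admfret2})\Rightarrow(\ref{admfret1})$ directly by first checking FRet holds, i.e. that $\rho_A(\alpha)=\rho_A(\beta)$ with $\alpha,\beta\in{\cal K}(A)$ forces $\rho_B(f^{\bullet}(\alpha))=\rho_B(f^{\bullet}(\beta))$ --- which follows from admissibility, so really the cleanest route is $(\ref{admfret0})\Rightarrow(\ref{admfret1})$ and $(\ref{admfret0})\Rightarrow(\ref{admfret2})$ both directly, then $(\ref{admfret1})\Rightarrow(\ref{admfret2})$ and $(\ref{admfret4})\Rightarrow(\ref{admfret0})$ to close the loop.

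\textbf{The substantive implications.} For $(\ref{admfret0})\Rightarrow(\ref{admfret2})$ (and the well-definedness of ${\cal L}(f)$): assume $f^*({\rm Spec}(B))\subseteq{\rm Spec}(A)$. By Lemma \ref{fcomm} only the inclusion $\rho_B([f^{\bullet}(\alpha),f^{\bullet}(\beta)]_B)\subseteq\rho_B(f^{\bullet}([\alpha,\beta]_A))$ remains; equivalently $V_B(f^{\bullet}([\alpha,\beta]_A))\subseteq V_B([f^{\bullet}(\alpha),f^{\bullet}(\beta)]_B)$. So take $\psi\in{\rm Spec}(B)$ with $f^{\bullet}([\alpha,\beta]_A)\subseteq\psi$; by adjunction this says $[\alpha,\beta]_A\subseteq f^*(\psi)$, and $f^*(\psi)\in{\rm Spec}(A)$ by admissibility, so by Lemma \ref{charspec} (item \ref{charspec2}, using $\alpha,\beta\in{\cal K}(A)$) we get $\alpha\subseteq f^*(\psi)$ or $\beta\subseteq f^*(\psi)$, i.e. $f^{\bullet}(\alpha)\subseteq\psi$ or $f^{\bullet}(\beta)\subseteq\psi$, whence $[f^{\bullet}(\alpha),f^{\bullet}(\beta)]_B\subseteq f^{\bullet}(\alpha)\cap f^{\bullet}(\beta)\subseteq\psi$; this gives $(\ref{admfret2})$. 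The same argument with general $\alpha,\beta\in{\rm Con}(A)$ replaced by the defining family shows ${\cal L}(f)$ is well-defined: if $\alpha,\beta\in{\cal K}(A)$ and $\rho_A(\alpha)=\rho_A(\beta)$, then for $\psi\in{\rm Spec}(B)$, $f^{\bullet}(\alpha)\subseteq\psi$ iff $\alpha\subseteq f^*(\psi)\in{\rm Spec}(A)$ iff (using $\rho_A(\alpha)=\rho_A(\beta)$) $\beta\subseteq f^*(\psi)$ iff $f^{\bullet}(\beta)\subseteq\psi$, so $V_B(f^{\bullet}(\alpha))=V_B(f^{\bullet}(\beta))$, i.e. $\lambda_B(f^{\bullet}(\alpha))=\lambda_B(f^{\bullet}(\beta))$; hence FRet holds and $(\ref{admfret1})$ follows from $(\ref{admfret2})$ by the easy link above. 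Finally, for $(\ref{admfret4})\Rightarrow(\ref{admfret0})$: let $\psi\in{\rm Spec}(B)$ and set $\phi=f^*(\psi)\in{\rm Con}(A)$; $\phi\neq\nabla_A$ since $\psi\neq\nabla_B$ (by the preliminaries, as $\C$ is semi-degenerate, though I should check whether this hypothesis is in force --- if not, one notes $f^{\bullet}(\nabla_A)\supseteq$ the relevant generators or argues $\nabla_A$ is compact hence $f^{\bullet}(\nabla_A)$ dominates enough; the cleanest is to invoke the standing semi-degeneracy assumption). To verify $\phi\in{\rm Spec}(A)$ via Lemma \ref{charspec}(\ref{charspec2}), take $\alpha,\beta\in{\cal K}(A)$ with $[\alpha,\beta]_A\subseteq\phi=f^*(\psi)$, i.e. $f^{\bullet}([\alpha,\beta]_A)\subseteq\psi$, hence $\rho_B(f^{\bullet}([\alpha,\beta]_A))\subseteq\psi$ since $\psi$ is radical; by $(\ref{admfret4})$ this gives $[f^{\bullet}(\alpha),f^{\bullet}(\beta)]_B\subseteq\psi$, and since $\psi\in{\rm Spec}(B)$ we get $f^{\bullet}(\alpha)\subseteq\psi$ or $f^{\bullet}(\beta)\subseteq\psi$, i.e. $\alpha\subseteq f^*(\psi)=\phi$ or $\beta\subseteq\phi$. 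Thus $\phi\in{\rm Spec}(A)$ and $f$ is admissible.

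\textbf{Expected main obstacle.} The routine parts are the adjunction shuffling and the $\rho/\lambda/V$ dictionary. The one point needing care is the bookkeeping that ensures FRet actually holds before invoking ${\cal L}(f)$ in statement $(\ref{admfret1})$ --- i.e.\ keeping the logical order straight so that well-definedness of ${\cal L}(f)$ is derived (from admissibility, or equivalently from $(\ref{admfret2})$) rather than assumed. A secondary wrinkle is confirming $\phi=f^*(\psi)\neq\nabla_A$ in the last implication; this is where the semi-degeneracy of $\C$ (the standing assumption, via the remark that $(f^*)^{-1}(\{\nabla_B\})=\{\nabla_A\}$) is used, and it should be cited explicitly.
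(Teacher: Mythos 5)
Your proof is correct and follows essentially the same route as the paper's: the same adjunction argument $f^{\bullet}(\theta)\subseteq\psi\Leftrightarrow\theta\subseteq f^*(\psi)$, the same use of Lemma \ref{charspec} and of the $V_B/\rho_B/\lambda_B$ dictionary, with only a cosmetic rearrangement of the cycle (you close it via $(\ref{admfret4})\Rightarrow(\ref{admfret0})$ directly, the paper via $(\ref{admfret2})\Rightarrow(\ref{admfret0})$ after establishing $(\ref{admfret2})\Leftrightarrow(\ref{admfret3})\Leftrightarrow(\ref{admfret4})$). On the point you flag: semi--degeneracy of $\C$ is \emph{not} among the standing hypotheses of this section (it is only cited as a sufficient condition for them), so the step ``$f^*(\psi)\neq\nabla_A$'' needed before applying Lemma \ref{charspec} is genuinely not covered; the paper's own proof applies Lemma \ref{charspec} at the same spot without comment, so your unease is warranted rather than a defect of your argument --- strictly, both proofs require the additional hypothesis $(f^*)^{-1}(\{\nabla_B\})=\{\nabla_A\}$ (e.g.\ $\C$ semi--degenerate), as a constant lattice morphism ${\cal L}_2\rightarrow{\cal L}_3$ shows that $(\ref{admfret4})$ can hold while $f^*$ sends a prime to $\nabla_A$.
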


\begin{proof}(\ref{admfret2})$\Leftrightarrow $(\ref{admfret3}): By the definition of $\equiv _B$.

\noindent (\ref{admfret3})$\Leftrightarrow $(\ref{admfret4}): By Lemma \ref{fcomm} and the fact that $\rho _B(f^{\bullet }([\alpha ,\beta ]_A))\supseteq [f^{\bullet }(\alpha ),f^{\bullet }(\beta )]_B$ iff $\rho _B(f^{\bullet }([\alpha ,\beta ]_A))\supseteq \rho _B([f^{\bullet }(\alpha ),$\linebreak $f^{\bullet }(\beta )]_B)$.

\noindent (\ref{admfret0})$\Rightarrow $(\ref{admfret2}): Let $\alpha ,\beta \in {\cal K}(A)$ and $\psi \in {\rm Spec}(B)$, so that $f^*(\psi )\in {\rm Spec}(A)$ since $f$ is admissible, thus, since $(f^{\bullet },f^*)$ is an adjoint pair: $f^{\bullet }([\alpha ,\beta ]_A)\subseteq \psi $ iff $[\alpha ,\beta ]_A\subseteq f^*(\psi )$ iff $\alpha \subseteq f^*(\psi )$ or $\beta \subseteq f^*(\psi )$ iff $f^{\bullet }(\alpha )\subseteq \psi $ or $f^{\bullet }(\beta )\subseteq \psi $ iff $[f^{\bullet }(\alpha ),f^{\bullet }(\beta )]_B\subseteq \psi $. Therefore $V_B(f^{\bullet }([\alpha ,\beta ]_A))=V_B([f^{\bullet }(\alpha ),f^{\bullet }(\beta )]_B)$, so $\rho _B(f^{\bullet }([\alpha ,\beta ]_A))=\rho _B([f^{\bullet }(\alpha ),f^{\bullet }(\beta )]_B)$, thus $\lambda _B(f^{\bullet }([\alpha ,\beta ]_A))=\lambda _B([f^{\bullet }(\alpha ),f^{\bullet }(\beta )]_B)$.

\noindent (\ref{admfret0}),(\ref{admfret2})$\Rightarrow $(\ref{admfret1}): Let $\alpha ,\beta \in {\cal K}(A)$ such that $\lambda _A(\alpha )=\lambda _A(\beta )$, so that $\rho _A(\alpha )=\rho _A(\beta )$, thus $V_A(\alpha )=V_A(\beta )$.

Let $\psi \in {\rm Spec}(B)$, so that $f^*(\psi )\in {\rm Spec}(A)$ since $f$ is admissible, thus, by the above and the fact that $(f^{\bullet },f^*)$ is an adjoint pair: $f^{\bullet }(\alpha )\subseteq \psi $ iff $\alpha \subseteq f^*(\psi )$ iff $\beta \subseteq f^*(\psi )$ iff $f^{\bullet }(\beta )\subseteq \psi $, therefore $V_B(f^{\bullet }(\alpha ))=V_B(f^{\bullet }(\beta ))$, so that $\rho _B(f^{\bullet }(\alpha ))=\rho _B(f^{\bullet }(\beta ))$, thus ${\cal L}(f)(\lambda _A(\alpha ))=\lambda _B(f^{\bullet }(\alpha ))=\lambda _B(f^{\bullet }(\beta ))={\cal L}(f)(\lambda _A(\beta ))$, hence ${\cal L}(f)$ is well defined, that is $f$ fulfills FRet.

Now let $\gamma ,\delta \in {\cal K}(A)$, arbitrary. Then ${\cal L}(f)(\lambda _A(\gamma )\wedge \lambda _A(\delta  ))={\cal L}(f)(\lambda _A([\gamma ,\delta ]_A))=\lambda _B(f^{\bullet }([\gamma ,\delta ]_A))=\lambda _B([f^{\bullet }(\gamma ),f^{\bullet }(\delta )]_B)=\lambda _B(f^{\bullet }(\gamma ))\wedge \lambda _B(f^{\bullet }(\delta ))={\cal L}(f)(\lambda _A(\gamma ))\wedge {\cal L}(f)(\lambda _A(\delta ))$.

\noindent (\ref{admfret1})$\Rightarrow $(\ref{admfret2}): Let $\alpha ,\beta \in {\cal K}(A)$, so that $[\alpha ,\beta ]_A\in {\cal K}(A)$ and $\lambda _B(f^{\bullet }([\alpha ,\beta ]_A))={\cal L}(f)(\lambda _A([\alpha ,\beta ]_A))={\cal L}(f)(\lambda _A(\alpha )\wedge \lambda _A(\beta ))={\cal L}(f)(\lambda _A(\alpha ))\wedge {\cal L}(f)(\lambda _A(\beta ))=\lambda _B(f^{\bullet }(\alpha ))\wedge \lambda _B(f^{\bullet }(\beta ))=\lambda _B([f^{\bullet }(\alpha ),f^{\bullet }(\beta )]_B)$.

\noindent (\ref{admfret2})$\Rightarrow $(\ref{admfret0}): Let $\alpha ,\beta \in {\cal K}(A)$ and $\psi \in {\rm Spec}(B)$. Then $\lambda _B(f^{\bullet }([\alpha ,\beta ]_A))=\lambda _B([f^{\bullet }(\alpha ),f^{\bullet }(\beta )]_B)$, thus $\rho _B(f^{\bullet }([\alpha ,\beta ]_A))$\linebreak $=\rho _B([f^{\bullet }(\alpha ),f^{\bullet }(\beta )]_B)$, so that $V_B(f^{\bullet }([\alpha ,\beta ]_A))=V_B([f^{\bullet }(\alpha ),f^{\bullet }(\beta )]_B)$, therefore, since $(f^{\bullet },f^*)$ is an adjoint pair: $[\alpha ,\beta ]_A\subseteq f^*(\psi )$ iff $f^{\bullet }([\alpha ,\beta ]_A)\subseteq \psi $ iff $[f^{\bullet }(\alpha ),f^{\bullet }(\beta )]_B\subseteq \psi $ iff $f^{\bullet }(\alpha )\subseteq \psi $ or $f^{\bullet }(\beta )\subseteq \psi $ iff $\alpha \subseteq f^*(\psi )$ or $\beta \subseteq f^*(\psi )$. By Lemma \ref{charspec}, it follows that $f^*(\psi )\in {\rm Spec}(A)$, hence $f$ is admissible.\end{proof}

\begin{corollary} If $f^{\bullet }([\alpha ,\beta ]_A)=[f^{\bullet }(\alpha ),f^{\bullet }(\beta )]_B$ for all $\alpha ,\beta \in {\cal K}(A)$, then $f$ satisfies FRet and ${\cal L}(f)$ is a lattice morphism. The converse does not hold.\label{suffret}\end{corollary}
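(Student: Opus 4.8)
The plan is to read the statement off Theorem~\ref{admfret}: the corollary is essentially the observation that its hypothesis is strictly stronger than condition (\ref{admfret2}) of that theorem, since the latter only demands equality of the two congruences \emph{modulo} $\equiv _B$, that is, equality of their radicals $\rho _B$.

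First I would assume $f^{\bullet }([\alpha ,\beta ]_A)=[f^{\bullet }(\alpha ),f^{\bullet }(\beta )]_B$ for all $\alpha ,\beta \in {\cal K}(A)$ and apply the canonical surjection $\lambda _B$ to both sides, obtaining $\lambda _B(f^{\bullet }([\alpha ,\beta ]_A))=\lambda _B([f^{\bullet }(\alpha ),f^{\bullet }(\beta )]_B)$ for all $\alpha ,\beta \in {\cal K}(A)$, which is precisely condition (\ref{admfret2}) of Theorem~\ref{admfret}. By the implication (\ref{admfret2})$\Rightarrow $(\ref{admfret1}) of that theorem, $f$ satisfies FRet and ${\cal L}(f)$ preserves the meet; since, by Proposition~\ref{closediagr}, ${\cal L}(f)$ automatically preserves ${\bf 0}$ and the join, it is a lattice morphism, as claimed. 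There is no real obstacle in this direction, everything having been packaged into Theorem~\ref{admfret}.

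For the second assertion --- that the converse fails --- the substantive point is to locate the gap between genuine equality of congruences and equality of their radicals. A witness must be a morphism $f:A\rightarrow B$ that is admissible (equivalently, by Theorem~\ref{admfret}, satisfies FRet with ${\cal L}(f)$ a lattice morphism) but for which $f^{\bullet }([\alpha ,\beta ]_A)\neq [f^{\bullet }(\alpha ),f^{\bullet }(\beta )]_B$ for some $\alpha ,\beta \in {\cal K}(A)$, while still $\rho _B(f^{\bullet }([\alpha ,\beta ]_A))=\rho _B([f^{\bullet }(\alpha ),f^{\bullet }(\beta )]_B)$. By Lemma~\ref{distribsuffret}, if the commutator of $B$ coincided with the intersection (in particular if $\C $ were congruence--distributive), then ${\cal L}(f)$ preserving the meet would be \emph{equivalent} to the displayed equality on ${\cal K}(A)$, so no counterexample exists there; hence the algebra $B$ must be chosen so that $\equiv _B\neq \Delta _{{\rm Con}(B)}$, and the phenomenon is genuinely one of general universal algebra. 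I would exhibit such an example in Section~\ref{examples}, in the same spirit as the examples used for the neighbouring remarks, which is the only non--routine ingredient of the proof.
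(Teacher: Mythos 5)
Your direct implication is exactly the paper's argument: apply $\lambda _B$ to the hypothesized equality to land in condition (iii) of Theorem~\ref{admfret}, and read off FRet together with preservation of the meet (the ${\bf 0}$ and the join coming for free from Proposition~\ref{closediagr}). That part is complete. Your analysis of where a counterexample to the converse must live is also correct: since $\lambda _B$ identifies congruences precisely up to $\rho _B$, the converse can only fail when $\equiv _B\neq \Delta _{{\rm Con}(B)}$, and Lemma~\ref{distribsuffret} rules out the congruence--distributive setting.

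The genuine gap is that ``the converse does not hold'' is an existence claim, and you stop at describing the required features of a witness without producing one; this is the entire content of the second sentence of the corollary. The paper's witness is the embedding $i_{T,U}:T\rightarrow U$ of Example~\ref{tip20}. There ${\rm Spec}(T)={\rm Spec}(U)=\emptyset $, so ${\cal L}(T)\cong {\cal L}(U)\cong {\cal L}_1$ and FRet together with the lattice--morphism property of ${\cal L}(i_{T,U})$ hold vacuously (every map between one--element lattices is a lattice isomorphism), while $i_{T,U}^{\bullet }([\theta ,\theta ]_T)=i_{T,U}^{\bullet }(\Delta _T)=\Delta _U\neq \delta =[\alpha ,\alpha ]_U=[i_{T,U}^{\bullet }(\theta ),i_{T,U}^{\bullet }(\theta )]_U$. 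Note that this example is even more degenerate than the scenario you sketch: rather than exploiting a subtle difference between a congruence and its radical, it collapses both reticulations entirely, so that admissibility carries no information at all about $f^{\bullet }$ and the commutator. Without some such explicit construction your proposal establishes only the first sentence of the corollary.
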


\begin{proof} By Theorem \ref{admfret}, the direct implication holds. Example \ref{tip20} disproves the converse.\end{proof}

\begin{lemma}{\rm \cite[Corollary $7.4$]{gulo}} If $\C $ is congruence--distributive and has the CIP, in particular if $\C $ is congruence--distributive and has the PIP, then every morphism in $\C $ is admissible.\label{cipalladm}\end{lemma}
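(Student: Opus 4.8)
The plan is to reduce admissibility of an arbitrary morphism $f:A\rightarrow B$ in $\C $ to a single congruence--theoretic identity, and then establish that identity using the structure theory of congruence--distributive varieties. Since $\C $ is congruence--distributive, the commutator coincides with the intersection of congruences in every member of $\C $; hence, by the last item of Lemma \ref{distribsuffret}, $f$ fulfils FRet and ${\cal L}(f)$ preserves the meet if and only if $f^{\bullet }(\alpha \cap \beta )=f^{\bullet }(\alpha )\cap f^{\bullet }(\beta )$ for all $\alpha ,\beta \in {\cal K}(A)$. By Theorem \ref{admfret}, $f$ is admissible precisely when ${\cal L}(f)$ is a lattice morphism, i.e. precisely when ${\cal L}(f)$ preserves the meet; so the statement reduces to the identity
\[f^{\bullet }(\alpha \cap \beta )=f^{\bullet }(\alpha )\cap f^{\bullet }(\beta )\qquad (\alpha ,\beta \in {\cal K}(A)).\]
(One can also argue directly: for $\psi \in {\rm Spec}(B)$ one has $f^*(\psi )\neq \nabla _A$ --- otherwise $f(A^2)\subseteq \psi $, so the image of the subalgebra $f(A)$ in $B/\psi $ is a one--element subalgebra, forcing $\psi =\nabla _B$ once $\C $ is semi--degenerate, as in the setting of interest --- and then, via the adjunction $(f^{\bullet },f^*)$ of Remark \ref{fbullet}, Lemma \ref{charspec}, and the identity above, one checks that $f^*(\psi )\in {\rm Spec}(A)$.) The inclusion $\subseteq $ in the identity is automatic from the monotonicity of $f^{\bullet }$; the work is in $\supseteq $.

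To prove the identity, write $f=i\circ p$, where $p=p_{{\rm Ker}(f)}:A\rightarrow A/{\rm Ker}(f)$ is the canonical surjection and $i:A/{\rm Ker}(f)\cong f(A)\hookrightarrow B$ is the inclusion of the subalgebra $S:=f(A)$; then $f^{\bullet }=i^{\bullet }\circ p^{\bullet }$ by Remark \ref{fbullet}. Since ${\rm Con}(A)$ is distributive, $(\alpha \cap \beta )\vee {\rm Ker}(f)=(\alpha \vee {\rm Ker}(f))\cap (\beta \vee {\rm Ker}(f))$, and, as $\theta \mapsto \theta /{\rm Ker}(f)$ is a lattice isomorphism from $[{\rm Ker}(f))$ onto ${\rm Con}(A/{\rm Ker}(f))$, it follows that $p^{\bullet }$ preserves finite meets (and it carries ${\cal K}(A)$ into ${\cal K}(A/{\rm Ker}(f))$ by Lemma \ref{fcg}); so it remains to prove that $i^{\bullet }$ preserves finite meets of compact congruences, i.e. that $Cg_B(\gamma \cap \delta )=Cg_B(\gamma )\cap Cg_B(\delta )$ for all $\gamma ,\delta \in {\cal K}(S)$.

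Each such $\gamma ,\delta $ is a finite join of principal congruences of $S$; since $i^{\bullet }=Cg_B(\cdot )$ preserves arbitrary joins (Remark \ref{fbullet}) and ${\rm Con}(S)$, ${\rm Con}(B)$ are distributive, distributing the meet over these joins reduces the claim to two principal congruences: for $a,b,c,d\in S$,
\[Cg_B(Cg_S(a,b)\cap Cg_S(c,d))=Cg_B(a,b)\cap Cg_B(c,d),\]
the CIP being used to ensure that $Cg_S(a,b)\cap Cg_S(c,d)$ is again a finitely generated congruence of $S$, say $Cg_S(g_1,h_1)\vee \cdots \vee Cg_S(g_r,h_r)$ --- and in fact principal when $\C $ has the PIP --- so that the left--hand side equals $Cg_B(g_1,h_1)\vee \cdots \vee Cg_B(g_r,h_r)$. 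Here, too, $\subseteq $ is immediate from monotonicity, and the reverse inclusion $Cg_B(a,b)\cap Cg_B(c,d)\subseteq Cg_B(Cg_S(a,b)\cap Cg_S(c,d))$ is the heart of the matter.

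I expect this last inclusion to be the main obstacle: it does not follow from the distributivity of the congruence lattices alone, but requires the structural description of principal congruences available in a congruence--distributive variety --- J\'onsson terms, equivalently Baker's principal congruence formula --- which bounds, uniformly over $\C $, the length of the Maltsev chains witnessing $(x,y)\in Cg_B(a,b)$, together with the compatibility of that description with the subalgebra $S$, so that the identity $Cg_S(a,b)\cap Cg_S(c,d)=Cg_S(g_1,h_1)\vee \cdots \vee Cg_S(g_r,h_r)$ can be lifted from $S$ up to $B$. Everything else is bookkeeping with the adjunction $(f^{\bullet },f^*)$, with Theorem \ref{admfret} and Lemma \ref{charspec}, and with congruence--distributivity.
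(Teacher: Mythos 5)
Your reduction is reasonable, and you have correctly located where the content of the lemma lives: everything comes down to $f^{\bullet }$ preserving binary meets of compact congruences, which (after splitting $f$ through its image and using distributivity to handle the quotient part and the finite joins) comes down to $Cg_B(a,b)\cap Cg_B(c,d)\subseteq Cg_B(Cg_S(a,b)\cap Cg_S(c,d))$ for a subalgebra $S$ of $B$. But you do not prove this inclusion --- you only announce that it is ``the heart of the matter'' and gesture at J\'onsson terms and Baker's principal congruence formula. That is a genuine gap, not a routine omission, and the tools you name cannot close it: J\'onsson terms exist in \emph{every} congruence--distributive variety, yet the inclusion is false in the variety of all lattices. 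Indeed, Example \ref{pentagon} of this very paper exhibits the embedding $i_{{\cal L}_2^2,{\cal M}_3}$, for which $i_{{\cal L}_2^2,{\cal M}_3}^{\bullet }(\phi \cap \psi )=\Delta _{{\cal M}_3}\neq \nabla _{{\cal M}_3}=i_{{\cal L}_2^2,{\cal M}_3}^{\bullet }(\phi )\cap i_{{\cal L}_2^2,{\cal M}_3}^{\bullet }(\psi )$ and $i_{{\cal L}_2^2,{\cal M}_3}^*(\Delta _{{\cal M}_3})=\Delta _{{\cal L}_2^2}\notin {\rm Spec}({\cal L}_2^2)$; here both algebras are finite, so $Cg_S(0,a)\cap Cg_S(0,b)=\Delta _S$ is certainly compact in $S$, and yet your key inclusion fails. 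The only hypothesis missing in that example is the CIP of the \emph{whole variety}, so any correct proof must use the CIP globally, not merely (as in your sketch) to guarantee that the intersection of two principal congruences of $S$ is finitely generated in $S$.

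Concretely, the standard route --- and, in effect, the one this paper takes for the analogous statement about commutators --- is via the theorem that a congruence--distributive variety has the CIP if and only if it admits a system of congruence intersection terms $(p_i,q_i)_{i\in I}$ with $Cg_M(a,b)\cap Cg_M(c,d)=\bigvee _{i\in I}Cg_M(p_i^M(a,b,c,d),q_i^M(a,b,c,d))$ uniformly over the variety; once such terms are available, $f^{\bullet }$ preserves principal meets because it preserves joins and commutes with term operations (exactly the computation in the Proposition on congruence commutator terms), and your remaining bookkeeping then goes through. Without that equivalence (or a direct argument playing the same role), the proof is missing its essential step. A smaller issue: you invoke Theorem \ref{admfret}, which is proved under the standing hypotheses $\nabla _A\in {\cal K}(A)$, $\nabla _B\in {\cal K}(B)$; these are not assumed in the lemma, so you should rely on your parenthetical direct argument via the adjunction and Lemma \ref{charspec} instead --- and there the step $f^*(\psi )\neq \nabla _A$ also silently imports semi--degeneracy.
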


\begin{proposition} If $\C $ is congruence--distributive and has the CIP, in particular if $\C $ is congruence--distributive and has the PIP, then $f$ fulfills FRet and $f^{\bullet }:{\cal K}(A)\rightarrow {\cal K}(B)$ and ${\cal L}(f):{\cal L}(A)\rightarrow {\cal L}(B)$ are lattice morphisms, so that, if $\C $ is also semi--degenerate, then ${\cal L}$ is a functor from $\C $ to the variety of distributive lattices.

If, moreover, $\C $ is a congruence--distributive variety with $\vec{0}$ and $\vec{1}$ and the CIP, then ${\cal L}$ is a functor from $\C $ to the variety of bounded distributive lattices.\label{cipnedpc}\end{proposition}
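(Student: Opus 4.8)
The plan is to reduce the statement to results already in hand. First I would check that the global hypotheses under which Sections~\ref{reticulatia} and~\ref{fret} operate are met for every member of $\C$: since $\C$ is congruence--distributive, the commutator coincides with the intersection of congruences in each member $M$ of $\C$, so ${\rm Con}(M)$ is a frame and $[\cdot,\cdot]_M$ is in particular commutative and distributive w.r.t.\ arbitrary joins; the CIP says precisely that ${\cal K}(M)$ is closed under finite intersections, that is under the commutator; and, when $\C$ is moreover semi--degenerate, $\nabla_M\in{\cal K}(M)$ for every $M$. Hence the reticulation ${\cal L}(M)$ and all the constructions of Section~\ref{fret} are available, with $\rho_M=id_{{\rm Con}(M)}$, so that ${\cal L}(M)$ may be taken to be the bounded sublattice ${\cal K}(M)$ of ${\rm Con}(M)$.

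For the morphism $f$ itself I would argue as follows. By Lemma~\ref{cipalladm}, $f$ is admissible, since $\C$ is congruence--distributive and has the CIP. By the equivalence (\ref{admfret0})$\Leftrightarrow$(\ref{admfret1}) of Theorem~\ref{admfret}, $f$ then satisfies FRet and ${\cal L}(f)$ preserves the meet; together with Proposition~\ref{closediagr}, which already gives that ${\cal L}(f)$ preserves ${\bf 0}$ and the join, this makes ${\cal L}(f):{\cal L}(A)\rightarrow{\cal L}(B)$ a lattice morphism. For $f^{\bullet}$, recall that $f^{\bullet}$ preserves arbitrary joins (Remark~\ref{fbullet}) and, by Lemma~\ref{fcg}, restricts to a map ${\cal K}(A)\rightarrow{\cal K}(B)$; since the commutators of $A$ and $B$ equal the intersection, the last bullet of Lemma~\ref{distribsuffret} turns the fact that ${\cal L}(f)$ preserves the meet into the equality $f^{\bullet}(\alpha\cap\beta)=f^{\bullet}(\alpha)\cap f^{\bullet}(\beta)$ for all $\alpha,\beta\in{\cal K}(A)$, whence $f^{\bullet}\mid_{{\cal K}(A)}$ is a lattice morphism as well.

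To obtain the functor, I would now assume $\C$ semi--degenerate, so that every member of $\C$ carries a reticulation by the first paragraph, and apply the second paragraph to every morphism of $\C$: all morphisms satisfy FRet with lattice--morphism images. Combined with ${\cal L}(id_A)=id_{{\cal L}(A)}$ and the composition law ${\cal L}(g\circ f)={\cal L}(g)\circ{\cal L}(f)$ from Proposition~\ref{compfret} (and the corresponding preservation of the meet there), this exhibits ${\cal L}$ as a functor from $\C$ to the variety of distributive lattices. Finally, if $\C$ is a congruence--distributive variety with $\vec{0}$ and $\vec{1}$ and the CIP, then $\C$ is semi--degenerate, so the previous conclusion applies; moreover, by part~(\ref{closediagr1}) of Proposition~\ref{closediagr}, ${\cal L}(f)$ preserves the ${\bf 1}$ for every morphism $f$ (since $\C$ is a variety with $\vec{0}$ and $\vec{1}$), hence is a bounded lattice morphism, and therefore ${\cal L}$ is a functor from $\C$ to the variety ${\cal D}{\bf 01}$ of bounded distributive lattices.

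The main obstacle is, honestly, not a conceptual one: the argument is a bookkeeping assembly of Lemma~\ref{cipalladm}, Theorem~\ref{admfret}, Lemmas~\ref{fcg} and~\ref{distribsuffret}, and Propositions~\ref{closediagr} and~\ref{compfret}. The one point requiring a little care is the verification, in the first paragraph, that congruence--distributivity together with the CIP and (for the functor statements) semi--degeneracy really do force the standing hypotheses of Sections~\ref{reticulatia} and~\ref{fret} to hold throughout $\C$, so that ${\cal L}$ is defined on the whole category and the composition law may be applied uniformly.
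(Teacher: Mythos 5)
Your proposal is correct and follows essentially the same route as the paper: admissibility of $f$ via Lemma \ref{cipalladm}, then Theorem \ref{admfret} to get FRet and meet--preservation of ${\cal L}(f)$, with the statement about $f^{\bullet}\mid_{{\cal K}(A)}$ obtained from the fact that, in the congruence--distributive case with the CIP, $\lambda_A$ and $\lambda_B$ are lattice isomorphisms between ${\cal K}(\cdot)$ and ${\cal L}(\cdot)$ (which is exactly what the last bullet of Lemma \ref{distribsuffret} encodes). The additional bookkeeping you supply — verifying the standing hypotheses across $\C$, invoking Proposition \ref{compfret} for functoriality, and Proposition \ref{closediagr}(\ref{closediagr1}) for preservation of ${\bf 1}$ — is left implicit in the paper but is the intended argument.
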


\begin{proof} By Lemma \ref{cipalladm} and Theorem \ref{admfret}, $f$ fulfills FRet and ${\cal L}(f):{\cal L}(A)\rightarrow {\cal L}(B)$ is a lattice morphism, so that $f^{\bullet }:{\cal K}(A)\rightarrow {\cal K}(B)$ is a lattice morphism since, in this particular case, ${\cal K}(A)$ and ${\cal K}(B)$ are sublattices of ${\rm Con}(A)$ and ${\rm Con}(B)$, respectively, and $\lambda _A:{\cal K}(A)\rightarrow {\cal L}(A)$ and $\lambda _B:{\cal K}(B)\rightarrow {\cal L}(B)$ are lattice isomorphisms.\end{proof}

\begin{remark} If $f$ satisfies FRet and $f^{\bullet }\mid 
_{{\cal K}(A)}:{\cal K}(A)\rightarrow {\cal K}(B)$ is surjective, then, by the surjectivity of $\lambda _B:{\cal K}(B)\rightarrow {\cal L}(B)$, it follows that ${\cal L}(f)\circ \lambda _A=\lambda _B\circ f^{\bullet }$ is surjective, hence ${\cal L}(f):{\cal L}(A)\rightarrow {\cal L}(B)$ is surjective.\end{remark}

\begin{lemma}\begin{enumerate}
\item\label{surjfret} If $f$ is surjective, then $f$ satisfies FRet and ${\cal L}(f)$ is a bounded lattice morphism.
\item\label{surjcg} If $f$ is surjective, then $f^{\bullet }:{\rm Con}(A)\rightarrow {\rm Con}(B)$, $f^{\bullet }\mid 
_{{\cal K}(A)}:{\cal K}(A)\rightarrow {\cal K}(B)$ and $f^{\bullet }\mid _{{\rm PCon}(A)}:{\rm PCon}(A)\rightarrow {\rm PCon}(B)$ are surjective.
\item\label{cgsurjk1} If $f^{\bullet }:{\rm Con}(A)\rightarrow {\rm Con}(B)$ is surjective, then $f^{\bullet }\mid 
_{{\cal K}(A)}:{\cal K}(A)\rightarrow {\cal K}(B)$ is surjective, so, if, additionally, $f$ satisfies FRet, then ${\cal L}(f):{\cal L}(A)\rightarrow {\cal L}(B)$ is surjective.
\item\label{cgsurjk2} If ${\cal C}$ is congruence--distributive and $f^{\bullet }:{\rm Con}(A)\rightarrow {\rm Con}(B)$ is surjective, then $f$ satisfies FRet and ${\cal L}(f):{\cal L}(A)\rightarrow {\cal L}(B)$ is surjective.\end{enumerate}\label{cgsurjk}\end{lemma}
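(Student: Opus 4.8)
The plan is to dispatch the four items in the stated order, relying throughout on the adjunction $(f^{\bullet },f^*)$ from Remark \ref{fbullet}, the congruence--generation identities of Lemma \ref{fcg}, Theorem \ref{admfret}, the remark from Section \ref{preliminaries} that every surjective morphism is admissible, and the remark immediately preceding the lemma (that FRet together with surjectivity of $f^{\bullet }\mid _{{\cal K}(A)}$ forces surjectivity of ${\cal L}(f)$).

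For item (\ref{surjfret}) I would first note that a surjective $f$ is admissible, so the implication (\ref{admfret0})$\Rightarrow $(\ref{admfret1}) of Theorem \ref{admfret} immediately gives that $f$ satisfies FRet and that ${\cal L}(f)$ preserves the meet; since Proposition \ref{closediagr} guarantees that ${\cal L}(f)$ preserves ${\bf 0}$ and joins, and its part (\ref{closediagr1}) gives preservation of ${\bf 1}$ because $f$ is surjective, ${\cal L}(f)$ is then a bounded lattice morphism. If one wishes to avoid the appeal to admissibility, FRet can be verified directly: for $\alpha \in {\cal K}(A)$ one has $f^{\bullet }(\alpha )=f(\alpha \vee {\rm Ker}(f))$ by Lemma \ref{fcg}, and transporting along the isomorphism $A/{\rm Ker}(f)\cong B$, using the identity $\rho _{A/\theta }((\alpha \vee \theta )/\theta )=\rho _A(\alpha \vee \theta )/\theta $ with $\theta ={\rm Ker}(f)$ and the fact that $\rho _A$ preserves joins, shows that $\rho _A(\alpha )=\rho _A(\beta )$ forces $\rho _B(f^{\bullet }(\alpha ))=\rho _B(f^{\bullet }(\beta ))$; preservation of the meet then follows by a short computation with radicals using the distributivity of ${\rm RCon}(A)\cong {\rm Con}(A)/\!\equiv _A$.

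For item (\ref{surjcg}) I would use that, by Lemma \ref{fcg}, $f^{\bullet }(Cg_A(X))=Cg_B(f(X))$ for every $X\subseteq A^2$. Since $f$ is onto, so is $f\times f:A^2\rightarrow B^2$, hence every subset of $B^2$ is $f(X)$ for some $X\subseteq A^2$, and $X$ can be chosen finite, respectively a singleton, when the target set is; taking $X$ finite resp. principal then yields surjectivity of $f^{\bullet }\mid _{{\cal K}(A)}$ resp. of $f^{\bullet }\mid _{{\rm PCon}(A)}$. For $f^{\bullet }:{\rm Con}(A)\rightarrow {\rm Con}(B)$ I would observe that $f^{\bullet }(f^*(\psi ))=Cg_B(f(f^*(\psi )))=Cg_B(\psi \cap f(A^2))=Cg_B(\psi )=\psi $ for every $\psi \in {\rm Con}(B)$, since $f(A^2)=\nabla _B$, so that $f^*$ is a section of $f^{\bullet }$.

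For items (\ref{cgsurjk1}) and (\ref{cgsurjk2}): for (\ref{cgsurjk1}), given $\delta \in {\cal K}(B)$, I would pick $\theta \in {\rm Con}(A)$ with $f^{\bullet }(\theta )=\delta $; since ${\rm Con}(A)$ is algebraic, $\theta =\bigvee \{\gamma \in {\cal K}(A)\ |\ \gamma \subseteq \theta \}$, and because $f^{\bullet }$ preserves arbitrary joins (Remark \ref{fbullet}) and maps ${\cal K}(A)$ into ${\cal K}(B)$ (Lemma \ref{fcg}), $\delta $ becomes the join of a family of compact congruences of $B$ each below $\delta $, hence, by compactness of $\delta $, a finite subjoin $f^{\bullet }(\gamma _1)\vee \dots \vee f^{\bullet }(\gamma _n)=f^{\bullet }(\gamma _1\vee \dots \vee \gamma _n)$ with $\gamma _1\vee \dots \vee \gamma _n\in {\cal K}(A)$, so $f^{\bullet }\mid _{{\cal K}(A)}$ is onto; if moreover $f$ satisfies FRet, the remark preceding the lemma then yields that ${\cal L}(f)$ is onto. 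For (\ref{cgsurjk2}), in a congruence--distributive $\C $ the commutator of $A$ coincides with the intersection, so $f$ satisfies FRet by Lemma \ref{distribsuffret}, and, combined with the hypothesis that $f^{\bullet }$ is onto, item (\ref{cgsurjk1}) then gives that ${\cal L}(f)$ is onto. I expect the only step requiring real care to be this passage in (\ref{cgsurjk1}) from surjectivity of $f^{\bullet }$ on the whole congruence lattices to surjectivity on the compact parts, which is exactly the standard argument that a compact element below a join lies below a finite subjoin, applied to the join--preserving map $f^{\bullet }$ and the algebraic lattice ${\rm Con}(A)$; everything else reduces to routine applications of the adjunction, Lemma \ref{fcg} and Theorem \ref{admfret}.
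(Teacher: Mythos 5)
Your proposal is correct and follows essentially the same route as the paper: item (\ref{surjfret}) via admissibility of surjective morphisms, Theorem \ref{admfret} and Proposition \ref{closediagr}; item (\ref{surjcg}) via Lemma \ref{fcg} and join--preservation of $f^{\bullet }$; item (\ref{cgsurjk1}) via the standard compactness argument applied to the join--preserving $f^{\bullet }$ on the algebraic lattice ${\rm Con}(A)$; and item (\ref{cgsurjk2}) via Lemma \ref{distribsuffret}. The only (harmless) deviations are cosmetic: in (\ref{surjcg}) you exhibit $f^*$ as a section of $f^{\bullet }$ where the paper decomposes $\beta $ into principal congruences, and in (\ref{cgsurjk1}) you use compact rather than principal congruences below $\theta $.
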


\begin{proof} (\ref{surjfret}) By Proposition \ref{closediagr}, (\ref{closediagr1}), Theorem \ref{admfret} and the fact that all surjective morphisms are admissible.

\noindent (\ref{surjcg}) By Lemma \ref{fcg}, for all $a,b\in A$ and any $\beta \in {\rm Con}(B)$, we have $f^{\bullet }(Cg_A(a,b))=Cg_B(f(a),f(b))$ and $\displaystyle \beta =\bigvee _{(x,y)\in \beta }Cg_B(x,y)$, which, along with the fact that $f^{\bullet }$ preserves arbitrary joins and the surjectivity of $f$, proves that $f^{\bullet }({\rm Con}(A))={\rm Con}(B)$, $f^{\bullet }({\cal K}(A))={\cal K}(B)$ and $f^{\bullet }({\rm PCon}(A))={\rm PCon}(B)$.

\noindent (\ref{cgsurjk1}) Let $\beta \in {\cal K}(B)$. Since $f^{\bullet }:{\rm Con}(A)\rightarrow {\rm Con}(B)$ is surjective, it follows that there exists an $\alpha \in {\rm Con}(A)$ such that $\displaystyle \beta =f^{\bullet }(\alpha )=f^{\bullet }(\bigvee _{(a,b)\in \alpha }Cg_A(a,b))=\bigvee _{(a,b)\in \alpha }f^{\bullet }(Cg_A(a,b))$, hence, for some $n\in \N ^*$ and some $(a_1,b_1),\ldots ,(a_n,b_n)\in \alpha $, $\displaystyle \beta =\bigvee _{i=1}^nf^{\bullet }(Cg_A(a_i,b_i))=f^{\bullet }(Cg_A(\{(a_1,b_1),\ldots ,(a_n,b_n)\}))\in f^{\bullet }({\cal K}(A))$. Therefore $f^{\bullet }\mid 
_{{\cal K}(A)}:{\cal K}(A)\rightarrow {\cal K}(B)$ is surjective.

\noindent (\ref{cgsurjk2}) By (\ref{cgsurjk1}) and Lemma \ref{distribsuffret}.\end{proof}

\begin{remark} By Lemma \ref{cgsurjk}, (\ref{surjcg}), if $f$ is surjective, then, if ${\cal K}(A)={\rm Con}(A)$ or ${\rm PCon}(A)={\rm Con}(A)$ or ${\rm PCon}(A)={\cal K}(A)$ or $A$ is simple, then ${\cal K}(B)={\rm Con}(B)$ or ${\rm PCon}(B)={\rm Con}(B)$ or ${\rm PCon}(B)={\cal K}(B)$ or $B$ is simple, respectively.

Indeed, if ${\cal K}(A)={\rm Con}(A)$, then ${\cal K}(B)=f({\cal K}(A))=f({\rm Con}(A))={\rm Con}(B)$, and analogously for the next two statements. The fact that $f^{\bullet }(\Delta _A)=\Delta _B$ and, since $f$ is surjective, $f^{\bullet }(\nabla _A)=\nabla _B$, gives us the last statement.\end{remark}

\begin{remark} Recall that a complete lattice has all elements compact iff it satisfies the Ascending Chain Condition (ACC). Thus ${\cal K}(A)={\rm Con}(A)$ iff ${\rm Cp}({\rm Con}(A))={\rm Con}(A)$ iff ${\rm Con}(A)$ satisfies the Ascending Chain Condition, which holds, in particular, if ${\rm Con}(A)$ has finite height, in particular if ${\rm Con}(A)$ is finite, for instance if $A$ is finite or simple.

If the commutator of $A$ equals the intersection, in particular if $\C $ is congruence--distributive, then ${\cal K}(A)={\rm Cp}({\rm Con}(A))$ is a sublattice of ${\rm Con}(A)$ with all elements compact and ${\cal L}(A)\cong {\cal K}(A)$, thus ${\cal L}(A)={\rm Cp}({\cal L}(A))$, i.e. ${\cal L}(A)$ has all elements compact, that is ${\cal L}(A)$ satisfies the ACC, according to the above.\label{reticacc}\end{remark}

\begin{proposition} ${\cal L}$ preserves surjectivity; more precisely, if $f$ is surjective, then $f$ fulfills FRet and ${\cal L}(f):{\cal L}(A)\rightarrow {\cal L}(B)$ is a surjective lattice morphism.\end{proposition}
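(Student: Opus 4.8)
The plan is to assemble this statement directly from the components already proved in Lemma \ref{cgsurjk}, so the ``proof'' is really a bookkeeping exercise. First I would invoke Lemma \ref{cgsurjk}, part (\ref{surjfret}): since $f$ is surjective, $f$ satisfies FRet and the induced map ${\cal L}(f):{\cal L}(A)\rightarrow {\cal L}(B)$ is a bounded lattice morphism; in particular it is a lattice morphism, as required. So the only thing left to check is the surjectivity of ${\cal L}(f)$.

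For surjectivity I would use Lemma \ref{cgsurjk}, part (\ref{surjcg}), which tells us that, when $f$ is surjective, the restriction $f^{\bullet }\mid _{{\cal K}(A)}:{\cal K}(A)\rightarrow {\cal K}(B)$ is surjective. Combining this with the commutativity of the defining square of Proposition \ref{closediagr}, namely ${\cal L}(f)\circ \lambda _A=\lambda _B\circ (f^{\bullet }\mid _{{\cal K}(A)})$, together with the surjectivity of $\lambda _B:{\cal K}(B)\rightarrow {\cal L}(B)$, we get that the composite ${\cal L}(f)\circ \lambda _A$ is surjective, which forces ${\cal L}(f)$ itself to be surjective. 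This is precisely the content of the remark preceding Lemma \ref{cgsurjk}, and it is also the conclusion one obtains by feeding part (\ref{surjcg}) into part (\ref{cgsurjk1}) of that lemma.

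There is no genuine obstacle at this point: the substantive work was already done earlier. The fact that a surjective $f$ satisfies FRet rests on surjective morphisms being admissible together with Theorem \ref{admfret}, and the fact that $f^{\bullet }$ maps ${\cal K}(A)$ onto ${\cal K}(B)$ rests on Lemma \ref{fcg} (computation of $f^{\bullet }$ on principal congruences) and the preservation of arbitrary joins by $f^{\bullet }$ (Remark \ref{fbullet}(ii)). The present proposition simply packages these facts into the form most convenient for later use, namely that ${\cal L}$ carries surjective morphisms in $\C $ to surjective lattice morphisms between reticulations.
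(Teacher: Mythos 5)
Your proposal is correct and follows essentially the same route as the paper, whose proof is exactly the citation of Lemma \ref{cgsurjk}, parts (\ref{surjfret}), (\ref{surjcg}) and (\ref{cgsurjk1}). Your explicit use of the commutative square ${\cal L}(f)\circ \lambda _A=\lambda _B\circ f^{\bullet }\mid _{{\cal K}(A)}$ together with the surjectivity of $\lambda _B$ is just the content of part (\ref{cgsurjk1}) (and of the remark preceding the lemma), so the two arguments coincide.
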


\begin{proof} By Lemma \ref{cgsurjk}, (\ref{surjfret}), (\ref{surjcg}) and (\ref{cgsurjk1}).\end{proof}

\begin{remark} If the commutator of $A$ equals the intersection, ${\rm Con}(A)$ is a chain and $(f^*)^{-1}(\{\nabla _B\})=\{\nabla _A\}$, then $f$ satisfies FRet and ${\cal L}(f)$ is a lattice morphism.

Indeed, this follows from Theorem \ref{admfret} and the fact that, in this case, $f$ is admissible, since ${\rm Spec}(A)={\rm Con}(A)\setminus \{\nabla _A\}$. See also Lemma \ref{distribsuffret} and \cite{euadm}.\label{corfret}\end{remark}

Let $I$ be a non--empty set and, for each $i\in I$, $p_i$ and $q_i$ be terms over $\tau $ of arity $4$.

Recall that $(p_i,q_i)_{i\in I}$ is a {\em system of congruence intersection terms for $\C $} iff, for any member $M$ of $\C $ and any $a,b,c,d\in M$, $\displaystyle Cg_M(a,b)\cap Cg_M(c,d)=\bigvee _{i\in I}Cg_M(p_i^M(a,b,c,d),q_i^M(a,b,c,d))$ {\rm \cite{agl}}.

By analogy to the previous definition, let us introduce:

\begin{definition} $(p_i,q_i)_{i\in I}$ is a {\em system of congruence commutator terms for $\C $} iff, for any member $M$ of $\C $ and any $a,b,c,d\in M$, $\displaystyle [Cg_M(a,b),Cg_M(c,d)]_M=\bigvee _{i\in I}Cg_M(p_i^M(a,b,c,d),q_i^M(a,b,c,d))$.\end{definition}

\begin{remark} Clearly, if $\C $ is congruence--distributive and admits a finite system of congruence intersection terms, then, in each member $M$ of $\C $, ${\cal K}(M)$ is closed w.r.t. the intersection.

More generally, if $\C $ admits a finite system of congruence commutator terms, then, in each member $M$ of $\C $, ${\cal K}(M)$ is closed w.r.t. the commutator.\label{cgcomm}\end{remark}

\begin{proposition} If $\C $ admits a system of congruence commutator terms, then $f^{\bullet }([\alpha ,\beta ]_A)=[f^{\bullet }(\alpha ),f^{\bullet }(\beta )]_B$ for all $\alpha ,\beta \in {\rm Con}(A)$, in particular $f$ fulfills FRet and ${\cal L}(f)$ is a lattice morphism.\end{proposition}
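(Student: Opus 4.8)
The plan is to reduce the identity $f^{\bullet }([\alpha ,\beta ]_A)=[f^{\bullet }(\alpha ),f^{\bullet }(\beta )]_B$ to the case of principal congruences, and then to evaluate both sides using the system of congruence commutator terms. First I would recall that $f^{\bullet }$ preserves arbitrary joins of congruences (Remark \ref{fbullet}, $(ii)$) and that, by Lemma \ref{fcg}, $f^{\bullet }(Cg_A(a,b))=Cg_B(f(a),f(b))$ for all $a,b\in A$. Writing $\alpha =\bigvee _{(a,b)\in \alpha }Cg_A(a,b)$ and $\beta =\bigvee _{(c,d)\in \beta }Cg_A(c,d)$ and using that $[\cdot ,\cdot ]_A$ and $[\cdot ,\cdot ]_B$ are distributive w.r.t. arbitrary joins, together with the fact that $f^{\bullet }$ preserves joins, both $f^{\bullet }([\alpha ,\beta ]_A)$ and $[f^{\bullet }(\alpha ),f^{\bullet }(\beta )]_B$ become joins, indexed over all $(a,b)\in \alpha $ and $(c,d)\in \beta $, of the terms $f^{\bullet }([Cg_A(a,b),Cg_A(c,d)]_A)$ and $[Cg_B(f(a),f(b)),Cg_B(f(c),f(d))]_B$ respectively. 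Hence it suffices to prove $f^{\bullet }([Cg_A(a,b),Cg_A(c,d)]_A)=[Cg_B(f(a),f(b)),Cg_B(f(c),f(d))]_B$ for arbitrary $a,b,c,d\in A$.

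For this, let $(p_i,q_i)_{i\in I}$ be a system of congruence commutator terms for $\C $. The definition, applied in $A$, gives $[Cg_A(a,b),Cg_A(c,d)]_A=\bigvee _{i\in I}Cg_A(p_i^A(a,b,c,d),q_i^A(a,b,c,d))$; applying $f^{\bullet }$ and using again that it preserves arbitrary joins together with $f^{\bullet }(Cg_A(x,y))=Cg_B(f(x),f(y))$, I obtain $f^{\bullet }([Cg_A(a,b),Cg_A(c,d)]_A)=\bigvee _{i\in I}Cg_B(f(p_i^A(a,b,c,d)),f(q_i^A(a,b,c,d)))$. Since $f$ is a $\tau $--morphism, $f(p_i^A(a,b,c,d))=p_i^B(f(a),f(b),f(c),f(d))$ and likewise for $q_i$, so the right--hand side equals $\bigvee _{i\in I}Cg_B(p_i^B(f(a),f(b),f(c),f(d)),q_i^B(f(a),f(b),f(c),f(d)))$, which, by the defining property of $(p_i,q_i)_{i\in I}$ applied in $B$ to the elements $f(a),f(b),f(c),f(d)$, is precisely $[Cg_B(f(a),f(b)),Cg_B(f(c),f(d))]_B$. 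This settles the principal case, and the reduction above then yields the identity for all $\alpha ,\beta \in {\rm Con}(A)$.

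Finally, the identity holds in particular for all $\alpha ,\beta \in {\cal K}(A)$, so Corollary \ref{suffret} gives that $f$ satisfies FRet and ${\cal L}(f)$ is a lattice morphism. The computation itself is essentially bookkeeping; the only point needing care is the passage to principal congruences, where one must invoke the join--distributivity of the commutator and the join--preservation of $f^{\bullet }$ over \emph{arbitrary} (not merely finite) index sets, and keep the notation straight --- $f$ denoting $f\times f$ on pairs while the same letter denotes the underlying homomorphism on elements --- so that $f(t^A(\bar{x}))=t^B(f(\bar{x}))$ may be applied inside the generators of the principal congruences.
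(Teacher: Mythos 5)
Your proof is correct and follows essentially the same route as the paper's: reduce to principal congruences via join--distributivity of the commutators and join--preservation of $f^{\bullet }$, evaluate the principal case with the commutator terms and the identity $f(t^A(\bar{x}))=t^B(f(\bar{x}))$, and conclude via Corollary \ref{suffret} (the paper invokes Theorem \ref{admfret} directly, of which that corollary is an immediate consequence). The only cosmetic difference is the order of the two steps and your choice of the canonical decomposition $\alpha =\bigvee _{(a,b)\in \alpha }Cg_A(a,b)$ instead of an arbitrary one.
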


\begin{proof}Let $(p_i,q_i)_{i\in I}$ be a system of congruence commutator terms for $\C $.

We first prove that $f^{\bullet }$ preserves the commutator applied to principal congruences. Let $a,b,c,d\in A$. Then, since $f^{\bullet }$ preserves arbitrary joins: $\displaystyle f^{\bullet }([Cg_A(a,b),Cg_A(c,d)]_A)=f^{\bullet }(\bigvee _{i\in I}Cg_A(p_i^A(a,b,c,d),q_i^A(a,b,c,d)))=\bigvee _{i\in I}f^{\bullet }(Cg_A(p_i^A(a,b,c,d),q_i^A(a,b,c,d)))=\bigvee _{i\in I}Cg_B(f(p_i^A(a,b,c,d)),f(q_i^A(a,b,c,d)))=\bigvee _{i\in I}Cg_B(p_i^B(f(a),f(b),f(c),$\linebreak $f(d))),q_i^B(f(a),f(b),f(c),f(d))))=[Cg_B(f(a),f(b)),Cg_B(f(c),f(d))]_B=[f^{\bullet }(Cg_A(a,b)),f^{\bullet }(Cg_A(c,d))]_B$.

Now let $\alpha ,\beta \in {\rm Con}(A)$. Then $\displaystyle \alpha =\bigvee _{j\in J}\alpha _j$ and $\displaystyle \beta =\bigvee _{k\in K}\beta _k$ for some non--empty families $(\alpha _j)_{j\in J}\subseteq {\rm PCon}(A)$ and $(\beta _k)_{k\in K}\subseteq {\rm PCon}(A)$. From the above and the fact that $f^{\bullet }$ preserves arbitrary joins, we obtain: $\displaystyle f^{\bullet }([\alpha ,\beta ]_A)=f^{\bullet }([\bigvee _{j\in J}\alpha _j,\bigvee _{k\in K}\beta _k]_A)=f^{\bullet }(\bigvee _{j\in J}\bigvee _{k\in K}[\alpha _j,\beta _k]_A)=\bigvee _{j\in J}\bigvee _{k\in K}f^{\bullet }([\alpha _j,\beta _k]_A)=\bigvee _{j\in J}\bigvee _{k\in K}[f^{\bullet }(\alpha _j),$\linebreak $\displaystyle f^{\bullet }(\beta _k)]_B=[\bigvee _{j\in J}f^{\bullet }(\alpha _j),\bigvee _{k\in K}f^{\bullet }(\beta _k)]_B=[f^{\bullet }(\bigvee _{j\in J}\alpha _j),f^{\bullet }(\bigvee _{k\in K}\beta _k)]_B=[f^{\bullet }(\alpha ),f^{\bullet }(\beta )]_B$.

Apply Theorem \ref{admfret} for the last statement.\end{proof}

In view of Remark \ref{cgcomm}, we obtain:

\begin{corollary}\begin{itemize}
\item If $\C $ is semi--degenerate and admits a system of congruence commutator terms, then ${\cal L}$ is a functor from $\C $ to the variety of distributive lattices.
\item If $\C $ is a variety with $\vec{0}$ and $\vec{1}$ that admits a system of congruence commutator terms, then ${\cal L}$ is a functor from $\C $ to the variety of bounded distributive lattices.\end{itemize}\end{corollary}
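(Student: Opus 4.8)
The plan is to reduce the corollary to the bundling observation recorded right after Proposition \ref{compfret}: once every morphism of $\C $ fulfills FRet and has a meet--preserving image through ${\cal L}$, the assignment ${\cal L}$ is automatically a covariant functor from $\C $ to the variety of distributive lattices, and it lands in the variety of bounded distributive lattices as soon as all these images preserve the ${\bf 1}$ as well. So I would only need to check, under the hypotheses of the corollary, that (a) ${\cal L}(M)$ is a well--defined bounded distributive lattice for every member $M$ of $\C $, (b) every morphism $f$ of $\C $ fulfills FRet with ${\cal L}(f)$ a lattice morphism, and, for the second assertion, (c) every such ${\cal L}(f)$ preserves the ${\bf 1}$.

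For (a), I would verify the running hypotheses of Sections \ref{reticulatia} and \ref{fret} for an arbitrary $M\in \C $: the commutator $[\cdot ,\cdot ]_M$ is commutative and distributive w.r.t. arbitrary joins (this being the standing context in which congruence commutator terms are considered), $\nabla _M\in {\cal K}(M)$, which is exactly semi--degeneracy by the characterization recalled in Section \ref{preliminaries}, and ${\cal K}(M)$ is closed under $[\cdot ,\cdot ]_M$, which is Remark \ref{cgcomm} applied to the given system of congruence commutator terms; by \cite[Proposition $9$]{retic} this makes each ${\cal L}(M)$ a bounded distributive lattice. For (b), the Proposition immediately preceding the corollary gives $f^{\bullet }([\alpha ,\beta ]_A)=[f^{\bullet }(\alpha ),f^{\bullet }(\beta )]_B$ for all $\alpha ,\beta \in {\rm Con}(A)$, hence, through Theorem \ref{admfret}, $f$ fulfills FRet and ${\cal L}(f)$ is a lattice morphism; together with ${\cal L}(id_M)=id_{{\cal L}(M)}$ and the composition law ${\cal L}(g\circ f)={\cal L}(g)\circ {\cal L}(f)$ of Proposition \ref{compfret}, the bundling observation yields the first assertion. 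For the second assertion, a variety with $\vec{0}$ and $\vec{1}$ is semi--degenerate, so the first assertion applies, and by Proposition \ref{closediagr}(\ref{closediagr1}) every ${\cal L}(f)$ additionally preserves the ${\bf 1}$ and is thus a bounded lattice morphism, so ${\cal L}$ is a functor from $\C $ to the variety of bounded distributive lattices.

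Since the whole argument amounts to stitching together the preceding Proposition with Propositions \ref{closediagr} and \ref{compfret}, there is no real obstacle; the only point needing care is the bookkeeping in step (a) --- in particular, the closure of ${\cal K}(M)$ under the commutator rests on Remark \ref{cgcomm}, which is stated for \emph{finite} systems of congruence commutator terms, so one should either restrict to finite systems or check that closure directly from the compactness of $\alpha ,\beta $ together with the distributivity of the commutator over the finite joins of principal congruences that present them.
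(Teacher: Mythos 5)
Your argument follows exactly the route the paper intends: Remark \ref{cgcomm} for the closure of ${\cal K}(M)$ under the commutator, the immediately preceding Proposition (via Corollary \ref{suffret}/Theorem \ref{admfret}) for FRet and meet--preservation of ${\cal L}(f)$, semi--degeneracy for $\nabla _M\in {\cal K}(M)$, and Propositions \ref{closediagr} and \ref{compfret} for assembling the functor and, in the $\vec{0}$--$\vec{1}$ case, the preservation of ${\bf 1}$. Your closing caveat is a fair catch --- Remark \ref{cgcomm} is stated only for \emph{finite} systems, and your proposed ``direct check'' would still leave $[\alpha ,\beta ]_M$ as a possibly infinite join of principal congruences, so the closure of ${\cal K}(M)$ really does need either a finite system or the section's standing assumption --- but this is an imprecision in the paper's statement rather than a gap in your proof.
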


\begin{corollary}\begin{itemize}
\item If $\C $ is semi--degenerate and congruence--distributive and admits a system of congruence intersection terms, then ${\cal L}$ is a functor from $\C $ to the variety of distributive lattices.
\item If $\C $ is a congruence--distributive variety with $\vec{0}$ and $\vec{1}$ that admits a system of congruence intersection terms, then ${\cal L}$ is a functor from $\C $ to the variety of bounded distributive lattices.\end{itemize}\end{corollary}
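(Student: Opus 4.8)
The plan is to reduce this statement to the preceding corollary by observing that, for a congruence--distributive variety, a system of congruence intersection terms is literally the same thing as a system of congruence commutator terms. So the first step is to recall, via \cite{bj} as noted in Section \ref{preliminaries}, that since $\C$ is congruence--distributive, in every member $M$ of $\C$ the commutator $[\cdot ,\cdot ]_M$ coincides with the intersection of congruences. Consequently $[\cdot ,\cdot ]_M$ is automatically commutative and distributive w.r.t. arbitrary joins, the latter because ${\rm Con}(M)$, being complete, algebraic and distributive, is a frame; together with $\nabla _M\in {\cal K}(M)$ (which is exactly semi--degeneracy), this ensures that the standing hypotheses of Sections \ref{reticulatia} and \ref{fret} are in force.

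Next I would carry out the identification of the two notions of terms. Let $(p_i,q_i)_{i\in I}$ be a system of congruence intersection terms for $\C$. Then, for any member $M$ of $\C$ and any $a,b,c,d\in M$,
\[
[Cg_M(a,b),Cg_M(c,d)]_M=Cg_M(a,b)\cap Cg_M(c,d)=\bigvee _{i\in I}Cg_M(p_i^M(a,b,c,d),q_i^M(a,b,c,d)),
\]
where the first equality holds because in $\C$ the commutator equals the intersection and the second because $(p_i,q_i)_{i\in I}$ is a system of congruence intersection terms. Hence $(p_i,q_i)_{i\in I}$ is a system of congruence commutator terms for $\C$ in the sense of the definition above.

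Finally, I would simply invoke the previous corollary: its first item, applied when $\C$ is semi--degenerate and admits a system of congruence commutator terms, gives that ${\cal L}$ is a functor from $\C$ to the variety of distributive lattices; its second item, applied when $\C$ is a variety with $\vec{0}$ and $\vec{1}$ (hence in particular semi--degenerate) admitting such a system, gives that ${\cal L}$ is a functor from $\C$ to the variety of bounded distributive lattices. There is essentially no obstacle here: the entire content of the proof is the equality ``commutator $=$ intersection'' valid in the congruence--distributive setting, after which the statement is a verbatim specialization of the preceding corollary.
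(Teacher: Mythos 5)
Your proposal is correct and is exactly the argument the paper intends: since the commutator coincides with the intersection in a congruence--distributive variety, a system of congruence intersection terms is a system of congruence commutator terms, and the statement is then a direct specialization of the preceding corollary.
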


Recall that a join--semilattice with smallest element $(L,\vee ,0)$ is said to be {\em dually Brouwerian} iff there exists a binary operation $\dot{-}$ on $L$, called {\em dual relative pseudocomplementation}, such that, for all $a,b,c\in L$, $a\dot{-}b\leq c$ iff $a\leq b\vee c$. In particular, in a dually Brouwerian join--semilattice $(L,\vee ,0)$, we have, for all $a,b\in L$: $a\dot{-}b=0$ iff $a\leq b$.

Following \cite{bj}, we say that $\C $ has {\em equationally definable principal congruences} (abbreviated {\em EDPC}) iff there exist an $n\in \N ^*$ and terms $p_1,\ldots ,p_n,q_1,\ldots ,q_n$ of arity $4$ over $\tau $ such that, for all members $M$ of $\C $ and all $a,b\in M$, $Cg_M(a,b)=\{(c,d)\in M^2\ |\ (\forall \, i\in \overline{1,n})\, (p_i^M(a,b,c,d)=q_i^M(a,b,c,d))\}$.

\begin{theorem}\textup{\cite{blkpgz},\cite{kp}}\begin{enumerate}
\item\label{edpc1} If $\C $ has EDPC, then $\C $ is congruence--distributive.
\item\label{edpc2} $\C $ has EDPC if and only if, for any member $M$ of $\C $, the semilattice $({\cal K}(M),\vee ,\Delta _M)$ is dually Browerian. In this case, if $n\in \N ^*$ and $p_1,q_1,\ldots ,p_n,q_n$ are as above, then, for any member $M$ of $\C $, the operation $\dot{-}$ of the dually Brouwerian semilattice ${\cal K}(M)$ is defined on ${\rm PCon}(M)$ by: $\displaystyle Cg_M(c,d)\dot{-}Cg_M(a,b)=\bigvee _{i=1}^nCg_M(p_i^M(a,b,c,d),q_i^M(a,b,c,d))$ for any $a,b,c,d\in M$.\end{enumerate}\label{edpc}\end{theorem}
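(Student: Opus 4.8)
\noindent The two assertions are classical \cite{blkpgz}, \cite{kp}; below is the route I would take. Throughout I would use the adjunction $(f^{\bullet},f^{*})$ and its behaviour on joins (Remark~\ref{fbullet}), Lemma~\ref{fcg}, and the representation ${\rm Con}(M)\cong{\rm Id}({\cal K}(M))$, valid since ${\rm Con}(M)$ is algebraic, for every $M\in\C$. The plan is to prove the two implications of (\ref{edpc2}) and to read (\ref{edpc1}) off the first.

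\emph{From EDPC to a dually Brouwerian semilattice of compact congruences, with the stated formula.} Starting from EDPC with $4$-ary terms $p_1,q_1,\dots,p_n,q_n$, the first step is the identity: for every $M\in\C$, all $a,b,c,d\in M$ and all $\theta\in{\rm Con}(M)$, $(c,d)\in Cg_M(a,b)\vee\theta$ if and only if $(p_i^M(a,b,c,d),q_i^M(a,b,c,d))\in\theta$ for all $i\in\overline{1,n}$. One obtains it by passing to $M/\theta$, where $(c/\theta,d/\theta)\in Cg_{M/\theta}(a/\theta,b/\theta)$ is rewritten by EDPC and by the fact that terms commute with $p_\theta$, Lemma~\ref{fcg} relating the quotient congruence to $Cg_M(a,b)\vee\theta$. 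Setting $\gamma_{a,b,c,d}:=\bigvee_{i=1}^{n}Cg_M(p_i^M(a,b,c,d),q_i^M(a,b,c,d))\in{\cal K}(M)$, the identity says precisely that $\gamma_{a,b,c,d}$ is the least $\theta\in{\rm Con}(M)$ with $Cg_M(c,d)\subseteq Cg_M(a,b)\vee\theta$. Iterating the principal case along finite joins then gives, for any $\alpha,\beta\in{\cal K}(M)$, a least $\sigma\in{\cal K}(M)$ with $\alpha\subseteq\beta\vee\sigma$ --- a finite join of congruences of the form $\gamma_{\bullet}$ --- so that $({\cal K}(M),\vee,\Delta_M)$ is dually Brouwerian with $Cg_M(c,d)\mathbin{\dot{-}}Cg_M(a,b)=\gamma_{a,b,c,d}$, which is the forward half of (\ref{edpc2}) and the displayed formula at once. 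For (\ref{edpc1}), I would note that the ideal lattice of \emph{any} dually Brouwerian join--semilattice $L$ is distributive: if $a\in I\cap(J\vee K)$, say $a\le j\vee k$ with $j\in J$, $k\in K$, then $a\mathbin{\dot{-}}j\le a$ and $a\mathbin{\dot{-}}j\le k$, so $a\mathbin{\dot{-}}j\in I\cap K$, while $a\mathbin{\dot{-}}(a\mathbin{\dot{-}}j)\le a$ and $\le j$, so it lies in $I\cap J$, and $a\le(a\mathbin{\dot{-}}(a\mathbin{\dot{-}}j))\vee(a\mathbin{\dot{-}}j)$ puts $a$ in $(I\cap J)\vee(I\cap K)$; applying this with $L={\cal K}(M)$ for each $M\in\C$ shows $\C$ congruence--distributive.

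\emph{From dually Brouwerian compact congruences to EDPC.} Conversely, assume $({\cal K}(N),\vee,\Delta_N)$ dually Brouwerian for every $N\in\C$. I would fix an infinite set $X$ with $x_0,x_1,x_2,x_3\in X$, work in $F=F_{\C}(X)$, and put $\delta:=Cg_F(x_2,x_3)\mathbin{\dot{-}}Cg_F(x_0,x_1)\in{\cal K}(F)$; its defining property extends by algebraicity to: $\delta\subseteq\theta$ iff $(x_2,x_3)\in Cg_F(x_0,x_1)\vee\theta$, for all $\theta\in{\rm Con}(F)$. Writing $\delta=\bigvee_{i=1}^{n}Cg_F(u_i,v_i)$ ($\delta$ being compact), the next step is to show that $\delta$ is generated by pairs inside $H:=\langle x_0,x_1,x_2,x_3\rangle$, equivalently $\delta=j^{\bullet}(\delta_H)$ where $j\colon H\hookrightarrow F$ and $\delta_H:=Cg_H(x_2,x_3)\mathbin{\dot{-}}Cg_H(x_0,x_1)=\bigvee_i Cg_H(p_i,q_i)$ for genuinely $4$-ary terms $p_1,q_1,\dots,p_n,q_n$ not depending on $X$. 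Granting this, for an arbitrary $M\in\C$ and $a,b,c,d\in M$ I would pick $X$ large enough and a surjection $h\colon F\to M$ with $h(x_0)=a$, $h(x_1)=b$, $h(x_2)=c$, $h(x_3)=d$; then $h(Cg_F(x_0,x_1)\vee{\rm Ker}(h))=Cg_M(a,b)$ by Lemma~\ref{fcg}, hence $h^{*}(Cg_M(a,b))=Cg_F(x_0,x_1)\vee{\rm Ker}(h)$ by surjectivity, so $(c,d)\in Cg_M(a,b)$ iff $(x_2,x_3)\in Cg_F(x_0,x_1)\vee{\rm Ker}(h)$ iff $\delta\subseteq{\rm Ker}(h)$ iff $(p_i,q_i)\in{\rm Ker}(h)$ for all $i$ iff $p_i^M(a,b,c,d)=q_i^M(a,b,c,d)$ for all $i$, which is EDPC.

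\emph{The main obstacle.} The step I expect to cost the most work is the reduction of $\delta$ to four generators, and I would carry it out through the notion of support: a congruence $\theta$ of $F_{\C}(X)$ is \emph{supported on} $Y\subseteq X$ iff $\sigma_Y^{\bullet}(\theta)=\theta$, where $\sigma_Y$ fixes $Y$ and maps $X\setminus Y$ to a fixed element of $Y$. Choosing that element in $Y_1\cap Y_2$ one checks $\sigma_{Y_1}\circ\sigma_{Y_2}=\sigma_{Y_1\cap Y_2}$, so (using $(g\circ f)^{\bullet}=g^{\bullet}\circ f^{\bullet}$ from Remark~\ref{fbullet}) being supported on both $Y_1$ and $Y_2$ forces being supported on $Y_1\cap Y_2$. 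Now $\delta$ is visibly supported on the finite set $Y_1$ of generators occurring in the $u_i,v_i$, which contains $x_0,x_1,x_2,x_3$; and every automorphism of $F$ fixing $x_0,x_1,x_2,x_3$ induces a lattice automorphism of ${\rm Con}(F)$ fixing $Cg_F(x_0,x_1)$ and $Cg_F(x_2,x_3)$, hence --- by the uniqueness in the defining property of $\delta$ --- fixing $\delta$; applying one that moves $Y_1\setminus\{x_0,x_1,x_2,x_3\}$ off itself, which is possible because $X$ is infinite, exhibits $\delta$ as supported on a finite $Y_2$ with $Y_1\cap Y_2=\{x_0,x_1,x_2,x_3\}$, whence $\delta$ is supported on $\{x_0,x_1,x_2,x_3\}$. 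The delicate point is precisely this interplay of compactness, automorphism--invariance and supports; beyond it, what remains is routine $f^{\bullet}/f^{*}$--bookkeeping.
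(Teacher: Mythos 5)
The paper offers no proof of this theorem: it is imported verbatim from K\"ohler--Pigozzi and Blok--Pigozzi, so there is nothing internal to compare your argument against. Judged on its own terms, your reconstruction is essentially the classical one and is sound in all its main steps: the translation of EDPC into the statement that $(c,d)\in Cg_M(a,b)\vee\theta$ iff all $(p_i^M(a,b,c,d),q_i^M(a,b,c,d))\in\theta$ (via Lemma \ref{fcg} applied to $p_\theta$) correctly identifies $\gamma_{a,b,c,d}$ as $Cg_M(c,d)\dot{-}Cg_M(a,b)$; the derivation of congruence--distributivity from the distributivity of ${\rm Id}(L)$ for any dually Brouwerian join--semilattice $L$, using ${\rm Con}(M)\cong{\rm Id}({\cal K}(M))$, is correct (all four inequalities you invoke follow from the adjunction $a\dot{-}b\leq c\Leftrightarrow a\leq b\vee c$); and the converse via the free algebra, with $\delta$ pinned down by compactness, automorphism--invariance and the support calculus $\sigma_{Y_1}\circ\sigma_{Y_2}=\sigma_{Y_1\cap Y_2}$, is exactly the standard mechanism.

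The one place you should tighten is the final transfer to an arbitrary $M$: ``pick $X$ large enough and a surjection $h:F\rightarrow M$'' makes the terms $p_i,q_i$ nominally depend on the cardinality of $M$, whereas EDPC requires a single finite family of terms for the whole variety. The standard repair is to fix $X$ countably infinite once and for all, and to observe that $(c,d)\in Cg_M(a,b)$ holds iff it holds in some finitely generated subalgebra of $M$ containing $a,b,c,d$ (a Mal'cev chain involves only finitely many parameters, and $Cg_S(a,b)\subseteq Cg_M(a,b)$ for any subalgebra $S$); since term evaluation is absolute between $S$ and $M$, the equivalence proved for countably generated algebras then propagates to all of $\C$. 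With that observation inserted, your proof is complete.
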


\begin{lemma} If $\C $ has EDPC, then, for all $\alpha ,\beta \in {\rm PCon}(A)$, $f^{\bullet }(\alpha \dot{-}\beta )=f^{\bullet }(\alpha )\dot{-}f^{\bullet }(\beta )$.\label{prezdifpcg}\end{lemma}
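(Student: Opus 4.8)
The plan is to reduce the claim for arbitrary principal congruences to the formula for $\dot{-}$ on principal congruences given in Theorem~\ref{edpc}~(\ref{edpc2}), and then push everything through $f^{\bullet}$ using the fact that $f^{\bullet}$ preserves arbitrary joins (Remark~\ref{fbullet}~(ii)) and that $f^{\bullet}(Cg_A(a,b))=Cg_B(f(a),f(b))$ (Lemma~\ref{fcg}). First I would fix $\alpha,\beta\in{\rm PCon}(A)$, say $\alpha=Cg_A(c,d)$ and $\beta=Cg_A(a,b)$ for suitable $a,b,c,d\in A$. Since $\C$ has EDPC, it is congruence--distributive by Theorem~\ref{edpc}~(\ref{edpc1}), so $\alpha\dot{-}\beta$ is again computed inside ${\cal K}(A)$; by Theorem~\ref{edpc}~(\ref{edpc2}) we have the explicit expression $\alpha\dot{-}\beta=\bigvee_{i=1}^n Cg_A(p_i^A(a,b,c,d),q_i^A(a,b,c,d))$, with $n$ and $p_1,q_1,\dots,p_n,q_n$ the terms witnessing EDPC.

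Next I would apply $f^{\bullet}$ to this join. Using that $f^{\bullet}$ preserves arbitrary joins and sends $Cg_A(x,y)$ to $Cg_B(f(x),f(y))$, we get
\[
f^{\bullet}(\alpha\dot{-}\beta)=\bigvee_{i=1}^n Cg_B\bigl(f(p_i^A(a,b,c,d)),f(q_i^A(a,b,c,d))\bigr).
\]
Since $f$ is a $\tau$--morphism, $f(p_i^A(a,b,c,d))=p_i^B(f(a),f(b),f(c),f(d))$ and likewise for $q_i$, so the right--hand side equals $\bigvee_{i=1}^n Cg_B(p_i^B(f(a),f(b),f(c),f(d)),q_i^B(f(a),f(b),f(c),f(d)))$. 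Applying Theorem~\ref{edpc}~(\ref{edpc2}) now \emph{in} $B$ — with the same terms, since EDPC is a property of the whole variety $\C$ and $B\in\C$ — this last join is exactly $Cg_B(f(c),f(d))\,\dot{-}\,Cg_B(f(a),f(b))=f^{\bullet}(\alpha)\,\dot{-}\,f^{\bullet}(\beta)$, because $f^{\bullet}(Cg_A(a,b))=Cg_B(f(a),f(b))$ and $f^{\bullet}(Cg_A(c,d))=Cg_B(f(c),f(d))$. Chaining these equalities yields $f^{\bullet}(\alpha\dot{-}\beta)=f^{\bullet}(\alpha)\,\dot{-}\,f^{\bullet}(\beta)$, as desired.

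The argument is essentially bookkeeping, so there is no serious obstacle; the only point requiring a little care is that the operation $\dot{-}$ on ${\cal K}(B)$ is the one coming from the \emph{same} system of terms $p_i,q_i$ that witnesses EDPC for $\C$, which is guaranteed precisely by the uniformity in the statement of Theorem~\ref{edpc}~(\ref{edpc2}). One should also note that $f^{\bullet}(\alpha),f^{\bullet}(\beta)\in{\rm PCon}(B)$ by Lemma~\ref{fcg}, so the formula for $\dot{-}$ on principal congruences is indeed applicable on the $B$ side. No hypothesis on $f$ (such as surjectivity) is needed, since both directions of the computation only use that $f^{\bullet}$ preserves joins and principal congruences.
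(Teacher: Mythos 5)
Your proposal is correct and follows essentially the same route as the paper's proof: both expand $\alpha\dot{-}\beta$ via the term formula of Theorem \ref{edpc}, (\ref{edpc2}), push the finite join through $f^{\bullet}$ using Lemma \ref{fcg} and join-preservation, commute $f$ with the terms $p_i,q_i$, and reassemble in $B$. No discrepancies.
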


\begin{proof}Let $n\in \N ^*$ and $p_1,q_1,\ldots ,p_n,q_n$ be as in Theorem \ref{edpc}, and $a,b,c,d\in A$. Then, by Theorem \ref{edpc} and Lemma \ref{fcg}, $\displaystyle f^{\bullet }(Cg_A(c,d)\dot{-}Cg_A(a,b))=f^{\bullet }(\bigvee _{i=1}^nCg_A(p_i^A(a,b,c,d),q_i^A(a,b,c,d)))=\bigvee _{i=1}^nf^{\bullet }(Cg_A(p_i^A(a,b,c,d),$\linebreak $\displaystyle q_i^A(a,b,c,d)))=\bigvee _{i=1}^nCg_B(f(p_i^A(a,b,c,d)),f(q_i^A(a,b,c,d)))=\bigvee _{i=1}^nCg_B(p_i^B(f(a),f(b),f(c),f(d)),q_i^B(f(a),f(b),$\linebreak $f(c),f(d)))=Cg_B(f(c),f(d))\dot{-}Cg_B(f(a),f(b))=f^{\bullet }(Cg_A(c,d))\dot{-}f^{\bullet }(Cg_A(a,b))$.\end{proof}

\begin{remark}{\rm \cite{ezeh}} If $\C $ has EDPC, then, for all $\alpha ,\beta ,\gamma \in {\cal K}(A)$:\begin{itemize}
\item $(\alpha \vee \beta )\dot{-}\gamma =(\alpha \dot{-}\gamma )\vee (\beta \dot{-}\gamma )$;
\item $\alpha \dot{-}(\beta \vee \gamma )=(\alpha \dot{-}\beta )\dot{-}\gamma $.\end{itemize}\end{remark}

\begin{proposition} If $\C $ has EDPC, then, for all $\alpha ,\beta \in {\cal K}(A)$, $f^{\bullet }(\alpha \dot{-}\beta )=f^{\bullet }(\alpha )\dot{-}f^{\bullet }(\beta )$.\label{fprezdif}\end{proposition}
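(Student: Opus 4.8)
The plan is to bootstrap from Lemma~\ref{prezdifpcg}, which already establishes the identity when $\alpha $ and $\beta $ are principal, using two ingredients: that $f^{\bullet }$ preserves arbitrary joins of congruences (Remark~\ref{fbullet}, $(ii)$), and the two distributivity laws for $\dot{-}$ over $\vee $ recorded in the Remark immediately above. Note that, by Theorem~\ref{edpc}, $\C $ is congruence--distributive and, for every member $M$ of $\C $, $({\cal K}(M),\vee ,\Delta _M)$ is dually Brouwerian, so $\dot{-}$ is a genuine operation on ${\cal K}(A)$ and on ${\cal K}(B)$, and $f^{\bullet }$ restricts to a join--preserving map $f^{\bullet }\mid _{{\cal K}(A)}:{\cal K}(A)\rightarrow {\cal K}(B)$ that carries ${\rm PCon}(A)$ into ${\rm PCon}(B)$. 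A trivial induction from the binary laws yields their $n$--ary forms for compact congruences: $(\delta _1\vee \cdots \vee \delta _m)\dot{-}\gamma =(\delta _1\dot{-}\gamma )\vee \cdots \vee (\delta _m\dot{-}\gamma )$ and $\gamma \dot{-}(\delta _1\vee \cdots \vee \delta _n)=(\cdots ((\gamma \dot{-}\delta _1)\dot{-}\delta _2)\cdots )\dot{-}\delta _n$.

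The first step is an intermediate claim: for every $\delta \in {\cal K}(A)$ and every $\beta \in {\rm PCon}(A)$, $f^{\bullet }(\delta \dot{-}\beta )=f^{\bullet }(\delta )\dot{-}f^{\bullet }(\beta )$. To prove it, write $\delta =\delta _1\vee \cdots \vee \delta _m$ with each $\delta _k\in {\rm PCon}(A)$ (possible since $\delta $ is compact). Then, successively by the $n$--ary first distributivity law in ${\cal K}(A)$, the preservation of finite joins by $f^{\bullet }$, Lemma~\ref{prezdifpcg} applied to each pair $(\delta _k,\beta )$ of principal congruences, and the same distributivity law back in ${\cal K}(B)$, one obtains $f^{\bullet }(\delta \dot{-}\beta )=f^{\bullet }(\bigvee _{k=1}^m(\delta _k\dot{-}\beta ))=\bigvee _{k=1}^mf^{\bullet }(\delta _k\dot{-}\beta )=\bigvee _{k=1}^m(f^{\bullet }(\delta _k)\dot{-}f^{\bullet }(\beta ))=(\bigvee _{k=1}^mf^{\bullet }(\delta _k))\dot{-}f^{\bullet }(\beta )=f^{\bullet }(\delta )\dot{-}f^{\bullet }(\beta )$.

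Now for arbitrary $\alpha ,\beta \in {\cal K}(A)$, write $\beta =\beta _1\vee \cdots \vee \beta _n$ with each $\beta _j\in {\rm PCon}(A)$, put $\delta _0=\alpha $ and $\delta _\ell =\delta _{\ell -1}\dot{-}\beta _\ell $ for $\ell \in \overline{1,n}$; each $\delta _\ell $ lies in ${\cal K}(A)$ because ${\cal K}(A)$ is closed under $\dot{-}$, and the $n$--ary second distributivity law gives $\delta _n=\alpha \dot{-}\beta $. Applying the intermediate claim $n$ times, to the pairs $(\delta _{\ell -1},\beta _\ell )\in {\cal K}(A)\times {\rm PCon}(A)$, yields $f^{\bullet }(\alpha \dot{-}\beta )=f^{\bullet }(\delta _n)=(\cdots (f^{\bullet }(\alpha )\dot{-}f^{\bullet }(\beta _1))\cdots )\dot{-}f^{\bullet }(\beta _n)$, and by the $n$--ary second distributivity law in ${\cal K}(B)$ this equals $f^{\bullet }(\alpha )\dot{-}(f^{\bullet }(\beta _1)\vee \cdots \vee f^{\bullet }(\beta _n))=f^{\bullet }(\alpha )\dot{-}f^{\bullet }(\beta )$, the last equality again by preservation of finite joins.

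The only delicate point --- the ``obstacle'', such as it is --- is that the dual relative pseudocomplement of one principal congruence by another is in general merely a finite join of principal congruences, not itself principal, so Lemma~\ref{prezdifpcg} cannot be iterated directly; the distributivity laws for $\dot{-}$ over $\vee $ are precisely what lets one peel every difference of compact congruences apart into differences of principal ones, apply Lemma~\ref{prezdifpcg} there, and reassemble the result using that $f^{\bullet }$ preserves joins.
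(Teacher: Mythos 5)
Your proof is correct and follows essentially the same route as the paper's: first reduce to the case of a compact $\alpha$ against a principal $\beta$ by decomposing $\alpha$ into a finite join of principal congruences and using the law $(\delta_1\vee\cdots\vee\delta_m)\dot{-}\gamma=(\delta_1\dot{-}\gamma)\vee\cdots\vee(\delta_m\dot{-}\gamma)$ together with Lemma~\ref{prezdifpcg} and the join--preservation of $f^{\bullet}$, then peel off the principal components of $\beta$ one at a time via $\gamma\dot{-}(\delta\vee\varepsilon)=(\gamma\dot{-}\delta)\dot{-}\varepsilon$. The paper phrases the second stage as an explicit induction, but your iterated application of the intermediate claim is the same argument.
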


\begin{proof}Let $\theta \in {\rm PCon}(A)$ and $\alpha \in {\cal K}(A)$, so that $\displaystyle \alpha =\bigvee _{i=1}^r\alpha _i$ for some $r\in \N ^*$ and some $\alpha _1,\ldots ,\alpha _r\in {\rm PCon}(A)$. Then, by Lemma \ref{prezdifpcg}, $\displaystyle f^{\bullet }(\alpha \dot{-}\theta )=f^{\bullet }((\bigvee _{i=1}^r\alpha _i)\dot{-}\theta )=f^{\bullet }(\bigvee _{i=1}^r(\alpha _i\dot{-}\theta ))=\bigvee _{i=1}^rf^{\bullet }(\alpha _i\dot{-}\theta )=\bigvee _{i=1}^r(f^{\bullet }(\alpha _i)\dot{-}f^{\bullet }(\theta ))=(\bigvee _{i=1}^rf^{\bullet }(\alpha _i))\dot{-}f^{\bullet }(\theta )=f^{\bullet }(\bigvee _{i=1}^r\alpha _i)\dot{-}f^{\bullet }(\theta )=f^{\bullet }(\alpha )\dot{-}f^{\bullet }(\theta )$.

Now let $\beta \in {\cal K}(A)$, so that $\displaystyle \beta =\bigvee _{j=1}^s\beta _j$ for some $s\in \N ^*$ and some $\beta _1,\ldots ,\beta _s\in {\rm PCon}(A)$. We apply induction on $t\in \overline{1,s}$. By the above, $f^{\bullet }(\alpha \dot{-}\beta _1)=f^{\bullet }(\alpha )\dot{-}f^{\bullet }(\beta _1)$. Now assume that, for some $t\in \overline{1,s-1}$, $\displaystyle f^{\bullet }(\alpha \dot{-}(\bigvee _{j=1}^t\beta _j))=f^{\bullet }(\alpha )\dot{-}f^{\bullet }(\bigvee _{j=1}^t\beta _j)$. Then, since $\displaystyle \alpha \dot{-}(\bigvee _{j=1}^t\beta _j)\in {\cal K}(A)$, $\displaystyle f^{\bullet }(\alpha \dot{-}(\bigvee _{j=1}^{t+1}\beta _j))=f^{\bullet }((\alpha \dot{-}(\bigvee _{j=1}^{t}\beta _j))\dot{-}\beta _{t+1})=f^{\bullet }(\alpha \dot{-}(\bigvee _{j=1}^{t}\beta _j))\dot{-}$\linebreak $f^{\bullet }(\beta _{t+1})=(f^{\bullet }(\alpha )\dot{-}f^{\bullet }(\bigvee _{j=1}^{t}\beta _j))\dot{-}f^{\bullet }(\beta _{t+1})=f^{\bullet }(\alpha )\dot{-}(f^{\bullet }(\bigvee _{j=1}^{t}\beta _j)\vee f^{\bullet }(\beta _{t+1}))=f^{\bullet }(\alpha )\dot{-}f^{\bullet }(\bigvee _{j=1}^{t+1}\beta _j)$. Thus $\displaystyle f^{\bullet }(\alpha \dot{-}\beta )=f^{\bullet }(\alpha \dot{-}(\bigvee _{j=1}^s\beta _j))=f^{\bullet }(\alpha )\dot{-}f^{\bullet }(\bigvee _{j=1}^s\beta _j)=f^{\bullet }(\alpha )\dot{-}f^{\bullet }(\beta )$.\end{proof}

Let $L$ and $M$ be dually Brouwerian join--semilattices. We call $h:L\rightarrow M$ a {\em dually Brouwerian join--semilattice morphism} iff $h$ preserves the $0$, the join and the dual relative pseudocomplementation; if $L$ and $M$ are lattices and $h$ also preserves the meet, then we call $h$ a {\em dually Brouwerian lattice morphism}. Note that, if $L$ is a lattice, then $L$ is distributive, as one can easily derive from \cite[Lemma $4.4$]{bj}.

\begin{corollary} If ${\cal C}$ has EDPC, then ${\cal L}(f)=f^{\bullet }:{\cal L}(A)={\cal K}(A)\rightarrow {\cal L}(B)={\cal K}(B)$ is a dually Brouwerian join--semilattice morphism.\end{corollary}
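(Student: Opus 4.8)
The plan is to obtain the statement by assembling results already proved, since nothing substantial remains to be done. First I would invoke Theorem~\ref{edpc}\,(\ref{edpc1}): EDPC forces $\C$ to be congruence--distributive, so in both $A$ and $B$ the term condition commutator coincides with the intersection of congruences. Under the standing hypotheses of this section---namely $\nabla _A\in {\cal K}(A)$, $\nabla _B\in {\cal K}(B)$, and ${\cal K}(A)$, ${\cal K}(B)$ closed under the commutator---the discussion in Section~\ref{reticulatia} shows that $\lambda _A:{\cal K}(A)\rightarrow {\cal L}(A)$ and $\lambda _B:{\cal K}(B)\rightarrow {\cal L}(B)$ are lattice isomorphisms, so we may take ${\cal L}(A)={\cal K}(A)$ and ${\cal L}(B)={\cal K}(B)$ with $\lambda _A$, $\lambda _B$ the identity maps; then, by Lemma~\ref{distribsuffret}, $f$ fulfils FRet and ${\cal L}(f)=f^{\bullet }\!\mid _{{\cal K}(A)}:{\cal K}(A)\rightarrow {\cal K}(B)$, this restriction being well defined by Lemma~\ref{fcg}.

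Next I would recall from Theorem~\ref{edpc}\,(\ref{edpc2}) that $({\cal K}(A),\vee ,\Delta _A)$ and $({\cal K}(B),\vee ,\Delta _B)$ are dually Brouwerian join--semilattices, with dual relative pseudocomplementation $\dot{-}$ described, on principal congruences, by the EDPC terms $p_1,q_1,\dots ,p_n,q_n$. It then remains only to verify the three preservation conditions making ${\cal L}(f)=f^{\bullet }$ a dually Brouwerian join--semilattice morphism: preservation of the $0$ is the equality $f^{\bullet }(\Delta _A)=\Delta _B$ noted in Section~\ref{fret}; preservation of finite joins is the instance of Remark~\ref{fbullet} asserting that $f^{\bullet }$ preserves arbitrary joins, together with the closure of ${\cal K}(A)$ under binary joins; and preservation of $\dot{-}$ is precisely Proposition~\ref{fprezdif}. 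Combining these three facts gives the corollary.

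There is no genuine obstacle here---the corollary merely repackages Proposition~\ref{fprezdif} and Theorem~\ref{edpc}. The only point calling for a moment's care is the identification of the operation $\dot{-}$ appearing in Proposition~\ref{fprezdif} with the dual relative pseudocomplementation of ${\cal K}(A)$ supplied by Theorem~\ref{edpc}\,(\ref{edpc2}); these coincide because both are built from the same quadruple of terms. One should also note that, although ${\cal K}(A)$ is in fact a (distributive) lattice under the present hypotheses, ${\cal L}(f)=f^{\bullet }$ need not preserve the meet---indeed, by Theorem~\ref{admfret}, preservation of the meet is equivalent to admissibility of $f$, which is not automatic---so the conclusion is stated at the level of join--semilattice morphisms rather than lattice morphisms.
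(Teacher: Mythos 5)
Your proposal is correct and follows essentially the same route as the paper, whose proof is a one-line citation of Remark \ref{fbullet} (join preservation by $f^{\bullet}$), Proposition \ref{fprezdif} (preservation of $\dot{-}$), and Theorem \ref{edpc}, (\ref{edpc1}) (EDPC implies congruence--distributivity, hence the identification ${\cal L}(\cdot)={\cal K}(\cdot)$ and ${\cal L}(f)=f^{\bullet}$). Your additional remarks on the identification of the two $\dot{-}$ operations and on why meet preservation is not claimed are accurate but not needed.
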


\begin{proof} By Remark \ref{fbullet}, Proposition \ref{fprezdif} and Theorem \ref{edpc}, (\ref{edpc1}).\end{proof}

\begin{remark} If $\C $ is a discriminator variety, then, by \cite[Theorem $5.5$]{bj}, ${\rm PCon}(A)={\cal K}(A)\cong {\cal L}(A)$ is a relatively complemented sublattice of ${\rm Con}(A)$; we set ${\cal K}(A)={\cal L}(A)$, and the same for $B$. From \cite[Lemma $5.3$]{bj} it follows that ${\cal L}(f)=f^{\bullet }\mid _{{\rm PCon}(A)}:{\rm PCon}(A)\rightarrow {\rm PCon}(B)$ is a relatively complemented lattice morphism.\end{remark}

\begin{remark} ${\cal L}$ reflects neither injectivity, nor surjectivity, as shown by the case of the morphism $l:Q\rightarrow P$ from Example \ref{mnex4}. ${\cal L}$ does not preserve injectivity and does not reflect surjectivity even for congruence--distributive varieties, as shown by the case of the morphism $i_{{\cal L}_2^2,{\cal M}_3}:{\cal L}_2^2\rightarrow {\cal M}_3$ from Example \ref{pentagon}.

If the commutators of $A$ and $B$ coincide to the intersection, ${\cal K}(A)={\rm Con}(A)$ and $f$ is surjective, then $f^{\bullet }:{\rm Con}(A)\rightarrow {\rm Con}(B)$ is surjective, thus ${\cal K}(B)={\rm Con}(B)$ and $f^{\bullet }:{\cal K}(A)\rightarrow {\cal K}(B)$ is surjective, hence ${\cal L}(f):{\cal L}(A)\rightarrow {\cal L}(B)$ is surjective. In particular, in congruence--distributive varieties, the functor ${\cal L}$ preserves the surjectivity of morphisms defined on finite algebras.\end{remark}

\begin{remark} If $f$ is injective, then, for all $\theta \in {\rm Con}(A)$, we have: $f^{\bullet }(\theta )=\Delta _B$ iff $\theta =\Delta _A$. Indeed, $f(\Delta _A)\subseteq \Delta _B$, so $f^{\bullet }(\Delta _A)=\Delta _B$, while, since $f(\theta )\subseteq f^{\bullet }(\theta )$, $f^{\bullet }(\theta )=\Delta _B$ implies $f(\theta )\subseteq \Delta _B$, which implies $\theta =\Delta _A$ if $f$ is injective.\end{remark}

\begin{proposition} If $\C $ is semi--degenerate and has EDPC and the CIP, then ${\cal L}$ is a functor from $\C $ to the variety of distributive lattices which preserves injectivity.\label{edpclinj}\end{proposition}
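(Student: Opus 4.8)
The plan is to deal first with the functoriality claim, which is immediate from earlier results, and then to concentrate on the preservation of injectivity, the only substantive point.

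First I would note that, by Theorem \ref{edpc}, (\ref{edpc1}), a variety with EDPC is congruence--distributive, so here $\C $ is a semi--degenerate congruence--distributive variety with the CIP and Proposition \ref{cipnedpc} already gives that ${\cal L}$ is a functor from $\C $ to the variety of distributive lattices. In this situation the commutators of all members of $\C $ coincide with the intersection of congruences, so, by the discussion following Lemma \ref{distribsuffret}, for every morphism $g:M\rightarrow N$ in $\C $ we may take ${\cal L}(M)={\cal K}(M)$, ${\cal L}(N)={\cal K}(N)$ and ${\cal L}(g)=g^{\bullet }\mid _{{\cal K}(M)}:{\cal K}(M)\rightarrow {\cal K}(N)$; moreover, the CIP guarantees that ${\cal K}(M)$ is a (distributive) sublattice of ${\rm Con}(M)$ for each member $M$ of $\C $.

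Next, to prove that ${\cal L}$ preserves injectivity, I would fix an injective morphism $f:A\rightarrow B$ in $\C $ and take $\alpha ,\beta \in {\cal K}(A)$ with ${\cal L}(f)(\alpha )={\cal L}(f)(\beta )$, that is $f^{\bullet }(\alpha )=f^{\bullet }(\beta )$. By Theorem \ref{edpc}, (\ref{edpc2}), the join--semilattices $({\cal K}(A),\vee ,\Delta _A)$ and $({\cal K}(B),\vee ,\Delta _B)$ are dually Brouwerian, and in any dually Brouwerian join--semilattice $a\dot{-}b=0$ iff $a\leq b$. Since $f^{\bullet }(\alpha )=f^{\bullet }(\beta )$, we have $f^{\bullet }(\alpha )\leq f^{\bullet }(\beta )$ and $f^{\bullet }(\beta )\leq f^{\bullet }(\alpha )$, so applying Proposition \ref{fprezdif} twice yields $f^{\bullet }(\alpha \dot{-}\beta )=f^{\bullet }(\alpha )\dot{-}f^{\bullet }(\beta )=\Delta _B$ and $f^{\bullet }(\beta \dot{-}\alpha )=f^{\bullet }(\beta )\dot{-}f^{\bullet }(\alpha )=\Delta _B$ (note $\alpha \dot{-}\beta ,\beta \dot{-}\alpha \in {\cal K}(A)$).

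Finally I would invoke the remark immediately preceding this proposition: since $f$ is injective, $f^{\bullet }(\theta )=\Delta _B$ forces $\theta =\Delta _A$ for every $\theta \in {\rm Con}(A)$. Hence $\alpha \dot{-}\beta =\Delta _A$ and $\beta \dot{-}\alpha =\Delta _A$, so $\alpha \leq \beta $ and $\beta \leq \alpha $ in ${\cal K}(A)$, whence $\alpha =\beta $; therefore ${\cal L}(f)$ is injective. The argument is essentially an assembly of facts already established; the one point deserving care — which I regard as the conceptual crux rather than a real obstacle — is the observation that the dual relative pseudocomplementation lets one detect the equality $\alpha =\beta $ through two applications of $f^{\bullet }$ combined with the fact that an injective $f$ makes $f^{\bullet }$ reflect the bottom congruence, everything else being routine.
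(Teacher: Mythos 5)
Your proof is correct and follows essentially the same route as the paper's: both deduce functoriality from Theorem \ref{edpc}, (\ref{edpc1}), and Proposition \ref{cipnedpc}, and both establish injectivity of $f^{\bullet }$ on ${\cal K}(A)$ via Theorem \ref{edpc}, (\ref{edpc2}), Proposition \ref{fprezdif} and the fact that an injective $f$ reflects $\Delta $ through $f^{\bullet }$. The only cosmetic difference is that the paper packages the argument as a single chain of equivalences ($f^{\bullet }(\alpha )\subseteq f^{\bullet }(\beta )$ iff $\alpha \subseteq \beta $) where you apply the two inequalities separately.
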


\begin{proof} Assume that $\C $ has EDPC and the CIP, so that every morphism in $\C $ satisfies FRet and ${\cal L}$ is a functor from $\C $ to the variety of distributive lattices by Theorem \ref{edpc}, (\ref{edpc1}), and Proposition \ref{cipnedpc}, and also assume that $f$ is injective. Let $\alpha ,\beta \in {\cal K}(A)$. Then, by Theorem \ref{edpc}, (\ref{edpc2}), Proposition \ref{fprezdif} and the injectivity of $f$: $f^{\bullet }(\alpha )\subseteq f^{\bullet }(\beta )$ iff $f^{\bullet }(\alpha )\dot{-}f^{\bullet }(\beta )=\Delta _B$ iff $f^{\bullet }(\alpha \dot{-}\beta )=\Delta _B$ iff $\alpha \dot{-}\beta =\Delta _A$ iff $\alpha \subseteq \beta $. Hence: $f^{\bullet }(\alpha )=f^{\bullet }(\beta )$ iff $\alpha =\beta $, therefore $f^{\bullet }$ is injective, thus so is ${\cal L}(f):{\cal L}(A)\rightarrow {\cal L}(B)$, since $\C $ is congruence--distributive.\end{proof}

\begin{remark} Assume that $f$ is injective and the canonical embedding of $f(A)$ into $B$ satisfies the Congruence Extension Property. Then, for $\alpha \in {\rm Con}(A)$, $f^{\bullet }(\alpha )\cap f(A)^2=f(\alpha )$, hence the map $f^{\bullet }:{\rm Con}(A)\rightarrow {\rm Con}(B)$ is injective, thus so are its restrictions $f^{\bullet }\mid _{{\cal K}(A)}:{\cal K}(A)\rightarrow {\cal K}(B)$ and $f^{\bullet }\mid _{{\rm PCon}(A)}:{\rm PCon}(A)\rightarrow {\rm PCon}(B)$.

Thus, if, additionally, the commutators of $A$ and $B$ coincide to the intersection, so that ${\cal K}(A)$ and ${\cal K}(B)$ are sublattices of ${\rm Con}(A)$ and ${\rm Con}(B)$, respectively, $\lambda _A:{\cal K}(A)\rightarrow {\cal L}(A)$ and $\lambda _B:{\cal K}(B)\rightarrow {\cal L}(B)$ are lattice isomorphisms and, as noted in Lemma \ref{distribsuffret}, $f$ satisfies FRet, it follows that ${\cal L}(f)$ is injective.\end{remark}

Therefore, in view of Proposition \ref{cipnedpc}, we have:

\begin{proposition} If $\C $ is semi--degenerate, congruence--distributive and congruence--extensible and it has the CIP, then ${\cal L}$ is a functor from $\C $ to the variety of distributive lattices which preserves injectivity.\end{proposition}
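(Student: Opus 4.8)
The plan is to reduce the statement to the preceding Proposition \ref{cipnedpc} together with the Remark stated just before it. First I would record that all the standing hypotheses of this section are automatic here: since $\C$ is semi--degenerate we have $\nabla_M\in{\cal K}(M)$ for every member $M$, and since $\C$ is congruence--distributive the commutator coincides with the intersection, so the CIP says exactly that ${\cal K}(M)$ is a sublattice of ${\rm Con}(M)$ and hence is closed under the commutator. Thus Proposition \ref{cipnedpc} applies and already gives that ${\cal L}$ is a well--defined functor from $\C$ to the variety of distributive lattices; only the preservation of injectivity is left to check. So fix an injective morphism $f:A\rightarrow B$ in $\C$.

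The second step is to bring in congruence--extensibility. By definition this means that the inclusion of any subalgebra into any member of $\C$ has the Congruence Extension Property, and in particular the canonical embedding $i_{f(A),B}:f(A)\rightarrow B$ does. Hence the Remark preceding the statement applies verbatim: $f^{\bullet}(\alpha)\cap f(A)^2=f(\alpha)$ for every $\alpha\in{\rm Con}(A)$. Since $f$ is injective, ${\rm Ker}(f)=\Delta_A$ and so $[{\rm Ker}(f))={\rm Con}(A)$, whence $f^*(f(\alpha))=\alpha$ for all $\alpha\in{\rm Con}(A)$; combining this with the previous identity shows that $\alpha$ is recoverable from $f^{\bullet}(\alpha)$, i.e.\ $f^{\bullet}:{\rm Con}(A)\rightarrow{\rm Con}(B)$ is injective, and therefore so is its restriction $f^{\bullet}\mid_{{\cal K}(A)}:{\cal K}(A)\rightarrow{\cal K}(B)$.

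Finally I would transport this injectivity across the reticulation. Because the commutators of $A$ and $B$ coincide with the intersection, ${\cal K}(A)$ and ${\cal K}(B)$ are bounded sublattices of ${\rm Con}(A)$ and ${\rm Con}(B)$, the canonical maps $\lambda_A:{\cal K}(A)\rightarrow{\cal L}(A)$ and $\lambda_B:{\cal K}(B)\rightarrow{\cal L}(B)$ are lattice isomorphisms, and by Lemma \ref{distribsuffret} $f$ satisfies FRet with ${\cal L}(f)\circ\lambda_A=\lambda_B\circ f^{\bullet}\mid_{{\cal K}(A)}$; hence ${\cal L}(f)=\lambda_B\circ f^{\bullet}\mid_{{\cal K}(A)}\circ\lambda_A^{-1}$ is a composite of injective maps, so it is injective. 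Together with Proposition \ref{cipnedpc} this yields the claim. I expect the only point needing care is the second step, namely checking that ``congruence--extensible variety'' supplies the Congruence Extension Property for the embedding $f(A)\hookrightarrow B$ (and not merely for $A$), so that the cited Remark is legitimately applicable; everything afterwards is routine bookkeeping with the isomorphisms $\lambda_A,\lambda_B$ already fixed in Section \ref{reticulatia}.
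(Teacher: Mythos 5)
Your proof is correct and follows essentially the same route as the paper: Proposition \ref{cipnedpc} supplies the functor, and the Remark immediately preceding the statement (applied to the inclusion $f(A)\hookrightarrow B$, which does have the Congruence Extension Property since congruence--extensibility of $\C $ is a property of all subalgebra embeddings) gives injectivity of $f^{\bullet }$ and hence of ${\cal L}(f)$ via the isomorphisms $\lambda _A,\lambda _B$. The point you flag as needing care is indeed unproblematic for exactly the reason you suspect.
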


In what follows we apply the functoriality of the reticulation to the study of properties Going Up, Going Down and Lying Over in algebras whose semilattices of compact congruences and commutators are as above.

\begin{definition} We say that $f$ fulfills property {\em Going Up} (abbreviated {\em GU}) if and only if, for any $\phi ,\psi \in \textup{Spec}(A)$ and any $\phi _1\in \textup{Spec}(B)$ such that $\phi \subseteq \psi $ and $f^*(\phi _1)=\phi $, there exists a $\psi _1\in \textup{Spec}(B)$ such that $\phi _1\subseteq \psi _1$ and $f^*(\psi _1)=\psi $.

We say that $f$ fulfills property {\em Going Down} (abbreviated {\em GD}) if and only if, for any $\phi ,\psi \in \textup{Spec}(A)$ and any $\phi _1\in \textup{Spec}(B)$ such that $\phi \supseteq \psi $ and $f^*(\phi _1)=\phi $, there exists a $\psi _1\in \textup{Spec}(B)$ such that $\phi _1\supseteq \psi _1$ and $f^*(\psi _1)=\psi $.

We say that $f$ fulfills property {\em Lying Over} (abbreviated {\em LO}) if and only if, for any $\phi \in \textup{Spec}(A)$ such that $\textup{Ker}(f)\subseteq \phi $, there exists a $\phi _1\in \textup{Spec}(B)$ such that $f^*(\phi _1)=\phi $.\end{definition}

\begin{definition} Let $L$, $M$ be bounded lattices and $h:L\rightarrow M$ be a bounded lattice morphism.

We say that $h$ fulfills property {\em Id--Going Up} (abbreviated {\em Id--GU}) if and only if, for any $P,Q\in \textup{Spec}_{\textup{Id}}(L)$ and any $P_1\in \textup{Spec}_{\textup{Id}}(M)$ such that $P\subseteq Q$ and $h^{-1}(P_1)=P$, there exists a $Q_1\in \textup{Spec}_{\textup{Id}}(M)$ such that $P_1\subseteq Q_1$ and $h^{-1}(Q_1)=Q$.

We say that $h$ fulfills property {\em Id--Going Down} (abbreviated {\em Id--GD}) if and only if, for any $P,Q\in \textup{Spec}_{\textup{Id}}(L)$ and any $P_1\in \textup{Spec}_{\textup{Id}}(M)$ such that $P\supseteq Q$ and $h^{-1}(P_1)=P$, there exists a $Q_1\in \textup{Spec}_{\textup{Id}}(M)$ such that $P_1\supseteq Q_1$ and $h^{-1}(Q_1)=Q$.

We say that $h$ fulfills property {\em Id--Lying Over} (abbreviated {\em Id--LO}) if and only if, for any $P\in \textup{Spec}_{\textup{Id}}(L)$ such that $h^{-1}(\{0\})\subseteq P$, there exists a $P_1\in \textup{Spec}_{\textup{Id}}(M)$ such that $h^{-1}(Q_1)=Q$.\end{definition}

\begin{remark} If $L$ and $M$ are bounded distributive lattices and $h:L\rightarrow M$ is a bounded lattice morphism, then $h^{-1}(\textup{Spec}_{\textup{Id}}(M))\subseteq \textup{Spec}_{\textup{Id}}(L)$.\end{remark}

For the sake of completeness, we include here the argument for the next lemma from \cite{retic}:

\begin{lemma} For any $\alpha \in {\cal K}(A)$ and any $\phi \in {\rm Spec}(A)$, we have: $\lambda _A(\alpha )\in \phi ^*$ iff $\alpha \subseteq \phi $.\label{recallemma}\end{lemma}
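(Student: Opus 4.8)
The plan is to unwind the definitions of $\phi^*$ and $\lambda_A$ and reduce the claim to a property of radicals. Recall that $\phi^* = \{\lambda_A(\gamma) \mid \gamma \in {\cal K}(A), \gamma \subseteq \phi\}$ and that $\phi \in {\rm Spec}(A) \subseteq {\rm RCon}(A)$, so $\rho_A(\phi) = \phi$. The backward implication is immediate: if $\alpha \subseteq \phi$ with $\alpha \in {\cal K}(A)$, then $\lambda_A(\alpha) \in \phi^*$ by the very definition of $\phi^*$, taking $\gamma = \alpha$.

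For the forward implication, suppose $\lambda_A(\alpha) \in \phi^*$. Then there exists $\gamma \in {\cal K}(A)$ with $\gamma \subseteq \phi$ and $\lambda_A(\gamma) = \lambda_A(\alpha)$, i.e.\ $\rho_A(\gamma) = \rho_A(\alpha)$, equivalently $V_A(\gamma) = V_A(\alpha)$. Since $\gamma \subseteq \phi$ and $\phi \in {\rm Spec}(A)$, we have $\phi \in V_A(\gamma) = V_A(\alpha)$, which by definition of $V_A$ means exactly $\alpha \subseteq \phi$. This closes the argument.

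I do not anticipate a genuine obstacle here; the statement is essentially a bookkeeping lemma that repackages the equivalence ``$\rho_A(\alpha) = \rho_A(\beta)$ iff $V_A(\alpha) = V_A(\beta)$'' (recalled in Section~\ref{reticulatia}) together with the membership test ``$\phi \in V_A(\theta)$ iff $\theta \subseteq \phi$'' for $\phi \in {\rm Spec}(A)$. The only point deserving a moment's care is making sure one uses that $\phi$ is itself a \emph{prime} congruence (so it belongs to $V_A$ of whatever it contains) rather than an arbitrary congruence; this is what lets the witness $\gamma$ be discarded and replaced by $\alpha$ without changing containment in $\phi$. Everything else is a direct substitution of definitions, so the proof will be only a couple of lines.
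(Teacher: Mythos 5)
Your proof is correct and follows essentially the same route as the paper's: the backward direction is immediate from the definition of $\phi^*$, and the forward direction extracts a witness $\gamma\in{\cal K}(A)\cap(\phi]$ with $\lambda_A(\gamma)=\lambda_A(\alpha)$, passes to $V_A(\gamma)=V_A(\alpha)$, and concludes $\phi\in V_A(\alpha)$, i.e. $\alpha\subseteq\phi$. No issues.
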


\begin{proof} If $\alpha \subseteq \phi $, then $\alpha \in {\cal K}(A)\cap (\phi ]$, hence $\lambda _A(\alpha )\in \lambda _A({\cal K}(A)\cap (\phi ])=\phi ^*$.

If $\lambda _A(\alpha )\in \phi ^*=\lambda _A({\cal K}(A)\cap (\phi ])$, then, for some $\beta \in {\cal K}(A)$ such that $\beta \subseteq \phi $, we have $\lambda _A(\alpha )=\lambda _A(\beta )$, that is $\rho _A(\alpha )=\rho _A(\beta )$, so that $\phi \in V_A(\beta )=V_A(\alpha )$, thus $\alpha \subseteq \phi $.\end{proof}

\begin{lemma} For any $\phi \in {\rm Spec}(A)$, we have: ${\rm Ker}(f)\subseteq \phi $ iff ${\cal L}(f)^{-1}(\{{\bf 0}\})\subseteq \phi ^*$.\label{nuissance}\end{lemma}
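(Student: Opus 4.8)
The statement relates a condition on $\ker f$ inside a prime congruence $\phi$ to a condition on the ideal $\phi^*$ of $\mathcal L(A)$ via the map $\mathcal L(f)$. The natural strategy is to unwind both sides using the definitions of $\mathcal L(f)$ and of $\phi^*$, and to use Lemma \ref{recallemma} as the bridge. Recall that $\phi^* = \{\lambda_A(\alpha) \mid \alpha \in \mathcal K(A),\ \alpha \subseteq \phi\}$ and that, by Lemma \ref{recallemma}, for $\alpha \in \mathcal K(A)$ we have $\lambda_A(\alpha) \in \phi^*$ iff $\alpha \subseteq \phi$. Also $\mathcal L(f)(\lambda_A(\alpha)) = \lambda_B(f^{\bullet}(\alpha))$, and $\lambda_B(\delta) = \mathbf 0$ iff $\delta = \Delta_B$ (since $\lambda_B(\delta) = \mathbf 0$ means $\rho_B(\delta) = \rho_B(\Delta_B) = \Delta_B$ — here one needs $B$ semiprime, which holds since we are in the standard setting where, e.g., $\mathcal C$ is congruence-modular and semi-degenerate, making $\equiv_B$ behave as recalled in Section \ref{reticulatia}; more carefully, $\lambda_B(\delta)=\mathbf 0$ iff $\delta \equiv_B \Delta_B$ iff $\rho_B(\delta) = \rho_B(\Delta_B)$).

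Here is the key computation. First note that since $\lambda_A$ is surjective, $\mathcal L(f)^{-1}(\{\mathbf 0\}) = \{\lambda_A(\alpha) \mid \alpha \in \mathcal K(A),\ \lambda_B(f^{\bullet}(\alpha)) = \mathbf 0\} = \{\lambda_A(\alpha) \mid \alpha \in \mathcal K(A),\ f^{\bullet}(\alpha) \equiv_B \Delta_B\}$, which by the semiprimeness-type remark equals $\{\lambda_A(\alpha)\mid \alpha\in\mathcal K(A),\ f^{\bullet}(\alpha)=\Delta_B\}$ when $B$ is semiprime, and in any case equals $\{\lambda_A(\alpha)\mid \alpha\in\mathcal K(A),\ \rho_B(f^{\bullet}(\alpha)) = \Delta_B\}$; actually the cleanest route avoids committing to semiprimeness: $f^{\bullet}(\alpha) \equiv_B \Delta_B$ iff $\rho_B(f^{\bullet}(\alpha)) = \rho_B(\Delta_B)$, and $\rho_B(\Delta_B) = \bigcap\{\psi \in \mathrm{Spec}(B)\}$, so $f^{\bullet}(\alpha)\equiv_B\Delta_B$ iff $f^{\bullet}(\alpha) \subseteq \psi$ for every $\psi \in \mathrm{Spec}(B)$, i.e. iff $f(\alpha) \subseteq \psi$ for every such $\psi$. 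I would then split into the two implications.

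For the forward direction, suppose $\ker f \subseteq \phi$. Take $x \in \mathcal L(f)^{-1}(\{\mathbf 0\})$, say $x = \lambda_A(\alpha)$ with $\alpha \in \mathcal K(A)$ and $\lambda_B(f^{\bullet}(\alpha)) = \mathbf 0$; I want $\lambda_A(\alpha) \in \phi^*$, i.e. (by Lemma \ref{recallemma}) $\alpha \subseteq \phi$. The hypothesis $\lambda_B(f^{\bullet}(\alpha))=\mathbf 0$ together with $\ker f\subseteq\phi$ should force $\alpha\subseteq\phi$; I expect this to go through $f$ and its adjoint $f^{\bullet}$: since $f^{\bullet}(\alpha)\equiv_B\Delta_B$ we get that $f^{\bullet}(\alpha)$ lies below every prime of $B$, hence $\rho_B(f^{\bullet}(\alpha))=\Delta_B$; but this alone does not obviously give $\alpha\subseteq\phi$ unless we know $\phi$ comes from a prime of $B$ — which it need not. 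The correct idea is rather: $\lambda_B(f^\bullet(\alpha))=\mathbf 0$ gives $f(\alpha)\subseteq\rho_B(\Delta_B)$, and $\rho_B(\Delta_B)$ restricted back to $A$ via $f^*$ sits inside $\rho_A(\ker f)$ (using adjointness and the fact that $f^*$ sends primes of $B$ to... no, $f$ need not be admissible). So the honest route uses only: $f^{\bullet}(\alpha)\equiv_B\Delta_B$ $\Rightarrow$ $f^{\bullet}(\alpha)\subseteq\rho_B(\Delta_B)$ $\Rightarrow$ $\alpha\subseteq f^*(f^{\bullet}(\alpha))\subseteq f^*(\rho_B(\Delta_B))$, and one must show $f^*(\rho_B(\Delta_B))\subseteq\rho_A(\ker f)\subseteq\phi$ when $\ker f\subseteq\phi$; equivalently $\alpha\vee\ker f$ lies in every prime of $A$ containing $\ker f$, which by $\phi\supseteq\ker f$ and $\phi\in\mathrm{Spec}(A)$ gives $\alpha\subseteq\phi$. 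For the converse, suppose $\mathcal L(f)^{-1}(\{\mathbf 0\})\subseteq\phi^*$. To show $\ker f\subseteq\phi$: write $\ker f = \bigvee\{Cg_A(a,b)\mid (a,b)\in\ker f\}$; for each such principal congruence $\alpha = Cg_A(a,b) \in \mathcal K(A)$ we have $f(a)=f(b)$, so $f^{\bullet}(\alpha) = Cg_B(f(a),f(b)) = \Delta_B$, whence $\lambda_B(f^{\bullet}(\alpha)) = \mathbf 0$, so $\lambda_A(\alpha) \in \mathcal L(f)^{-1}(\{\mathbf 0\}) \subseteq \phi^*$, so by Lemma \ref{recallemma} $\alpha\subseteq\phi$; taking the join over all $(a,b)\in\ker f$ gives $\ker f\subseteq\phi$.

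**Main obstacle.** The delicate point is the forward direction: translating "$\lambda_B(f^{\bullet}(\alpha))=\mathbf 0$ and $\ker f\subseteq\phi$" into "$\alpha\subseteq\phi$". This is where one must be careful about whether $\phi$ is in the image of $f^*$ from a prime of $B$; since $f$ is not assumed admissible, the clean adjunction argument $f^{\bullet}(\alpha)\subseteq\psi \Leftrightarrow \alpha\subseteq f^*(\psi)$ cannot be applied with $\psi$ a prime mapping onto $\phi$. The resolution I expect the authors to use — and which I would use — is to work with radicals: $\lambda_B(f^{\bullet}(\alpha))=\mathbf 0$ means $\rho_B(f^{\bullet}(\alpha)) = \rho_B(\Delta_B)$; applying $f^*$ and using $\alpha \subseteq f^*(f^{\bullet}(\alpha))$ plus monotonicity of $f^*$ and the behaviour of $\rho$, together with $\ker f\subseteq\phi\in\mathrm{Spec}(A)$, pins $\alpha$ inside $\phi$. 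The rest is routine manipulation of the definitions recalled in Sections \ref{preliminaries}--\ref{reticulatia} plus repeated invocations of Lemma \ref{recallemma}.
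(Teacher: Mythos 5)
Your converse direction is correct and is in fact more elementary than the paper's: decomposing ${\rm Ker}(f)$ as $\bigvee\{Cg_A(a,b)\ |\ (a,b)\in {\rm Ker}(f)\}$, noting $f^{\bullet }(Cg_A(a,b))=Cg_B(f(a),f(a))=\Delta _B$, and invoking Lemma \ref{recallemma} is a clean shortcut past the paper's computation of ${\cal L}(f)^{-1}(\{{\bf 0}\})$ as $\lambda _A({\cal K}(A)\cap (f^*(\rho _B(\Delta _B))])$ followed by a compactness argument.

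The forward direction, however, is not proved. You correctly reduce it to the inclusion $f^*(\rho _B(\Delta _B))\subseteq \phi $ (equivalently, $f^*(\rho _B(\Delta _B))\subseteq \rho _A({\rm Ker}(f))$), you explicitly label this the main obstacle, and then you only record what ``one must show'' and what you ``expect'' to work; no argument is supplied, and this is exactly where all the content of the lemma lives. The missing step does not follow from ${\rm Ker}(f)\subseteq \phi $ by routine manipulation: from $\phi \in {\rm Spec}(A)$ and ${\rm Ker}(f)\subseteq \phi $ one gets $\rho _A({\rm Ker}(f))\subseteq \phi $, but the only inclusion between $f^*(\rho _B(\Delta _B))=\bigcap f^*({\rm Spec}(B))$ and $\rho _A({\rm Ker}(f))=\bigcap ({\rm Spec}(A)\cap [{\rm Ker}(f)))$ that comparing the two index sets yields --- and that already presupposes $f^*({\rm Spec}(B))\subseteq {\rm Spec}(A)$, i.e.\ admissibility --- is $f^*(\rho _B(\Delta _B))\supseteq \rho _A({\rm Ker}(f))$, the wrong way around. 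The paper's own proof of this implication is precisely the place where the inclusion $f^*({\rm Spec}(B))\subseteq {\rm Spec}(A)\cap [f^*(\Delta _B))$ is invoked, so the lemma is really being operated in the admissible setting of Proposition \ref{gugdloid}; your write-up, which deliberately tries to avoid admissibility, cannot close the gap. A concrete warning sign that something beyond the stated hypotheses is needed: under the standing assumptions of Section \ref{fret} alone, ${\cal L}(f)$ need not preserve the ${\bf 1}$ (e.g.\ $d:R\rightarrow N$ in Example \ref{mnex4}, where ${\rm Ker}(d)=\tau \in {\rm Spec}(R)$), in which case ${\bf 1}=\lambda _A(\nabla _A)\in {\cal L}(f)^{-1}(\{{\bf 0}\})$ while $\phi ^*$ is a proper (prime) ideal of ${\cal L}(A)$, so the right-hand side fails for every $\phi \in {\rm Spec}(A)$ even though the left-hand side can hold. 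So the forward implication is not ``the rest is routine''; it needs an additional input (admissibility of $f$, or a Lying-Over-type property) that your proposal never identifies or uses.
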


\begin{proof} Note that ${\cal L}(f)^{-1}(\{{\bf 0}\})={\cal L}(f)^{-1}(\{\lambda _B(\Delta _B)\})=\{\lambda _A(\alpha )\ |\ \alpha \in {\cal K}(A),{\cal L}(f)(\lambda _A(\alpha ))=\lambda _B(\Delta _B)\}=\{\lambda _A(\alpha )\ |\ \alpha \in {\cal K}(A),\lambda _B(f^{\bullet }(\alpha ))=\lambda _B(\Delta _B)\}=\{\lambda _A(\alpha )\ |\ \alpha \in {\cal K}(A),\lambda _B(f^{\bullet }(\alpha ))=\lambda _B(\Delta _B)\}=\{\lambda _A(\alpha )\ |\ \alpha \in {\cal K}(A),\rho _B(f^{\bullet }(\alpha ))=\rho _B(\Delta _B)\}=\{\lambda _A(\alpha )\ |\ \alpha \in {\cal K}(A),f^{\bullet }(\alpha )\subseteq \rho _B(\Delta _B)\}=\{\lambda _A(\alpha )\ |\ \alpha \in {\cal K}(A),\alpha \subseteq f^*(\rho _B(\Delta _B))\}=\lambda _A({\cal K}(A)\cap (f^*(\rho _B(\Delta _B))])$.

Now let $\phi \in {\rm Spec}(A)$, and recall that $\phi ^*=\lambda _A({\cal K}(A)\cap (\phi ])$. Notice that, for any  $\alpha \in {\cal K}(A)$, $\lambda _A(\alpha )\in \lambda _A({\cal K}(A)\cap (\phi ])$ implies that, for some $\beta \in {\cal K}(A)\cap (\phi ]$, we have $\lambda _A(\alpha )=\lambda _A(\beta )$, so that $\alpha \subseteq \rho _A(\alpha )=\rho _A(\beta )\subseteq \rho _A(\phi )=\phi $, thus $\alpha \subseteq \phi $; hence: $\lambda _A(\alpha )\in \lambda _A({\cal K}(A)\cap (\phi ])$ iff $\alpha \in {\cal K}(A)\cap (\phi ]$.

Therefore: ${\cal L}(f)^{-1}(\{{\bf 0}\})\subseteq \phi ^*$ iff $\lambda _A({\cal K}(A)\cap (f^*(\rho _B(\Delta _B))])\subseteq \lambda _A({\cal K}(A)\cap (\phi ])$ iff ${\cal K}(A)\cap (f^*(\rho _B(\Delta _B))]\subseteq {\cal K}(A)\cap (\phi ]$ iff ${\cal K}(A)\cap (f^*(\rho _B(\Delta _B))]\subseteq (\phi ]$ iff every $\alpha \in {\cal K}(A)$ such that $\alpha \subseteq f^*(\rho _B(\Delta _B))$ satisfies $\alpha \subseteq \phi $ iff $\bigvee ({\cal K}(A)\cap (f^*(\rho _B(\Delta _B))])\subseteq \phi $, that is $f^*(\rho _B(\Delta _B))\subseteq \phi $.

Since $f^*(\Delta _B)\subseteq f^*(\rho _B(\Delta _B))$, by the above ${\cal L}(f)^{-1}(\{{\bf 0}\})\subseteq \phi ^*$ implies $f^*(\Delta _B)\subseteq \phi $, that is ${\rm Ker}(f)\subseteq \phi $.

On the other hand, again since $\rho _A(\phi )=\phi $, we have: $f^*(\Delta _B)={\rm Ker}(f)\subseteq \phi $ iff $\rho _A(f^*(\Delta _B))\subseteq \phi $, that is $\bigcap ({\rm Spec}(A)\cap [f^*(\Delta _B)))\subseteq \phi $, which, since $f^*({\rm Spec}(B))\subseteq {\rm Spec}(A)\cap [f^*(\Delta _B))$, implies that $f^*(\rho _B(\Delta _B))=f^*(\bigcap {\rm Spec}(B))=\bigcap f^*({\rm Spec}(B))\subseteq \bigcap ({\rm Spec}(A)\cap [f^*(\Delta _B)))\subseteq \phi $, so that ${\cal L}(f)^{-1}(\{{\bf 0}\})\subseteq \phi ^*$ by the above.\end{proof}

\begin{proposition} If $f$ is admissible, then: $f$ satisfies property GU, GD, respectively LO iff ${\cal L}(f)$ satisfies Id--GU, Id--GD, respectively Id--LO.\label{gugdloid}\end{proposition}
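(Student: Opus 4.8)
The key is the homeomorphism $\phi \mapsto \phi^*$ from ${\rm Spec}(A)$ to ${\rm Spec}_{\rm Id}({\cal L}(A))$ of Proposition \ref{homeo}, together with the fact (established in the discussion following Lemma \ref{fcomm}) that when $f$ is admissible we have $f^*({\rm Spec}(B))\subseteq {\rm Spec}(A)$, so $f^*$ restricts to a map ${\rm Spec}(B)\rightarrow {\rm Spec}(A)$, and that this restriction is transported by the homeomorphisms of Proposition \ref{homeo} to ${\cal L}(f)^{-1}$ on prime ideal spectra. The first thing I would establish is precisely this compatibility: for $\psi \in {\rm Spec}(B)$, $f^*(\psi)^* = {\cal L}(f)^{-1}(\psi^*)$. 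To see this, note that $\lambda_A(\alpha) \in f^*(\psi)^*$ iff $\alpha \subseteq f^*(\psi)$ (by Lemma \ref{recallemma}), iff $f^\bullet(\alpha) \subseteq \psi$ (since $(f^\bullet, f^*)$ is an adjoint pair, Remark \ref{fbullet}(i)), iff $\lambda_B(f^\bullet(\alpha)) \in \psi^*$ (Lemma \ref{recallemma} applied in $B$), iff ${\cal L}(f)(\lambda_A(\alpha)) \in \psi^*$, iff $\lambda_A(\alpha) \in {\cal L}(f)^{-1}(\psi^*)$. This dictionary, combined with the order isomorphism part of Proposition \ref{homeo}, immediately converts the inclusion and equation conditions defining GU, GD, LO into the corresponding conditions defining Id--GU, Id--GD, Id--LO.

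Concretely, for GU $\Leftrightarrow$ Id--GU: suppose $f$ has GU and let $P,Q \in {\rm Spec}_{\rm Id}({\cal L}(A))$ with $P \subseteq Q$, $P_1 \in {\rm Spec}_{\rm Id}({\cal L}(B))$ with ${\cal L}(f)^{-1}(P_1) = P$. Write $P = \phi^*$, $Q = \psi^*$, $P_1 = \phi_1^*$ via the homeomorphisms; then $\phi \subseteq \psi$ (order isomorphism) and $f^*(\phi_1)^* = {\cal L}(f)^{-1}(\phi_1^*) = P = \phi^*$, whence $f^*(\phi_1) = \phi$ since $\phi \mapsto \phi^*$ is injective. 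GU for $f$ gives $\psi_1 \in {\rm Spec}(B)$ with $\phi_1 \subseteq \psi_1$ and $f^*(\psi_1) = \psi$; then $Q_1 := \psi_1^*$ satisfies $P_1 = \phi_1^* \subseteq \psi_1^* = Q_1$ and ${\cal L}(f)^{-1}(Q_1) = f^*(\psi_1)^* = \psi^* = Q$. The converse direction and the GD case are verbatim the same with all inclusions between primes reversed where appropriate, and LO is the same argument but simpler, invoking Lemma \ref{nuissance} to translate ${\rm Ker}(f) \subseteq \phi$ into ${\cal L}(f)^{-1}(\{{\bf 0}\}) \subseteq \phi^*$, which is exactly the hypothesis appearing in Id--LO (with $L = {\cal L}(A)$, $M = {\cal L}(B)$, $h = {\cal L}(f)$, $P = \phi^*$). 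Note that here we are implicitly using that $f$ admissible forces ${\cal L}(f)$ to be a bounded lattice morphism (Theorem \ref{admfret} together with Proposition \ref{closediagr}\eqref{closediagr1} --- one needs $\nabla_B$ in the image, which for admissible surjective $f$ is automatic and in general follows since admissibility gives FRet and the relevant preservation; alternatively this is subsumed once we know ${\cal L}(f)$ is a lattice morphism and its domain and codomain are bounded), so that the Id--GU/GD/LO definitions even apply to it.

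The only mild obstacle is bookkeeping: one must be careful that the ``$=$'' conditions $f^*(\phi_1) = \phi$ and $h^{-1}(P_1) = P$ correspond, not merely the ``$\subseteq$'' ones, and this is where injectivity of $\phi \mapsto \phi^*$ (from Proposition \ref{homeo}) is essential rather than just monotonicity. There is also a minor point that the $P_1, Q_1$ produced must genuinely be \emph{prime} ideals of ${\cal L}(B)$: this is free, since they are of the form $\psi_1^*$ with $\psi_1 \in {\rm Spec}(B)$ and Proposition \ref{homeo} tells us $\psi_1^* \in {\rm Spec}_{\rm Id}({\cal L}(B))$; conversely, every prime ideal of ${\cal L}(B)$ \emph{is} of this form, again by Proposition \ref{homeo}, so the quantifiers ``there exists $Q_1 \in {\rm Spec}_{\rm Id}({\cal L}(B))$'' and ``there exists $\psi_1 \in {\rm Spec}(B)$'' match up bijectively. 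With these remarks the three equivalences are immediate and the proof is a few lines of transcription.
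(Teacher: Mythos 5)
Your proof is correct and follows essentially the same route as the paper: the central identity $f^*(\psi)^*={\cal L}(f)^{-1}(\psi ^*)$, proved via Lemma \ref{recallemma} and the adjunction $(f^{\bullet },f^*)$, is exactly the commutative diagram the paper establishes, and the LO case is handled by Lemma \ref{nuissance} in both arguments. The remaining transcription through the order isomorphisms of Proposition \ref{homeo} matches the paper's treatment, so no further comment is needed.
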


\begin{proof} By Proposition \ref{homeo}, the maps $u_A:{\rm Spec}(A)\rightarrow {\rm Spec}_{\rm Id}({\cal L}(A))$ and $u_B:{\rm Spec}(B)\rightarrow {\rm Spec}_{\rm Id}({\cal L}(B))$ defined by $u_A(\phi )=\phi ^*$ and $u_B(\psi )=\psi ^*$ for any $\phi \in {\rm Spec}(A)$ and any $\psi \in {\rm Spec}(B)$ are order isomorphisms.

The following diagram is commutative:

\begin{center}\begin{picture}(120,33)(0,0)
\put(-10,30){${\rm Spec}(A)$}
\put(70,30){$\textup{Spec}_{\textup{Id}}({\cal L}(A))$}
\put(-10,0){${\rm Spec}(B)$}
\put(70,0){$\textup{Spec}_{\textup{Id}}({\cal L}(B))$}
\put(26,33){\vector(1,0){42}}
\put(40,35){$u_A$}
\put(26,3){\vector(1,0){42}}
\put(40,5){$u_B$}
\put(7,7){\vector(0,1){21}}
\put(-3,14){$f^*$}
\put(86,7){\vector(0,1){21}}
\put(87,14){${\cal L}(f)^*$}
\end{picture}\end{center}\vspace*{-3pt}

Indeed, by Lemma \ref{recallemma} and the fact that $f^{\bullet}({\cal K}(A))\subseteq {\cal K}(B)$, for any $\psi \in {\rm Spec}(B)$, we have: ${\cal L}(f)^*(u_B(\psi ))={\cal L}(f)^*(\psi ^*)=\{\lambda _A(\alpha )\ |\ \alpha \in {\cal K}(A),{\cal L}(f)(\lambda _A(\alpha ))\in \psi ^*\}=\{\lambda _A(\alpha )\ |\ \alpha \in {\cal K}(A),\lambda _B(f^{\bullet}(\alpha ))\in \psi ^*\}=\{\lambda _A(\alpha )\ |\ \alpha \in {\cal K}(A),f^{\bullet}(\alpha )\subseteq \psi \}=\{\lambda _A(\alpha )\ |\ \alpha \in {\cal K}(A),\alpha \subseteq f^*(\psi )\}=\lambda _A({\cal K}(A)\cap (f^*(\psi )])=f^*(\psi )^*=u_A(f^*(\psi ))$.

Hence the statements in the enunciation on GU and GD versus Id--GU and Id--GD, respectively. By Lemma \ref{nuissance}, we have, for every $\phi \in {\rm Spec}(A)$: ${\cal L}(f)^{-1}(\{{\bf 0}\})\subseteq u_A(\phi )$ iff ${\rm Ker}(f)\subseteq \phi $, which, along with the commutativity of the diagram above, yields the statement on LO versus Id--LO in the enunciation.\end{proof}

\begin{proposition} Any dually Brouwerian lattice morphism satisfies Id--GU.\label{brougu}\end{proposition}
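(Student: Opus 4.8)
The plan is to obtain $Q_1$ by a prime separation argument carried out in $M$, the key point being that, since $h$ commutes with the dual relative pseudocomplementation $\dot{-}$, the hypothesis $h^{-1}(P_1)=P$ can be exploited through inequalities of the form $h(a)\le x\vee h(q)$. So fix $P,Q\in {\rm Spec}_{\rm Id}(L)$ with $P\subseteq Q$ and $P_1\in {\rm Spec}_{\rm Id}(M)$ with $h^{-1}(P_1)=P$. First I would form the ideal $I$ of $M$ generated by $P_1\cup h(Q)$ and the filter $F$ of $M$ generated by $h(L\setminus Q)$. Since $Q$ is a prime ideal, $L\setminus Q$ is a filter of $L$, and, as $h$ preserves finite meets, $h(L\setminus Q)$ is closed under finite meets, whence $F=\{y\in M\ |\ y\ge h(a)\text{ for some }a\in L\setminus Q\}$; dually, since $h$ preserves finite joins and $Q$ is an ideal, $I=\{y\in M\ |\ y\le x\vee h(q)\text{ for some }x\in P_1,\ q\in Q\}$. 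Note that $F\neq \emptyset$ because $Q$ is proper.

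The crux is to check that $I\cap F=\emptyset$. Suppose not; then there are $a\in L\setminus Q$, $x\in P_1$ and $q\in Q$ with $h(a)\le x\vee h(q)$. Applying the adjunction defining $\dot{-}$ in $M$ together with the fact that $h$ preserves $\dot{-}$ gives $h(a\dot{-}q)=h(a)\dot{-}h(q)\le x\in P_1$, so $h(a\dot{-}q)\in P_1$ and hence $a\dot{-}q\in h^{-1}(P_1)=P\subseteq Q$. Since $a\le q\vee (a\dot{-}q)$ and both $q$ and $a\dot{-}q$ belong to the ideal $Q$, we conclude $a\in Q$, contradicting $a\in L\setminus Q$.

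Once $I\cap F=\emptyset$ is established, the prime separation theorem for distributive lattices --- applicable because a dually Brouwerian lattice such as $M$ is distributive --- produces a prime ideal $Q_1$ of $M$ with $I\subseteq Q_1$ and $Q_1\cap F=\emptyset$ (and $Q_1$ is proper since $F\neq \emptyset$). Then $P_1\subseteq I\subseteq Q_1$; the inclusion $h(Q)\subseteq Q_1$ yields $Q\subseteq h^{-1}(Q_1)$, while $Q_1\cap h(L\setminus Q)=\emptyset$ yields $h^{-1}(Q_1)\subseteq Q$, so $h^{-1}(Q_1)=Q$ and $Q_1$ witnesses Id--GU.

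The main obstacle is really just recognising the right ideal and filter to separate; the computations are light, and their heart is the single line $h(a\dot{-}q)=h(a)\dot{-}h(q)\le x$, which converts the lattice inequality $h(a)\le x\vee h(q)$ into the membership $a\dot{-}q\in h^{-1}(P_1)=P$ that forces the contradiction. A minor point to handle carefully is that $h(L\setminus Q)$ is meet-closed, so that $F$ admits the simple description used above --- this is exactly where $h$ preserving meets and $Q$ being prime enter.
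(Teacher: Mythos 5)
Your proof is correct and follows essentially the same route as the paper's: a prime--separation (Zorn) argument in $M$ whose crux is the identity $h(a\dot{-}q)=h(a)\dot{-}h(q)$, used to convert an inequality $h(a)\le x\vee h(q)$ into membership of $a\dot{-}q$ in a pulled--back prime ideal. The only difference is organizational: you place $h(Q)$ into the ideal being separated and verify disjointness upfront, whereas the paper takes an ideal maximal among those disjoint from $h(L\setminus Q)$ and uses that maximality (via the same $\dot{-}$ computation) to show its preimage is exactly $Q$.
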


\begin{proof} Let $L$ and $M$ be lattices with smallest element such that $(L,\vee ,0)$ and $(M,\vee ,0)$ are dually Brouwerian join--semilattices, and $h:L\rightarrow M$ be a dually Brouwerian lattice morphism.

Let $P,Q\in {\rm Spec}_{\rm Id}(L)$ and $P_1\in {\rm Spec}_{\rm Id}(M)$ such that $P\subseteq Q$ and $h^{-1}(P_1)=P$.

Let us denote by $S=L\setminus P$ and $T=L\setminus Q$, so that $T\subseteq S$, so that $h^{-1}(P_1)\cap T=P\cap T=\emptyset $ and thus $P_1\cap h(T)=\emptyset $. By Zorn's Lemma, it follows that there exists an ideal $Q_1$ of $M$ such that $Q_1\cap h(T)=\emptyset $ and $Q_1$ is maximal w.r.t. this property, so that $P_1\subseteq Q_1$. Since $Q\in {\rm Spec}_{\rm Id}(L)$, it follows that $T$ is closed w.r.t. the meet, thus $h(T)$ is closed w.r.t. the meet, from which it immediately follows that $Q_1\in {\rm Spec}_{\rm Id}(M)$.

$h^{-1}(Q_1)\cap T\subseteq h^{-1}(Q_1)\cap h^{-1}(h(T))=h^{-1}(Q_1\cap h(T))=\emptyset $, thus $h^{-1}(Q_1)\setminus Q=h^{-1}(Q_1)\cap (L\setminus Q)=\emptyset $, therefore $h^{-1}(Q_1)\subseteq Q$.

Now let $x\in Q$ and assume by absurdum that $x\notin h^{-1}(Q_1)$, that is $h(x)\notin Q_1$, so that $Q_1\subsetneq Q_1\vee (h(x)]$ and thus $(Q_1\vee (h(x)])\cap h(T)\neq \emptyset $ by the choice of $Q_1$, so that, for some $t\in T$ and some $a\in Q_1$, $h(t)\leq h(x)\vee a$, thus $h(t\dot{-}x)=h(t)\dot{-}h(x)\leq a$, hence $h(t\dot{-}x)\in Q_1$, thus $t\dot{-}x\in h^{-1}(Q_1)\subseteq Q$, so that, since $t\dot{-}x\leq t\dot{-}x$, we have $t\leq (t\dot{-}x)\vee x\in Q$, thus $t\in Q=L\setminus T$, and we have a contradiction. Hence $Q\subseteq h^{-1}(Q_1)$, therefore $h^{-1}(Q_1)=Q$.\end{proof}

The proof of the proposition above follows the lines of analogous results for MV--algebras and BL--algebras from \cite{bel} and \cite{rada}, respectively. The two previous propositions yield the following result from \cite{gulo} as a corollary:

\begin{corollary} If ${\cal C}$ has EDPC and $f$ is admissible, then $f$ satisfies GU.\end{corollary}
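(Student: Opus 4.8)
The statement to prove is the final corollary: if $\mathcal{C}$ has EDPC and $f$ is admissible, then $f$ satisfies GU. The plan is to simply chain together the three tools developed immediately before it. First, since $\mathcal{C}$ has EDPC, Theorem~\ref{edpc}, part~(\ref{edpc1}), tells us $\mathcal{C}$ is congruence--distributive, so in particular the commutators coincide with intersections, $\mathcal{K}(A)=\mathcal{L}(A)$ and $\mathcal{K}(B)=\mathcal{L}(B)$ with $\lambda_A,\lambda_B$ the relevant identifications, and the hypotheses needed to speak about FRet and about $\mathcal{L}(f)$ are all in force. Since $f$ is admissible, Theorem~\ref{admfret} guarantees that $f$ satisfies FRet and that $\mathcal{L}(f):\mathcal{L}(A)\rightarrow\mathcal{L}(B)$ is a lattice morphism.

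Next, I would upgrade $\mathcal{L}(f)$ from a mere lattice morphism to a dually Brouwerian lattice morphism. By Proposition~\ref{fprezdif}, $f^{\bullet}(\alpha\,\dot{-}\,\beta)=f^{\bullet}(\alpha)\,\dot{-}\,f^{\bullet}(\beta)$ for all $\alpha,\beta\in\mathcal{K}(A)$; combined with the identification ${\cal L}(f)=f^{\bullet}:{\cal K}(A)\to{\cal K}(B)$ (valid under EDPC, as noted in the corollary preceding Proposition~\ref{fprezdif}) and the fact that $\mathcal{L}(f)$ preserves $\mathbf 0$, the join and the meet, we conclude that $\mathcal{L}(f)$ is a dually Brouwerian lattice morphism in the sense defined just before that corollary. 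Here Theorem~\ref{edpc}, part~(\ref{edpc2}), supplies that $(\mathcal{K}(M),\vee,\Delta_M)$ is dually Brouwerian for every member $M$ of $\mathcal{C}$, so the domain and codomain are indeed dually Brouwerian lattices.

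Then I would invoke Proposition~\ref{brougu}: any dually Brouwerian lattice morphism satisfies Id--GU. Hence $\mathcal{L}(f)$ satisfies Id--GU. Finally, since $f$ is admissible, Proposition~\ref{gugdloid} says $f$ satisfies GU if and only if $\mathcal{L}(f)$ satisfies Id--GU. Putting these two together, $f$ satisfies GU, which is the claim.

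**Main obstacle.** There is essentially no obstacle here — this corollary is deliberately arranged to fall out by composition of the preceding two propositions, exactly as the text says (\emph{``The two previous propositions yield the following result from \cite{gulo} as a corollary''}). The only point requiring a line of care is confirming that the EDPC hypothesis legitimately licenses all of: the congruence--distributivity (for $\mathcal{L}(f)=f^{\bullet}$ and for the lattice identifications), the dually Brouwerian structure on $\mathcal{K}(A)$ and $\mathcal{K}(B)$, and the commutation $f^{\bullet}(\alpha\,\dot{-}\,\beta)=f^{\bullet}(\alpha)\,\dot{-}\,f^{\bullet}(\beta)$; all three are already recorded above. So the proof is a two- or three-line citation chain: Proposition~\ref{fprezdif} plus Theorem~\ref{edpc} give that $\mathcal{L}(f)$ is a dually Brouwerian lattice morphism, Proposition~\ref{brougu} gives Id--GU for it, and Proposition~\ref{gugdloid} transfers this back to GU for $f$.
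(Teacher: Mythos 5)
Your proof is correct and follows exactly the route the paper intends: the paper itself only remarks that the corollary follows from Propositions \ref{gugdloid} and \ref{brougu}, and your chain (EDPC $\Rightarrow$ congruence--distributivity and the dually Brouwerian structure on ${\cal K}(A)$, ${\cal K}(B)$; admissibility $\Rightarrow$ FRet and meet--preservation via Theorem \ref{admfret}; Proposition \ref{fprezdif} $\Rightarrow$ preservation of $\dot{-}$; then Proposition \ref{brougu} and Proposition \ref{gugdloid}) supplies precisely the omitted details. The only nit is a misplaced reference: the corollary identifying ${\cal L}(f)=f^{\bullet}$ as a dually Brouwerian join--semilattice morphism \emph{follows} Proposition \ref{fprezdif} rather than preceding it.
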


\section{Functoriality of the Boolean Center}
\label{fbc}

Throughout this section, $B$ will be a member of $\C $, $f:A\rightarrow B$ will be a morphism, and we will assume that $\nabla _A\in {\cal K}(A)$, $\nabla _B\in {\cal K}(B)$, the commutators of $A$ and $B$ are commutative and distributive w.r.t. arbitrary joins, all of which hold in the particular case when $\C $ is congruence--modular and semi--degenerate. We will also assume that ${\cal K}(A)$ and ${\cal K}(B)$ are closed w.r.t. the commutators of $A$ and $B$, respectively.

If ${\cal B}({\rm Con}(A))$ and ${\cal B}({\rm Con}(B))$ are Boolean sublattices of ${\rm Con}(A)$ and ${\rm Con}(B)$, respectively, then we say that $f$ satisfies the {\em functoriality of the Boolean center} (abbreviated {\em FBC}) iff:

\begin{flushleft}\begin{tabular}{ll}
(FBC1) & $f^{\bullet }({\cal B}({\rm Con}(A)))\subseteq {\cal B}({\rm Con}(B))$;\\ 
(FBC2) & $f^{\bullet }\mid _{{\cal B}({\rm Con}(A))}:{\cal B}({\rm Con}(A))\rightarrow {\cal B}({\rm Con}(B))$ is a Boolean morphism.\end{tabular}\end{flushleft}

Throughout the rest of this section, we will also assume that $[\alpha ,\nabla _A]_A=\alpha $ for all $\alpha \in {\rm Con}(A)$ and $[\beta ,\nabla _B]_B=\beta $ for all $\beta \in {\rm Con}(B)$, which also hold in the particular case when $\C $ is congruence--modular and semi--degenerate.

Under the conditions above, by \cite[Lemma $24$]{retic}, ${\cal B}({\rm Con}(A))$ is a Boolean sublattice of ${\rm Con}(A)$, on which the commutator coincides with the intersection; moreover, by \cite[Lemma $18$, (iv)]{retic}, for all $\sigma \in {\cal B}({\rm Con}(A))$ and all $\theta \in {\rm Con}(A)$, we have $[\sigma ,\theta ]_A=\sigma \cap \theta $; also, for all $\alpha ,\beta \in {\rm Con}(A)$ such that $\alpha \vee \beta =\nabla _A$, we have $[\alpha ,\beta ]_A=\alpha \cap \beta $. By \cite[Proposition $19$, (iv)]{retic}, ${\cal B}({\rm Con}(A))\subseteq {\cal K}(A)$, so that $\lambda _A({\cal B}({\rm Con}(A)))\subseteq {\cal B}({\cal L}(A))$ and $\lambda _A\mid _{{\cal B}({\rm Con}(A))}:{\cal B}({\rm Con}(A))\rightarrow {\cal B}({\cal L}(A))$ is a Boolean morphism.

\begin{lemma}{\rm \cite[Theorem $5$, $(i)$]{retic}} If $\C $ is congruence--modular and semi--degenerate, then the Boolean morphism $\lambda _A\mid _{{\cal B}({\cal L}(A))}:{\cal B}({\cal L}(A))\rightarrow {\cal B}({\cal L}(B))$ is injective. If, furthermore, $A$ is semiprime or its commutator is associative, then this restriction of $\lambda _A$ is a Boolean isomorphism.\label{boolinj}\end{lemma}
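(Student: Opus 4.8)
The plan is to prove that the Boolean morphism $\lambda _A\mid _{{\cal B}({\rm Con}(A))}:{\cal B}({\rm Con}(A))\rightarrow {\cal B}({\cal L}(A))$ is injective in general, and onto under the extra hypothesis; a bijective Boolean morphism being a Boolean isomorphism, this yields both claims. Since a Boolean morphism is injective as soon as its kernel is trivial, for the first part I would take $\sigma \in {\cal B}({\rm Con}(A))$ with $\lambda _A(\sigma )={\bf 0}$ and deduce $\sigma =\Delta _A$: letting $\sigma '$ be the complement of $\sigma $ in ${\rm Con}(A)$, from $\lambda _A(\sigma )={\bf 0}=\lambda _A(\Delta _A)$, i.e. $\rho _A(\sigma )=\rho _A(\Delta _A)$, we get $V_A(\sigma )={\rm Spec}(A)$; as $\sigma \vee \sigma '=\nabla _A$ forces $[\sigma ,\sigma ']_A=\sigma \cap \sigma '=\Delta _A$, every $\phi \in {\rm Spec}(A)$ contains exactly one of $\sigma ,\sigma '$, and since $\sigma \subseteq \phi $ always this gives $V_A(\sigma ')=\emptyset $, hence $\sigma '=\nabla _A$ (using that $\C $ is congruence--modular and semi--degenerate) and $\sigma =\sigma \cap \sigma '=\Delta _A$.

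For surjectivity, take $x\in {\cal B}({\cal L}(A))$ and write $x=\lambda _A(\alpha )$ with complement $\lambda _A(\beta )$ for some $\alpha ,\beta \in {\cal K}(A)$; then $\lambda _A(\alpha \vee \beta )={\bf 1}$ and $\lambda _A([\alpha ,\beta ]_A)={\bf 0}$, so $\alpha \vee \beta =\nabla _A$ and $[\alpha ,\beta ]_A\subseteq \rho _A(\Delta _A)=:n_A$. If $A$ is semiprime this already finishes matters: $n_A=\Delta _A$, so $[\alpha ,\beta ]_A=\Delta _A$, hence $\alpha \cap \beta =\Delta _A$, so $\alpha \in {\cal B}({\rm Con}(A))$ and $\lambda _A(\alpha )=x$. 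The remaining case, with the commutator associative, is the heart of the matter, and I would handle it in two steps.

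Write $\xi =[\alpha ,\beta ]_A\in {\cal K}(A)$, and set $\theta ^1=\theta $, $\theta ^{k+1}=[\theta ,\theta ^k]_A$, noting that associativity makes iterated commutators of a finite multiset of congruences depend only on that multiset. The first step is a nilpotency lemma: $\xi ^n=\Delta _A$ for some $n\in \N ^*$. If not, the set $S=\{\psi \in {\rm Con}(A)\mid \xi ^k\not\subseteq \psi \text{ for all }k\}$ contains $\Delta _A$ and is closed under unions of chains (each $\xi ^k$ being compact, as ${\cal K}(A)$ is closed under the commutator), so by Zorn's Lemma it has a maximal element $\phi $. This $\phi $ is prime: if $[\gamma ,\delta ]_A\subseteq \phi $ with $\gamma ,\delta \not\subseteq \phi $, then maximality gives some $k$ with $\xi ^k\subseteq \phi \vee \gamma $ and $\xi ^k\subseteq \phi \vee \delta $, whence $\xi ^{2k}=[\xi ^k,\xi ^k]_A\subseteq [\phi \vee \gamma ,\phi \vee \delta ]_A\subseteq \phi \vee [\gamma ,\delta ]_A=\phi $, contradicting $\phi \in S$. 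But then, since $\xi \subseteq n_A=\bigcap {\rm Spec}(A)\subseteq \phi $, we again contradict $\phi \in S$; hence $\xi $ is nilpotent.

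The second step, fixing such an $n$ and putting $m=2n-1$, is to extract a complemented congruence. Expanding $(\alpha \vee \beta )^m$ (which equals $\nabla _A$, as $[\nabla _A,\nabla _A]_A=\nabla _A$) by join--distributivity of the commutator, associativity and commutativity yield $\nabla _A=\bigvee _{j=0}^m P_j$, where $P_j$ is the iterated commutator of $j$ copies of $\alpha $ and $m-j$ copies of $\beta $; here $P_m=\alpha ^m$ and $P_j\subseteq \xi $ whenever $0<j<m$. Set $\sigma =\bigvee _{j\geq n}P_j$ and $\tau =\bigvee _{j<n}P_j$, so $\sigma \vee \tau =\nabla _A$; for $j\geq n>j'$ the multiset defining $[P_j,P_{j'}]_A$ has at least $n$ copies of each of $\alpha ,\beta $, so $[P_j,P_{j'}]_A\subseteq \xi ^n=\Delta _A$ by associativity, giving $[\sigma ,\tau ]_A=\sigma \cap \tau =\Delta _A$ and thus $\sigma \in {\cal B}({\rm Con}(A))$. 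Finally $\alpha ^m\subseteq \sigma \subseteq \alpha \vee n_A$, and since $\rho _A(\alpha ^m)=\rho _A(\alpha )$ and $n_A\subseteq \rho _A(\alpha )$, we obtain $\rho _A(\sigma )=\rho _A(\alpha )$, i.e. $\lambda _A(\sigma )=x$, so the morphism is onto. The main obstacle is this associative case --- above all the nilpotency lemma, where associativity and the compactness of $\xi $ are both essential, together with the ``binomial'' splitting of $(\alpha \vee \beta )^m$ into a complemented pair.
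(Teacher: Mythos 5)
The statement you are proving is imported by the paper from \cite[Theorem $5$, $(i)$]{retic} without proof, so there is no in--paper argument to compare against; note also that the printed statement contains a typo (the map $\lambda _A$ cannot land in ${\cal B}({\cal L}(B))$ --- the intended morphism is $\lambda _A\mid _{{\cal B}({\rm Con}(A))}:{\cal B}({\rm Con}(A))\rightarrow {\cal B}({\cal L}(A))$, as every later invocation of the lemma confirms), and you have correctly proved the intended statement. Your argument checks out in all three parts. For injectivity, the kernel computation is right: $V_A(\sigma )={\rm Spec}(A)$ together with $[\sigma ,\sigma ']_A=\sigma \cap \sigma '=\Delta _A$ forces $V_A(\sigma ')=\emptyset $, hence $\sigma '=\nabla _A$ by semi--degeneracy and $\sigma =\sigma \cap \nabla _A=\Delta _A$. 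The semiprime case of surjectivity is immediate as you say. In the associative case, the nilpotency lemma is the crux and is sound: the $\xi ^k$ are compact because ${\cal K}(A)$ is closed under the commutator, so Zorn applies to $S$; maximality plus distributivity of the commutator over joins gives $\xi ^{2k}\subseteq \phi \vee [\gamma ,\delta ]_A=\phi $, where associativity (with the standing commutativity) is exactly what licenses $\xi ^{2k}=[\xi ^k,\xi ^k]_A$; and $\xi \subseteq \rho _A(\Delta _A)\subseteq \phi $ then contradicts $\phi \in S$. The binomial splitting of $\nabla _A=(\alpha \vee \beta )^{2n-1}$ into $\sigma =\bigvee _{j\geq n}P_j$ and $\tau =\bigvee _{j<n}P_j$ is also correct: for $j\geq n>j'$ the combined multiset has at least $n$ copies of each of $\alpha ,\beta $ (the count $2(2n-1)-(j+j')\geq n$ is right), so $[P_j,P_{j'}]_A\subseteq \xi ^n=\Delta _A$, and $\alpha ^m\subseteq \sigma \subseteq \alpha \vee \rho _A(\Delta _A)$ pins down $\lambda _A(\sigma )=\lambda _A(\alpha )$. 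This is the standard ``lift idempotents modulo a nil ideal'' scheme transported to commutators, and it is exactly the kind of argument one expects behind the cited result; the only things left implicit (the trivial algebra and the ${\rm Spec}(A)=\emptyset $ degeneracies, and the $n=1$ edge case) are harmless.
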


\begin{lemma}{\rm \cite[Lemma $25$]{retic}} If $\C $ is congruence--modular and semi--degenerate and $A$ is semiprime, then, for all $\alpha \in {\rm Con}(A)$: $\lambda _A(\alpha )\in {\cal B}({\cal L}(A))$ iff $\alpha \in {\cal B}({\rm Con}(A))$.\label{booliff}\end{lemma}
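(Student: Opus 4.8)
The plan is to prove the two implications of the biconditional separately, the easy one first. Suppose $\alpha \in {\cal B}({\rm Con}(A))$. We have already recorded, from \cite[Proposition $19$, (iv)]{retic} and the discussion immediately preceding Lemma \ref{boolinj}, that ${\cal B}({\rm Con}(A)) \subseteq {\cal K}(A)$ and that $\lambda _A\mid _{{\cal B}({\rm Con}(A))}:{\cal B}({\rm Con}(A))\rightarrow {\cal B}({\cal L}(A))$ is a Boolean morphism; hence $\lambda _A(\alpha)\in {\cal B}({\cal L}(A))$ at once.

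For the converse, assume $\lambda _A(\alpha)\in {\cal B}({\cal L}(A))$. The crucial input is Lemma \ref{boolinj}: since $\C $ is congruence--modular and semi--degenerate and $A$ is semiprime, $\lambda _A\mid _{{\cal B}({\rm Con}(A))}$ is a Boolean \emph{isomorphism} onto ${\cal B}({\cal L}(A))$, in particular surjective, so there is some $\sigma \in {\cal B}({\rm Con}(A))$ with $\lambda _A(\sigma)=\lambda _A(\alpha)$, that is $\rho _A(\sigma)=\rho _A(\alpha)$. Write $\sigma '$ for the Boolean complement of $\sigma $ in ${\cal B}({\rm Con}(A))$, so that $\sigma \cap \sigma '=\Delta _A$ and $\sigma \vee \sigma '=\nabla _A$. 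I would then conclude $\alpha =\sigma $ in three short steps. First, $\sigma $ (and symmetrically $\sigma '$) is a radical congruence: $\rho _A(\sigma)\cap \rho _A(\sigma ')=\rho _A(\sigma \cap \sigma ')=\rho _A(\Delta _A)=\Delta _A$ by semiprimeness, while $\rho _A(\sigma)\vee \rho _A(\sigma ')=\rho _A(\nabla _A)=\nabla _A$; using that ${\rm Con}(A)$ is modular together with $\sigma \subseteq \rho _A(\sigma)$ gives $\rho _A(\sigma)=\rho _A(\sigma)\cap (\sigma \vee \sigma ')=\sigma \vee (\rho _A(\sigma)\cap \sigma ')\subseteq \sigma \vee (\rho _A(\sigma)\cap \rho _A(\sigma '))=\sigma $, so $\rho _A(\sigma)=\sigma $. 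Second, consequently $\alpha \subseteq \rho _A(\alpha)=\rho _A(\sigma)=\sigma $. Third, $\alpha =\sigma $: since $\alpha \subseteq \sigma $ we have $\alpha \cap \sigma '\subseteq \sigma \cap \sigma '=\Delta _A$, and $\rho _A(\alpha \vee \sigma ')=\rho _A(\alpha)\vee \rho _A(\sigma ')=\sigma \vee \sigma '=\nabla _A$ forces $\alpha \vee \sigma '=\nabla _A$ by the congruence--modular, semi--degenerate property that $\rho _A(\theta)=\nabla _A$ only for $\theta =\nabla _A$; then modularity applied to $\alpha \subseteq \sigma $ yields $\sigma =\sigma \cap (\alpha \vee \sigma ')=\alpha \vee (\sigma \cap \sigma ')=\alpha $. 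Hence $\alpha =\sigma \in {\cal B}({\rm Con}(A))$.

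The point that needs care --- rather than a genuine obstacle --- is the non-uniqueness of complements in ${\rm Con}(A)$, which is only modular, not distributive: one cannot short-circuit the argument by merely observing that $\alpha $ and $\sigma $ are both complements of $\sigma '$ and deducing $\alpha =\sigma $. This is exactly why the first step, which extracts from semiprimeness that the chosen $\sigma $ is radical and hence that $\alpha \subseteq \rho _A(\alpha)=\sigma $, has to be carried out before the modular-law computation in the third step. Beyond that, everything used is routine: the behaviour of $\rho _A$ under finite meets and joins, the identities $\rho _A(\Delta _A)=\Delta _A$ (semiprimeness) and $\rho _A(\nabla _A)=\nabla _A$, and the implication $\rho _A(\theta)=\nabla _A\Rightarrow \theta =\nabla _A$, are all recorded in Section \ref{reticulatia} under the standing hypotheses, so no extra assumptions are introduced.
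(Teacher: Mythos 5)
Your proof is correct. Note that the paper itself gives no argument for this statement --- it is imported verbatim as \cite[Lemma 25]{retic} --- so there is no in-paper proof to compare against; what you have written is a legitimate self-contained derivation from facts the paper does record: the containment $\lambda_A({\cal B}({\rm Con}(A)))\subseteq {\cal B}({\cal L}(A))$ for the easy direction, and for the converse the surjectivity of $\lambda_A\mid_{{\cal B}({\rm Con}(A))}$ onto ${\cal B}({\cal L}(A))$ from Lemma \ref{boolinj} (you read the obvious typo in its statement the intended way), the identities $\rho_A(\alpha\cap\beta)=\rho_A(\alpha)\cap\rho_A(\beta)$, $\rho_A(\Delta_A)=\Delta_A$, and $\rho_A(\theta)=\nabla_A\Rightarrow\theta=\nabla_A$, plus modularity of ${\rm Con}(A)$. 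All three steps check out, and your cautionary remark about non-uniqueness of complements in a merely modular lattice is exactly the right point to flag: it is why you must first establish $\rho_A(\sigma)=\sigma$ and hence $\alpha\subseteq\sigma$ before invoking the modular law. Two cosmetic observations: once you have $\alpha\cap\sigma'=\Delta_A$ and $\alpha\vee\sigma'=\nabla_A$ you could stop, since $\sigma'$ is then a complement of $\alpha$ and $\alpha\in{\cal B}({\rm Con}(A))$ follows without the final modular-law computation showing $\alpha=\sigma$; and in establishing $\rho_A(\alpha\vee\sigma')=\nabla_A$ you only need the trivial inclusion $\rho_A(\alpha)\vee\rho_A(\sigma')\subseteq\rho_A(\alpha\vee\sigma')$, so you need not lean on the stronger join-preservation identity for $\rho_A$ recalled in Section \ref{reticulatia}.
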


\begin{remark}  Since ${\cal B}({\rm Con}(A))\subseteq {\cal K}(A)\subseteq {\rm Con}(A)$, it follows that, if ${\rm Con}(A)$ is a Boolean lattice, in particular if $A$ is simple, then ${\cal B}({\rm Con}(A))={\cal K}(A)={\rm Con}(A)$.

Since the same holds for $B$, we may notice that: $f$ satisfies (FBC1) if ${\cal B}({\rm Con}(B))={\cal K}(B)$, in particular if ${\rm Con}(B)$ is a Boolean lattice, in particular if $B$ is simple.\end{remark}

\begin{remark} If $f$ satisfies (FBC1), $f^{\bullet }\mid _{{\cal K}(A)}:{\cal K}(A)\rightarrow {\cal K}(B)$ preserves the commutator and $f^{\bullet }(\nabla _A)=\nabla _B$, the latter holding if $f$ is surjective or $\C $ is a variety with $\vec{0}$ and $\vec{1}$, then, since the commutators of $A$ and $B$ coincide to the intersection on ${\cal B}({\rm Con}(A))$ and ${\cal B}({\rm Con}(B))$, respectively, it follows that $f$ satisfies FBC.

In particular, $f$ satisfies FBC if $f^{\bullet }:{\rm Con}(A)\rightarrow {\rm Con}(B)$ is a bounded lattice morphism, that is if:\begin{itemize}
\item $f^{\bullet }(\nabla _A)=\nabla _B$, in particular if $f$ is surjective or $\C $ is a variety with $\vec{0}$ and $\vec{1}$, and:
\item $f^{\bullet }$ preserves the intersection, in particular if $f$ is surjective and the commutators of $A$ and $B$ coincide to the intersection, in particular if $f$ is surjective and $\C $ is congruence--distributive.\end{itemize}\end{remark}

\begin{remark} If $f$ fulfills FRet and ${\cal L}(f):{\cal L}(A)\rightarrow {\cal L}(B)$ is a bounded lattice morphism, then $f$ fulfills FBC and the image of ${\cal L}(f)$ through the functor ${\cal B}$ is ${\cal B}({\cal L}(f))={\cal L}(f)\mid _{{\cal B}({\cal L}(A))}:{\cal B}({\cal L}(A))\rightarrow {\cal B}({\cal L}(B))$.

If all morphisms in $\C $ fulfill FRet and ${\cal L}$ is a functor from $\C $ to the variety of bounded distributive lattices, then ${\cal B}\circ {\cal L}$ is a functor from $\C $ to the variety of Boolean algebras.\end{remark}

Thus, in view of Proposition \ref{cipnedpc}:

\begin{corollary} If $\C $ is a congruence--distributive variety with $\vec{0}$ and $\vec{1}$ and the CIP, then every morphism in $\C $ fulfills FBC.\end{corollary}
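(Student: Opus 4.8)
The plan is to obtain the statement as an immediate combination of Proposition \ref{cipnedpc} with the unlabelled remark immediately preceding it, once we have checked that the standing hypotheses of Section \ref{fbc} are indeed in force. First I would verify this latter point: a congruence--distributive variety with $\vec{0}$ and $\vec{1}$ and the CIP is congruence--modular, and, being a variety with $\vec{0}$ and $\vec{1}$, it is semi--degenerate; hence for every member $M$ of $\C $ we have $\nabla _M\in {\cal K}(M)$, the commutator $[\cdot ,\cdot ]_M$ (which here coincides with the intersection, since $\C $ is congruence--distributive) is commutative and distributive w.r.t. arbitrary joins, the CIP gives that ${\cal K}(M)$ is closed under the intersection and therefore under the commutator, and $[\alpha ,\nabla _M]_M=\alpha \cap \nabla _M=\alpha $ for all $\alpha \in {\rm Con}(M)$. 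Thus all of the running assumptions of Section \ref{fbc} hold, so FBC is meaningful for every morphism of $\C $, and by \cite[Lemma $24$]{retic} each ${\cal B}({\rm Con}(M))$ is a Boolean sublattice of ${\rm Con}(M)$.

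Next I would apply Proposition \ref{cipnedpc} to the morphism $f$ (indeed to every morphism of $\C $): since $\C $ is congruence--distributive with the CIP, $f$ fulfills FRet and ${\cal L}(f):{\cal L}(A)\rightarrow {\cal L}(B)$ is a lattice morphism; since $\C $ is moreover a variety with $\vec{0}$ and $\vec{1}$, ${\cal L}(f)$ also preserves the $\mathbf{1}$, so it is a bounded lattice morphism.

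Finally I would invoke the remark just above the statement: whenever $f$ fulfills FRet and ${\cal L}(f)$ is a bounded lattice morphism, $f$ fulfills FBC, with ${\cal B}({\cal L}(f))={\cal L}(f)\mid _{{\cal B}({\cal L}(A))}:{\cal B}({\cal L}(A))\rightarrow {\cal B}({\cal L}(B))$. As this argument applies verbatim to every morphism in $\C $, the corollary follows. I do not expect any real obstacle here; the only point requiring care is the bookkeeping in the first step, namely confirming that the hypotheses of the statement subsume the standing assumptions of Section \ref{fbc} (in particular $[\alpha ,\nabla _M]_M=\alpha $ and the closure of ${\cal K}(M)$ under the commutator), so that the notion FBC is well defined before the invocation of Proposition \ref{cipnedpc} and the preceding remark.
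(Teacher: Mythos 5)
Your proposal is correct and takes essentially the same route as the paper: Proposition \ref{cipnedpc} gives that every morphism fulfills FRet with ${\cal L}(f)$ a bounded lattice morphism, and the remark immediately preceding the corollary then yields FBC. Your explicit verification that the standing hypotheses of Section \ref{fbc} (in particular $[\alpha ,\nabla _M]_M=\alpha $ and the closure of ${\cal K}(M)$ under the commutator via the CIP) are in force is left implicit in the paper but is exactly the right bookkeeping.
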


\begin{remark} ${\cal B}\circ {\cal L}$ does not preserve surjectivity, as shown by the example of the surjective morphism $h:{\cal N}_5\rightarrow {\cal L}_2^2$ from Example \ref{pentagon}. Note, also, that the bounded lattice morphism ${\cal L}(h)$ is surjective, but the Boolean morphism ${\cal B}({\cal L}(h))$ is not surjective.

On the other hand, notice the bounded lattice embedding $i_{{\cal L}_2,{\cal N}_5}$ from Example \ref{pentagon}, in whose case the Boolean morphism ${\cal B}({\cal L}(i_{{\cal L}_2,{\cal N}_5}))$ is surjective, while the bounded lattice morphism ${\cal L}(i_{{\cal L}_2,{\cal N}_5})$ is not surjective.\end{remark}

\begin{proposition} If:\begin{itemize}
\item $\C $ is congruence--modular and semi--degenerate,
\item $f$ fulfills FRet and ${\cal L}(f)$
preserves the ${\bf 1}$,
\item ${\cal L}(f)\mid _{{\cal B}({\cal L}(A))}$ preserves the meet, in particular if ${\cal L}(f)$ preserves the meet,
\item and $B$ is semiprime,\end{itemize}

\noindent then $f$ fulfills FBC.\label{suffbc}\end{proposition}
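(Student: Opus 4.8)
The plan is to verify conditions (FBC1) and (FBC2) by transporting the complemented congruences of $A$ across to ${\cal L}(B)$ via $\lambda _A$ and ${\cal L}(f)$, and back to ${\rm Con}(B)$ via $\lambda _B$, the semiprimeness of $B$ being what makes $\lambda _B$ faithful enough on ${\cal B}({\rm Con}(B))$ for the argument to close (Lemmas \ref{boolinj} and \ref{booliff}). First I would record that $f^{\bullet }(\nabla _A)=\nabla _B$: since ${\cal L}(f)$ preserves ${\bf 1}$ and $\lambda _A(\nabla _A)={\bf 1}$, we get $\lambda _B(f^{\bullet }(\nabla _A))={\cal L}(f)(\lambda _A(\nabla _A))={\bf 1}=\lambda _B(\nabla _B)$, and, since $\lambda _B(\theta )={\bf 1}$ forces $\theta =\nabla _B$, the equality follows. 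I will also freely use the facts from \cite{retic} recalled above: ${\cal B}({\rm Con}(A))\subseteq {\cal K}(A)$, on ${\cal B}({\rm Con}(A))$ (and on ${\cal B}({\rm Con}(B))$) the commutator coincides with the intersection, and $\lambda _A\mid _{{\cal B}({\rm Con}(A))}:{\cal B}({\rm Con}(A))\rightarrow {\cal B}({\cal L}(A))$ is a Boolean morphism (and likewise for $B$).

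For (FBC1), I would take $\sigma \in {\cal B}({\rm Con}(A))$ with complement $\sigma '\in {\cal B}({\rm Con}(A))\subseteq {\cal K}(A)$; then $\lambda _A(\sigma )\vee \lambda _A(\sigma ')=\lambda _A(\nabla _A)={\bf 1}$ and $\lambda _A(\sigma )\wedge \lambda _A(\sigma ')=\lambda _A([\sigma ,\sigma ']_A)=\lambda _A(\Delta _A)={\bf 0}$, so $\lambda _A(\sigma ),\lambda _A(\sigma ')\in {\cal B}({\cal L}(A))$ are complementary in ${\cal L}(A)$. Applying ${\cal L}(f)$, which preserves the join, ${\bf 0}$, ${\bf 1}$ and, restricted to ${\cal B}({\cal L}(A))$, the meet, I get that ${\cal L}(f)(\lambda _A(\sigma ))$ and ${\cal L}(f)(\lambda _A(\sigma '))$ are complementary in ${\cal L}(B)$; hence $\lambda _B(f^{\bullet }(\sigma ))={\cal L}(f)(\lambda _A(\sigma ))\in {\cal B}({\cal L}(B))$, and by Lemma \ref{booliff} (using that $B$ is semiprime) this gives $f^{\bullet }(\sigma )\in {\cal B}({\rm Con}(B))$, which is (FBC1).

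For (FBC2), since ${\cal B}({\rm Con}(A))$ and ${\cal B}({\rm Con}(B))$ are Boolean algebras, it suffices to check that $f^{\bullet }\mid _{{\cal B}({\rm Con}(A))}$ is a bounded lattice morphism; it maps $\Delta _A$ to $\Delta _B$ and $\nabla _A$ to $\nabla _B$ and preserves joins, so only meet--preservation is in question. Given $\sigma ,\tau \in {\cal B}({\rm Con}(A))$, both $f^{\bullet }(\sigma \cap \tau )$ and $f^{\bullet }(\sigma )\cap f^{\bullet }(\tau )$ lie in ${\cal B}({\rm Con}(B))$ by (FBC1) and the closure of ${\cal B}({\rm Con}(B))$ under intersection. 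I would then compute, using that $\lambda _B$ is a lattice morphism on ${\cal K}(B)$, that $\sigma \cap \tau =[\sigma ,\tau ]_A$ and $f^{\bullet }(\sigma )\cap f^{\bullet }(\tau )=[f^{\bullet }(\sigma ),f^{\bullet }(\tau )]_B$ on the respective Boolean centers, that $\lambda _B\circ f^{\bullet }={\cal L}(f)\circ \lambda _A$ on ${\cal K}$, and the meet--preservation of ${\cal L}(f)$ on ${\cal B}({\cal L}(A))$:
\[\lambda _B(f^{\bullet }(\sigma )\cap f^{\bullet }(\tau ))={\cal L}(f)(\lambda _A(\sigma ))\wedge {\cal L}(f)(\lambda _A(\tau ))={\cal L}(f)(\lambda _A(\sigma )\wedge \lambda _A(\tau ))=\lambda _B(f^{\bullet }(\sigma \cap \tau )).\]
Since $B$ is semiprime, $\lambda _B\mid _{{\cal B}({\rm Con}(B))}$ is injective by Lemma \ref{boolinj}, so $f^{\bullet }(\sigma \cap \tau )=f^{\bullet }(\sigma )\cap f^{\bullet }(\tau )$; thus $f^{\bullet }\mid _{{\cal B}({\rm Con}(A))}$ is a bounded lattice morphism, hence a Boolean morphism, which is (FBC2).

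The step I expect to be the main obstacle is precisely this meet--preservation: $f^{\bullet }$ need not preserve meets on the whole of ${\rm Con}(A)$ (which is why demanding that ${\cal L}(f)$ be a lattice morphism is a strictly stronger hypothesis than the one at hand), so the equality $f^{\bullet }(\sigma \cap \tau )=f^{\bullet }(\sigma )\cap f^{\bullet }(\tau )$ has to be obtained indirectly, by pushing it into ${\cal L}(B)$ --- where only meet--preservation on the Boolean center is available --- and then pulling it back using that $\lambda _B$ is faithful on ${\cal B}({\rm Con}(B))$, which is where the semiprimeness of $B$ is genuinely needed. A secondary nuisance is the bookkeeping around the inclusions ${\cal B}({\rm Con})\subseteq {\cal K}\subseteq {\rm Con}$ and the fact that on the Boolean centers the commutator equals the intersection, so that the meets in ${\cal L}(A)$, ${\cal L}(B)$ and those inside ${\rm Con}(A)$, ${\rm Con}(B)$ all correspond correctly under $\lambda _A$ and $\lambda _B$.
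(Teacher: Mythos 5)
Your proof is correct and follows essentially the same route as the paper's: establish (FBC1) by noting that ${\cal L}(f)$ restricted to ${\cal B}({\cal L}(A))$ is a bounded lattice morphism (hence sends complemented elements to complemented elements) and then pulling back with Lemma \ref{booliff} via the semiprimeness of $B$, and establish (FBC2) by computing the meet inside ${\cal L}(B)$ and using the injectivity of $\lambda _B$ on the Boolean center (Lemma \ref{boolinj}) to recover the equality $f^{\bullet }(\sigma \cap \tau )=f^{\bullet }(\sigma )\cap f^{\bullet }(\tau )$ in ${\rm Con}(B)$. The only cosmetic differences are that you derive $f^{\bullet }(\nabla _A)=\nabla _B$ from $\lambda _B(\theta )={\bf 1}\Leftrightarrow \theta =\nabla _B$ rather than from Lemma \ref{boolinj}, and that you spell out the complement argument that the paper compresses into the observation that bounded lattice morphisms preserve complements.
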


\begin{proof} Since $f^{\bullet }$ preserves the join and thus so does ${\cal L}(f)$, it follows that ${\cal L}(f)\mid _{{\cal B}({\cal L}(A))}:{\cal B}({\cal L}(A))\rightarrow {\cal L}(B)$ is a bounded lattice morphism, hence ${\cal L}(f)({\cal B}({\cal L}(A)))\subseteq {\cal B}({\cal L}(B))$ and so ${\cal L}(f)\mid _{{\cal B}({\cal L}(A))}:{\cal B}({\cal L}(A))\rightarrow {\cal B}({\cal L}(B))$ is a bounded lattice morphism, thus a Boolean morphism.

Let $\alpha \in {\cal B}({\rm Con}(A))$. Then $\lambda _A(\alpha )\in {\cal B}({\cal L}(A))$, thus, by the above, $\lambda _B(f^{\bullet }(\alpha ))={\cal L}(f)(\lambda _A(\alpha ))\in {\cal B}({\cal L}(B))$, so that $f^{\bullet }(\alpha )\in {\cal B}({\rm Con}(B))$ by Lemma \ref{booliff}. Hence $f^{\bullet }({\cal B}({\rm Con}(A)))\subseteq {\cal B}({\rm Con}(B))$.

Trivially, $f^{\bullet }(\Delta _A)=\Delta _B$. We have $\lambda _B(f^{\bullet }(\nabla _A))={\cal L}(f)(\lambda _A(\nabla _A))={\cal L}(f)({\bf 1})={\bf 1}=\lambda _B(\nabla _B)$, thus $f^{\bullet }(\nabla _A)=\nabla _B$ by Lemma \ref{boolinj}. Let $\alpha ,\beta \in {\cal B}({\rm Con}(A))\subseteq {\cal K}(A)$. Then $\lambda _B(f^{\bullet }(\alpha \cap \beta ))={\cal L}(f)(\lambda _A(\alpha \cap \beta ))={\cal L}(f)(\lambda _A(\alpha )\wedge \lambda _A(\beta ))={\cal L}(f)(\lambda _A(\alpha ))\wedge {\cal L}(f)(\lambda _A(\beta ))=\lambda _B(f^{\bullet }(\alpha ))\wedge \lambda _B(f^{\bullet }(\beta ))=\lambda _B(f^{\bullet }(\alpha )\cap f^{\bullet }(\beta ))$, so that $f^{\bullet }(\alpha \cap \beta )=f^{\bullet }(\alpha )\cap f^{\bullet }(\beta )$ by Lemma \ref{boolinj}. Therefore $f^{\bullet }\mid _{{\cal B}({\rm Con}(A))}:{\cal B}({\rm Con}(A))\rightarrow {\cal B}({\rm Con}(B))$ is a Boolean morphism.\end{proof}

\begin{corollary} If:\begin{itemize}
\item $\C $ is semi--degenerate,
\item $f^{\bullet }(\nabla _A)=\nabla _B$ and $f^{\bullet }(\alpha \cap \beta )=f^{\bullet }(\alpha )\cap f^{\bullet }(\beta )$ for all $\alpha ,\beta \in {\cal B}({\rm Con}(A))$,
\item $\C $ is congruence--modular and the commutators of $A$ and $B$ coincide to the intersection, in particular if $\C $ is congruence--distributive,\end{itemize}

\noindent then $f$ fulfills FBC.\end{corollary}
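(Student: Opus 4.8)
The plan is to derive this corollary from Proposition \ref{suffbc}, by verifying that the four hypotheses of that proposition are consequences of the three displayed ones. First I would note that, once the commutators of $A$ and $B$ coincide with the intersection of congruences, the standing assumptions of this section are automatic: such a commutator is commutative and distributive with respect to arbitrary joins, one has $[\alpha ,\nabla _A]_A=\alpha \cap \nabla _A=\alpha $ for every $\alpha \in {\rm Con}(A)$ and similarly in $B$, and, as recalled in Section \ref{reticulatia}, both $A$ and $B$ are then semiprime. Since $\C $ is moreover congruence--modular and semi--degenerate, the first hypothesis of Proposition \ref{suffbc} holds, and the fourth one ($B$ semiprime) has just been observed.

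For the second hypothesis I would apply Lemma \ref{distribsuffret}: because the commutators of $A$ and $B$ equal the intersection, $f$ fulfills FRet, and the equivalences of that lemma turn the assumption $f^{\bullet }(\nabla _A)=\nabla _B$ into the statement that ${\cal L}(f)$ preserves the ${\bf 1}$. For the third hypothesis I must check that ${\cal L}(f)\mid _{{\cal B}({\cal L}(A))}$ preserves the meet. Here I would use that, the commutator of $A$ coinciding with the intersection, $\lambda _A:{\cal K}(A)\rightarrow {\cal L}(A)$ is a lattice isomorphism and ${\cal K}(A)$ is a bounded sublattice of ${\rm Con}(A)$; together with ${\cal B}({\rm Con}(A))\subseteq {\cal K}(A)$ (from \cite[Proposition $19$, (iv)]{retic}) and Lemma \ref{booliff}, this gives ${\cal B}({\cal L}(A))=\lambda _A({\cal B}({\rm Con}(A)))$, the meet of two such elements $\lambda _A(\alpha ),\lambda _A(\beta )$ being $\lambda _A(\alpha \cap \beta )$. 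Then, using the commutativity of the diagram defining ${\cal L}(f)$, the hypothesis $f^{\bullet }(\alpha \cap \beta )=f^{\bullet }(\alpha )\cap f^{\bullet }(\beta )$ for $\alpha ,\beta \in {\cal B}({\rm Con}(A))$, and that $\lambda _B$ is a lattice morphism while the commutator of $B$ is the intersection, I would compute ${\cal L}(f)(\lambda _A(\alpha )\wedge \lambda _A(\beta ))=\lambda _B(f^{\bullet }(\alpha \cap \beta ))=\lambda _B(f^{\bullet }(\alpha )\cap f^{\bullet }(\beta ))=\lambda _B(f^{\bullet }(\alpha ))\wedge \lambda _B(f^{\bullet }(\beta ))={\cal L}(f)(\lambda _A(\alpha ))\wedge {\cal L}(f)(\lambda _A(\beta ))$. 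All four hypotheses of Proposition \ref{suffbc} then hold, so it yields FBC; the ``in particular'' clause follows since congruence--distributivity forces the commutator to equal the intersection in every member of $\C $ and implies congruence--modularity.

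The only step requiring genuine care is this identification ${\cal B}({\cal L}(A))=\lambda _A({\cal B}({\rm Con}(A)))$ and the passage between the meet on ${\cal L}(A)$ and the intersection on ${\rm Con}(A)$; all the rest is bookkeeping. In fact one can also check (FBC1) and (FBC2) directly, bypassing Proposition \ref{suffbc}: ${\rm Con}(A)$ and ${\rm Con}(B)$ are frames, hence distributive, so ${\cal B}({\rm Con}(A))$ and ${\cal B}({\rm Con}(B))$ are Boolean sublattices and FBC is meaningful; for $\alpha \in {\cal B}({\rm Con}(A))$ with complement $\alpha '$, Remark \ref{fbullet}, $(ii)$ together with the two halves of the second hypothesis gives $f^{\bullet }(\alpha )\vee f^{\bullet }(\alpha ')=f^{\bullet }(\nabla _A)=\nabla _B$ and $f^{\bullet }(\alpha )\cap f^{\bullet }(\alpha ')=f^{\bullet }(\Delta _A)=\Delta _B$, whence $f^{\bullet }(\alpha )\in {\cal B}({\rm Con}(B))$; and then $f^{\bullet }\mid _{{\cal B}({\rm Con}(A))}$ preserves $\Delta $, $\nabla $, finite joins and finite meets, so it is a bounded lattice morphism between Boolean algebras, hence a Boolean morphism.
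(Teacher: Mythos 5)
Your first derivation is correct and is exactly the route the paper intends: the corollary is stated immediately after Proposition \ref{suffbc} with no written proof, and your verification of its four hypotheses --- FRet and the equivalence ``${\cal L}(f)$ preserves ${\bf 1}$ iff $f^{\bullet }(\nabla _A)=\nabla _B$'' from Lemma \ref{distribsuffret}, semiprimeness of $B$ from the commutator equalling the intersection, and the identification ${\cal B}({\cal L}(A))=\lambda _A({\cal B}({\rm Con}(A)))$ via Lemma \ref{booliff} to transfer meet--preservation --- supplies precisely the bookkeeping the authors left implicit. Your second, direct verification of (FBC1) and (FBC2) is also sound, and it is worth noting that it is essentially the proof of the later, more general Proposition \ref{suf4fbc} (which assumes only $f^{\bullet }(\nabla _A)=\nabla _B$ and preservation of intersections on ${\cal B}({\rm Con}(A))$, with no hypothesis that the commutators equal the intersection); that proposition subsumes the present corollary, so your direct argument in fact proves more than is asked, at the cost of not illustrating how Proposition \ref{suffbc} is used.
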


\begin{proposition}\begin{itemize}
\item FRet does not imply FBC, not even in congruence--distributive varieties.
\item FBC does not imply FRet.\end{itemize}\end{proposition}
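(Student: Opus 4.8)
The plan is to prove both non--implications by producing explicit counterexamples, which I would collect in Section \ref{examples}. For \emph{FRet does not imply FBC, even in congruence--distributive varieties}, I would use the bounded lattice embedding $i_{{\cal L}_2^2,{\cal M}_3}:{\cal L}_2^2\rightarrow {\cal M}_3$ of Example \ref{pentagon}, taken inside a congruence--distributive variety of (bounded) lattices. Since that variety is congruence--distributive, the commutator equals the intersection, so ${\cal B}({\rm Con}(-))$ is a Boolean sublattice on both sides (making FBC meaningful) and $i_{{\cal L}_2^2,{\cal M}_3}$ automatically satisfies FRet by Lemma \ref{distribsuffret}. I would then compute the relevant congruence lattices: ${\rm Con}({\cal L}_2^2)$ is the four--element Boolean lattice, its two atoms being the projection kernels $\theta _1,\theta _2$, which are mutually complementary with $\theta _1\vee \theta _2=\nabla _{{\cal L}_2^2}$ and $\theta _1\cap \theta _2=\Delta _{{\cal L}_2^2}$, so ${\cal B}({\rm Con}({\cal L}_2^2))={\rm Con}({\cal L}_2^2)$; and ${\cal M}_3$ is simple, so ${\rm Con}({\cal M}_3)=\{\Delta _{{\cal M}_3},\nabla _{{\cal M}_3}\}$. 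As each $\theta _j$ collapses a pair of distinct elements and ${\cal M}_3$ is simple, $i_{{\cal L}_2^2,{\cal M}_3}^{\bullet }(\theta _1)=i_{{\cal L}_2^2,{\cal M}_3}^{\bullet }(\theta _2)=\nabla _{{\cal M}_3}$, whereas $i_{{\cal L}_2^2,{\cal M}_3}^{\bullet }(\theta _1\cap \theta _2)=i_{{\cal L}_2^2,{\cal M}_3}^{\bullet }(\Delta _{{\cal L}_2^2})=\Delta _{{\cal M}_3}$; hence the restriction of $i_{{\cal L}_2^2,{\cal M}_3}^{\bullet }$ to ${\cal B}({\rm Con}({\cal L}_2^2))$ preserves neither the meet nor complements, so it is not a Boolean morphism and (FBC2) fails, while FRet holds.

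For \emph{FBC does not imply FRet}, I would use the endomorphism $v:V\rightarrow V$ of Example \ref{tip20}, which I would place in a congruence--modular semi--degenerate variety so that the standing hypotheses of Section \ref{fbc} are in force. In that example $v$ fails FRet, while $v^{\bullet }$ preserves the meet and the commutator and $v^{\bullet }(\nabla _V)\equiv _V\nabla _V$; the last equivalence forces $v^{\bullet }(\nabla _V)=\nabla _V$ because $\nabla _V/\!\!\equiv _V=\{\nabla _V\}$ in such a variety. Since $v^{\bullet }$ also always preserves arbitrary joins and $\Delta _V$, it is a bounded lattice endomorphism of ${\rm Con}(V)$; consequently, for every $\sigma \in {\cal B}({\rm Con}(V))$ with complement $\sigma '$ we obtain $v^{\bullet }(\sigma )\vee v^{\bullet }(\sigma ')=\nabla _V$ and $v^{\bullet }(\sigma )\cap v^{\bullet }(\sigma ')=\Delta _V$, so $v^{\bullet }(\sigma )\in {\cal B}({\rm Con}(V))$, which gives (FBC1), and $v^{\bullet }\mid _{{\cal B}({\rm Con}(V))}$ is a bounded lattice morphism between Boolean lattices, hence a Boolean morphism, which gives (FBC2). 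Thus $v$ satisfies FBC but not FRet.

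The congruence bookkeeping above is routine; the real work sits in Example \ref{tip20}, namely in exhibiting an algebra $V$ in a congruence--modular semi--degenerate variety together with an endomorphism $v$ whose induced map $v^{\bullet }$ on congruences preserves meets and the commutator (and fixes $\nabla _V$) while nevertheless failing FRet. That construction is the main obstacle; granting it, along with the elementary congruence computations for ${\cal L}_2^2$ and ${\cal M}_3$ needed in Example \ref{pentagon}, both implications above are immediate.
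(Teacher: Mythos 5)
Your first counterexample is correct and is a legitimate variant of the paper's: the paper disposes of the first bullet with the morphism $g:{\cal L}_2^2\rightarrow {\cal N}_5$ of Example \ref{pentagon}, which fails (FBC1) outright, whereas your $i_{{\cal L}_2^2,{\cal M}_3}$ satisfies (FBC1) but fails (FBC2), since its restriction to the Boolean centers sends both atoms $\phi ,\psi $ of ${\rm Con}({\cal L}_2^2)={\cal B}({\rm Con}({\cal L}_2^2))$ to $\nabla _{{\cal M}_3}$ while sending $\phi \cap \psi =\Delta _{{\cal L}_2^2}$ to $\Delta _{{\cal M}_3}$; either morphism does the job, and FRet holds for both by Lemma \ref{distribsuffret}.

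The second counterexample, however, is wrong. The endomorphism $v:V\rightarrow V$ of Example \ref{tip20} does \emph{not} satisfy FBC: by the explicit table, $v^{\bullet }(\nabla _V)=\sigma $, the middle element of the three--element chain ${\rm Con}(V)$, which is not complemented, so $v^{\bullet }({\cal B}({\rm Con}(V)))=\{\Delta _V,\sigma \}\nsubseteq \{\Delta _V,\nabla _V\}={\cal B}({\rm Con}(V))$ and (FBC1) already fails. Your step from $v^{\bullet }(\nabla _V)\equiv _V\nabla _V$ to $v^{\bullet }(\nabla _V)=\nabla _V$ uses the identity $\nabla _V/\!\!\equiv _V=\{\nabla _V\}$, which is only guaranteed in a congruence--modular semi--degenerate variety; $V$ cannot be placed in such a variety, since $\{0\}$ is a one--element subalgebra of the non--trivial algebra $V$ and $[\nabla _V,\nabla _V]_V=\sigma \neq \nabla _V$. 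In fact ${\rm Spec}(V)=\{\Delta _V\}$ gives $\nabla _V/\!\!\equiv _V=\{\sigma ,\nabla _V\}$, which is exactly why $v$ can preserve the commutator and the intersection and still fail FRet. The counterexample you need from Example \ref{tip20} is the morphism $h:U\rightarrow V$: there ${\cal B}({\rm Con}(U))=\{\Delta _U,\nabla _U\}$ and $h^{\bullet }$ restricts to a Boolean isomorphism onto ${\cal B}({\rm Con}(V))=\{\Delta _V,\nabla _V\}$, so $h$ satisfies FBC, while ${\rm Spec}(U)=\emptyset $ forces $\Delta _U\equiv _U\nabla _U$ although $(h^{\bullet }(\Delta _U),h^{\bullet }(\nabla _U))=(\Delta _V,\nabla _V)\notin \,\equiv _V$, so $h$ fails FRet.
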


\begin{proof}The lattice morphism $g$ in Example \ref{pentagon} fulfills the FRet, but fails the FBC.

The morphism $h$ in Example \ref{tip20} satisfies FBC, but fails the FRet.\end{proof}

\begin{remark} If $f$ fulfills FBC and $f^{\bullet }(\nabla _A)=\nabla _B$, in particular if $f$ fulfills FBC and FRet, then ${\cal L}(f)$ preserves the ${\bf 1}$, but, as shown by the case of the bounded lattice morphism $k$ in Example \ref{pentagon}, ${\cal L}(f)$ does not necessarily preserve the meet.\end{remark}

\begin{remark} If the commutators of $A$ and $B$ coincide to the intersection and the lattices ${\rm Con}(A)$ and ${\rm Con}(B)$ are Boolean, then the following are equivalent:\begin{itemize}
\item $f$ fulfills FBC;
\item $f$ fulfills FRet and ${\cal L}(f)$ preserves the meet and the ${\bf 1}$.\end{itemize}\end{remark}

\begin{remark} If $f$ fulfills FRet and FBC, then ${\cal L}(f)\mid _{{\cal B}({\cal L}(A))}:{\cal B}({\cal L}(A))\rightarrow {\cal B}({\cal L}(B))$ is a Boolean morphism.

\begin{center}
\begin{picture}(120,40)(0,0)
\put(-26,30){${\cal B}({\rm Con}(A))$}
\put(82,30){${\cal B}({\rm Con}(B))$}
\put(-14,0){${\cal B}({\cal L}(A))$}
\put(90,0){${\cal B}({\cal L}(B))$}
\put(23,33){\vector(1,0){57}}
\put(25,39){$f^{\bullet }\mid _{{\cal B}({\rm Con}(A))}$}
\put(24,3){\vector(1,0){65}}
\put(29,8){${\cal L}(f)\mid _{{\cal B}({\cal L}(A))}$}
\put(12,27){\vector(0,-1){18}}
\put(-49,17){$\lambda _A\mid _{{\cal B}({\rm Con}(A))}$}
\put(94,27){\vector(0,-1){18}}
\put(96,17){$\lambda _B\mid _{{\cal B}({\rm Con}(B))}$}
\end{picture}\end{center}\end{remark}

\begin{remark} Obviously, whenever ${\cal L}(f):{\cal L}(A)\rightarrow {\cal L}(B)$ is injective, it follows that ${\cal L}(f)\mid _{{\cal B}({\cal L}(A))}:{\cal B}({\cal L}(A))\rightarrow {\cal B}({\cal L}(B))$ is injective, as well.\label{linjbl}\end{remark}

\begin{corollary}\begin{itemize}\item If $\C $ has EDPC and $f$ is injective, then ${\cal L}(f)\mid _{{\cal B}({\cal L}(A))}:{\cal B}({\cal L}(A))\rightarrow {\cal B}({\cal L}(B))$ is injective.
\item If $\C $ is a variety with $\vec{0}$ and $\vec{1}$, EDPC and the CIP, then the functor ${\cal B}\circ {\cal L}$ preserves injectivity.\end{itemize}\label{edpcblinj}\end{corollary}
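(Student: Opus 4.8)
The plan is to reduce both statements to the single assertion that, under the stated hypotheses, the map ${\cal L}(f)$ is injective, and then to quote Remark \ref{linjbl}, which already records that injectivity of ${\cal L}(f)$ propagates to the restriction ${\cal L}(f)\mid_{{\cal B}({\cal L}(A))}$.

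For the first item I would start from the fact that EDPC forces congruence--distributivity (Theorem \ref{edpc}, (\ref{edpc1})), so the commutators of $A$ and $B$ are the intersection, ${\cal L}(A)={\cal K}(A)$, ${\cal L}(B)={\cal K}(B)$ and ${\cal L}(f)=f^{\bullet}\mid_{{\cal K}(A)}$, exactly as in the corollary that follows Proposition \ref{fprezdif}. I would then rerun, now without assuming the CIP or semi--degeneracy, the computation already present in the proof of Proposition \ref{edpclinj}: for $\alpha,\beta\in{\cal K}(A)$, using that $({\cal K}(A),\vee,\Delta_A)$ and $({\cal K}(B),\vee,\Delta_B)$ are dually Brouwerian (Theorem \ref{edpc}, (\ref{edpc2})), that $f^{\bullet}$ preserves $\dot{-}$ on ${\cal K}(A)$ (Proposition \ref{fprezdif}), and that injectivity of $f$ gives $f^{\bullet}(\theta)=\Delta_B$ iff $\theta=\Delta_A$, one obtains $f^{\bullet}(\alpha)\subseteq f^{\bullet}(\beta)$ iff $f^{\bullet}(\alpha)\dot{-}f^{\bullet}(\beta)=\Delta_B$ iff $f^{\bullet}(\alpha\dot{-}\beta)=\Delta_B$ iff $\alpha\dot{-}\beta=\Delta_A$ iff $\alpha\subseteq\beta$. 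Hence ${\cal L}(f)=f^{\bullet}\mid_{{\cal K}(A)}$ is order--reflecting, so injective, and Remark \ref{linjbl} concludes. The point worth checking is that this chain of equivalences invokes nothing beyond EDPC and the injectivity of $f$ --- the CIP entered Proposition \ref{edpclinj} only to turn ${\cal L}$ into a functor valued in distributive lattices, which is immaterial for the injectivity of a single morphism ${\cal L}(f)$.

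For the second item I would note that a variety with $\vec{0}$ and $\vec{1}$ is semi--degenerate, so the hypotheses of Proposition \ref{edpclinj} hold, and since additionally $\C$ has $\vec{0}$ and $\vec{1}$, Proposition \ref{cipnedpc} upgrades ${\cal L}$ to a functor from $\C$ to the variety of bounded distributive lattices which preserves injectivity; the remark earlier in this section then makes ${\cal B}\circ{\cal L}$ a functor from $\C$ to the variety of Boolean algebras, with $({\cal B}\circ{\cal L})(f)={\cal L}(f)\mid_{{\cal B}({\cal L}(A))}$. For injective $f$, the first item (or Proposition \ref{edpclinj} together with Remark \ref{linjbl}) shows this restriction is injective, so ${\cal B}\circ{\cal L}$ preserves injectivity. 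The only real obstacle I foresee is the bookkeeping needed to see that, already in the EDPC--only setting of the first item, ${\cal L}(f)$ carries the Boolean center of ${\cal L}(A)$ into that of ${\cal L}(B)$ so that ${\cal L}(f)\mid_{{\cal B}({\cal L}(A))}$ is the legitimate map named in the statement --- the fact tacitly underpinning Remark \ref{linjbl} --- after which no further calculation is required.
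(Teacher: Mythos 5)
Your proposal is correct and follows essentially the same route as the paper, whose proof simply cites Remark \ref{linjbl} together with Propositions \ref{edpclinj} and \ref{cipnedpc}. Your only addition is to make explicit that the dual-relative-pseudocomplementation computation inside the proof of Proposition \ref{edpclinj} uses nothing but EDPC and the injectivity of $f$ (the CIP and semi-degeneracy entering only for functoriality), which is exactly the observation the paper leaves implicit when it invokes Proposition \ref{edpclinj} for the first, weaker-hypothesis item.
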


\begin{proof} By Remark \ref{linjbl} and Propositions \ref{edpclinj} and \ref{cipnedpc}.\end{proof}

\begin{proposition} If $f^{\bullet }(\nabla _A)=\nabla _B$ and $f^{\bullet }\mid _{{\cal B}({\rm Con}(A))}$ preserves the intersection, in particular if $f^{\bullet }$ preserves the commutator, then $f$ fulfills the FBC.\label{suf4fbc}\end{proposition}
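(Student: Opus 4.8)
The plan is to check the two conditions (FBC1) and (FBC2) directly from the main hypotheses $f^{\bullet}(\nabla_A)=\nabla_B$ and ``$f^{\bullet}\mid_{{\cal B}({\rm Con}(A))}$ preserves the intersection'', and then to reduce the last clause of the statement, where $f^{\bullet}$ preserves the commutator, to this case. I would rely on the structural facts recalled at the beginning of this section: ${\cal B}({\rm Con}(A))$ and ${\cal B}({\rm Con}(B))$ are Boolean sublattices of ${\rm Con}(A)$, resp. ${\rm Con}(B)$, contained in ${\cal K}(A)$, resp. ${\cal K}(B)$, on which the commutator coincides with the intersection \cite[Lemma $24$]{retic}; that $\Delta_A,\nabla_A\in{\cal B}({\rm Con}(A))$, with $\Delta_A$ the least and $\nabla_A$ the greatest element; and that $f^{\bullet}$ is order--preserving, preserves arbitrary joins (Remark \ref{fbullet}, $(ii)$) and satisfies $f^{\bullet}(\Delta_A)=\Delta_B$.

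For (FBC1), I would take $\sigma\in{\cal B}({\rm Con}(A))$ and let $\overline{\sigma}\in{\cal B}({\rm Con}(A))$ be its complement, so that $\sigma\vee\overline{\sigma}=\nabla_A$ and $\sigma\cap\overline{\sigma}=\Delta_A$; applying $f^{\bullet}$ and using join--preservation, the hypothesis $f^{\bullet}(\nabla_A)=\nabla_B$, the preservation of the intersection on ${\cal B}({\rm Con}(A))$, and $f^{\bullet}(\Delta_A)=\Delta_B$, one gets $f^{\bullet}(\sigma)\vee f^{\bullet}(\overline{\sigma})=\nabla_B$ and $f^{\bullet}(\sigma)\cap f^{\bullet}(\overline{\sigma})=\Delta_B$, so $f^{\bullet}(\overline{\sigma})$ is a complement of $f^{\bullet}(\sigma)$ in ${\rm Con}(B)$, whence $f^{\bullet}(\sigma)\in{\cal B}({\rm Con}(B))$. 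For (FBC2), the corestricted map $f^{\bullet}\mid_{{\cal B}({\rm Con}(A))}\colon{\cal B}({\rm Con}(A))\rightarrow{\cal B}({\rm Con}(B))$ is then well defined; it preserves finite joins and $\Delta$ (as $f^{\bullet}$ preserves arbitrary joins and fixes $\Delta$), preserves $\nabla$ by hypothesis, and preserves meets since on both Boolean sublattices the meet is the intersection, which $f^{\bullet}\mid_{{\cal B}({\rm Con}(A))}$ preserves by hypothesis; a bounded lattice morphism between Boolean algebras preserves complements by their uniqueness, so $f^{\bullet}\mid_{{\cal B}({\rm Con}(A))}$ is a Boolean morphism, i.e.\ $f$ fulfills FBC.

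Finally, for the last clause, suppose $f^{\bullet}$ preserves the commutator, $f^{\bullet}(\nabla_A)=\nabla_B$ being still assumed (commutator--preservation alone does not force it). For $\sigma\in{\cal B}({\rm Con}(A))$ with complement $\overline{\sigma}$, since the commutator equals the intersection on ${\cal B}({\rm Con}(A))$ one has $[f^{\bullet}(\sigma),f^{\bullet}(\overline{\sigma})]_B=f^{\bullet}([\sigma,\overline{\sigma}]_A)=f^{\bullet}(\Delta_A)=\Delta_B$, while $f^{\bullet}(\sigma)\vee f^{\bullet}(\overline{\sigma})=\nabla_B$; the fact recalled at the beginning of this section that $\alpha\vee\beta=\nabla_B$ forces $[\alpha,\beta]_B=\alpha\cap\beta$ then yields $f^{\bullet}(\sigma)\cap f^{\bullet}(\overline{\sigma})=\Delta_B$, so $f^{\bullet}(\sigma)\in{\cal B}({\rm Con}(B))$ and therefore $[f^{\bullet}(\sigma),\theta]_B=f^{\bullet}(\sigma)\cap\theta$ for all $\theta\in{\rm Con}(B)$ by \cite[Lemma $18$, (iv)]{retic}; hence $f^{\bullet}(\sigma\cap\tau)=f^{\bullet}([\sigma,\tau]_A)=[f^{\bullet}(\sigma),f^{\bullet}(\tau)]_B=f^{\bullet}(\sigma)\cap f^{\bullet}(\tau)$ for all $\sigma,\tau\in{\cal B}({\rm Con}(A))$, so $f^{\bullet}\mid_{{\cal B}({\rm Con}(A))}$ preserves the intersection and we are reduced to the case already treated. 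The only delicate point is that ${\rm Con}(B)$ need not be distributive and its commutator need not coincide with the intersection outside ${\cal B}({\rm Con}(B))$; accordingly one must produce the complement of $f^{\bullet}(\sigma)$ explicitly as $f^{\bullet}(\overline{\sigma})$ and secure $f^{\bullet}(\sigma)\in{\cal B}({\rm Con}(B))$ before replacing commutators by intersections in the final computation.
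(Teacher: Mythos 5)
Your proof is correct and follows essentially the same route as the paper's: show that $f^{\bullet}$ sends a complement pair $(\sigma,\overline{\sigma})$ to a complement pair in ${\rm Con}(B)$ using join--preservation, $f^{\bullet}(\nabla_A)=\nabla_B$ and meet/commutator preservation (via the fact that $\alpha\vee\beta=\nabla_B$ forces $[\alpha,\beta]_B=\alpha\cap\beta$), and then observe that the restriction is a bounded lattice morphism between Boolean algebras, hence Boolean. Your version is if anything slightly more careful than the paper's, which runs the two hypotheses together in one chain of equalities; your explicit reduction of the commutator--preservation clause to the intersection--preservation clause, and your remark that $f^{\bullet}(\nabla_A)=\nabla_B$ must still be assumed there, are both sound.
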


\begin{proof}Let $\alpha \in {\cal B}({\rm Con}(A))$, so that, for some $\beta \in {\cal B}({\rm Con}(A))$, $\alpha \vee \beta =\nabla _A$ and $[\alpha ,\beta ]_A=\alpha \cap \beta =\Delta _A$. Then $f^{\bullet }(\alpha )\vee f^{\bullet }(\beta )=f^{\bullet }(\alpha \vee \beta )=f^{\bullet }(\nabla _A)=\nabla _B$ and thus $f^{\bullet }(\alpha )\cap f^{\bullet }(\beta )=[f^{\bullet }(\alpha ),f^{\bullet }(\beta )]_B=f^{\bullet }([\alpha ,\beta ]_A)=f^{\bullet }(\Delta _A)=\Delta _B$, hence $f^{\bullet }(\alpha )\in {\cal B}({\rm Con}(B))$, so $f$ fulfills ${\rm FBC1}$. Also, $f^{\bullet }(\Delta _A)=\Delta _B$, $f^{\bullet }(\nabla _A)=\nabla _B$ and $f^{\bullet }$ preserves the join and the commutator, that is the intersection on ${\cal B}({\rm Con}(A))$.\end{proof}

\begin{corollary} If $\C $ is congruence--modular and $f$ is surjective, then $f$ fulfills the FBC.\label{cgmodfsurj}\end{corollary}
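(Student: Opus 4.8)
The plan is to obtain this as an immediate consequence of Proposition~\ref{suf4fbc}, so I would only need to check its two hypotheses for our surjective $f$, namely that $f^{\bullet }(\nabla _A)=\nabla _B$ and that $f^{\bullet }$ preserves the commutator (the latter being stronger than, hence implying, the preservation of the intersection on ${\cal B}({\rm Con}(A))$, on which the commutator coincides with the intersection by \cite[Lemma~$24$]{retic}).

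The first hypothesis is routine: since $f$ is surjective, $f(\nabla _A)=f(A^2)=B^2=\nabla _B$, and as $\nabla _B\in {\rm Con}(B)$ this gives $f^{\bullet }(\nabla _A)=Cg_B(f(\nabla _A))=Cg_B(\nabla _B)=\nabla _B$.

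For the second hypothesis, the observation to exploit is that, since $f$ is surjective, Lemma~\ref{fcg} yields $f^{\bullet }(\gamma )=Cg_B(f(\gamma ))=Cg_{f(A)}(f(\gamma ))=f(\gamma \vee {\rm Ker}(f))$ for every $\gamma \in {\rm Con}(A)$. Now I would invoke the characterization of the modular commutator recalled in Section~\ref{preliminaries} (following \cite{fremck}): since $\C $ is congruence--modular and $f$ is surjective, for all $\alpha ,\beta \in {\rm Con}(A)$ we have $[f(\alpha \vee {\rm Ker}(f)),f(\beta \vee {\rm Ker}(f))]_B=f([\alpha ,\beta ]_A\vee {\rm Ker}(f))$. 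Rewriting the three occurrences of $f(\,\cdot \vee {\rm Ker}(f))$ via the preceding identity turns this into $[f^{\bullet }(\alpha ),f^{\bullet }(\beta )]_B=f^{\bullet }([\alpha ,\beta ]_A)$, so $f^{\bullet }$ preserves the commutator on the whole of ${\rm Con}(A)$, a fortiori on ${\cal B}({\rm Con}(A))$. Proposition~\ref{suf4fbc} then applies and yields that $f$ fulfils FBC.

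I do not expect a genuine obstacle here, the result being a short corollary. The only subtlety worth noting is that one should not attempt to prove preservation of the intersection of arbitrary congruences directly, which would require congruence--distributivity; the correct route is via preservation of the commutator, and this is exactly what congruence--modularity together with surjectivity of $f$ supplies, by the Freese--McKenzie description of the modular commutator already recorded in the preliminaries.
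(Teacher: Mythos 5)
Your proposal is correct and matches the paper's intent: the corollary is stated as an immediate consequence of Proposition \ref{suf4fbc}, with $f^{\bullet }(\nabla _A)=\nabla _B$ following from surjectivity and the preservation of the commutator following from the Freese--McKenzie identity $[f(\alpha \vee {\rm Ker}(f)),f(\beta \vee {\rm Ker}(f))]_B=f([\alpha ,\beta ]_A\vee {\rm Ker}(f))$ recalled in the preliminaries, exactly as you argue. Both hypothesis checks are carried out correctly, so there is nothing to add.
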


\begin{definition} We say that a $\theta \in {\rm Con}(A)$ fulfills the {\em Congruence Boolean Lifting Property} (abbreviated {\em CBLP}) iff the map $p_{\theta }^{\bullet }\mid _{{\cal B}({\rm Con}(A))}=p_{\theta }\mid _{{\cal B}({\rm Con}(A))}:{\cal B}({\rm Con}(A))\rightarrow {\cal B}({\rm Con}(A/\theta ))$ is surjective. We say that $A$ fulfills the {\em Congruence Boolean Lifting Property} ({\em CBLP}) iff all congruences of $A$ satisfy the CBLP.\label{defcblp}\end{definition}

For instance, if $\theta \in {\rm Con}(A)$ such that $A/\theta $ is simple, so that ${\cal B}({\rm Con}(A/\theta ))={\rm Con}(A/\theta )\cong {\cal L}_2$, then $\theta $ satisfies the CBLP, so, in particular, any maximal congruence of $A$ has the CBLP.

Throughout the rest of this section, $\C $ will be congruence--modular.

\begin{remark} Let $\theta \in {\rm Con}(A)$. Then, by Lemma \ref{fcg}, $p_{\theta }^{\bullet }:{\rm Con}(A)\rightarrow {\rm Con}(A/\theta )$ is defined by $p_{\theta }^{\bullet }(\alpha )=(\alpha \vee \theta )/\theta $ for all $\alpha \in {\rm Con}(A)$, and, by Corollary \ref{cgmodfsurj}, the map $p_{\theta }^{\bullet }\mid _{{\cal B}({\rm Con}(A))}=p_{\theta }\mid _{{\cal B}({\rm Con}(A))}:{\cal B}({\rm Con}(A))\rightarrow {\cal B}({\rm Con}(A/\theta ))$ is well defined and it is a Boolean morphism.\end{remark}

\begin{lemma} Let $\alpha ,\beta \in {\rm Con}(A)$ with $\beta \subseteq \alpha $.\begin{enumerate}
\item\label{quocblp1} If $\beta $ and $\alpha /\beta $ have the CBLP, then $\alpha $ has the CBLP.
\item\label{quocblp2} If $\alpha $ has the CBLP, then $\alpha /\beta $ has the CBLP.\end{enumerate}\label{quocblp}\end{lemma}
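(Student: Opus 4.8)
The plan is to deduce both parts from two ingredients already available: the composition law $(g\circ f)^{\bullet }=g^{\bullet }\circ f^{\bullet }$ of Remark \ref{fbullet}, $(iii)$, and Corollary \ref{cgmodfsurj}, which makes every surjective morphism of a congruence--modular variety fulfill the FBC.

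First I would set up the factorization of the canonical projections. Since $\beta \subseteq \alpha $, the second isomorphism theorem provides a unique isomorphism $\eta :A/\alpha \rightarrow (A/\beta )/(\alpha /\beta )$ with $\eta \circ p_{\alpha }=p_{\alpha /\beta }\circ p_{\beta }$; moreover $A/\beta $, $A/\alpha $ and $(A/\beta )/(\alpha /\beta )$ are members of $\C $ and, using the commutator identities for surjective morphisms in a congruence--modular variety recalled in Section \ref{preliminaries}, they inherit from $A$ the standing hypotheses of this section, so that the Boolean centers of their congruence lattices, the maps $p_{(\cdot )}^{\bullet }$ restricted to these Boolean centers, and the CBLP are all defined for them. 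Applying Remark \ref{fbullet}, $(iii)$, to $\eta \circ p_{\alpha }=p_{\alpha /\beta }\circ p_{\beta }$ yields $\eta ^{\bullet }\circ p_{\alpha }^{\bullet }=p_{\alpha /\beta }^{\bullet }\circ p_{\beta }^{\bullet }$; since $\eta $ is an isomorphism, $\eta ^{\bullet }$ (which is just the direct image under $\eta ^2$) is a lattice isomorphism ${\rm Con}(A/\alpha )\rightarrow {\rm Con}((A/\beta )/(\alpha /\beta ))$, hence it restricts to a Boolean isomorphism between ${\cal B}({\rm Con}(A/\alpha ))$ and ${\cal B}({\rm Con}((A/\beta )/(\alpha /\beta )))$, which I use to identify these two Boolean lattices and simply write $p_{\alpha }^{\bullet }=p_{\alpha /\beta }^{\bullet }\circ p_{\beta }^{\bullet }$.

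Next I would pass to Boolean centers. As $p_{\beta }$ and $p_{\alpha /\beta }$ are surjective and $\C $ is congruence--modular, Corollary \ref{cgmodfsurj} gives that both fulfill the FBC, so that $p_{\beta }^{\bullet }({\cal B}({\rm Con}(A)))\subseteq {\cal B}({\rm Con}(A/\beta ))$, $p_{\alpha /\beta }^{\bullet }({\cal B}({\rm Con}(A/\beta )))\subseteq {\cal B}({\rm Con}((A/\beta )/(\alpha /\beta )))$, and the corresponding restrictions are Boolean morphisms. Restricting $p_{\alpha }^{\bullet }=p_{\alpha /\beta }^{\bullet }\circ p_{\beta }^{\bullet }$ to ${\cal B}({\rm Con}(A))$ then gives $p_{\alpha }^{\bullet }\mid _{{\cal B}({\rm Con}(A))}=\big(p_{\alpha /\beta }^{\bullet }\mid _{{\cal B}({\rm Con}(A/\beta ))}\big)\circ \big(p_{\beta }^{\bullet }\mid _{{\cal B}({\rm Con}(A))}\big)$. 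From this factorization both statements follow at once: for (\ref{quocblp1}), if $\beta $ and $\alpha /\beta $ have the CBLP then the two factors on the right are surjective, hence so is the composite $p_{\alpha }^{\bullet }\mid _{{\cal B}({\rm Con}(A))}$, that is $\alpha $ has the CBLP; for (\ref{quocblp2}), if $\alpha $ has the CBLP then the composite is surjective, hence so is its left--hand factor $p_{\alpha /\beta }^{\bullet }\mid _{{\cal B}({\rm Con}(A/\beta ))}$, that is $\alpha /\beta $ has the CBLP.

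The only step requiring any care — and it is routine — is the identification made in the second paragraph: one must check that the canonical isomorphism $\eta $ induces, via $\eta ^{\bullet }$, a Boolean isomorphism of the Boolean centers, so that surjectivity of $p_{\alpha }^{\bullet }\mid _{{\cal B}({\rm Con}(A))}$ onto ${\cal B}({\rm Con}(A/\alpha ))$ is equivalent to surjectivity onto ${\cal B}({\rm Con}((A/\beta )/(\alpha /\beta )))$; this is immediate because $\eta ^{\bullet }$ is a lattice isomorphism of the congruence lattices and lattice isomorphisms carry complemented elements bijectively to complemented elements. Everything else is bookkeeping with compositions of surjective maps.
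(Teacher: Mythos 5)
Your proof is correct and takes essentially the same approach as the paper's: both invoke the Second Isomorphism Theorem to factor $p_{\alpha }$ through $p_{\beta }$ and $p_{\alpha /\beta }$ up to the canonical isomorphism, restrict the resulting commutative square to the Boolean centers (which is legitimate by Corollary \ref{cgmodfsurj}), and read off both implications from the surjectivity of composites. The only cosmetic difference is that you derive the commutativity $\varphi _{\alpha ,\beta }^{\bullet }\circ p_{\alpha }^{\bullet }=p_{\alpha /\beta }^{\bullet }\circ p_{\beta }^{\bullet }$ from Remark \ref{fbullet}, $(iii)$, applied to $\varphi _{\alpha ,\beta }\circ p_{\alpha }=p_{\alpha /\beta }\circ p_{\beta }$, whereas the paper verifies it by a direct computation with joins of congruences.
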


\begin{proof} By the Second Isomorphism Theorem, the map $\varphi _{\alpha ,\beta }:A/\alpha \rightarrow (A/\beta )/(\alpha /\beta )$, defined by $\varphi _{\alpha ,\beta }(a/\alpha )=(a/\beta )/(\alpha /\beta )$ for all $a\in A$, is an isomorphism in $\C $, so that $\varphi _{\alpha ,\beta }^{\bullet }:{\rm Con}(A/\alpha )\rightarrow {\rm Con}((A/\beta )/(\alpha /\beta ))$ is a lattice isomorphism and thus ${\cal B}(\varphi _{\alpha ,\beta }^{\bullet }):{\cal B}({\rm Con}(A/\alpha ))\rightarrow {\cal B}({\rm Con}((A/\beta )/(\alpha /\beta )))$ is a Boolean isomorphism. For all $\theta \in {\rm Con}(A)$, $\varphi _{\alpha ,\beta }^{\bullet }(p_{\alpha }^{\bullet }(\theta ))=\varphi _{\alpha ,\beta }^{\bullet }((\theta \vee \alpha )/\alpha )=((\theta \vee \alpha )/\beta )/(\alpha /\beta )=((\theta \vee \beta \vee \alpha )/\beta )/(\alpha /\beta )=((\theta \vee \beta )/\beta \vee \alpha /\beta )/(\alpha /\beta )=p_{\alpha /\beta }^{\bullet }((\theta \vee \beta )/\beta )=p_{\alpha /\beta }^{\bullet }(p_{\beta }^{\bullet }(\theta ))$, hence the following leftmost diagram is commutative, thus so is the rightmost diagram below, hence the implications in the enunciation:

\begin{center}\begin{tabular}{cc}
\hspace*{-70pt}
\begin{picture}(120,40)(0,0)
\put(-7,30){${\rm Con}(A)$}
\put(82,30){${\rm Con}(A/\alpha )$}
\put(-12,0){${\rm Con}(A/\beta )$}
\put(70,0){${\rm Con}((A/\alpha )/(\alpha /\beta ))$}
\put(26,33){\vector(1,0){54}}
\put(45,37){$p_{\alpha }^{\bullet }$}
\put(32,3){\vector(1,0){36}}
\put(40,9){$p_{\alpha /\beta }^{\bullet }$}
\put(12,27){\vector(0,-1){18}}
\put(0,17){$p_{\beta }^{\bullet }$}
\put(94,27){\vector(0,-1){18}}
\put(96,17){$\varphi _{\alpha ,\beta }^{\bullet }$}
\end{picture}
&\hspace*{90pt}
\begin{picture}(120,40)(0,0)
\put(-26,30){${\cal B}({\rm Con}(A))$}
\put(82,30){${\cal B}({\rm Con}(A/\alpha ))$}
\put(-35,0){${\cal B}({\rm Con}(A/\beta ))$}
\put(95,0){${\cal B}({\rm Con}((A/\alpha )/(\alpha /\beta )))$}
\put(23,33){\vector(1,0){57}}
\put(25,39){$p_{\alpha }^{\bullet }\mid _{{\cal B}({\rm Con}(A))}$}
\put(24,3){\vector(1,0){70}}
\put(24,9){$p_{\alpha /\beta }^{\bullet }\mid _{{\cal B}({\rm Con}(A/\beta ))}$}
\put(12,27){\vector(0,-1){18}}
\put(-43,17){$p_{\beta }^{\bullet }\mid _{{\cal B}({\rm Con}(A))}$}
\put(103,27){\vector(0,-1){18}}
\put(105,17){${\cal B}(\varphi _{\alpha ,\beta }^{\bullet })$}
\end{picture}\end{tabular}\end{center}\vspace*{-15pt}\end{proof}

\begin{proposition} $A$ has the CBLP iff, for all $\theta \in {\rm Con}(A)$, $A/\theta $ has the CBLP.\label{quoprescblp}\end{proposition}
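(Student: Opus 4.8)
The plan is to read off this equivalence from Lemma \ref{quocblp} together with the Correspondence Theorem, so that no new computation is needed; I would present it as a short two-part argument.

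For the forward implication, I would assume $A$ has the CBLP and fix $\theta \in {\rm Con}(A)$, and show that every congruence of $A/\theta $ has the CBLP in $A/\theta $. By the Correspondence Theorem, the congruences of $A/\theta $ are exactly those of the form $\alpha /\theta $ with $\alpha \in {\rm Con}(A)$ and $\theta \subseteq \alpha $. Since $A$ has the CBLP, each such $\alpha $ has the CBLP, so Lemma \ref{quocblp}, part \ref{quocblp2} (applied with $\beta =\theta $) gives that $\alpha /\theta $ has the CBLP in $A/\theta $. Hence $A/\theta $ has the CBLP.

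For the converse, I would simply specialize the hypothesis to $\theta =\Delta _A$: then $A/\Delta _A$ has the CBLP, and since $p_{\Delta _A}:A\rightarrow A/\Delta _A$ is an isomorphism in $\C $, the CBLP transfers back to $A$. Concretely, exactly as with the isomorphism $\varphi _{\alpha ,\beta }$ in the proof of Lemma \ref{quocblp}, the map $p_{\Delta _A}^{\bullet }:{\rm Con}(A)\rightarrow {\rm Con}(A/\Delta _A)$ is a lattice isomorphism sending $\gamma $ to $\gamma /\Delta _A$ and restricting to a Boolean isomorphism ${\cal B}({\rm Con}(A))\rightarrow {\cal B}({\rm Con}(A/\Delta _A))$, and it fits into a commutative square with the Boolean morphisms $p_{\gamma }^{\bullet }\mid _{{\cal B}({\rm Con}(A))}$ and $p_{\gamma /\Delta _A}^{\bullet }\mid _{{\cal B}({\rm Con}(A/\Delta _A))}$; thus $\gamma $ has the CBLP iff $\gamma /\Delta _A$ does, and the latter holds for every $\gamma \in {\rm Con}(A)$ by hypothesis. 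There is no real obstacle here: the only points needing care are the routine identification of ${\rm Con}(A/\theta )$ with the interval $[\theta )\subseteq {\rm Con}(A)$ in the forward direction (so that Lemma \ref{quocblp}, part \ref{quocblp2} applies verbatim), and, in the converse, the compatibility of an algebra isomorphism with the canonical surjections on Boolean centers, which is already implicit in the proof of the preceding lemma.
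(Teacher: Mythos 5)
Your proof is correct and is essentially the paper's own argument: the forward implication is Lemma \ref{quocblp}, part (\ref{quocblp2}), applied with $\beta =\theta $ to the congruences $\alpha /\theta $ ($\theta \subseteq \alpha $) supplied by the Correspondence Theorem, and the converse follows from the isomorphism $A\cong A/\Delta _A$. Your extra detail on transporting the CBLP along that isomorphism is exactly the mechanism already used for $\varphi _{\alpha ,\beta }$ in the proof of Lemma \ref{quocblp}, so nothing new is needed.
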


\begin{proof} By Lemma \ref{quocblp}, (\ref{quocblp2}), for the direct implication, and the fact that $A$ is isomorphic to $A/\Delta _A$, for the converse.\end{proof}

\begin{proposition} Let $\theta \in {\rm Con}(A)$. Then: $A/\theta $ is semiprime iff $\theta \in {\rm RCon}(A)$.\label{quosprime}\end{proposition}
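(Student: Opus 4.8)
The plan is to trace the defining conditions through the quotient $A/\theta$. By definition, $A/\theta$ is semiprime iff $\rho_{A/\theta}(\Delta_{A/\theta})=\Delta_{A/\theta}$, so the whole statement will follow once I express $\rho_{A/\theta}(\Delta_{A/\theta})$ in terms of $\rho_A(\theta)$ and then transfer the resulting equality back up along $p_\theta$.

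First I would note that ${\rm Ker}(p_\theta)=\theta$, so $\Delta_{A/\theta}=\theta/\theta=(\Delta_A\vee \theta)/\theta$. Applying the quotient formula for radicals recalled in Section \ref{reticulatia}, namely $\rho_{A/\theta}((\alpha\vee \theta)/\theta)=\rho_A(\alpha\vee \theta)/\theta$, with $\alpha=\Delta_A$, gives $\rho_{A/\theta}(\Delta_{A/\theta})=\rho_A(\theta)/\theta$. Hence $A/\theta$ is semiprime iff $\rho_A(\theta)/\theta=\theta/\theta$.

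Next I would use the fact, recalled in Section \ref{preliminaries} and applied to $f=p_\theta$, that $\alpha\mapsto \alpha/\theta$ is a lattice isomorphism from the interval $[\theta)\subseteq {\rm Con}(A)$ onto ${\rm Con}(A/\theta)$; in particular it is injective on $[\theta)$. Since $\theta\subseteq \rho_A(\theta)$, both $\theta$ and $\rho_A(\theta)$ lie in $[\theta)$, so $\rho_A(\theta)/\theta=\theta/\theta$ holds iff $\rho_A(\theta)=\theta$, which, by the characterization ${\rm RCon}(A)=\{\psi\in {\rm Con}(A)\ |\ \psi=\rho_A(\psi)\}$ from Section \ref{reticulatia}, is exactly $\theta\in {\rm RCon}(A)$. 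Chaining the two equivalences gives the claim.

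No step is really an obstacle here; the only points that need care are the identification $\Delta_{A/\theta}=(\Delta_A\vee \theta)/\theta$ and checking that the radical operator and the quotient formula are legitimately available for $A/\theta$, which they are under the standing hypotheses of this section together with the congruence--modularity of $\C$.
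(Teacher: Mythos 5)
Your proof is correct and follows essentially the same route as the paper's: compute $\Delta_{A/\theta}=\theta/\theta$, apply the quotient formula $\rho_{A/\theta}((\Delta_A\vee\theta)/\theta)=\rho_A(\theta)/\theta$, and conclude via the injectivity of $\alpha\mapsto\alpha/\theta$ on $[\theta)$. The only difference is that you make explicit the justification for the step $\rho_A(\theta)/\theta=\theta/\theta\Leftrightarrow\rho_A(\theta)=\theta$, which the paper leaves implicit.
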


\begin{proof} $\Delta _{A/\theta }=(\Delta _A\vee \theta )/\theta =\theta /\theta $ and $\rho _{A/\theta }(\Delta _{A/\theta })=\rho _A(\Delta _A\vee \theta )/\theta =\rho _A(\theta )/\theta $. Hence $A/\theta $ is semiprime iff $\rho _{A/\theta }(\Delta _{A/\theta })=\Delta _{A/\theta }$ iff $\rho _A(\theta )/\theta =\theta /\theta $ iff $\rho _A(\theta )=\theta $ iff $\theta \in {\rm RCon}(A)$.\end{proof}

\begin{corollary}\begin{itemize}
\item $A/\theta $ is semiprime for all $\theta \in {\rm Con}(A)$ iff ${\rm RCon}(A)={\rm Con}(A)$.
\item If the commutator of $A$ equals the intersection, then $A/\theta $ is semiprime for all $\theta \in {\rm Con}(A)$.
\end{itemize}\end{corollary}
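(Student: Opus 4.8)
The statement is a direct consequence of Proposition~\ref{quosprime}. First I would settle the equivalence in the first bullet. By Proposition~\ref{quosprime}, for a fixed $\theta \in {\rm Con}(A)$ the quotient $A/\theta$ is semiprime iff $\theta \in {\rm RCon}(A)$; quantifying over all congruences, $A/\theta$ is semiprime for every $\theta \in {\rm Con}(A)$ iff ${\rm Con}(A) \subseteq {\rm RCon}(A)$. Since ${\rm RCon}(A) = \{\theta \in {\rm Con}(A)\ |\ \theta = \rho_A(\theta)\} \subseteq {\rm Con}(A)$ by definition, the inclusion ${\rm Con}(A) \subseteq {\rm RCon}(A)$ is equivalent to the equality ${\rm RCon}(A) = {\rm Con}(A)$, which is exactly the first bullet.

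For the second bullet I would invoke the remark recalled in Section~\ref{reticulatia}: if the commutator of $A$ coincides with the intersection, then $\rho_A(\theta) = \theta$ for every $\theta \in {\rm Con}(A)$ — equivalently, ${\rm RCon}(A) = {\rm Con}(A)$, since in that case ${\rm Spec}(A)$ is the set of the meet--irreducible elements of the algebraic lattice ${\rm Con}(A)$, so every congruence is an intersection of prime congruences. The conclusion then follows from the first bullet (or, just as directly, from Proposition~\ref{quosprime} applied to each $\theta$).

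There is essentially no obstacle here: the only points requiring (minimal) care are the trivial set--theoretic inclusion ${\rm RCon}(A) \subseteq {\rm Con}(A)$ coming straight from the definition of radical congruences, and the correct handling of the universal quantifier over $\theta$ when passing between the pointwise statement of Proposition~\ref{quosprime} and the global statements in the corollary.
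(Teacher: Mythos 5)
Your proof is correct and matches the paper's (implicit) argument: the corollary is stated there without proof precisely because it follows from Proposition \ref{quosprime} by quantifying over $\theta$, together with the fact recalled in Section \ref{reticulatia} that ${\rm RCon}(A)={\rm Con}(A)$ when the commutator of $A$ equals the intersection. Nothing is missing.
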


Throughout the rest of this section, $\C $ will be congruence--modular and semi--degenerate.

Recall that an ideal $I$ of a bounded distributive lattice $L$ is said to have the {\em Id--BLP} iff the Boolean morphism ${\cal B}(\pi _I):{\cal B}(L)\rightarrow {\cal B}(L/I)$ is surjective {\rm \cite{dcggcm}}, and $L$ is said to have the {\em Id--BLP} iff all its ideals have the Id--BLP.

Recall from Section \ref{reticulatia} that, for any $\theta \in {\rm Con}(A)$, we have $\theta ^*\in {\rm Id}({\cal L}(A))$.

\begin{theorem}{\rm \cite[Theorem $7$]{retic}} For any $\theta \in {\rm Con}(A)$, the map $\varphi _{\theta }:{\cal L}(A/\theta )\rightarrow {\cal L}(A)/\theta ^*$ defined by $\varphi _{\theta }(\lambda _{A/\theta }((\alpha \vee \theta )/\theta ))=\lambda _A(\alpha )/\theta ^*$ for all $\alpha \in {\cal K}(A)$, is a lattice isomorphism.\label{presquo}\end{theorem}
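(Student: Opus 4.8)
The plan is to exhibit $\varphi _{\theta }$ as the canonical isomorphism between the codomains of two surjective lattice morphisms out of ${\cal K}(A)$ that turn out to have the same kernel, and then to invoke the homomorphism theorem for lattices.

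First I would set up the two maps. Since $p_{\theta }:A\rightarrow A/\theta $ is surjective, Lemma \ref{cgsurjk} gives that $p_{\theta }$ satisfies FRet, that ${\cal L}(p_{\theta }):{\cal L}(A)\rightarrow {\cal L}(A/\theta )$ is a surjective bounded lattice morphism, and that $p_{\theta }^{\bullet }\mid _{{\cal K}(A)}:{\cal K}(A)\rightarrow {\cal K}(A/\theta )$ is surjective; moreover, by Lemma \ref{fcg}, $p_{\theta }^{\bullet }(\alpha )=(\alpha \vee \theta )/\theta $ for all $\alpha \in {\cal K}(A)$. Set $k:={\cal L}(p_{\theta })\circ \lambda _A\mid _{{\cal K}(A)}:{\cal K}(A)\rightarrow {\cal L}(A/\theta )$, so that $k(\alpha )=\lambda _{A/\theta }((\alpha \vee \theta )/\theta )$, and $h:=\pi _{\theta ^*}\circ \lambda _A\mid _{{\cal K}(A)}:{\cal K}(A)\rightarrow {\cal L}(A)/\theta ^*$, so that $h(\alpha )=\lambda _A(\alpha )/\theta ^*$; recall from Section \ref{reticulatia} that $\theta ^*\in {\rm Id}({\cal L}(A))$, so $h$ is well defined. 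Both $k$ and $h$ are surjective lattice morphisms, and the map $\varphi _{\theta }$ in the statement is precisely the one determined by $\varphi _{\theta }\circ k=h$. Hence it suffices to prove that $k$ and $h$ induce the same lattice congruence on ${\cal K}(A)$, i.e.\ that $k(\alpha )=k(\beta )\Leftrightarrow h(\alpha )=h(\beta )$ for all $\alpha ,\beta \in {\cal K}(A)$; the homomorphism theorem then yields a unique lattice isomorphism $\varphi _{\theta }:{\cal L}(A/\theta )\rightarrow {\cal L}(A)/\theta ^*$ with $\varphi _{\theta }(\lambda _{A/\theta }((\alpha \vee \theta )/\theta ))=\lambda _A(\alpha )/\theta ^*$.

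For the kernel computations I would work inside the frame ${\rm Con}(A)/\!\!\equiv _A$, using that $\lambda _A$ preserves arbitrary joins, that $\lambda _A(x)=\lambda _A(y)$ iff $\rho _A(x)=\rho _A(y)$, the recalled identity $\rho _{A/\theta }((\gamma \vee \theta )/\theta )=\rho _A(\gamma \vee \theta )/\theta $, and the injectivity of $\gamma \mapsto \gamma /\theta $ on $[\theta )$. Chaining these, $k(\alpha )=k(\beta )$ iff $\rho _A(\alpha \vee \theta )=\rho _A(\beta \vee \theta )$ iff $\lambda _A(\alpha )\vee \lambda _A(\theta )=\lambda _A(\beta )\vee \lambda _A(\theta )$. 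On the other hand, since $\theta ^*=\{\lambda _A(\gamma )\mid \gamma \in {\cal K}(A),\gamma \subseteq \theta \}$ and, for an ideal $I$ of a distributive lattice, $\pi _I(x)=\pi _I(y)$ iff $x\vee i=y\vee i$ for some $i\in I$, we get $h(\alpha )=h(\beta )$ iff there is $\gamma \in {\cal K}(A)$ with $\gamma \subseteq \theta $ and $\lambda _A(\alpha )\vee \lambda _A(\gamma )=\lambda _A(\beta )\vee \lambda _A(\gamma )$. The implication $h(\alpha )=h(\beta )\Rightarrow k(\alpha )=k(\beta )$ is then immediate: join the last equality with $\lambda _A(\theta )$ and absorb $\lambda _A(\gamma )$ using $\lambda _A(\gamma )\leq \lambda _A(\theta )$.

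The reverse implication is the heart of the matter, and the step I expect to be the main obstacle. The key points are that $\lambda _A(\theta )=\bigvee \{\lambda _A(\gamma )\mid \gamma \in {\cal K}(A),\gamma \subseteq \theta \}$ (because $\theta $ is the join of its compact subcongruences and $\lambda _A$ preserves joins), that this family of joinands is directed, and that $\lambda _A(\alpha )$ is a \emph{compact} element of ${\rm Con}(A)/\!\!\equiv _A$ whenever $\alpha \in {\cal K}(A)$ --- which follows from $\lambda _A\circ \rho _A=\lambda _A$, the fact that $\rho _A$ preserves arbitrary joins, and the compactness of $\alpha $ in ${\rm Con}(A)$. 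Granting this, from $\lambda _A(\alpha )\vee \lambda _A(\theta )=\lambda _A(\beta )\vee \lambda _A(\theta )$ we obtain $\lambda _A(\alpha )\leq \bigvee _{\gamma }(\lambda _A(\beta )\vee \lambda _A(\gamma ))$, a directed join, so by compactness $\lambda _A(\alpha )\leq \lambda _A(\beta )\vee \lambda _A(\gamma _1)$ for a single $\gamma _1\in {\cal K}(A)$ with $\gamma _1\subseteq \theta $; symmetrically $\lambda _A(\beta )\leq \lambda _A(\alpha )\vee \lambda _A(\gamma _2)$ for some such $\gamma _2$, and then $\gamma _0:=\gamma _1\vee \gamma _2\in {\cal K}(A)$, $\gamma _0\subseteq \theta $, satisfies $\lambda _A(\alpha )\vee \lambda _A(\gamma _0)=\lambda _A(\beta )\vee \lambda _A(\gamma _0)$, i.e.\ $h(\alpha )=h(\beta )$. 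This establishes that $k$ and $h$ have the same kernel, whence $\varphi _{\theta }$ is a lattice isomorphism; being an isomorphism of bounded lattices it also preserves ${\bf 0}$ and ${\bf 1}$, and its uniqueness as the solution of $\varphi _{\theta }\circ k=h$ forces the displayed formula.
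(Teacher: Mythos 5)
This theorem is quoted verbatim from \cite[Theorem 7]{retic}, so the present paper contains no proof of it to compare against; I am therefore assessing your argument on its own terms. Your architecture is sound and is surely the natural one: realize ${\cal L}(A/\theta )$ and ${\cal L}(A)/\theta ^*$ as quotients of ${\cal K}(A)$ via the surjections $k=\lambda _{A/\theta }\circ p_{\theta }^{\bullet }\mid _{{\cal K}(A)}$ and $h=\pi _{\theta ^*}\circ \lambda _A\mid _{{\cal K}(A)}$, note that both carry $\vee $ to $\vee $ and $[\cdot ,\cdot ]_A$ to $\wedge $ (the first because $p_{\theta }$ is surjective and $\C $ is congruence--modular, by Proposition \ref{closediagr}; the second by the definition of the operations on ${\cal L}(A)$ and the standard description of the quotient of a distributive lattice by an ideal), and show that the two kernels coincide. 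Your two kernel descriptions are correct, as is the easy implication from $h(\alpha )=h(\beta )$ to $k(\alpha )=k(\beta )$.

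The one step that does not stand as justified is the compactness of $\lambda _A(\alpha )$ in ${\rm Con}(A)/\!\!\equiv _A$ for $\alpha \in {\cal K}(A)$, which is exactly what powers the hard implication. You derive it from ``$\rho _A$ preserves arbitrary joins''. Read literally, with $\bigvee $ the join of ${\rm Con}(A)$, the recalled identity $\rho _A(\bigvee _i\alpha _i)=\bigvee _i\rho _A(\alpha _i)$ is false in this generality: commutative unital rings satisfy all the standing hypotheses, and there $\rho _A(\alpha )\vee \rho _A(\beta )$ is a sum of radical ideals, which need not be radical (e.g. $(y-x^2)+(y)=(x^2,y)$ in $k[x,y]$, whose radical is $(x,y)$). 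What that bullet can legitimately mean is only that $\equiv _A$ preserves arbitrary joins, i.e. $\lambda _A(\bigvee _i\alpha _i)=\bigvee _i\lambda _A(\alpha _i)$, and from $\alpha \subseteq \rho _A(\bigvee _i\delta _i)$ this gives you no way to invoke the compactness of $\alpha $ in ${\rm Con}(A)$. The claim you need is nevertheless true, and the right tool is the finitary description of the radical established in \cite{retic} and echoed here by the recalled identity $\rho _A([\alpha ,\beta ]_A^n)=\rho _A([\alpha ,\beta ]_A)$: for $\alpha \in {\cal K}(A)$ one has $\alpha \subseteq \rho _A(\delta )$ iff $[\alpha ,\alpha ]_A^n\subseteq \delta $ for some $n$, and $[\alpha ,\alpha ]_A^n$ is again compact because ${\cal K}(A)$ is closed under the commutator. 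With this, $\alpha \subseteq \rho _A(\beta \vee \theta )$ yields $[\alpha ,\alpha ]_A^n\subseteq \beta \vee \theta =\bigcup \{\beta \vee \gamma \ |\ \gamma \in {\cal K}(A)\cap (\theta ]\}$, a directed union, hence $[\alpha ,\alpha ]_A^n\subseteq \beta \vee \gamma _1$ for a single such $\gamma _1$, hence $\alpha \subseteq \rho _A(\alpha )=\rho _A([\alpha ,\alpha ]_A^n)\subseteq \rho _A(\beta \vee \gamma _1)$; your symmetrization with $\gamma _0=\gamma _1\vee \gamma _2$ then finishes the proof exactly as you wrote it.
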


\begin{lemma} Let $\theta \in {\rm Con}(A)$.\begin{itemize}
\item If $\lambda _{A/\theta }\mid _{{\cal B}({\rm Con}(A/\theta ))}:{\cal B}({\rm Con}(A/\theta ))\rightarrow {\cal B}({\cal L}(A/\theta ))$ is surjective and $\theta $ has the CBLP, then $\theta ^*$ has the Id--BLP.
\item If $\lambda _A\mid _{{\cal B}({\rm Con}(A))}:{\cal B}({\rm Con}(A))\rightarrow {\cal B}({\cal L}(A))$ is surjective and $\lambda _{A/\theta }\mid _{{\cal B}({\rm Con}(A/\theta ))}:{\cal B}({\rm Con}(A/\theta ))\rightarrow {\cal B}({\cal L}(A/\theta ))$ is bijective, then: $\theta $ has the CBLP iff $\theta ^*$ has the Id--BLP (in ${\cal L}(A)$).\end{itemize}\label{cblpidblp}\end{lemma}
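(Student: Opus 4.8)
The plan is to collapse the whole statement onto a single commutative square of Boolean morphisms and then perform two elementary surjectivity/injectivity chases on it. Fix $\theta\in{\rm Con}(A)$. By Corollary \ref{cgmodfsurj} the surjective morphism $p_\theta$ fulfills FBC, so $p_\theta^{\bullet}\mid_{{\cal B}({\rm Con}(A))}\colon{\cal B}({\rm Con}(A))\to{\cal B}({\rm Con}(A/\theta))$ is a well-defined Boolean morphism, namely the very map whose surjectivity is the CBLP of $\theta$. Recall also that $\lambda_A\mid_{{\cal B}({\rm Con}(A))}\colon{\cal B}({\rm Con}(A))\to{\cal B}({\cal L}(A))$ and $\lambda_{A/\theta}\mid_{{\cal B}({\rm Con}(A/\theta))}\colon{\cal B}({\rm Con}(A/\theta))\to{\cal B}({\cal L}(A/\theta))$ are Boolean morphisms, that ${\cal B}(\pi_{\theta^*})\colon{\cal B}({\cal L}(A))\to{\cal B}({\cal L}(A)/\theta^*)$ is the map whose surjectivity is the Id--BLP of $\theta^*$, and that, by Theorem \ref{presquo}, $\varphi_\theta\colon{\cal L}(A/\theta)\to{\cal L}(A)/\theta^*$ is a lattice isomorphism, hence ${\cal B}(\varphi_\theta)\colon{\cal B}({\cal L}(A/\theta))\to{\cal B}({\cal L}(A)/\theta^*)$ is a Boolean isomorphism.

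The one point requiring a verification is the identity ${\cal B}(\pi_{\theta^*})\circ\lambda_A\mid_{{\cal B}({\rm Con}(A))}={\cal B}(\varphi_\theta)\circ\lambda_{A/\theta}\mid_{{\cal B}({\rm Con}(A/\theta))}\circ\, p_\theta^{\bullet}\mid_{{\cal B}({\rm Con}(A))}$. For $\sigma\in{\cal B}({\rm Con}(A))\subseteq{\cal K}(A)$ one has $p_\theta^{\bullet}(\sigma)=(\sigma\vee\theta)/\theta$, and then $\varphi_\theta(\lambda_{A/\theta}((\sigma\vee\theta)/\theta))=\lambda_A(\sigma)/\theta^*=\pi_{\theta^*}(\lambda_A(\sigma))$ straight from the defining formula of $\varphi_\theta$ in Theorem \ref{presquo} (applied with the compact congruence $\sigma$). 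Assembling this commuting square, with the right choice of $\varphi_\theta$, is what I would regard as the main obstacle; everything past it is a diagram chase.

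For the first bullet, assume $\lambda_{A/\theta}\mid_{{\cal B}({\rm Con}(A/\theta))}$ is surjective and $\theta$ has the CBLP, so $p_\theta^{\bullet}\mid_{{\cal B}({\rm Con}(A))}$ is surjective. Given $x\in{\cal B}({\cal L}(A)/\theta^*)$, pull it back successively along the isomorphism ${\cal B}(\varphi_\theta)$, then along $\lambda_{A/\theta}\mid_{{\cal B}({\rm Con}(A/\theta))}$, then along $p_\theta^{\bullet}\mid_{{\cal B}({\rm Con}(A))}$, producing some $\sigma\in{\cal B}({\rm Con}(A))$; the commutativity above gives ${\cal B}(\pi_{\theta^*})(\lambda_A(\sigma))=x$. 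Hence ${\cal B}(\pi_{\theta^*})$ is surjective, i.e.\ $\theta^*$ has the Id--BLP.

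For the second bullet, assume $\lambda_A\mid_{{\cal B}({\rm Con}(A))}$ is surjective and $\lambda_{A/\theta}\mid_{{\cal B}({\rm Con}(A/\theta))}$ is bijective. The direct implication is the first bullet. Conversely, assume $\theta^*$ has the Id--BLP, i.e.\ ${\cal B}(\pi_{\theta^*})$ is surjective. Given $\tau\in{\cal B}({\rm Con}(A/\theta))$, form ${\cal B}(\varphi_\theta)(\lambda_{A/\theta}(\tau))\in{\cal B}({\cal L}(A)/\theta^*)$, lift it along ${\cal B}(\pi_{\theta^*})$ to an element of ${\cal B}({\cal L}(A))$, and lift that along the surjection $\lambda_A\mid_{{\cal B}({\rm Con}(A))}$ to some $\sigma\in{\cal B}({\rm Con}(A))$. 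Commutativity then yields ${\cal B}(\varphi_\theta)(\lambda_{A/\theta}(p_\theta^{\bullet}(\sigma)))={\cal B}(\varphi_\theta)(\lambda_{A/\theta}(\tau))$, and cancelling the injective maps ${\cal B}(\varphi_\theta)$ and $\lambda_{A/\theta}\mid_{{\cal B}({\rm Con}(A/\theta))}$ (both $p_\theta^{\bullet}(\sigma)$ and $\tau$ lying in ${\cal B}({\rm Con}(A/\theta))$) gives $p_\theta^{\bullet}(\sigma)=\tau$. So $p_\theta^{\bullet}\mid_{{\cal B}({\rm Con}(A))}$ is surjective, i.e.\ $\theta$ has the CBLP, completing the equivalence.
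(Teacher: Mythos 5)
Your proof is correct and follows essentially the same route as the paper: both reduce the statement to the commutativity of the diagram relating $p_\theta^{\bullet}\mid_{{\cal B}({\rm Con}(A))}$, the restricted $\lambda$'s, ${\cal B}(\pi_{\theta^*})$ and the Boolean isomorphism ${\cal B}(\varphi_\theta)$ induced by Theorem \ref{presquo}, the only difference being that you merge the paper's square and triangle into a single composite identity and spell out the two surjectivity/injectivity chases that the paper leaves implicit.
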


\begin{proof} By the definitions, $\theta $ has the CBLP iff the Boolean morphism $p_{\theta }^{\bullet }\mid _{{\cal B}({\rm Con}(A))}:{\cal B}({\rm Con}(A))\rightarrow {\cal B}({\rm Con}(A/\theta ))$ is surjective, while $\theta ^*$ has the Id--BLP iff the Boolean morphism ${\cal B}(\pi _{\theta ^*}):{\cal B}({\cal L}(A))\rightarrow {\cal B}({\cal L}(A)/\theta ^*)$ is surjective.

The definition of the lattice isomorphism $\varphi _{\theta }$ from Theorem \ref{presquo} shows that the following leftmost diagram is commutative, hence, by considering the restrictions of the maps in this diagram to the Boolean centers, we obtain the commutative rightmost diagram below:

\begin{center}\begin{tabular}{cc}
\hspace*{-50pt}
\begin{picture}(120,40)(0,0)
\put(0,30){${\cal K}(A)$}
\put(82,30){${\cal K}(A/\theta )$}
\put(0,0){${\cal L}(A)$}
\put(82,0){${\cal L}(A/\theta )$}
\put(25,33){\vector(1,0){55}}
\put(36,37){$p_{\theta }^{\bullet }\mid _{{\cal K}(A)}$}
\put(25,3){\vector(1,0){55}}
\put(40,6){${\cal L}(p_{\theta })$}
\put(12,27){\vector(0,-1){18}}
\put(-1,17){$\lambda _A$}
\put(94,27){\vector(0,-1){18}}
\put(96,17){$\lambda _{A/\theta }$}
\put(34,-30){${\cal L}(A)/\theta ^*$}
\put(13,-3){\vector(1,-1){21}}
\put(95,-3){\vector(-1,-1){24}}
\put(11,-18){$\pi _{\theta ^*}$}
\put(85,-19){$\varphi _{\theta }$}
\end{picture}
&\hspace*{50pt}
\begin{picture}(120,40)(0,0)
\put(-26,30){${\cal B}({\rm Con}(A))$}
\put(82,30){${\cal B}({\rm Con}(A/\theta ))$}
\put(-14,0){${\cal B}({\cal L}(A))$}
\put(90,0){${\cal B}({\cal L}(A/\theta ))$}
\put(23,33){\vector(1,0){57}}
\put(25,39){$p_{\theta }^{\bullet }\mid _{{\cal B}({\rm Con}(A))}$}
\put(24,3){\vector(1,0){65}}
\put(27,8){${\cal L}(p_{\theta })\mid _{{\cal B}({\cal L}(A))}$}
\put(12,27){\vector(0,-1){18}}
\put(-46,17){$\lambda _A\mid _{{\cal B}({\rm Con}(A))}$}
\put(103,27){\vector(0,-1){18}}
\put(105,17){$\lambda _{A/\theta }\mid _{{\cal B}({\rm Con}(A/\theta ))}$}
\put(34,-30){${\cal B}({\cal L}(A)/\theta ^*)$}
\put(13,-3){\vector(1,-1){21}}
\put(110,-3){\vector(-1,-1){24}}
\put(-8,-18){${\cal B}(\pi _{\theta ^*})$}
\put(100,-20){${\cal B}(\varphi _{\theta })$}
\end{picture}\end{tabular}\end{center}\vspace*{25pt}Thus ${\cal L}(p_{\theta })\mid _{{\cal B}({\cal L}(A))}\circ \lambda _A\mid _{{\cal B}({\rm Con}(A))}=\lambda _{A/\theta }\mid _{{\cal B}({\rm Con}(A/\theta ))}\circ p_{\theta }^{\bullet }\mid _{{\cal B}({\rm Con}(A))}$, hence the statements in the enunciation.\end{proof}

\begin{proposition} Let $\theta \in {\rm Con}(A)$.\begin{itemize}
\item If $\theta \in {\rm RCon}(A)$ and $\theta $ has CBLP, then $\theta ^*$ has the Id--BLP. 
\item If $\Delta _A,\theta \in {\rm RCon}(A)$, then: $\theta $ has CBLP iff $\theta ^*$ has the Id--BLP.
\item If the commutator of $A/\theta $ is associative and $\theta $ has CBLP, then $\theta ^*$ has the Id--BLP.
\item If the commutators of $A$ and $A/\theta $ are associative, then: $\theta $ has CBLP iff $\theta ^*$ has the Id--BLP.\end{itemize}\label{cblpvsidblp}\end{proposition}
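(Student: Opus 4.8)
The plan is to derive all four statements from Lemma \ref{cblpidblp}, whose two parts require, respectively, that $\lambda_{A/\theta}\mid_{{\cal B}({\rm Con}(A/\theta))}:{\cal B}({\rm Con}(A/\theta))\to{\cal B}({\cal L}(A/\theta))$ be surjective, and that $\lambda_A\mid_{{\cal B}({\rm Con}(A))}$ be surjective while $\lambda_{A/\theta}\mid_{{\cal B}({\rm Con}(A/\theta))}$ be bijective. So the whole proof reduces to supplying these surjectivity/bijectivity facts, which I would extract from Lemma \ref{boolinj} together with Proposition \ref{quosprime}. A preliminary observation to record is that, since $\C$ is congruence--modular and semi--degenerate and $A/\theta\in\C$, the quotient $A/\theta$ also satisfies the standing hypotheses of this section: $\nabla_{A/\theta}\in{\cal K}(A/\theta)$ because $\C$ is semi--degenerate, the commutator of $A/\theta$ is commutative and distributive w.r.t. arbitrary joins because $\C$ is congruence--modular, $[\gamma,\nabla_{A/\theta}]_{A/\theta}=\gamma$ for all $\gamma\in{\rm Con}(A/\theta)$ since $\C$ has no skew congruences, and ${\cal K}(A/\theta)$ is closed w.r.t. the commutator because ${\cal K}(A/\theta)=\{(\alpha\vee\theta)/\theta\ |\ \alpha\in{\cal K}(A)\}$ and $[(\alpha\vee\theta)/\theta,(\beta\vee\theta)/\theta]_{A/\theta}=([\alpha,\beta]_A\vee\theta)/\theta\in{\cal K}(A/\theta)$. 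Consequently Lemma \ref{boolinj} applies to $A/\theta$ just as it does to $A$: in both cases $\lambda_M\mid_{{\cal B}({\rm Con}(M))}$ is an injective Boolean morphism onto ${\cal B}({\cal L}(M))$, and it is a Boolean isomorphism — hence surjective — as soon as $M$ is semiprime or its commutator is associative.

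For the first item: from $\theta\in{\rm RCon}(A)$ and Proposition \ref{quosprime} we get that $A/\theta$ is semiprime, hence $\lambda_{A/\theta}\mid_{{\cal B}({\rm Con}(A/\theta))}$ is a Boolean isomorphism, in particular surjective; together with the assumption that $\theta$ has the CBLP, the first part of Lemma \ref{cblpidblp} gives that $\theta^*$ has the Id--BLP. For the second item: adding $\Delta_A\in{\rm RCon}(A)$ makes $A$ semiprime, so $\lambda_A\mid_{{\cal B}({\rm Con}(A))}$ is also surjective by Lemma \ref{boolinj}, while $\lambda_{A/\theta}\mid_{{\cal B}({\rm Con}(A/\theta))}$ is bijective as above; the second part of Lemma \ref{cblpidblp} then yields the equivalence "$\theta$ has the CBLP iff $\theta^*$ has the Id--BLP".

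The third and fourth items are proved by the same two arguments, the only change being that the surjectivity of $\lambda_{A/\theta}\mid_{{\cal B}({\rm Con}(A/\theta))}$ (and, in the fourth item, of $\lambda_A\mid_{{\cal B}({\rm Con}(A))}$) is now obtained from the "associative commutator" clause of Lemma \ref{boolinj} instead of from semiprimeness. I do not expect a genuine obstacle here: the content is entirely a matter of matching hypotheses. The one point that deserves explicit care is the preliminary observation above — namely that $A/\theta$ really does meet the running assumptions of the section, so that Lemma \ref{boolinj} is legitimately applicable to $A/\theta$ — and the (deliberate) asymmetry of the first part of Lemma \ref{cblpidblp}, which is what lets items 1 and 3 impose a condition on $A/\theta$ only, with no hypothesis on $A$ itself.
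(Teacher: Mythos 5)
Your proposal is correct and follows exactly the paper's route: the paper's own proof is just the citation ``By Lemmas \ref{cblpidblp} and \ref{boolinj} and Proposition \ref{quosprime}'', which unpacks to precisely the hypothesis-matching you carry out. Your explicit check that $A/\theta$ inherits the section's standing hypotheses (so that Lemma \ref{boolinj} legitimately applies to the quotient) is a detail the paper leaves implicit, but it is the same argument.
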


\begin{proof} By Lemmas \ref{cblpidblp} and Lemma \ref{boolinj} and Proposition \ref{quosprime}.\end{proof}

\begin{theorem}\begin{itemize}
\item If ${\rm RCon}(A)={\rm Con}(A)$, then: $A$ has the CBLP iff ${\cal L}(A)$ has the Id--BLP.
\item If the commutator in $\C $ is associative, then: $A$ has the CBLP iff ${\cal L}(A)$ has the Id--BLP.\end{itemize}\end{theorem}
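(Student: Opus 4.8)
The plan is to reduce both equivalences to Proposition~\ref{cblpvsidblp}, using the surjectivity from Proposition~\ref{mapstarsurj} of the map $\theta \mapsto \theta ^*$ from ${\rm Con}(A)$ onto ${\rm Id}({\cal L}(A))$; this is exactly what lets us pass between statements ranging over all congruences of $A$ and statements ranging over all ideals of the reticulation. Concretely, both bullets follow the scheme: first establish, for \emph{every} $\theta \in {\rm Con}(A)$, the two-way equivalence ``$\theta$ has the CBLP iff $\theta ^*$ has the Id--BLP'', and then quantify over $\theta$ using that $\{\theta ^*\ |\ \theta \in {\rm Con}(A)\}={\rm Id}({\cal L}(A))$.

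For the first bullet, I would observe that ${\rm RCon}(A)={\rm Con}(A)$ forces $\Delta _A\in {\rm RCon}(A)$ and $\theta \in {\rm RCon}(A)$ for every $\theta \in {\rm Con}(A)$, so the second item of Proposition~\ref{cblpvsidblp} applies to each $\theta$ and gives ``$\theta$ has the CBLP iff $\theta ^*$ has the Id--BLP''. Then: if $A$ has the CBLP and $I\in {\rm Id}({\cal L}(A))$, pick $\theta$ with $\theta ^*=I$ by Proposition~\ref{mapstarsurj}; since $\theta$ has the CBLP, $I=\theta ^*$ has the Id--BLP, and as $I$ was arbitrary, ${\cal L}(A)$ has the Id--BLP. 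Conversely, if ${\cal L}(A)$ has the Id--BLP, then each $\theta ^*$ (being an ideal) has the Id--BLP, hence each $\theta$ has the CBLP, i.e. $A$ has the CBLP.

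For the second bullet I would argue identically, except that the per-$\theta$ equivalence now comes from the fourth item of Proposition~\ref{cblpvsidblp}: since $\C $ is closed under quotients, $A/\theta \in \C $, so associativity of the commutator in $\C $ guarantees that the commutators of both $A$ and $A/\theta$ are associative, which is precisely the hypothesis of that item. Combining the resulting equivalence with the surjectivity of $\theta \mapsto \theta ^*$ exactly as above yields ``$A$ has the CBLP iff ${\cal L}(A)$ has the Id--BLP''.

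I do not expect a substantial obstacle: the result is essentially a transport of Proposition~\ref{cblpvsidblp} along the correspondence $\theta \leftrightarrow \theta ^*$. The only points requiring care are (i) checking that the two hypotheses in the statement are exactly strong enough to invoke the \emph{two-directional} items (2) and (4) of Proposition~\ref{cblpvsidblp} rather than the one-directional items (1) and (3) --- for the first case because ${\rm RCon}(A)={\rm Con}(A)$ makes $\Delta _A$ and every $\theta$ radical, for the second because associativity of the commutator is inherited by every quotient of a member of $\C $ --- and (ii) invoking Proposition~\ref{mapstarsurj} to be sure that \emph{every} ideal of ${\cal L}(A)$, not merely some, is of the form $\theta ^*$, which is what upgrades ``all $\theta ^*$ have the Id--BLP'' to ``${\cal L}(A)$ has the Id--BLP''.
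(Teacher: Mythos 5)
Your proposal is correct and follows exactly the paper's route: the paper proves this theorem simply "by Propositions \ref{cblpvsidblp} and \ref{mapstarsurj}", i.e.\ by combining the per-congruence equivalences (items for $\Delta_A,\theta\in{\rm RCon}(A)$ and for associative commutators, respectively) with the surjectivity of $\theta\mapsto\theta^*$ onto ${\rm Id}({\cal L}(A))$. You have merely spelled out the quantifier-chasing that the paper leaves implicit, and your two points of care (invoking the two-directional items, and using surjectivity to cover every ideal) are exactly the right ones.
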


\begin{proof} By Propositions \ref{cblpvsidblp} and \ref{mapstarsurj}.\end{proof}

\begin{proposition} Let $n\in \N ^*$, $M_1,\ldots ,M_n$ be members of $\C $ and $\theta _1\in {\rm Con}(M_1),\ldots ,\theta _n\in {\rm Con}(M_n)$. Then:\begin{enumerate}
\item\label{prodcblp1} $\theta _1\times \ldots \times \theta _n$ has the CBLP iff $\theta _1,\ldots ,\theta _n$ have the CBLP;
\item\label{prodcblp2} $M_1\times \ldots \times M_n$ has the CBLP iff $M_1,\ldots ,M_n$ have the CBLP.\end{enumerate}\label{prodcblp}\end{proposition}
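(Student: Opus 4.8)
The plan is to reduce both parts to the absence of skew congruences. Since $\C $ is congruence--modular and semi--degenerate, it has no skew congruences (as recalled in Section~\ref{preliminaries}), so for any members $N_1,\ldots ,N_n$ of $\C $ the map $(\alpha _1,\ldots ,\alpha _n)\mapsto \alpha _1\times \cdots \times \alpha _n$ is a bounded lattice isomorphism ${\rm Con}(N_1)\times \cdots \times {\rm Con}(N_n)\to {\rm Con}(N_1\times \cdots \times N_n)$. Writing $M=M_1\times \cdots \times M_n$ and $\theta =\theta _1\times \cdots \times \theta _n$, I would apply this with $N_i=M_i$ and, via the canonical isomorphism $M/\theta \cong (M_1/\theta _1)\times \cdots \times (M_n/\theta _n)$, with $N_i=M_i/\theta _i$, obtaining bounded lattice isomorphisms ${\rm Con}(M)\cong \prod _i{\rm Con}(M_i)$ and ${\rm Con}(M/\theta )\cong \prod _i{\rm Con}(M_i/\theta _i)$. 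I would also record the elementary fact that in a finite product of bounded lattices an element is complemented iff each of its components is, so that ${\cal B}(\prod _iL_i)=\prod _i{\cal B}(L_i)$ as Boolean algebras; this identifies ${\cal B}({\rm Con}(M))$ with $\prod _i{\cal B}({\rm Con}(M_i))$ and ${\cal B}({\rm Con}(M/\theta ))$ with $\prod _i{\cal B}({\rm Con}(M_i/\theta _i))$. Note that, under the standing hypotheses of this section, each ${\cal B}({\rm Con}(M_i))$ and ${\cal B}({\rm Con}(M_i/\theta _i))$ is a Boolean sublattice, so the CBLP is defined for all the algebras and congruences involved.

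Next I would identify $p_\theta ^\bullet $ under these isomorphisms. By Lemma~\ref{fcg}, $p_\theta ^\bullet (\alpha )=(\alpha \vee \theta )/\theta $ for every $\alpha \in {\rm Con}(M)$; since joins and quotients in ${\rm Con}(M)\cong \prod _i{\rm Con}(M_i)$ are computed componentwise, writing $\alpha =\alpha _1\times \cdots \times \alpha _n$ gives $p_\theta ^\bullet (\alpha )=\big((\alpha _1\vee \theta _1)/\theta _1,\ldots ,(\alpha _n\vee \theta _n)/\theta _n\big)=\big(p_{\theta _1}^\bullet (\alpha _1),\ldots ,p_{\theta _n}^\bullet (\alpha _n)\big)$. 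Hence, after restriction to the Boolean centres and transport along the isomorphisms of the previous paragraph, $p_\theta ^\bullet \mid _{{\cal B}({\rm Con}(M))}$ becomes the product map $\prod _i\big(p_{\theta _i}^\bullet \mid _{{\cal B}({\rm Con}(M_i))}\big)$. A product of functions is surjective iff every factor is surjective, so by Definition~\ref{defcblp}, $\theta $ has the CBLP iff each $\theta _i$ has the CBLP, which is part~(\ref{prodcblp1}).

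Part~(\ref{prodcblp2}) then follows by invoking the no-skew-congruences identification once more: every congruence of $M$ is of the form $\theta _1\times \cdots \times \theta _n$ with $\theta _i\in {\rm Con}(M_i)$, so by Definition~\ref{defcblp} and part~(\ref{prodcblp1}), $M$ has the CBLP iff for every $(\theta _1,\ldots ,\theta _n)\in \prod _i{\rm Con}(M_i)$ each $\theta _i$ has the CBLP; choosing all but one coordinate equal to $\Delta _{M_j}$ and letting the remaining coordinate range over ${\rm Con}(M_i)$ shows this is equivalent to each $M_i$ having the CBLP. The only point requiring care is the simultaneous bookkeeping of the three identifications --- of the congruence lattices, of the quotients, and of the Boolean centres --- together with the verification that $p_\theta ^\bullet $ is compatible with all of them; once the absence of skew congruences is in hand, every remaining step is routine.
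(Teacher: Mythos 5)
Your proof is correct and follows essentially the same route as the paper's: reduce everything to the absence of skew congruences in congruence--modular semi--degenerate varieties, identify the Boolean centers componentwise, observe that $p_{\theta }^{\bullet }=p_{\theta _1}^{\bullet }\times \ldots \times p_{\theta _n}^{\bullet }$, and use that a product of maps is surjective iff each factor is. Your explicit derivation of part (ii) from part (i) (specializing all but one coordinate to $\Delta _{M_j}$) is exactly the argument the paper leaves implicit in its one-line "By (i)".
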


\begin{proof} (\ref{prodcblp1}) Let $M=M_1\times \ldots \times M_n$ and $\theta =\theta _1\times \ldots \times \theta _n\in {\rm Con}(M)$, and note that $M/\theta =M_1/\theta _1\times \ldots \times M_n/\theta _n$. Since $\C $ is congruence--modular and semi--degenerate, the direct products $M_1\times \ldots \times M_n$ and $M_1/\theta _1\times \ldots \times M_n/\theta _n$ have no skew congruences, hence ${\cal B}({\rm Con}(M))={\cal B}({\rm Con}(M_1)\times \ldots \times {\rm Con}(M_n))={\cal B}({\rm Con}(M_1))\times \ldots \times {\cal B}({\rm Con}(M_n))$ and ${\cal B}({\rm Con}(M/\theta ))={\cal B}({\rm Con}(M_1/\theta _1)\times \ldots \times {\rm Con}(M_n/\theta _n))={\cal B}({\rm Con}(M_1/\theta _1))\times \ldots \times {\cal B}({\rm Con}(M_n/\theta _n))$. For all $\alpha _1\in {\rm Con}(M_1),\ldots ,\alpha _n\in {\rm Con}(M_n)$, $p_{\theta }^{\bullet }(\alpha )=(\alpha \vee \theta )/\theta =((\alpha _1\vee \theta _1)/\theta _1,\ldots ,(\alpha _n\vee \theta _n)/\theta _n)=(p_{\theta _1}^{\bullet }(\alpha _1),\ldots ,p_{\theta _n}^{\bullet }(\alpha _n))$, thus $p_{\theta }^{\bullet }=p_{\theta _1}^{\bullet }\times \ldots \times p_{\theta _n}^{\bullet }$. Hence $p_{\theta }^{\bullet }\mid _{{\cal B}({\rm Con}(M))}:{\cal B}({\rm Con}(M))\rightarrow {\cal B}({\rm Con}(M/\theta ))$ is surjective iff $p_{\theta _1}^{\bullet }\mid _{{\cal B}({\rm Con}(M_1))}:{\cal B}({\rm Con}(M_1))\rightarrow {\cal B}({\rm Con}(M_1/\theta _1)),\ldots ,p_{\theta _n}^{\bullet }\mid _{{\cal B}({\rm Con}(M_n))}:{\cal B}({\rm Con}(M_n))\rightarrow {\cal B}({\rm Con}(M_n/\theta _n))$ are surjective.

\noindent (\ref{prodcblp2}) By (\ref{prodcblp1}).\end{proof}

\begin{remark} In Proposition \ref{prodcblp}, (\ref{prodcblp1}), instead of $\C $ being congruence--modular and semi--degenerate, it suffices for $\C $ to be congruence--modular and the direct product $M_1\times \ldots \times M_n$ to have no skew congruences.\end{remark}

Recall that a bounded distributive lattice $L$ is said to be {\em B--normal} iff, for all $x,y\in L$ such that $x\vee y=1$, there exist $a,b\in {\cal B}(L)$ such that $x\vee a=y\vee b=1$ and $a\wedge b=0$. $L$ is said to be {\em B--conormal} iff its dual is B--normal.

\begin{definition} We say that the algebra $A$ is {\em congruence B--normal} iff, for all $\phi ,\psi \in {\rm Con}(A)$ such that $\phi \vee \psi =\nabla _A$, there exist $\alpha ,\beta \in {\cal B}({\rm Con}(A))$ such that $\phi \vee \alpha =\psi \vee \beta =\nabla _A$ and $[\alpha ,\beta ]_A=\Delta _A$.\end{definition}

\begin{remark} If $A$ is congruence--distributive, then $A$ is congruence B--normal iff its congruence lattice is B--normal. More generally, if $A$ is semiprime, then $A$ is congruence B--normal iff its congruence lattice satisfies the B--normality condition excepting distributivity.

Congruence B--normal algebras generalize commutative exchange rings \cite[Theorem $1.7$]{nic}, quasi--local residuated lattices \cite{eu3,eu4} and congruence--distributive B--normal algebras \cite{cblp}.\end{remark}

The following proofs are very similar to those of the analogous statements from \cite[Theorem $4.28$]{cblp}, but we introduce them here for the sake of completeness.

\begin{lemma} The following are equivalent:\begin{enumerate}
\item\label{charbnorm1} $A$ is congruence B--normal;
\item\label{charbnorm2} for all $\phi ,\psi \in {\cal K}(A)$ such that $\phi \vee \psi =\nabla _A$, there exist $\alpha ,\beta \in {\cal B}({\rm Con}(A))$ such that $\phi \vee \alpha =\psi \vee \beta =\nabla _A$ and $[\alpha ,\beta ]_A=\Delta _A$.\end{enumerate}\label{charbnorm}\end{lemma}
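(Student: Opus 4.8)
The forward implication (\ref{charbnorm1})$\Rightarrow$(\ref{charbnorm2}) is immediate: since ${\cal K}(A)\subseteq {\rm Con}(A)$, statement (\ref{charbnorm2}) is merely the restriction of the definition of congruence B--normality to pairs of \emph{compact} congruences, so it is implied by (\ref{charbnorm1}).

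For the converse (\ref{charbnorm2})$\Rightarrow$(\ref{charbnorm1}), the plan is to reduce an arbitrary pair of congruences with join $\nabla _A$ to a pair of compact congruences lying below them, and then invoke (\ref{charbnorm2}). Let $\phi ,\psi \in {\rm Con}(A)$ with $\phi \vee \psi =\nabla _A$. Since $\C $ is semi--degenerate, $\nabla _A\in {\cal K}(A)$, i.e.\ $\nabla _A$ is a compact element of the algebraic lattice ${\rm Con}(A)$. Using $\phi =\bigvee ({\cal K}(A)\cap (\phi ])$ and $\psi =\bigvee ({\cal K}(A)\cap (\psi ])$, I would write $\nabla _A=\phi \vee \psi $ as the join of all the elements $\gamma \vee \delta $ with $\gamma ,\delta \in {\cal K}(A)$, $\gamma \subseteq \phi $, $\delta \subseteq \psi $; by compactness of $\nabla _A$ a finite subfamily of these already joins to $\nabla _A$, and I would set $\phi _0\in {\cal K}(A)$ to be the join of the finitely many $\gamma $'s occurring and $\psi _0\in {\cal K}(A)$ the join of the corresponding $\delta $'s. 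Then $\phi _0\subseteq \phi $, $\psi _0\subseteq \psi $, and $\phi _0\vee \psi _0=\nabla _A$.

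Applying (\ref{charbnorm2}) to $\phi _0,\psi _0\in {\cal K}(A)$ yields $\alpha ,\beta \in {\cal B}({\rm Con}(A))$ with $\phi _0\vee \alpha =\psi _0\vee \beta =\nabla _A$ and $[\alpha ,\beta ]_A=\Delta _A$; since $\phi _0\subseteq \phi $ and $\psi _0\subseteq \psi $, monotonicity of the join gives $\nabla _A=\phi _0\vee \alpha \subseteq \phi \vee \alpha \subseteq \nabla _A$ and likewise $\psi \vee \beta =\nabla _A$, so the very same $\alpha ,\beta $ witness the defining condition of congruence B--normality for the original pair $\phi ,\psi $. Hence $A$ is congruence B--normal. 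There is no serious obstacle here; the only point that genuinely uses a hypothesis is the passage from the infinite join decompositions to a finite one, which is exactly where the semi--degeneracy of $\C $ (equivalently, the compactness of $\nabla _A$) is needed, everything else being routine lattice manipulation.
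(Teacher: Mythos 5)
Your proof is correct and follows essentially the same route as the paper's: both directions hinge on the compactness of $\nabla _A$ to replace $\phi ,\psi $ by compact congruences $\phi _0\subseteq \phi $, $\psi _0\subseteq \psi $ with $\phi _0\vee \psi _0=\nabla _A$, and then monotonicity of the join transfers the witnesses $\alpha ,\beta $ back to the original pair. The only cosmetic difference is that the paper extracts the finite subcover from the principal congruences $Cg_A(a,b)$ with $(a,b)\in \phi \cup \psi $, whereas you use arbitrary compact congruences below $\phi $ and $\psi $; the two are interchangeable here.
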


\begin{proof} (\ref{charbnorm1})$\Rightarrow $(\ref{charbnorm2}): Trivial.

\noindent (\ref{charbnorm2})$\Rightarrow $(\ref{charbnorm1}): Let $\phi ,\psi \in {\rm Con}(A)$ such that $\phi \vee \psi =\nabla _A$, that is $\nabla _A=\bigvee \{Cg_A(a,b)\ |\ (a,b)\in \phi \cup \psi \}$. But $\nabla _A\in {\cal K}(A)$, thus, for some $n,k\in \N ^*$, there exist $(a_1,b_1),\ldots ,(a_n,b_n)\in \phi $ and $(c_1,d_1),\ldots ,(c_k,d_k)\in \psi $ such that $\nabla =\varepsilon \vee \xi $, where $\displaystyle \varepsilon =\bigvee _{i=1}^nCg_A(a_i,b_i)\in {\cal K}(A)$ and $\displaystyle \xi =\bigvee _{j=1}^kCg_A(c_j,d_j)\in {\cal K}(A)$. Hence there exist $\alpha ,\beta \in {\rm Con}(A)$ such that $[\alpha ,\beta ]_A=\Delta _A$ and $\varepsilon \vee \alpha =\xi \vee \beta =\nabla _A$, so that $\phi \vee \alpha =\psi \vee \beta =\nabla _A$ since $\varepsilon \subseteq \phi $ and $\xi \subseteq \psi $.\end{proof}

\begin{proposition}\begin{enumerate}
\item\label{bnormretic1} If $A$ is congruence B--normal, then ${\cal L}(A)$ is B--normal.
\item\label{bnormretic2} If ${\cal C}$ is congruence--modular and semi--degenerate and the Boolean morphism $\lambda _A\mid _{{\cal B}({\rm Con}(A))}:{\cal B}({\rm Con}(A))\rightarrow {\cal B}({\cal L}(A))$ is surjective, then: $A$ is congruence B--normal iff ${\cal L}(A)$ is B--normal.
\end{enumerate}\label{bnormretic}\end{proposition}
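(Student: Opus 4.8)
The plan is to exploit that $\lambda _A:{\cal K}(A)\rightarrow {\cal L}(A)$ is a surjective bounded lattice morphism carrying ${\cal B}({\rm Con}(A))$ into ${\cal B}({\cal L}(A))$, together with the characterization of congruence B--normality by compact congruences (Lemma \ref{charbnorm}) and the identities $\lambda _A(\theta )={\bf 1}\Leftrightarrow \theta =\nabla _A$ and $\lambda _A(\alpha )\wedge \lambda _A(\beta )=\lambda _A([\alpha ,\beta ]_A)$ for all $\alpha ,\beta \in {\cal K}(A)$.

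For (\ref{bnormretic1}), let $x,y\in {\cal L}(A)$ with $x\vee y={\bf 1}$. First I would choose, by surjectivity of $\lambda _A$ on ${\cal K}(A)$, congruences $\phi ,\psi \in {\cal K}(A)$ with $x=\lambda _A(\phi )$ and $y=\lambda _A(\psi )$; then $\lambda _A(\phi \vee \psi )={\bf 1}$ forces $\phi \vee \psi =\nabla _A$. Applying Lemma \ref{charbnorm}, (\ref{charbnorm2}), there are $\alpha ,\beta \in {\cal B}({\rm Con}(A))$ with $\phi \vee \alpha =\psi \vee \beta =\nabla _A$ and $[\alpha ,\beta ]_A=\Delta _A$. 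Then $a:=\lambda _A(\alpha )$ and $b:=\lambda _A(\beta )$ lie in ${\cal B}({\cal L}(A))$, and, applying $\lambda _A$ to the three equalities, $x\vee a=\lambda _A(\phi \vee \alpha )={\bf 1}$, $y\vee b={\bf 1}$, and $a\wedge b=\lambda _A([\alpha ,\beta ]_A)=\lambda _A(\Delta _A)={\bf 0}$, so ${\cal L}(A)$ is B--normal.

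For (\ref{bnormretic2}), the forward implication is (\ref{bnormretic1}). For the converse I would again use Lemma \ref{charbnorm} and start from arbitrary $\phi ,\psi \in {\cal K}(A)$ with $\phi \vee \psi =\nabla _A$; then $x:=\lambda _A(\phi )$ and $y:=\lambda _A(\psi )$ satisfy $x\vee y={\bf 1}$, so B--normality of ${\cal L}(A)$ yields $a,b\in {\cal B}({\cal L}(A))$ with $x\vee a=y\vee b={\bf 1}$ and $a\wedge b={\bf 0}$. Since $\lambda _A\mid _{{\cal B}({\rm Con}(A))}:{\cal B}({\rm Con}(A))\rightarrow {\cal B}({\cal L}(A))$ is surjective, I lift $a,b$ to $\alpha ,\beta \in {\cal B}({\rm Con}(A))$; then $\lambda _A(\phi \vee \alpha )=x\vee a={\bf 1}$ gives $\phi \vee \alpha =\nabla _A$, similarly $\psi \vee \beta =\nabla _A$, and $\lambda _A([\alpha ,\beta ]_A)=\lambda _A(\alpha )\wedge \lambda _A(\beta )=a\wedge b={\bf 0}$. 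Finally, since $\alpha ,\beta \in {\cal B}({\rm Con}(A))$, we have $[\alpha ,\beta ]_A=\alpha \cap \beta \in {\cal B}({\rm Con}(A))$, and the injectivity of $\lambda _A\mid _{{\cal B}({\rm Con}(A))}$ (Lemma \ref{boolinj}, which needs $\C $ congruence--modular and semi--degenerate) together with $\lambda _A([\alpha ,\beta ]_A)={\bf 0}=\lambda _A(\Delta _A)$ gives $[\alpha ,\beta ]_A=\Delta _A$. Hence $A$ is congruence B--normal.

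The main obstacle is the last step of the converse: passing from $\lambda _A([\alpha ,\beta ]_A)={\bf 0}$ in ${\cal L}(A)$ back to $[\alpha ,\beta ]_A=\Delta _A$ in ${\rm Con}(A)$. In general $\lambda _A$ identifies congruences with equal radical, so this inference fails for arbitrary congruences; it works here precisely because $[\alpha ,\beta ]_A=\alpha \cap \beta $ lies in the Boolean center ${\cal B}({\rm Con}(A))$, on which $\lambda _A$ is injective by Lemma \ref{boolinj}. This is also the only point at which the congruence--modularity and semi--degeneracy of $\C $ (beyond the standing hypotheses of the section) and the surjectivity of $\lambda _A\mid _{{\cal B}({\rm Con}(A))}$ enter — in particular they are not needed for part (\ref{bnormretic1}).
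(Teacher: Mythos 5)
Your proof is correct and follows essentially the same route as the paper's: lift along $\lambda _A$, apply Lemma \ref{charbnorm} (respectively the B--normality of ${\cal L}(A)$), and descend using the surjectivity/injectivity of $\lambda _A\mid _{{\cal B}({\rm Con}(A))}$ from Lemma \ref{boolinj}; you are in fact slightly more careful than the paper in isolating why $[\alpha ,\beta ]_A=\alpha \cap \beta $ lies in ${\cal B}({\rm Con}(A))$ so that injectivity applies, and in invoking $\lambda _A(\theta )={\bf 1}\Leftrightarrow \theta =\nabla _A$ rather than injectivity for the join equalities. One small caveat: your closing claim that congruence--modularity and semi--degeneracy are not needed for part (i) is not quite right, since the step $\lambda _A(\phi \vee \psi )={\bf 1}\Rightarrow \phi \vee \psi =\nabla _A$ already relies on $\nabla _A/\!\!\equiv _A=\{\nabla _A\}$, which the paper derives from exactly these standing hypotheses of the section (without them, e.g.\ when ${\rm Spec}(A)=\emptyset $, that implication fails).
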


\begin{proof} (\ref{bnormretic1}) Assume that $A$ is congruence B--normal and let $\theta ,\zeta \in {\cal K}(A)$ such that $\lambda _A(\theta )\vee \lambda _A(\zeta )={\bf 1}$, that is $\lambda _A(\theta \vee \zeta )={\bf 1}$, so that $\theta \vee \zeta =\nabla _A$, hence there exist $\alpha ,\beta \in {\cal B}({\rm Con}(A))$ such that $\theta \vee \alpha =\zeta \vee \beta =\nabla _A$ and $[\alpha ,\beta ]_A=\Delta _A$, thus $\lambda _A(\alpha ),\lambda _A(\beta )\in {\cal B}({\cal L}(A))$, $\lambda _A(\theta )\vee \lambda _A(\alpha )=\lambda _A(\theta \vee \alpha )={\bf 1}=\lambda _A(\zeta \vee \beta )=\lambda _A(\zeta )\vee \lambda _A(\beta )$ and $\lambda _A(\alpha )\wedge \lambda _A(\beta )=\lambda _A([\alpha ,\beta ]_A)={\bf 0}$. Therefore ${\cal L}(A)$ is B--normal.

\noindent (\ref{bnormretic2}) Assume that ${\cal C}$ is congruence--modular and semi--degenerate and that this Boolean morphism is surjective, so that it is a Boolean isomorphism by Lemma \ref{boolinj}. By (\ref{bnormretic1}), it suffices to prove the converse implication, so assume that ${\cal L}(A)$ is B--normal, and let $\phi ,\psi \in {\cal K}(A)$ such that $\phi \vee \psi =\nabla _A$. Then $\lambda _A(\phi )\vee \lambda _A(\psi )=\lambda _A(\phi \vee \psi )={\bf 1}$, hence, by the surjectivity of $\lambda _A$ restricted to the Boolean centers, there exist $\alpha ,\beta \in {\cal B}({\rm Con}(A))$ such that $\lambda _A(\phi \vee \alpha )=\lambda _A(\phi )\vee \lambda _A(\alpha )={\bf 1}=\lambda _A(\nabla _A)=\lambda _A(\psi )\vee \lambda _A(\beta )=\lambda _A(\psi \vee \beta )$ and $\lambda _A([\alpha ,\beta ]_A)\lambda _A(\alpha )\wedge \lambda _A(\beta )={\bf 0}=\lambda _A(\Delta _A)$, therefore, by the injectivity of this Boolean morphism, $\phi \vee \alpha =\psi \vee \beta =\nabla _A$ and $[\alpha ,\beta ]_A=\Delta _A$. By Lemma \ref{charbnorm}, it follows that $A$ is congruence B--normal.\end{proof}

\begin{theorem} If ${\cal C}$ is congruence--modular and semi--degenerate and the Boolean morphism $\lambda _A\mid _{{\cal B}({\rm Con}(A))}:{\cal B}({\rm Con}(A))\rightarrow {\cal B}({\cal L}(A))$ is surjective, then the following are equivalent:\begin{enumerate}
\item\label{charcblp0} $A$ has the CBLP;
\item\label{charcblp2} ${\cal L}(A)$ has the Id--BLP;
\item\label{charcblp3} ${\cal L}(A)$ is B--normal;
\item\label{charcblp1} $A$ is congruence B--normal;
\item\label{charcblp4} the topological space $({\rm Spec}(A),\{D_A(\theta )\ |\ \theta \in {\rm Con}(A)\})$ is strongly zero--dimensional.\end{enumerate}\label{charcblp}\end{theorem}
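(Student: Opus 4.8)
The plan is to split the five conditions into two blocks glued at $(\ref{charcblp3})$: a congruence–lattice block $\{(\ref{charcblp0}),(\ref{charcblp1}),(\ref{charcblp3})\}$ and a distributive–lattice/topology block $\{(\ref{charcblp2}),(\ref{charcblp3}),(\ref{charcblp4})\}$. The second block requires nothing new: for a bounded distributive lattice $L$ the properties ``$L$ is B--normal'', ``$L$ has the Id--BLP'' and ``the prime spectrum ${\rm Spec}_{\rm Id}(L)$, with its Stone topology, is strongly zero--dimensional'' are pairwise equivalent (cf.\ \cite{dcggcm}); applying this to $L={\cal L}(A)$ and transporting $(\ref{charcblp4})$ across the homeomorphism of Proposition \ref{homeo} yields $(\ref{charcblp2})\Leftrightarrow(\ref{charcblp3})\Leftrightarrow(\ref{charcblp4})$. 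For the first block, $(\ref{charcblp1})\Leftrightarrow(\ref{charcblp3})$ is exactly Proposition \ref{bnormretic}, (\ref{bnormretic2}); this is the only place the surjectivity hypothesis on $\lambda _A\mid _{{\cal B}({\rm Con}(A))}$ is needed (by Lemma \ref{boolinj} that map is then a Boolean isomorphism). So the genuinely new work is $(\ref{charcblp0})\Leftrightarrow(\ref{charcblp1})$, which I would carry out entirely inside the congruence lattices of $A$ and its quotients, using that $\C $ congruence--modular and semi--degenerate makes every ${\cal B}({\rm Con}(M))$ a Boolean sublattice of ${\rm Con}(M)$ (so complements in it are unique), that $[\sigma ,\tau ]_M=\sigma \cap \tau $ whenever $\sigma $ is complemented, and that each $p_{\theta }^{\bullet }\mid _{{\cal B}({\rm Con}(A))}\colon {\cal B}({\rm Con}(A))\rightarrow {\cal B}({\rm Con}(A/\theta ))$ is a Boolean morphism (Corollary \ref{cgmodfsurj} and the remark after Definition \ref{defcblp}).

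For $(\ref{charcblp0})\Rightarrow(\ref{charcblp1})$ I would argue as follows. Given $\phi ,\psi \in {\rm Con}(A)$ with $\phi \vee \psi =\nabla _A$, put $\mu =\phi \cap \psi $. In $A/\mu $ one has $\phi /\mu \vee \psi /\mu =\nabla _{A/\mu }$ and $\phi /\mu \cap \psi /\mu =(\phi \cap \psi )/\mu =\Delta _{A/\mu }$, so $\phi /\mu $ and $\psi /\mu $ are mutually complementary and lie in ${\cal B}({\rm Con}(A/\mu ))$. Since $\mu $ has the CBLP, pick $\alpha \in {\cal B}({\rm Con}(A))$ with $p_{\mu }^{\bullet }(\alpha )=\psi /\mu $ and let $\beta =\alpha '$ be the complement of $\alpha $ in ${\cal B}({\rm Con}(A))$; then $p_{\mu }^{\bullet }(\beta )=(p_{\mu }^{\bullet }(\alpha ))'=\phi /\mu $. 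From $p_{\mu }^{\bullet }(\alpha )=(\alpha \vee \mu )/\mu =\psi /\mu $ together with $\mu \subseteq \psi $ the correspondence theorem gives $\alpha \vee \mu =\psi $, so $\phi \vee \alpha =\phi \vee \alpha \vee \mu =\nabla _A$; symmetrically $\beta \vee \mu =\phi $ gives $\psi \vee \beta =\nabla _A$; and $[\alpha ,\beta ]_A=\alpha \cap \beta =\Delta _A$ because $\alpha ,\beta $ are complementary in ${\cal B}({\rm Con}(A))$. Hence $A$ is congruence B--normal.

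For $(\ref{charcblp1})\Rightarrow(\ref{charcblp0})$: fix $\theta \in {\rm Con}(A)$ and $\gamma \in {\cal B}({\rm Con}(A/\theta ))$ with complement $\gamma '$; write $\gamma =\phi /\theta $ and $\gamma '=\psi /\theta $ with $\theta \subseteq \phi ,\psi \in {\rm Con}(A)$, so $\phi \vee \psi =\nabla _A$, and apply congruence B--normality to get $\alpha ,\beta \in {\cal B}({\rm Con}(A))$ with $\phi \vee \alpha =\psi \vee \beta =\nabla _A$ and $[\alpha ,\beta ]_A=\alpha \cap \beta =\Delta _A$. Pushing down by $p_{\theta }^{\bullet }$ (which preserves joins, and meets of complemented congruences) and writing $a=p_{\theta }^{\bullet }(\alpha )$, $b=p_{\theta }^{\bullet }(\beta )$, one gets in the Boolean algebra ${\cal B}({\rm Con}(A/\theta ))$ the relations $\gamma \vee a=\gamma '\vee b=\nabla _{A/\theta }$ and $a\wedge b=\Delta _{A/\theta }$, that is $\gamma '\leq a$, $\gamma \leq b$ and $b\leq a'$; these force $\gamma \leq b\leq a'\leq \gamma $, so $\gamma =b=p_{\theta }^{\bullet }(\beta )$. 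Thus $p_{\theta }^{\bullet }\mid _{{\cal B}({\rm Con}(A))}$ is surjective for every $\theta $, i.e.\ $A$ has the CBLP.

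The main obstacle I anticipate is precisely this congruence--theoretic core: the CBLP supplies only complemented \emph{liftings}, whereas congruence B--normality demands a pair of complemented congruences whose \emph{commutator} — not merely an intersection computed inside some quotient — equals $\Delta _A$; the device that overcomes this is taking the two witnesses as a complementary pair $\{\alpha ,\alpha '\}$ in ${\cal B}({\rm Con}(A))$, and, in the converse direction, the little Boolean--algebra squeeze $\gamma \leq b\leq a'\leq \gamma $. Everything else is routine bookkeeping: the correspondence theorem for congruences above a fixed one, the join--preservation of $p_{\theta }^{\bullet }$, and the standard lattice- and topology-theoretic facts invoked for the second block.
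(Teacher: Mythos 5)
Your proposal is correct, but it reroutes the core of the argument. The paper never proves (\ref{charcblp0})$\Leftrightarrow$(\ref{charcblp1}) inside the congruence lattices: it goes (\ref{charcblp0})$\Leftrightarrow$(\ref{charcblp2}) via Lemma \ref{cblpidblp} (which rests on the isomorphism ${\cal L}(A/\theta )\cong {\cal L}(A)/\theta ^*$ of Theorem \ref{presquo} and on $\lambda _A\mid _{{\cal B}({\rm Con}(A))}$ being a Boolean isomorphism), then (\ref{charcblp2})$\Leftrightarrow$(\ref{charcblp3}) by \cite[Proposition 13]{dcggcm}, then (\ref{charcblp3})$\Leftrightarrow$(\ref{charcblp1}) by Proposition \ref{bnormretic}, and finally (\ref{charcblp1})$\Leftrightarrow$(\ref{charcblp4}) by redoing, at the level of ${\rm Spec}(A)$, the argument of \cite[Theorem 4.28]{cblp}. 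Your direct proof of (\ref{charcblp0})$\Leftrightarrow$(\ref{charcblp1}) is sound: in the forward direction, passing to $A/(\phi \cap \psi )$, lifting $\psi /\mu $ to a complemented $\alpha $ and taking its complement $\beta =\alpha '$ works because $p_{\mu }^{\bullet }\mid _{{\cal B}({\rm Con}(A))}$ is a Boolean morphism (Corollary \ref{cgmodfsurj}), complements in the Boolean sublattice ${\cal B}({\rm Con}(A/\mu ))$ are unique, the correspondence theorem recovers $\alpha \vee \mu =\psi $, and $[\alpha ,\beta ]_A=\alpha \cap \beta $ on ${\cal B}({\rm Con}(A))$; in the converse, the squeeze $\gamma \leq b\leq a'\leq \gamma $ is a valid computation inside the distributive sublattice ${\cal B}({\rm Con}(A/\theta ))$. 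What your route buys is that the hard equivalence is purely congruence--theoretic and, as you observe, does not use the surjectivity of $\lambda _A\mid _{{\cal B}({\rm Con}(A))}$ at all (that hypothesis is consumed only by Proposition \ref{bnormretic}, (\ref{bnormretic2}), exactly as you say), so you in effect isolate which parts of the theorem need which hypotheses; what the paper's route buys is that the reticulation does all the work, illustrating the transfer principle the paper is advertising. The only soft spot is bibliographic: you attribute to \cite{dcggcm} the full three--way equivalence of B--normality, Id--BLP and strong zero--dimensionality of ${\rm Spec}_{\rm Id}(L)$ for a bounded distributive lattice $L$, whereas the paper only cites it for Id--BLP versus B--normality and handles (\ref{charcblp4}) by a separate algebra--level argument; the transport of strong zero--dimensionality across the homeomorphism of Proposition \ref{homeo} is unobjectionable, but you should either locate the lattice--theoretic equivalence precisely or supply its (standard) proof.
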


\begin{proof} By Lemma \ref{boolinj}, $\lambda _A\mid _{{\cal B}({\rm Con}(A))}:{\cal B}({\rm Con}(A))\rightarrow {\cal B}({\cal L}(A))$ is a Boolean isomorphism.

\noindent (\ref{charcblp0})$\Leftrightarrow $(\ref{charcblp2}): By Lemma \ref{cblpidblp} and Proposition \ref{mapstarsurj}.

\noindent (\ref{charcblp2})$\Leftrightarrow $(\ref{charcblp3}): By \cite[Proposition $13$]{dcggcm}.

\noindent (\ref{charcblp3})$\Leftrightarrow $(\ref{charcblp1}): By Proposition \ref{bnormretic}, (\ref{bnormretic2}).

\noindent (\ref{charcblp1})$\Leftrightarrow $(\ref{charcblp4}): Analogously to the proof of the similar equivalence from \cite[Theorem $4.28$]{cblp}.\end{proof}

\begin{remark} By \cite{dcggcm}, ${\cal L}(A)$ is B--normal iff ${\rm Id}({\cal L}(A))$ is B--normal iff ${\rm Filt}({\cal L}(A))$ is B--conormal.\end{remark}

\begin{corollary} If ${\cal C}$ is congruence--modular and semi--degenerate and either $A$ is semiprime or its commutator is associative, then: $A$ has the CBLP iff ${\cal L}(A)$ has the Id--BLP iff ${\cal L}(A)$ is B--normal iff $A$ is congruence B--normal iff the topological space $({\rm Spec}(A),\{D_A(\theta )\ |\ \theta \in {\rm Con}(A)\})$ is strongly zero--dimensional.\end{corollary}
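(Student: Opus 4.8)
The plan is to derive this statement as an immediate consequence of Theorem~\ref{charcblp}. That theorem already establishes the full chain of equivalences — $A$ has the CBLP $\iff$ ${\cal L}(A)$ has the Id--BLP $\iff$ ${\cal L}(A)$ is B--normal $\iff$ $A$ is congruence B--normal $\iff$ the topological space $({\rm Spec}(A),\{D_A(\theta)\ |\ \theta\in{\rm Con}(A)\})$ is strongly zero--dimensional — but under the standing hypothesis that the Boolean morphism $\lambda_A\mid_{{\cal B}({\rm Con}(A))}:{\cal B}({\rm Con}(A))\rightarrow{\cal B}({\cal L}(A))$ is surjective. So the only thing to do is to verify that the hypotheses of the corollary force this surjectivity, after which the conclusion is just a matter of quoting Theorem~\ref{charcblp}.

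First I would invoke Lemma~\ref{boolinj}: since $\C$ is assumed congruence--modular and semi--degenerate, that lemma already gives that $\lambda_A\mid_{{\cal B}({\rm Con}(A))}$ is injective, and, under the additional hypothesis that $A$ is semiprime or that its commutator is associative — which is precisely the extra hypothesis of the corollary — it upgrades this to the statement that $\lambda_A\mid_{{\cal B}({\rm Con}(A))}$ is a Boolean isomorphism. In particular it is surjective, so the standing hypothesis of Theorem~\ref{charcblp} is satisfied. Having discharged that hypothesis, I would then simply apply Theorem~\ref{charcblp} to obtain the equivalence of all five conditions, which is exactly the assertion of the corollary.

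There is no genuine obstacle here: all the substantive work has already been carried out in Theorem~\ref{charcblp} (which handles the equivalences) and in Lemma~\ref{boolinj} (which handles when $\lambda_A$ is a Boolean isomorphism on the Boolean centers), and the corollary is a packaging step that replaces the somewhat technical hypothesis ``$\lambda_A$ restricted to the Boolean centers is surjective'' by the two more transparent sufficient conditions — semiprimeness of $A$, or associativity of its commutator — each of which guarantees it. The one point to be mildly careful about is to read the restriction of $\lambda_A$ in Lemma~\ref{boolinj} with the intended source and target, namely as the map ${\cal B}({\rm Con}(A))\rightarrow{\cal B}({\cal L}(A))$, so that the conclusion ``Boolean isomorphism'' there indeed delivers surjectivity onto ${\cal B}({\cal L}(A))$, as required by Theorem~\ref{charcblp}.
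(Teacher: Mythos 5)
Your proposal is correct and matches the paper's own proof exactly: the paper also derives the corollary by invoking Lemma \ref{boolinj} (read with the intended source ${\cal B}({\rm Con}(A))$ and target ${\cal B}({\cal L}(A))$, despite the typo in its statement) to obtain the surjectivity hypothesis of Theorem \ref{charcblp}, and then quoting that theorem. Nothing further is needed.
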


\begin{proof} By Theorem \ref{charcblp} and Lemma \ref{boolinj}.\end{proof}

\begin{remark} Theorem \ref{charcblp} extends results such as: commutative unitary rings with the lifting property are exactly exchange rings \cite{nic}, residuated lattices with the Boolean Lifting Property are exactly quasi--local  residuated lattices \cite{ggcm}, in semi--degenerate congruence--distributive varieties, algebras with CBLP are exactly B--normal algebras \cite[Theorem $4.28$]{cblp}.\end{remark}

\section{Particular Cases and Examples}
\label{examples}

\begin{remark} By \cite[Theorem $8.11$, p.$126$]{blyth}, the variety of distributive lattices has the PIP, thus also the CIP, since it is congruence--distributive. Therefore, by Proposition \ref{cipnedpc}, ${\cal L}$ is a functor from the variety of distributive lattices to itself, as well as from the variety of bounded distributive lattices to itself.\end{remark}

\begin{remark} $\bullet \quad $Any Boolean algebra $A$ is isomorphic to its reticulation, since ${\rm Id}(A)\cong {\rm Con}(A)$ and thus ${\rm Spec}_{\rm Id}(A)$ and ${\rm Spec}(A)$, endowed with the Stone topologies, are homeomorphic, $A$ is a bounded distributive lattice and ${\cal L}(A)$ is unique up to a lattice isomorphism.

$\bullet \quad $A finite modular lattice $L$ is isomorphic to its reticulation iff $L$ is a Boolean algebra. Indeed, the converse implication follows from the above, while, for the direct implication, we may notice that, since $L$ is congruence--distributive and finite, we have ${\cal L}(L)\cong {\cal K}(L)={\rm Con}(L)$, which is a Boolean algebra \cite{blyth,gratzer,cwdw}.

$\bullet \quad $By Remark \ref{reticacc}, a lattice without ACC can not be isomorphic to its reticulation.

$\bullet \quad $If $A$ and $B$ are algebras with the CIP and the commutators equalling the intersection having ${\rm Con}(A)\cong {\rm Con}(B)$, then ${\cal K}(A)={\rm Cp}({\rm Con}(A))$ and ${\cal K}(B)={\rm Cp}({\rm Con}(B))$ are sublattices of ${\rm Con}(A)$ and ${\rm Con}(B)$, respectively, so we have ${\cal L}(A)\cong {\cal K}(A)\cong {\cal K}(B)\cong {\cal L}(B)$.

In particular, any lattice with the CIP, thus any finite or distributive lattice, has its reticulation isomorphic to the reticulation of its dual.\end{remark}

In the following examples, we have calculated the commutators using the method from \cite{meo}. Note that, in each of these examples, the commutator is distributive w.r.t. the join, hence, by \cite[Proposition $1.2$]{agl}, the prime congruences of $A$ are the meet--irreducible elements $\phi $ of ${\rm Con}(A)$ with the property that $[\alpha ,\alpha ]_A\subseteq \phi $ implies $\alpha \subseteq \phi $ for all $\alpha \in {\rm Con}(A)$.

\begin{example} By Lemma \ref{distribsuffret}, all the algebras in this example are semiprime and all the morphisms in this example fulfill FRet, since we are in the congruence--distributive variety of lattices and the following algebras are finite, thus all their congruences are compact, so these algebras trivially satisfy the CIP. Bounded lattices form a congruence--distributive variety with $\vec{0}$ and $\vec{1}$, thus all bounded lattice morphisms in this example also satisfy the FBC, according to Proposition \ref{suffbc}.

Let us consider the congruence--distributive variety of lattices, ${\cal L}_2^2=\{0,a,b,1\}$, ${\cal L}_2=\{0,a\}$ and let us consider the lattice embedding $i_{{\cal L}_2,{\cal L}_2^2}:{\cal L}_2\rightarrow {\cal L}_2^2$. Then we may take ${\cal L}({\cal L}_2)={\cal K}({\cal L}_2)={\rm Con}({\cal L}_2)=\{\Delta _{{\cal L}_2},\nabla _{{\cal L}_2}\}\cong {\cal L}_2$ and ${\cal L}({\cal L}_2^2)={\cal K}({\cal L}_2^2)={\rm Con}({\cal L}_2^2)=\{\Delta _{{\cal L}_2^2},\phi ,\psi ,\nabla _{{\cal L}_2^2}\}\cong {\cal L}_2^2$, where ${\cal L}_2^2/\phi =\{\{0,a\},\{b,1\}\}$ and ${\cal L}_2^2/\psi =\{\{0,b\},\{a,1\}\}$. Then $i_{{\cal L}_2,{\cal L}_2^2}$ fulfills FRet, with ${\cal L}(i_{{\cal L}_2,{\cal L}_2^2})=i_{{\cal L}_2,{\cal L}_2^2}^{\bullet }$, which preserves the meet, but does not preserve the ${\bf 1}$, since $i_{{\cal L}_2,{\cal L}_2^2}^{\bullet }(\nabla _{{\cal L}_2})=Cg_{{\cal L}_2^2}(i_{{\cal L}_2,{\cal L}_2^2}(\nabla _{{\cal L}_2}))=\alpha \neq \nabla _{{\cal L}_2^2}$. Recall that, since we are in a congruence--distributive variety, $\rho _{{\cal L}_2^2}=id_{{\rm Con}({\cal L}_2^2)}$.

Here is an example of a morphism $k$ in the congruence--dis\-tri\-bu\-tive semi--degenerate variety of bounded lattices ${\cal L}(k)$ does not preserve the meet, or, equivalently, such that $k^{\bullet }$ does not preserve the intersection of congruences. Let $k:{\cal N}_5\rightarrow {\cal N}_5$ be the bounded lattice morphism defined by the table below:\vspace*{15pt}\begin{center}\begin{tabular}{cccccc}
\begin{picture}(80,50)(0,0)
\put(38,10){$0$}
\put(13,37){$a$}
\put(52,27){$b$}
\put(53,47){$c$}
\put(40,20){\circle*{3}}
\put(50,30){\circle*{3}}
\put(50,50){\circle*{3}}
\put(40,60){\circle*{3}}
\put(20,40){\circle*{3}}
\put(40,20){\line(-1,1){20}}
\put(40,20){\line(1,1){10}}
\put(40,60){\line(-1,-1){20}}
\put(40,60){\line(1,-1){10}}
\put(50,30){\line(0,1){20}}
\put(38,63){$1$}
\put(15,55){${\cal N}_5:$}
\end{picture}
&\hspace*{-20pt}
\begin{picture}(160,50)(0,0)
\put(5,45){\begin{tabular}{c|ccccc}
$u$ & $0$ & $a$ & $b$ & $c$ & $1$\\ \hline 
$k(u)$ & $0$ & $a$ & $b$ & $b$ & $1$\\ \hline 
$h(u)$ & $0$ & $a$ & $b$ & $b$ & $1$\end{tabular}}
\put(-27,-3){\begin{tabular}{c|ccccc}
$\theta $ & $\Delta _{{\cal N}_5}$ & $\alpha $ & $\beta $ & $\gamma $ & $\nabla _{{\cal N}_5}$\\ \hline 
$k^{\bullet }(\theta )$ & $\Delta _{{\cal N}_5}$ & $\alpha $ & $\beta $ & $\Delta _{{\cal N}_5}$ & $\nabla _{{\cal N}_5}$\\ \hline 
$h^{\bullet }(\theta )$ & $\Delta _{{\cal L}_2^2}$ & $\phi $ & $\psi $ & $\Delta _{{\cal L}_2^2}$ & $\nabla _{{\cal L}_2^2}$\end{tabular}}
\put(145,-10){\begin{tabular}{c|cccc}
$\theta $ & $\Delta _{{\cal L}_2^2}$ & $\phi $ & $\psi $ & $\nabla _{{\cal L}_2^2}$\\ \hline 
$i_{{\cal L}_2^2,{\cal M}_3}^{\bullet }(\theta )$ & $\Delta _{{\cal M}_3}$ & $\nabla _{{\cal M}_3}$ & $\nabla _{{\cal M}_3}$ & $\nabla _{{\cal M}_3}$\end{tabular}}
\end{picture}
&\hspace*{-50pt}
\begin{picture}(80,50)(0,0)
\put(36,10){$\Delta _{{\cal N}_5}$}
\put(42,30){$\gamma $}
\put(40,20){\circle*{3}}
\put(40,35){\circle*{3}}
\put(30,45){\circle*{3}}
\put(50,45){\circle*{3}}
\put(40,55){\circle*{3}}
\put(40,20){\line(0,1){15}}
\put(40,35){\line(-1,1){10}}
\put(40,35){\line(1,1){10}}
\put(40,55){\line(-1,-1){10}}
\put(40,55){\line(1,-1){10}}
\put(21,43){$\alpha $}
\put(53,42){$\beta $}
\put(36,58){$\nabla _{{\cal N}_5}$}
\end{picture}
&\hspace*{-45pt}
\begin{picture}(80,50)(0,0)
\put(38,10){$0$}
\put(23,27){$a$}
\put(52,27){$b$}
\put(40,20){\circle*{3}}
\put(40,40){\circle*{3}}
\put(30,30){\circle*{3}}
\put(50,30){\circle*{3}}
\put(40,20){\line(-1,1){10}}
\put(40,20){\line(1,1){10}}
\put(40,40){\line(-1,-1){10}}
\put(40,40){\line(1,-1){10}}
\put(38,43){$1$}
\put(35,55){${\cal L}_2^2:$}
\end{picture}
&\hspace*{-48pt}
\begin{picture}(80,50)(0,0)
\put(36,25){$\Delta _{{\cal L}_2^2}$}
\put(40,35){\circle*{3}}
\put(30,45){\circle*{3}}
\put(50,45){\circle*{3}}
\put(40,55){\circle*{3}}
\put(40,35){\line(-1,1){10}}
\put(40,35){\line(1,1){10}}
\put(40,55){\line(-1,-1){10}}
\put(40,55){\line(1,-1){10}}
\put(21,43){$\phi $}
\put(53,42){$\psi $}
\put(36,58){$\nabla _{{\cal L}_2^2}$}
\end{picture}
&\hspace*{-30pt}
\begin{picture}(80,50)(0,0)
\put(38,10){$0$}
\put(13,37){$a$}
\put(42,37){$b$}
\put(62,37){$c$}
\put(40,20){\circle*{3}}
\put(60,40){\circle*{3}}
\put(40,60){\circle*{3}}
\put(40,40){\circle*{3}}
\put(20,40){\circle*{3}}
\put(40,20){\line(-1,1){20}}
\put(40,20){\line(1,1){20}}
\put(40,20){\line(0,1){40}}
\put(40,60){\line(-1,-1){20}}
\put(40,60){\line(1,-1){20}}
\put(38,63){$1$}
\put(15,55){${\cal M}_3:$}
\end{picture}
\end{tabular}\end{center}\vspace*{14pt}

${\cal N}_5$ has the congruence lattice above, where ${\cal N}_5/\alpha =\{\{0,b,c\},\{a,1\}\}$, ${\cal N}_5/\beta =\{\{0,a\},\{b,c,1\}\}$ and ${\cal N}_5/\gamma =\{\{0\},\{a\},\{b,c\},\{1\}\}$. We have $k^{\bullet }(\alpha )\cap k^{\bullet }(\beta )=\alpha \cap \beta =\gamma \neq \Delta _{{\cal N}_5}=k^{\bullet }(\gamma )=k^{\bullet }(\alpha \cap \beta )$.

Let us also consider ${\cal M}_3$ with the elements denoted as above and the bounded lattice embedding $i_{{\cal L}_2^2,{\cal M}_3}:{\cal L}_2^2\rightarrow {\cal M}_3$. ${\cal B}({\rm Con}({\cal M}_3))={\rm Con}({\cal M}_3)=\{\Delta _{{\cal M}_3},\nabla _{{\cal M}_3}\}\cong {\cal L}_2$. $i_{{\cal L}_2^2,{\cal M}_3}$ is injective and not surjective, but, as shown by the table above, $i_{{\cal L}_2^2,{\cal M}_3}^{\bullet }$ is surjective and not injective, hence so is ${\cal L}(i_{{\cal L}_2^2,{\cal M}_3})$, since we are in a congruence--distributive variety.

Let $h:{\cal N}_5\rightarrow {\cal L}_2^2$ be the surjective lattice morphism defined by the table above. Then $h^{\bullet }:{\rm Con}({\cal N}_5)={\cal K}({\cal N}_5)\rightarrow {\rm Con}({\cal L}_2^2)={\cal K}({\cal L}_2^2)$ is surjective, thus so is ${\cal L}(h):{\cal L}({\cal N}_5)\rightarrow {\cal L}({\cal L}_2^2)$, and $h$ fulfills the FBC, as announced above, but $h^{\bullet }\mid _{{\cal B}({\rm Con}({\cal N}_5))}:{\cal B}({\rm Con}({\cal N}_5))=\{\Delta _{{\cal N}_5},\nabla _{{\cal N}_5}\}\rightarrow {\cal B}({\rm Con}({\cal L}_2^2))={\rm Con}({\cal L}_2^2)$ is not surjective, thus neither is ${\cal B}({\cal L}(h)):{\cal B}({\cal L}({\cal N}_5))\rightarrow {\cal B}({\cal L}({\cal L}_2^2))$, since we are in a congruence--distributive variety and ${\cal N}_5$ and ${\cal L}_2^2$ are finite, so that we may take ${\cal L}({\cal N}_5)={\cal K}({\cal N}_5)={\rm Con}({\cal N}_5)$, ${\cal L}({\cal L}_2^2)={\cal K}({\cal L}_2^2)={\rm Con}({\cal L}_2^2)$ and ${\cal L}(h)=h^{\bullet }:{\rm Con}({\cal N}_5)\rightarrow {\rm Con}({\cal L}_2^2)$.

The bounded lattice embedding $i_{{\cal L}_2,{\cal N}_5}$ fulfills the FBC, as announced above, and, here as well, we may take ${\cal L}({\cal L}_2)={\cal K}({\cal L}_2)={\rm Con}({\cal L}_2)=\{\Delta _{{\cal L}_2},\nabla _{{\cal L}_2}\}={\cal B}({\rm Con}({\cal L}_2))$ and ${\cal L}(i_{{\cal L}_2,{\cal N}_5})=i_{{\cal L}_2,{\cal N}_5}^{\bullet }:{\rm Con}({\cal L}_2)\rightarrow {\rm Con}({\cal N}_5)$, so that ${\cal B}({\cal L}(i_{{\cal L}_2,{\cal N}_5}))={\cal L}(i_{{\cal L}_2,{\cal N}_5}^{\bullet })=i_{{\cal L}_2,{\cal N}_5}^{\bullet }\mid _{{\cal B}({\rm Con}({\cal L}_2))}:{\cal B}({\rm Con}({\cal L}_2))\rightarrow {\cal B}({\rm Con}({\cal N}_5))$. Since $i_{{\cal L}_2,{\cal N}_5}^{\bullet }({\cal B}({\rm Con}({\cal L}_2)))=i_{{\cal L}_2,{\cal N}_5}^{\bullet }({\rm Con}({\cal L}_2))=\{\Delta _{{\cal N}_5},\nabla _{{\cal N}_5}\}={\cal B}({\rm Con}({\cal N}_5))\subsetneq {\rm Con}({\cal N}_5)$, it follows that ${\cal B}({\cal L}(i_{{\cal L}_2,{\cal N}_5}))$ is surjective, while ${\cal L}(i_{{\cal L}_2,{\cal N}_5})$ is not surjective.

Here is a lattice morphism that fails FBC, and, since it is a morphism between finite lattices, it satisfies FRet, as all morphisms above: let $g:{\cal L}_2^2\rightarrow {\cal N}_5$ be defined by the following table, so that $g^{\bullet }$ has this definition:

\begin{center}\begin{tabular}{cc}
\begin{tabular}{c|cccc}
$u$ & $0$ & $a$ & $b$ & $1$\\ \hline 
$g(u)$ & $0$ & $0$ & $b$ & $b$\end{tabular}
&
\begin{tabular}{c|cccc}
$\theta $ & $\Delta _{{\cal L}_2^2}$ & $\phi $ & $\psi $ & $\nabla _{{\cal L}_2^2}$\\ \hline 
$g^{\bullet }(\theta )$ & $\Delta _{{\cal N}_5}$ & $\Delta _{{\cal N}_5}$ & $\alpha $ & $\alpha $\end{tabular}
\end{tabular}\end{center}

We have $g^{\bullet }({\cal B}({\rm Con}({\cal L}_2^2)))=g^{\bullet }({\rm Con}({\cal L}_2^2))=\{\Delta _{{\cal N}_5},\alpha \}\nsubseteq \{\Delta _{{\cal N}_5},\nabla _{{\cal N}_5}\}={\cal B}({\rm Con}({\cal N}_5))$, thus $g$ fails (FBC1).\label{pentagon}\end{example}

\begin{example} Let $\tau =(2)$ and let us consider the following $\tau $--algebra from \cite[Example $4$]{retic}: $N=(\{a,b,c,x,y\},$\linebreak $+^N)$, with $+^N:N^2\rightarrow N$ defined by the following table. Note that some of the congruences of $N$, as well as of the algebra $M$ from the same example, have been omitted in \cite{retic}; here is the correct Hasse diagram of ${\rm Con}(N)$, where: $N/\delta =\{\{a,b\},\{c\},\{x\},\{y\}\}$, $N/\eta _1=\{\{a\},\{b,c\},\{x\},\{y\}\}$, $N/\eta =\{\{a,b,c\},\{x\},\{y\}\}$, $N/\omega _1=\{\{a\},\{b\},\{c\},\{x,y\}\}$, $N/\omega _1=\{\{a,b\},\{c\},\{x,y\}\}$, $N/\zeta _1=\{\{a\},\{b,c\},\{x,y\}\}$, $N/\zeta =\{\{a,b,c\},\{x,y\}\}$, $N/\varepsilon =\{\{a,b,c,x\},\{y\}\}$ and $N/\xi =\{\{a,b,c,y\},\{x\}\}$.

\begin{center}
\begin{tabular}{cc}
\hspace*{-60pt}\begin{picture}(207,85)(0,0)
\put(90,40){\begin{tabular}{c|ccccc}
$+^N$ & $a$ & $b$ & $c$ & $x$ & $y$\\ \hline  
$a$ & $a$ & $b$ & $c$ & $a$ & $a$\\ 
$b$ & $b$ & $b$ & $c$ & $b$ & $b$\\ 
$c$ & $c$ & $c$ & $c$ & $c$ & $c$\\ 
$x$ & $x$ & $x$ & $x$ & $x$ & $x$\\ 
$y$ & $y$ & $y$ & $y$ & $y$ & $y$\end{tabular}}\end{picture}
&\hspace*{30pt}
\begin{picture}(80,85)(0,0)
\put(40,0){\circle*{3}}
\put(36,-10){$\Delta _N$}
\put(40,0){\line(1,1){20}}
\put(40,0){\line(-1,1){20}}
\put(40,0){\line(0,1){20}}
\put(20,20){\circle*{3}}
\put(40,20){\circle*{3}}
\put(60,20){\circle*{3}}
\put(40,40){\circle*{3}}
\put(60,40){\circle*{3}}
\put(20,40){\circle*{3}}
\put(40,60){\circle*{3}}
\put(20,20){\line(0,1){20}}
\put(20,20){\line(1,1){20}}
\put(60,20){\line(-1,1){20}}
\put(60,20){\line(0,1){20}}
\put(40,20){\line(1,1){20}}
\put(40,20){\line(-1,1){20}}
\put(40,60){\line(0,-1){20}}
\put(40,60){\line(-1,-1){20}}
\put(40,60){\line(1,-1){20}}
\put(20,40){\line(-1,1){20}}
\put(40,60){\line(-1,1){20}}
\put(20,80){\line(-1,-1){20}}
\put(20,80){\line(0,-1){40}}
\put(20,80){\circle*{3}}
\put(20,60){\circle*{3}}
\put(0,60){\circle*{3}}
\put(12,14){$\eta _1$}
\put(12,34){$\eta $}
\put(42,15){$\delta $}
\put(61,13){$\omega _1$}
\put(14,58){$\xi $}
\put(-6,58){$\varepsilon $}
\put(42,40){$\zeta _1$}
\put(42,60){$\zeta $}
\put(61,43){$\omega $}
\put(16,83){$\nabla _N$}
\end{picture}\end{tabular}\end{center}\vspace*{5pt}

$[\cdot ,\cdot ]_N$ is given by the following table, so that ${\rm Spec}(N)=\{\omega \}$, thus ${\rm RCon}(N)=\{\omega ,\nabla _N\}$, hence ${\cal L}(N)={\cal K}(N)/\!\equiv _N={\rm Con}(N)/\!\equiv _N=\{(\omega ],[\omega )\}=\{{\bf 0},{\bf 1}\}\cong {\cal L}_2$. By Proposition \ref{quosprime}, since $\Delta _N\notin {\rm RCon}(N)$, while $\omega \in {\rm RCon}(N)$, $N$ is not semiprime, but $N/\omega $ is semiprime.\vspace*{-5pt}

\begin{center}
\begin{tabular}{c|ccccccccccc}
$[\cdot ,\cdot ]_N$ & $\Delta _N$ & $\delta $ & $\eta _1$ & $\eta $ & $\omega _1$ & $\omega $ & $\zeta _1$ & $\zeta $ & $\varepsilon $ & $\xi $ & $\nabla _N$\\ \hline 
$\Delta _N$ & $\Delta _N$ & $\Delta _N$ & $\Delta _N$ & $\Delta _N$ & $\Delta _N$ & $\Delta _N$ & $\Delta _N$ & $\Delta _N$ & $\Delta _N$ & $\Delta _N$ & $\Delta _N$\\ 
$\delta $ & $\Delta _N$ & $\delta $ & $\Delta _N$ & $\delta $ & $\Delta _N$ & $\delta $ & $\Delta _N$ & $\delta $ & $\delta $ & $\delta $ & $\delta $\\ 
$\eta _1$ & $\Delta _N$ & $\Delta _N$ & $\eta _1$ & $\eta _1$ & $\Delta _N$ & $\Delta _N$ & $\eta _1$ & $\eta _1$ & $\eta _1$ & $\eta _1$ & $\eta _1$\\ 
$\eta $ & $\Delta _N$ & $\delta $ & $\eta _1$ & $\eta $ & $\Delta _N$ & $\delta $ & $\eta _1$ & $\eta $ & $\eta $ & $\eta $ & $\eta $\\ 
$\omega _1$ & $\Delta _N$ & $\Delta _N$ & $\Delta _N$ & $\Delta _N$ & $\Delta _N$ & $\Delta _N$ & $\Delta _N$ & $\Delta _N$ & $\Delta _N$ & $\Delta _N$ & $\Delta _N$\\ 
$\omega $ & $\Delta _N$ & $\delta $ & $\Delta _N$ & $\delta $ & $\Delta _N$ & $\delta $ & $\Delta _N$ & $\delta $ & $\delta $ & $\delta $ & $\delta $\\ 
$\zeta _1$ & $\Delta _N$ & $\Delta _N$ & $\eta _1$ & $\eta _1$ & $\Delta _N$ & $\Delta _N$ & $\eta _1$ & $\eta _1$ & $\eta _1$ & $\eta _1$ & $\eta _1$\\ 
$\zeta $ & $\Delta _N$ & $\delta $ & $\eta _1$ & $\eta $ & $\Delta _N$ & $\delta $ & $\eta _1$ & $\eta $ & $\eta $ & $\eta $ & $\eta $\\ 
$\varepsilon $ & $\Delta _N$ & $\delta $ & $\eta _1$ & $\eta $ & $\Delta _N$ & $\delta $ & $\eta _1$ & $\eta $ & $\eta $ & $\eta $ & $\eta $\\ 
$\xi $ & $\Delta _N$ & $\delta $ & $\eta _1$ & $\eta $ & $\Delta _N$ & $\delta $ & $\eta _1$ & $\eta $ & $\eta $ & $\eta $ & $\eta $\\ 
$\nabla _N$ & $\Delta _N$ & $\delta $ & $\eta _1$ & $\eta $ & $\Delta _N$ & $\delta $ & $\eta _1$ & $\eta $ & $\eta $ & $\eta $ & $\eta $\end{tabular}\end{center}\vspace*{-3pt}

Note that ${\cal B}({\rm Con}(N))=\{\Delta _N,\omega _1,\varepsilon ,\xi ,\nabla _N\}$, which is not a sublattice of ${\rm Con}(N)$, since it is not closed w.r.t. the intersection. Note, also, that $\{a\}$ is a subalgebra of $N$, thus the variety generated by $N$ is not semi--degenerate; the same holds for all the algebras in this example, as well as those in the following example, because each of these algebras has trivial subalgebras.

Let $(P,+^P)$ be the following $\tau $--algebra: $P=\{a,b,x,y\}$, with $+^P:P^2\rightarrow P$ defined by the table that follows:\vspace*{5pt}\begin{center}\hspace*{-60pt}
\begin{tabular}{ccc}
\begin{picture}(50,95)(0,0)
\put(0,38){
\begin{tabular}{c|cccc}
$+^P$ & $a$ & $b$ & $x$ & $y$\\ \hline  
$a$ & $a$ & $b$ & $y$ & $y$\\ 
$b$ & $b$ & $b$ & $y$ & $y$\\ 
$x$ & $x$ & $x$ & $x$ & $x$\\ 
$y$ & $y$ & $y$ & $y$ & $y$\end{tabular}}\end{picture}
&\hspace*{45pt}
\begin{picture}(80,95)(0,0)
\put(40,20){\circle*{3}}
\put(36,10){$\Delta _P$}
\put(40,20){\line(1,1){20}}
\put(40,20){\line(-1,1){20}}
\put(40,20){\line(0,1){20}}
\put(20,40){\circle*{3}}
\put(40,40){\circle*{3}}
\put(60,40){\circle*{3}}
\put(20,60){\circle*{3}}
\put(40,60){\circle*{3}}
\put(60,60){\circle*{3}}
\put(40,80){\circle*{3}}
\put(20,40){\line(0,1){20}}
\put(20,40){\line(1,1){20}}
\put(60,40){\line(-1,1){20}}
\put(60,40){\line(0,1){20}}
\put(40,40){\line(1,1){20}}
\put(40,40){\line(-1,1){20}}
\put(40,80){\line(0,-1){20}}
\put(40,80){\line(-1,-1){20}}
\put(40,80){\line(1,-1){20}}
\put(12,37){$\mu $}
\put(42,34){$\chi $}
\put(62,37){$\nu $}
\put(12,59){$\phi $}
\put(42,62){$\iota $}
\put(63,59){$\psi $}
\put(36,83){$\nabla _P$}
\end{picture}
&\hspace*{-15pt}
\begin{picture}(200,95)(0,0)
\put(0,45){
\begin{tabular}{c|cccccccc}
$[\cdot ,\cdot ]_P$ & $\Delta _P$ & $\chi $ & $\phi $ & $\mu $ & $\psi $ & $\nu $ & $\iota $ & $\nabla _P$\\ \hline 
$\Delta _P$ & $\Delta _P$ & $\Delta _P$ & $\Delta _P$ & $\Delta _P$ & $\Delta _P$ & $\Delta _P$ & $\Delta _P$ & $\Delta _P$\\ 
$\chi $ & $\Delta _P$ & $\Delta _P$ & $\Delta _P$ & $\Delta _P$ & $\Delta _P$ & $\Delta _P$ & $\Delta _P$ & $\Delta _P$\\ 
$\phi $ & $\Delta _P$ & $\Delta _P$ & $\mu $ & $\mu $ & $\Delta _P$ & $\Delta _P$ & $\mu $ & $\mu $\\ 
$\mu $ & $\Delta _P$ & $\Delta _P$ & $\mu $ & $\mu $ & $\Delta _P$ & $\Delta _P$ & $\mu $ & $\mu $\\ 
$\psi $ & $\Delta _P$ & $\Delta _P$ & $\Delta _P$ & $\Delta _P$ & $\nu $ & $\nu $ & $\nu $ & $\nu $\\ 
$\nu $ & $\Delta _P$ & $\Delta _P$ & $\Delta _P$ & $\Delta _P$ & $\nu $ & $\nu $ & $\nu $ & $\nu $\\ 
$\iota $ & $\Delta _P$ & $\Delta _P$ & $\mu $ & $\mu $ & $\nu $ & $\nu $ & $\iota $ & $\iota $\\ 
$\nabla _P$ & $\Delta _P$ & $\Delta _P$ & $\mu $ & $\mu $ & $\nu $ & $\nu $ & $\iota $ & $\iota $\end{tabular}}\end{picture}\end{tabular}
\end{center}\vspace*{5pt}

${\rm Con}(P)={\cal B}({\rm Con}(P))=\{\Delta _P,\chi ,\phi ,\psi ,\mu ,\nu ,\iota ,\nabla _P\}\cong {\cal L}_2^3$, where $P/\chi =\{\{a\},\{b\},\{x,y\}\}$, $P/\phi =\{\{a,b\},$\linebreak $\{x,y\}\}$, $P/\psi =\{\{a\},\{b,x,y\}\}$, $P/\mu =\{\{a,b\},\{x\},\{y\}\}$, $P/\nu =\{\{a\},\{x\},\{b,y\}\}$ and $P\iota =\{\{a,b,y\},\{x\}\}$, as in the diagram above.  The commutator of $P$ has the table above, hence ${\rm Spec}(P)=\{\phi ,\psi \}$, thus $\Delta _P\notin \{\phi ,\psi ,\chi ,\nabla _P\}={\rm RCon}(P)$, so $P$ is not semiprime, and ${\cal L}(P)={\cal B}({\cal L}(P))={\cal B}({\cal K}(P)/\!\equiv _P)={\cal B}({\rm Con}(P)/\!\equiv _P)={\rm Con}(P)/\!\equiv _P=\{\{\Delta _P,\chi \},\{\phi ,\mu\},\{\psi ,\nu\},\{\iota ,\nabla _P\}\}\cong {\cal L}_2^2$, hence $\lambda _P\mid _{{\cal B}({\rm Con}(P))}:{\cal B}({\rm Con}(P))={\rm Con}(P)\rightarrow {\cal B}({\cal L}(P))={\cal L}(P)$ is a surjective Boolean morphism.

Let $g:P\rightarrow N$ and $h:N\rightarrow P$ be the following $\tau $--morphisms:\vspace*{-4pt}\begin{center}\begin{tabular}{ll}\begin{tabular}{c|cccc}
$u$ & $a$ & $b$ & $x$ & $y$\\ \hline 
$g(u)$ & $a$ & $a$ & $y$ & $a$\end{tabular}
&
\begin{tabular}{c|cccccccc}
$\theta $ & $\Delta _P$ & $\chi $ & $\phi $ & $\mu $ & $\psi $ & $\nu $ & $\iota $ & $\nabla _P$\\ \hline
$g^{\bullet }(\theta )$ & $\Delta _N$ & $\xi $ & $\xi $ & $\Delta _N$ & $\xi $ & $\Delta _N$ & $\Delta _N$ & $\xi $ 
\end{tabular}\\
\begin{tabular}{c|ccccc}
$u$ & $a$ & $b$ & $c$ & $x$ & $y$\\ \hline 
$h(u)$ & $x$ & $x$ & $x$ & $y$ & $x$\end{tabular}
&
\begin{tabular}{c|ccccccccccc}
$\theta $ & $\Delta _N$ & $\delta $ & $\eta _1$ & $\eta $ & $\omega _1$ & $\omega $ & $\zeta _1$ & $\zeta $ & $\varepsilon $ & $\xi $ & $\nabla _N$\\ \hline 
$h^{\bullet }(\theta )$ & $\Delta _P$ & $\Delta _P$ & $\Delta _P$ & $\Delta _P$ & $\chi $ & $\chi $ & $\chi $ & $\chi $ & $\chi $ & $\Delta _P$ & $\chi $ \end{tabular}\end{tabular}\end{center}\vspace*{-2pt}

Then $g^{\bullet }$ and $h^{\bullet }$ have the tables above.

We have $\nabla _P\equiv _P\iota $, but $g^{\bullet }(\nabla _P)=\xi \equiv _N\!\!\!\!\!\!\!\!\!\!/\ \ \, \Delta _N=g^{\bullet }(\iota )$, hence $g$ fails FRet. Note that $g^{\bullet }$ preserves the intersection, but not the commutator, since $g^{\bullet }([\psi ,\psi ]_P)=g^{\bullet }(\nu )=\Delta _N\neq \eta =[\xi ,\xi ]_N=[g^{\bullet }(\psi ),g^{\bullet }(\psi )]_N$.

Since $h^{\bullet }({\rm Con}(N))=\{\Delta _P,\chi \}=\lambda _P(\Delta _P)$ and $[\chi ,\chi ]_P=\Delta _P$, $h$ satisfies FRet and $h^{\bullet }$ preserves the commutator. $h^{\bullet }(\varepsilon )\cap h^{\bullet }(\zeta )=\chi \cap \chi =\chi \neq \Delta _P=h^{\bullet }(\eta )=h^{\bullet }(\varepsilon \cap \zeta )$, thus $h^{\bullet }$ does not preserve the intersection, and ${\cal L}(h)({\bf 1})={\cal L}(h)(\lambda _N(\nabla _N))=\lambda _P(h^{\bullet }(\nabla _N))=\lambda _P(\chi )\neq \lambda _P(\nabla _P)={\bf 1}$.

Let $(Q,+^Q)$ be the following $\tau $--algebra: $Q=\{a,b,x,y\}$, with $+^Q:Q^2\rightarrow Q$ defined by the table below:\vspace*{10pt}\begin{center}\begin{tabular}{ccc}
\hspace*{-25pt}
\begin{picture}(100,65)(0,0)
\put(0,37){
\begin{tabular}{c|cccc}
$+^Q$ & $a$ & $b$ & $x$ & $y$\\ \hline  
$a$ & $a$ & $b$ & $x$ & $x$\\ 
$b$ & $b$ & $b$ & $y$ & $y$\\ 
$x$ & $x$ & $x$ & $x$ & $x$\\ 
$y$ & $y$ & $y$ & $y$ & $y$\end{tabular}}\end{picture}
&\hspace*{-30pt}
\begin{picture}(80,65)(0,0)
\put(36,10){$\Delta _Q$}
\put(42,30){$\gamma $}
\put(40,20){\circle*{3}}
\put(40,35){\circle*{3}}
\put(30,45){\circle*{3}}
\put(50,45){\circle*{3}}
\put(40,55){\circle*{3}}
\put(40,20){\line(0,1){15}}
\put(40,35){\line(-1,1){10}}
\put(40,35){\line(1,1){10}}
\put(40,55){\line(-1,-1){10}}
\put(40,55){\line(1,-1){10}}
\put(21,43){$\alpha $}
\put(53,42){$\beta $}
\put(36,58){$\nabla _Q$}
\end{picture}
&\hspace*{-35pt}
\begin{picture}(200,65)(0,0)
\put(0,40){
\begin{tabular}{c|ccccc|c}
$[\cdot ,\cdot ]_Q$ & $\Delta _Q$ & $\alpha $ & $\beta $ & $\gamma $ & $\nabla _Q$ & $\rho _Q(\cdot )$\\ \hline 
$\Delta _Q$ & $\Delta _Q$ & $\Delta _Q$ & $\Delta _Q$ & $\Delta _Q$ & $\Delta _Q$ & $\nabla _Q$\\ 
$\alpha $ & $\Delta _Q$ & $\alpha $ & $\gamma $ & $\Delta _Q$ & $\alpha $ & $\alpha $\\ 
$\beta $ & $\Delta _Q$ & $\gamma $ & $\beta $ & $\Delta _Q$ & $\beta $ & $\beta $\\ 
$\gamma $ & $\Delta _Q$ & $\Delta _Q$ & $\Delta _Q$ & $\Delta _Q$ & $\Delta _Q$ & $\gamma $\\ 
$\nabla _Q$ & $\Delta _Q$ & $\alpha $ & $\beta $ & $\Delta _Q$ & $\nabla _Q$ & $\gamma $\end{tabular}}\end{picture}\end{tabular}
\end{center}\vspace*{-15pt}

Then $Q$ has the congruence lattice represented above, with $Q/\alpha =\{\{a,b\},\{x,y\}\}$, $Q/\beta =\{\{a\},\{b,x,y\}\}$ and $Q/\gamma =\{\{a\},\{b\},\{x,y\}\}$. The commutator of $Q$ has the table above, hence ${\rm Spec}(Q)=\{\alpha ,\beta \}$, so $\rho _Q$ is as above and thus ${\cal L}(Q)={\cal K}(Q)/\!\equiv _Q={\rm Con}(Q)/\!\equiv _Q=\{\{\Delta _Q,\gamma \},\{\alpha \},\{\beta \},\{\nabla _Q\}\}=\{{\bf 0},\lambda _Q(\alpha ),\lambda _Q(\beta ),{\bf 1}\}\cong {\cal L}_2^2$. ${\cal B}({\rm Con}(Q))=\{\Delta _Q,\nabla _Q\}\cong {\cal L}_2$, hence the Boolean morphism $\lambda _Q\mid _{{\cal B}({\rm Con}(Q))}:{\cal B}({\rm Con}(Q))\rightarrow {\cal B}({\cal L}(Q))={\cal L}(Q)$ is injective, but not surjective.

Let $k:Q\rightarrow N$ and $l:Q\rightarrow P$ be the following $\tau $--morphisms:\vspace*{-4pt}\begin{center}\begin{tabular}{cc}
\begin{tabular}{c|cccc}
$u$ & $a$ & $b$ & $x$ & $y$\\ \hline 
$k(u)$ & $a$ & $b$ & $c$ & $c$\\ \hline 
$l(u)$ & $a$ & $b$ & $y$ & $y$\end{tabular} &\hspace*{15pt}
\begin{tabular}{c|ccccc}
$\theta $ & $\Delta _Q$ & $\alpha $ & $\beta $ & $\gamma $ & $\nabla _Q$\\ \hline 
$k^{\bullet }(\theta )$ & $\Delta _N$ & $\xi _1$ & $\psi _1$ & $\Delta _N$ & $\chi _1$\\ \hline 
$l^{\bullet }(\theta )$ & $\Delta _P$ & $\mu $ & $\nu $ & $\Delta _P$ & $\iota $\end{tabular}\end{tabular}
\end{center}\vspace*{-2pt}

Then $h^{\bullet }$ has the table above, so $h$ fulfills FRet and ${\cal L}(h)$ preserves the ${\bf 1}$, although $h^{\bullet }(\nabla _Q)\neq \nabla _M$: ${\cal L}(h)({\bf 1})={\cal L}(h)(\lambda _Q(\nabla _Q))=\lambda _M(h^{\bullet }(\nabla _Q))=\lambda _M(\varepsilon )={\bf 1}$. But ${\cal L}(h)$ does not preserve the meet, because: ${\cal L}(h)(\lambda _Q(\alpha )\wedge \lambda _Q(\beta ))={\cal L}(h)(\lambda _Q([\alpha ,\beta ]_Q))={\cal L}(h)(\lambda _Q(\Delta _Q))={\cal L}(h)({\bf 0})={\bf 0}\neq {\bf 1}={\bf 1}\wedge {\bf 1}=\lambda _M(\varepsilon )\wedge \lambda _M(\varepsilon )=\lambda _M(h^{\bullet }(\alpha ))\wedge \lambda _M(h^{\bullet }(\beta ))={\cal L}(h)(\lambda _Q(\alpha ))\wedge {\cal L}(h)(\lambda _Q(\beta ))$. $h^{\bullet }$ preserves neither the intersection, nor the commutator: $h^{\bullet }(\alpha \cap \beta )=h^{\bullet }(\gamma )=\Delta _M\neq \varepsilon =\varepsilon \cap \varepsilon =h^{\bullet }(\alpha )\cap h^{\bullet }(\beta )$ and $h^{\bullet }([\alpha ,\beta ]_Q)=h^{\bullet }(\Delta _Q)=\Delta _M\neq \varepsilon =[\varepsilon ,\varepsilon ]_M=[h^{\bullet }(\alpha ),h^{\bullet }(\beta )]_M$.

$k^{\bullet }$ has the table above, so $k$ fulfills FRet and ${\cal L}(k)$ preserves the meet and the ${\bf 1}$, although $k^{\bullet }(\nabla _Q)\neq \nabla _N$, and $k^{\bullet }$ preserves both the intersection and the commutator.

$l^{\bullet }$ is defined as above, so $l$ fulfills FRet and ${\cal L}(l)$ preserves the meet and the ${\bf 1}$, although  $l^{\bullet }(\nabla _Q)\neq \nabla _P$, and $l^{\bullet }$ preserves both the intersection and the commutator. Note that $l^{\bullet }\mid _{{\cal B}({\rm Con}(Q))}:{\cal B}({\rm Con}(Q))=\{\Delta _Q,\nabla _Q\}\rightarrow {\cal B}({\rm Con}(P))=\{\Delta _P,\mu ,\nu ,\nabla _P\}$ an injective Boolean morphism, and that, while $l$ is neither injective, nor surjective, ${\cal L}(l):{\cal L}(Q)={\cal B}({\cal L}(Q))\rightarrow {\cal L}(P)={\cal B}({\cal L}(P))\cong {\cal L}_2^2$ is a Boolean isomorphism.

Now let $(R,+^R)$ be the $\tau $--algebra defined by $R=\{a,b,c\}$ and the following table for the operation $+^R$:\vspace*{22pt}\begin{center}\begin{tabular}{ccc}
\begin{picture}(80,50)(0,0)
\put(0,38){
\begin{tabular}{c|ccc}
$+^R$ & $a$ & $b$ & $c$\\ \hline  
$a$ & $a$ & $b$ & $b$\\ 
$b$ & $b$ & $b$ & $b$\\  
$c$ & $c$ & $c$ & $c$\end{tabular}}\end{picture}
&
\begin{picture}(80,50)(0,0)
\put(36,20){$\Delta _R$}
\put(40,30){\circle*{3}}
\put(30,40){\circle*{3}}
\put(50,40){\circle*{3}}
\put(40,50){\circle*{3}}
\put(40,30){\line(-1,1){10}}
\put(40,30){\line(1,1){10}}
\put(40,50){\line(-1,-1){10}}
\put(40,50){\line(1,-1){10}}
\put(22,37){$\sigma $}
\put(53,37){$\tau $}
\put(36,53){$\nabla _R$}
\end{picture}
&\hspace*{-5pt}
\begin{picture}(100,50)(0,0)
\put(0,45){
\begin{tabular}{c|cccc}
$[\cdot ,\cdot ]_R$ & $\Delta _R$ & $\sigma $ & $\tau $ & $\nabla _R$\\ \hline 
$\Delta _R$ & $\Delta _R$ & $\Delta _R$ & $\Delta _R$ & $\Delta _R$\\ 
$\sigma $ & $\Delta _R$ & $\sigma $ & $\Delta _R$ & $\sigma $\\ 
$\tau $ & $\Delta _R$ & $\Delta _R$ & $\Delta _R$ & $\Delta _R$\\ 
$\nabla _R$ & $\Delta _R$ & $\sigma $ & $\Delta _R$ & $\sigma $
\end{tabular}}\end{picture}\end{tabular}\end{center}\vspace*{-23pt}

Then $R$ has the congruence lattice above, with $R/\sigma =\{\{a,b\},\{c\}\}$ and $R/\tau =\{\{a\},\{b,c\}\}$, and the commutator of $R$ has the previous definition, so that ${\rm Spec}(R)=\{\tau \}$ and thus ${\rm RCon}(R)=\{\tau ,\nabla _R\}$, so ${\cal L}(R)={\cal K}(R)/\!\equiv _R={\rm Con}(R)/\!\equiv _R=\{\{\Delta _R,\tau \},\{\sigma ,\nabla _R\}\}=\{{\bf 0},{\bf 1}\}\cong {\cal L}_2$, hence the Boolean morphism $\lambda _R\mid _{{\cal B}({\rm Con}(R))}:{\cal B}({\rm Con}(R))={\rm Con}(R)\rightarrow {\cal B}({\cal L}(R))={\cal L}(R)$ is surjective, but not injective.

Let $d:R\rightarrow N$, $e:R\rightarrow N$, $j:R\rightarrow N$ and $m:R\rightarrow P$ be the $\tau $--morphisms defined as follows:\vspace*{-5pt}\begin{center}
\begin{tabular}{cc}
\begin{tabular}{c|ccc}
$u$ & $a$ & $b$ & $c$\\ \hline 
$d(u)$ & $a$ & $b$ & $b$\\ 
$e(u)$ & $a$ & $c$ & $c$\\ 
$j(u)$ & $y$ & $y$ & $a$\\ 
$m(u)$ & $a$ & $y$ & $x$\end{tabular}
&\hspace*{15pt}
\begin{tabular}{c|cccc}
$\theta $ & $\Delta _R$ & $\sigma $ & $\tau $ & $\nabla _R$\\ \hline 
$d^{\bullet }(\theta )$ & $\Delta _N$ & $\delta $ & $\Delta _N$ & $\delta $\\ 
$e^{\bullet }(\theta )$ & $\Delta _N$ & $\eta $ & $\Delta _N$ & $\eta $\\ 
$j^{\bullet }(\theta )$ & $\Delta _N$ & $\Delta _N$ & $\xi $ & $\xi $\\ 
$m^{\bullet }(\theta )$ & $\Delta _P$ & $\iota $ & $\chi $ & $\nabla _P$\end{tabular}\end{tabular}\end{center}\vspace*{-3pt}

Then $d^{\bullet }$, $e^{\bullet }$, $j^{\bullet }$ and $m^{\bullet }$ have the definitions above, so $d$, $e$ and $m$ fulfill FRet, while $j$ fails FRet, since $\Delta _R\equiv _R\tau $, but $j^{\bullet }(\Delta _R)=\Delta _N\equiv _N\!\!\!\!\!\!\!\!\!\!/\ \ \; \xi =j^{\bullet }(\tau )$. Note that ${\cal L}(d)$ preserves the meet and the intersection, but not the ${\bf 1}$. ${\cal L}(e)$ and ${\cal L}(m)$ preserve the ${\bf 1}$, $m^{\bullet }$ and $e^{\bullet }$ preserve the intersection and the commutator, while $j^{\bullet }$ preserves the intersection, but not the commutator, because $j^{\bullet }([\tau ,\tau ]_R)=j^{\bullet }(\Delta _R)=\Delta _N\neq \eta =[\xi ,\xi ]_N=[j^{\bullet }(\tau ),j^{\bullet }(\tau )]_N$.\label{mnex4}\end{example}

\begin{example} Let $\tau =(2)$ and let us consider the following $\tau $--algebra from \cite[Example 6.3]{urs2} and \cite[Example 4.2]{urs3}: $U=(\{0,a,b,c,d\},+^U)$, with $+^U$ defined by the following table, along with the subalgebra $T=\{0,a,b,c\}$ of $U$, the $\tau $--embedding $i_{T,U}:T\rightarrow U$ and the $\tau $--morphism $t:U\rightarrow T$ defined by the table below:\vspace*{-3pt}

\begin{center}\begin{tabular}{ccc}
\begin{picture}(125,77)(0,0)
\put(0,35){
\begin{tabular}{c|ccccc}
$+^U$ & $0$ & $a$ & $b$ & $c$ & $d$\\ \hline  
$0$ & $0$ & $a$ & $b$ & $c$ & $d$\\ 
$a$ & $a$ & $0$ & $c$ & $b$ & $b$\\ 
$b$ & $b$ & $c$ & $0$ & $a$ & $a$\\ 
$c$ & $c$ & $b$ & $a$ & $0$ & $0$\\ 
$d$ & $d$ & $b$ & $a$ & $0$ & $0$\end{tabular}}
\put(5,-20){\begin{tabular}{c|ccccc}
$\phi $ & $\Delta _T$ & $\theta $ & $\zeta $ & $\xi $ & $\nabla _T$\\ \hline 
$i_{T,U}^{\bullet }(\phi )$ & $\Delta _U$ & $\alpha $ & $\beta $ & $\gamma $ & $\nabla _U$\end{tabular}}\end{picture}
&
\begin{picture}(80,75)(0,0)
\put(36,0){$\Delta _U$}
\put(36,73){$\nabla _U$}
\put(40,10){\circle*{3}}
\put(40,30){\circle*{3}}
\put(20,50){\circle*{3}}
\put(40,50){\circle*{3}}
\put(60,50){\circle*{3}}
\put(40,70){\circle*{3}}
\put(40,30){\line(-1,1){20}}
\put(40,30){\line(1,1){20}}
\put(40,10){\line(0,1){60}}
\put(40,70){\line(-1,-1){20}}
\put(40,70){\line(1,-1){20}}
\put(12,47){$\alpha $}
\put(43,47){$\beta $}
\put(63,47){$\gamma $}
\put(43,26){$\delta $}
\end{picture}
&\hspace*{-20pt}
\begin{picture}(200,75)(0,0)
\put(0,30){
\begin{tabular}{c|cccccc}
$[\cdot ,\cdot ]_U$ & $\Delta _U$ & $\alpha $ & $\beta $ & $\gamma $ & $\delta $ & $\nabla _U$\\ \hline 
$\Delta _U$ & $\Delta _U$ & $\Delta _U$ & $\Delta _U$ & $\Delta _U$ & $\Delta _U$ & $\Delta _U$\\ 
$\alpha $ & $\Delta _U$ & $\delta $ & $\delta $ & $\delta $ & $\delta $ & $\delta $\\ 
$\beta $ & $\Delta _U$ & $\delta $ & $\delta $ & $\delta $ & $\delta $ & $\delta $\\ 
$\gamma $ & $\Delta _U$ & $\delta $ & $\delta $ & $\delta $ & $\delta $ & $\delta $\\ 
$\delta $ & $\Delta _U$ & $\delta $ & $\delta $ & $\delta $ & $\Delta _U$ & $\delta $\\ 
$\nabla _U$ & $\Delta _U$ & $\delta $ & $\delta $ & $\delta $ & $\delta $ & $\delta $\end{tabular}}\end{picture}\\ 
\begin{picture}(80,90)(0,0)
\put(0,40){\begin{tabular}{c|ccccc}
$u$ & $0$ & $a$ & $b$ & $c$ & $d$\\ \hline 
$t(u)$ & $0$ & $a$ & $a$ & $0$ & $0$\end{tabular}}
\put(-50,10){\begin{tabular}{c|cccccc}
$\phi $ & $\Delta _U$ & $\alpha $ & $\beta $ & $\gamma $ & $\delta $ & $\nabla _U$\\ \hline 
$t^{\bullet }(\phi )$ & $\Delta _T$ & $\theta $ & $\theta $ & $\Delta _T$ & $\Delta _T$ & $\theta $\end{tabular}}
\end{picture}
&
\begin{picture}(80,90)(0,0)
\put(36,0){$\Delta _T$}
\put(36,53){$\nabla _T$}
\put(40,10){\circle*{3}}
\put(20,30){\circle*{3}}
\put(40,30){\circle*{3}}
\put(60,30){\circle*{3}}
\put(40,50){\circle*{3}}
\put(40,10){\line(-1,1){20}}
\put(40,10){\line(1,1){20}}
\put(40,10){\line(0,1){40}}
\put(40,50){\line(-1,-1){20}}
\put(40,50){\line(1,-1){20}}
\put(12,27){$\theta $}
\put(43,27){$\zeta $}
\put(63,27){$\xi $}
\end{picture}
&\hspace*{-20pt}
\begin{picture}(200,90)(0,0)
\put(0,30){
\begin{tabular}{c|cccccc}
$[\cdot ,\cdot ]_T$ & $\Delta _T$ & $\theta $ & $\zeta $ & $\xi $ & $\nabla _T$\\ \hline 
$\Delta _T$ & $\Delta _T$ & $\Delta _T$ & $\Delta _T$ & $\Delta _T$ & $\Delta _T$\\ 
$\theta $ & $\Delta _T$ & $\Delta _T$ & $\Delta _T$ & $\Delta _T$ & $\Delta _T$\\ 
$\zeta $ & $\Delta _T$ & $\Delta _T$ & $\Delta _T$ & $\Delta _T$ & $\Delta _T$\\ 
$\xi $ & $\Delta _T$ & $\Delta _T$ & $\Delta _T$ & $\Delta _T$ & $\Delta _T$\\ 
$\nabla _T$ & $\Delta _T$ & $\Delta _T$ & $\Delta _T$ & $\Delta _T$ & $\Delta _T$\end{tabular}}\end{picture}\end{tabular}
\end{center}

${\rm Con}(T)=\{\Delta _T,\theta ,\zeta ,\xi ,\nabla _T\}\cong {\cal M}_3$, with the Hasse diagram above, where $T/\theta =\{\{0,a\},\{b,c\}\}$, $T/\zeta =\{\{0,b\},\{a,c\}\}$, $T/\xi =\{\{0,c\},\{a,b\}\}$. Note that ${\cal B}({\rm Con}(T))={\rm Con}(T)$, which is not a Boolean lattice. The commutator of $T$ has the value $\Delta _T$ for every pair of congruences of $T$, so ${\rm Spec}(T)=\emptyset $, thus ${\cal L}(T)=\{{\bf 0}\}\cong {\cal L}_1$, thus, trivially, $t$ satisfies FRet. As shown by the table of $t^{\bullet }$ above, $t^{\bullet }$ preserves the commutator, but not the intersection, since $t^{\bullet }(\alpha \cap \beta )=t^{\bullet }(\delta )=\Delta _T\neq \theta =\theta \cap \theta =t^{\bullet }(\alpha )\cap t^{\bullet }(\beta )$.

$U$ has the congruence lattice represented above, where $U/\alpha =\{\{0,a\},\{b,c,d\}\}$, $U/\beta =\{\{0,b\},\{a,c,d\}\}$, $U/\gamma =\{\{0,c,d\},\{a,b\}\}$ and $U/\delta =\{\{0\},\{a\},\{b\},\{c,d\}\}$. As shown by the table of $[\cdot , \cdot ]_U$ above, calculated in \cite[Example $3$]{retic}, we have ${\rm Spec}(U)=\emptyset $, thus $\rho _U(\sigma )=\nabla _U$ for all $\sigma \in {\rm Con}(U)$, and hence ${\cal L}(U)=\{{\bf 0}\}\cong {\cal L}_1$, therefore, trivially, $i_{T,U}$ fulfills FRet. Also, trivially, ${\cal L}(i_{T,U})$ and ${\cal L}(t)$ are lattice isomorphisms. $[i_{T,U}^{\bullet }(\theta ),i_{T,U}^{\bullet }(\theta )]_U=[\alpha ,\alpha ]_U=\delta \notin i_{T,U}^{\bullet }({\rm Con}(T))$, in par\-ti\-cu\-lar $[i_{T,U}^{\bullet }(\theta ),i_{T,U}^{\bullet }(\theta )]_U\neq i_{T,U}^{\bullet }([\theta ,\theta ]_T)$. So $i_{T,U}^{\bullet }$ does not preserve the commutator, and, despite $i_{T,U}$ being injective, $i_{T,U}^{\bullet }$ does not preserve the intersection, either, since $i_{T,U}^{\bullet }(\theta \cap \zeta )=i_{T,U}^{\bullet }(\Delta _T)=\Delta _U\neq \delta =\alpha \cap \beta =i_{T,U}^{\bullet }(\theta )\cap i_{T,U}^{\bullet }(\zeta )$.

${\cal B}({\rm Con}(U))=\{\Delta _U,\nabla _U\}\cong {\cal L}_2$, hence the Boolean morphism $\lambda _U\mid _{{\cal B}({\rm Con}(U))}:{\cal B}({\rm Con}(U))\rightarrow {\cal B}({\cal L}(U))={\cal L}(U)$ is surjective, but not injective. Note that $[\phi ,\nabla _U]_U=\phi $ for all $\phi \in {\rm Con}(U)$, which proves that the stronger assumption that $\C $ is congruence--modular and semi--degenerate is necessary for the properties of ${\cal B}({\rm Con}(U))$ and this restriction of $\lambda _U$ recalled above.

Let us also consider the $\tau $--algebra $(V,+^V)$, with $V=\{0,s,t\}$ and $+^V$ defined by the following table:\vspace*{-2pt}

\begin{center}\begin{tabular}{ccc}
\begin{tabular}{c|ccc}
$+^V$ & $0$ & $s$ & $t$\\ \hline 
$0$ & $0$ & $s$ & $t$\\ 
$s$ & $s$ & $0$ & $t$\\ 
$t$ & $t$ & $t$ & $0$
\end{tabular}
&
\begin{tabular}{c}
\begin{tabular}{c|ccccc}
$u$ & $0$ & $a$ & $b$ & $c$ & $d$\\ \hline 
$h(u)$ & $0$ & $0$ & $t$ & $t$ & $t$
\end{tabular}\\ 
\begin{tabular}{c|cccccc}
$\phi $ & $\Delta _U$ & $\alpha $ & $\beta $ & $\gamma $ & $\delta $ & $\nabla _U$\\ \hline 
$h^{\bullet }(\phi )$ & $\Delta _V$ & $\Delta _V$ & $\nabla _V$ & $\nabla _V$ & $\Delta _V$ & $\nabla _V$
\end{tabular}\end{tabular}
&
\begin{tabular}{c|ccc}
$[\cdot ,\cdot ]_V$ & $\Delta _V$ & $\sigma $ & $\nabla _V$\\ \hline 
$\Delta _V$ & $\Delta _V$ & $\Delta _V$ & $\Delta _V$\\ 
$\sigma $ & $\Delta _V$ & $\Delta _V$ & $\sigma $\\ 
$\nabla _V$ & $\Delta _V$ & $\sigma $ & $\sigma $\end{tabular}\end{tabular}\end{center}

Notice that ${\rm Con}(V)=\{\Delta _V,\sigma ,\nabla _V\}\cong {\cal L}_3$, with $\sigma =eq(\{0,s\},\{t\})$, and that the commutator of $V$ has the table above, so that ${\rm Spec}(V)=\{\Delta _V\}$ and hence ${\cal L}(V)=\{\{\Delta _V\},\{\sigma ,\nabla _V\}\}\cong {\cal L}_2$. The map $h:U\rightarrow V$ defined by the table above is a $\tau $--morphism and $h^{\bullet }$ is defined as above, hence $h^{\bullet }({\cal B}({\rm Con}(U)))=h^{\bullet }(\{\Delta _U,\nabla _U\})=\{\Delta _V,\nabla _V\}={\cal B}({\rm Con}(V))$ and $h^{\bullet }\mid _{{\cal B}({\rm Con}(U))}$ is a Boolean isomorphism between ${\cal B}({\rm Con}(U))$ and ${\cal B}({\rm Con}(V))$, thus $h$ satisfies the FBC, but $\Delta _U\equiv _U\nabla _U$, while $(h^{\bullet }(\Delta _U),h^{\bullet }(\nabla _U))=(\Delta _V,\nabla _V)\notin \; \equiv _V$, thus $h$ fails FRet.

Now let us consider the map $v:V\rightarrow V$ defined by the following table. Then $v^{\bullet }$ has the following definition, thus $v$ fails FRet since $\sigma \equiv _V\nabla _V$, but $v^{\bullet }(\sigma )=\Delta _V\equiv _V\!\!\!\!\!\!\!\!\!\!/\ \ \ \sigma =v^{\bullet }(\nabla _V)$, despite the fact that $v^{\bullet }$ preserves the commutator and the intersection and $v^{\bullet }(\nabla _V)\equiv _V\nabla _V$.

\begin{center}\begin{tabular}{cc}
\begin{tabular}{c|ccc}
$u$ & $0$ & $s$ & $t$\\ \hline 
$v(u)$ & $0$ & $0$ & $s$
\end{tabular}
&\hspace*{15pt}
\begin{tabular}{c|ccc}
$\phi $ & $\Delta _V$ & $\sigma $ & $\nabla _V$\\ \hline 
$v^{\bullet }(\phi )$ & $\Delta _V$ & $\Delta _V$ & $\sigma $
\end{tabular}
\end{tabular}\end{center}\label{tip20}\end{example}

\section*{Acknowledgements}

This work was supported by the research grant number IZSEZO\_186586/1, awarded to the project {\em Re\-ti\-cu\-la\-tions of Concept Algebras} by the Swiss National Science Foundation, within the programme Scientific Exchanges.


\begin{thebibliography}{99} 
\bibitem{agl} P. Agliano, Prime Spectra in Modular Varieties, {\em Algebra Universalis} {\bf 30} (1993), 581--597.

\bibitem{urs2} P. Agliano, A. Ursini, On Subtractive Varieties, II: General Properties, {\em Algebra Universalis} {\bf 36}, Issue 2 (June 1996), 222--259.

\bibitem{urs3} P. Agliano, A. Ursini, On Subtractive Varieties, III: from Ideals to Congruences, {\em Algebra Universalis} {\bf 37} (1997), 296--333.

\bibitem{ezeh} H. Al--Ezeh, The Pure Spectrum of a $PF$--ring, {\em Commentarii Mathematici Universitatis Sancti Pauli} {\bf 37}, Issue 2 (1988), 179--183.

\bibitem{badvag} M. V. Badano, D. J. Vaggione, Varieties with Equationally Definable Factor Congruences, {\em Algebra Universalis} {\bf 70} (2013), 327--345.

\bibitem{bal} R. Balbes, P. Dwinger, {\em Distributive Lattices}, University of Missouri Press, Columbia, Missouri, 1974.

\bibitem{bak} K. A. Baker, Primitive Satisfaction and Equational Problems for Lattices and Other Algebras, {\em Trans. Amer. Math. Soc.} {\bf 190} (1974), 125--150.

\bibitem{bellsspl} L. P. Belluce, Semisimple Algebras of Infinite Valued Logic and Bold Fuzzy Set Theory. {\em Canadian Journal of Mathematics} 38 (1986), 1356--1379.

\bibitem{bell} L. P. Belluce, Spectral Spaces and Non--commutative Rings. {\em Communications in Algebra} 19, Issue 7 (1991), 1855--1865.

\bibitem{bel} L. P. Belluce, The Going Up and Going Down Theorems in MV--algebras and Abelian l--groups, {\em J. Math. An. Appl.} 241 (2000), 92--106.

\bibitem{blkpgz} W. J. Blok, D. Pigozzi, On the Structure of Varieties with Equationally Definable Principal Congruences I, {\em Algebra Universalis} {\bf 15} (1982), 195--227. 

\bibitem{blyth} T. S. Blyth, {\em Lattices and Ordered Algebraic Structures}, Springer--Verlag London Limited, 2005.

\bibitem{bur} S. Burris, H. P. Sankappanavar, {\em A Course in Universal Algebra}, Graduate Texts in Mathematics, {\bf 78}, Springer--Verlag, New York--Berlin (1981).

\bibitem{dcggcm} D. Cheptea, G. Georgescu, C. Mure\c san, Boolean Lifting Properties for Bounded Distributive Lattices, {\em Scientific Annals of Computer Science} {\bf XXV}, Issue 1 ({\em Special issue dedicated to Professor Sergiu Rudeanu}) (2015), 29--67.

\bibitem{cwdw} P. Crawley, R. P. Dilworth, {\em Algebraic Theory of Lattices}, Prentice Hall, Englewood Cliffs (1973). 

\bibitem{cze} J. Czelakowski, {\em The Equationally--defined Commutator. A Study in Equational Logic and Algebra}, Birkh\"{a}user Mathematics, 2015.

\bibitem{cze2} J. Czelakowski, Additivity of the Commutator and Residuation, {\em Reports on Mathematical Logic} {\bf 43} (2008), 109--132.

\bibitem{dinggll} A. Di Nola, G. Georgescu, L. Leu\c stean. Boolean Products of BL--algebras. {\em Journal of Mathematical Analysis and Applications} 251, Issue 1 (November 2000), 106--131.

\bibitem{fremck} R. Freese, R. McKenzie, {\em Commutator Theory for Congruence--modular Varieties}, London Mathematical Society Lecture Note Series {\bf 125}, Cambridge University Press, 1987.

\bibitem{gal} N. Galatos, P. Jipsen, T. Kowalski, H. Ono, {\em Residuated Lattices: An Algebraic \mbox{Glimpse at} Substructural Logics}, {\em Studies in Logic and The Foundations of Mathematics} {\bf 151}, \mbox{Elsevier}, Amsterdam/ Boston /Heidelberg /London /New York /Oxford /Paris /San Diego/ San Francisco /Singapore /Sydney /Tokyo, 2007. 

\bibitem{cblp} G. Georgescu, C. Mure\c san, Congruence Boolean Lifting Property, {\em Journal of Multiple-valued Logic and Soft Computing} {\bf 16}, Issue 3--5 (2010), 427--447.

\bibitem{ggcm} G. Georgescu, C. Mure\c san, Boolean Lifting Property for Residuated Lattices, {\em Soft Computing} 18, Issue 11 (November 2014).

\bibitem{gulo} G. Georgescu, C. Mure\c san, Going Up and Lying Over in Congruence--modular Algebras, {\em Mathematica Slovaca} {\bf 69}, Issue 2 (2019), 275--296.

\bibitem{retic} G. Georgescu, C. Mure\c san, The Reticulation of a Universal Algebra, {\em Scientific Annals of Computer Science} {\bf 28}, Issue 1 (2018), 67--113.

\bibitem{gratzer} G. Gr\"{a}tzer, {\em General Lattice Theory}, Birkh\"{a}user Akademie--Verlag, Basel--Boston--Berlin (1978).

\bibitem{gralgu} G. Gr\"{a}tzer, {\em Universal Algebra}, Second Edition, Springer Science+Business Media, LLC, New York, 2008.

\bibitem{joh} P. T. Johnstone, {\em Stone Spaces}, Cambridge Studies in Advanced Mathematics {\bf 3}, Cambridge University Press, Cambridge/London/New York/New Rochelle/Melbourne/Sydney, 1982.

\bibitem{bj} B. J\'{o}nsson, Congruence--distributive Varieties, {\em Math. Japonica} {\bf 42}, Issue 2 (1995), 353--401.

\bibitem{joy} A. Joyal. Le Th\'{e}or\`{e}me de Chevalley--Tarski et Remarques sur l`alg\`{e}bre Constructive. {\em Cahiers Topol. G\'{e}om. Diff\'{e}r.}, 16 (1975), 256--258. 

\bibitem{kp} P. K\" ohler, D. Pigozzi, Varieties with Equationally Defined Principal Congruences, {\em Algebra Universalis} {\bf 11} (1980), 213--219.

\bibitem{koll} J. Koll\'{a}r, Congruences and One--element Subalgebras, {\em Algebra Universalis} {\bf 9}, Issue 1 (December 1979), 266--267. 

\bibitem{mcks} R. McKenzie and J. Snow, {\em Congruence Modular Varieties: Commutator Theory and Its Uses, in Structural Theory of Automata, Semigroups, and Universal Algebra}, Springer, Dordrecht, 2005.

\bibitem{meo} W. DeMeo, The Commutator as Least Fixed Point of a Closure Operator, arXiv:1703.02764 [math.LO].

\bibitem{eu1} C. Mure\c{s}an. The Reticulation of a Residuated Lattice. {\em Bull. Math. Soc. Sci. Math. Roumanie} 51 (99), No. 1 (2008), 47--65.
 
\bibitem{eu2} C. Mure\c{s}an. Characterization of the Reticulation of a Residuated Lattice. {\em Journal of Multiple--valued Logic and Soft Computing} 16, No. 3--5 (2010), Special Issue: {\em Multiple--valued Logic and Its Algebras}, 427--447.

\bibitem{eu3} G. Georgescu, L. Leu\c stean, C. Mure\c san, Maximal Residuated Lattices with Lifting Boolean Center, {\em Algebra Universalis} 63, No. 1 (February 2010), 83--99.

\bibitem{eu4} C. Mure\c{s}an, Dense Elements and Classes of Residuated Lattices, {\em Bull. Math. Soc. Sci. Math. Roumanie} 53 (101), No. 1 (2010), 11--24.

\bibitem{eu5} C. Mure\c san. Further Functorial Properties of the Reticulation. {\em Journal of Multiple-valued Logic and Soft Computing} 16, No. 1--2 (2010), 177--187.

\bibitem{euadm} C. Mure\c san, Taking Prime, Maximal and Two--class Congruences Through Morphisms, arXiv:1607.06901 [math.RA].

\bibitem{nic} W. K. Nicholson, Lifting Idempotents and Exchange Rings, {\em Trans. Amer. Math. Soc.} 229 (1977), 269--287.

\bibitem{ouwe} P. Ouwehand, {\em Commutator Theory and Abelian Algebras}, arXiv:1309.0662 [math.RA].

\bibitem{rada} S. Rasouli, B. Davvaz, An Investigation on Boolean Prime Filters in BL--algebras, {\em Soft Computing} 19, Issue 10 (October 2015), 2743--2750.

\bibitem{sim} H. Simmons. Reticulated Rings. {\em Journal of Algebra} 66, Issue 1 (September 1980), 169--192.

\bibitem{urs5} A. Ursini, On Subtractive Varieties, V: Congruence Modularity and the Commutator, {\em Algebra Universalis} {\bf 43} (2000), 51--78.\end{thebibliography}
\end{document}